\newcommand*{\doublerightarrow}[2]{\mathrel{
		\settowidth{\@tempdima}{$\scriptstyle#1$}
		\settowidth{\@tempdimb}{$\scriptstyle#2$}
		\ifdim\@tempdimb>\@tempdima \@tempdima=\@tempdimb\fi
		\mathop{\vcenter{
				\offinterlineskip\ialign{\hbox to\dimexpr\@tempdima+1em{##}\cr
					\rightarrowfill\cr\noalign{\kern.5ex}
					\rightarrowfill\cr}}}\limits^{\!#1}_{\!#2}}}
\newcommand*{\triplerightarrow}[1]{\mathrel{
		\settowidth{\@tempdima}{$\scriptstyle#1$}
		\mathop{\vcenter{
				\offinterlineskip\ialign{\hbox to\dimexpr\@tempdima+1em{##}\cr
					\rightarrowfill\cr\noalign{\kern.5ex}
					\rightarrowfill\cr\noalign{\kern.5ex}
					\rightarrowfill\cr}}}\limits^{\!#1}}}
\patchcmd\start@gather{$$}{%
	$$%
	\displaywidth=\textwidth
	\displayindent=-\leftskip
}{}{\errmessage{Cannot patch \string\start@gather}}
\patchcmd\start@align{$$}{%
	$$%
	\displaywidth=\textwidth
	\displayindent=-\leftskip
}{}{\errmessage{Cannot patch \string\start@align}}
\patchcmd\start@multline{$$}{%
	$$%
	\displaywidth=\textwidth
	\displayindent=-\leftskip
}{}{\errmessage{Cannot patch \string\start@multline}}
\patchcmd\mathdisplay{$$}{%
	$$%
	\displaywidth=\textwidth
	\displayindent=-\leftskip
}{}{\errmessage{Cannot patch \string\mathdisplay}}
\theoremstyle{plain}
\newtheorem{thm}{Theorem}[section]
\newtheorem{lemma}[thm]{Lemma}
\newtheorem{prop}[thm]{Proposition}
\newtheorem*{cor}{Corollary}
\theoremstyle{definition}
\newtheorem{defn}{Definition}[section]
\newtheorem{example}{Example}[section]
\theoremstyle{remark}
\newtheorem*{remark}{Remark}
\def\resp{respectively}
\newcommand{\deq}{\coloneqq}
\newcommand{\Aoo}{\caa_\infty}
\newcommand{\super}{{\bzz/2\bzz}}
\newcommand{\tth}{^\text{th}}
\newcommand{\field}{{\mathbbm{k}}}
\renewcommand{\deg}[1]{{\left|#1\right|}}
\newcommand{\Tw}{\operatorname{Tw}}
\newcommand{\caa}{\mathcal{A}}
\newcommand{\ccc}{\mathcal{C}}
\newcommand{\cdd}{\mathcal{D}}
\newcommand{\cff}{\mathcal{F}}
\newcommand{\ctt}{\mathcal{T}}
\newcommand{\Z}{\mathbb{Z}}
\newcommand{\bpp}{\mathbb{P}}
\newcommand{\bzz}{\mathbb{Z}}
\newcommand{\fm}{\mathfrak{m}}
\newcommand{\ho}{\operatorname{Hom}}
\newcommand{\id}{\operatorname{id}}
\newcommand{\im}{\operatorname{Im}}
\newcommand{\can}{{\operatorname{can}}}
\newcommand{\Sp}{{\operatorname{Sp}}}
\newcommand{\Ord}{{\operatorname{Ord}}}
\newcommand{\half}[1]{\left(\frac{1}{2}\right)^{#1}}
\newcommand{\con}{{\operatorname{con}}}
\newcommand{\disc}{{\operatorname{disc}}}
\newcommand{\comp}{{\operatorname{comp}}}
\newcommand{\thick}{{\operatorname{thick}}}
\newcommand{\Con}{{\ccc\operatorname{on}}}
\newcommand{\Disc}{\cdd{\operatorname{isc}}}
\newcommand{\Comp}{\ccc{\operatorname{omp}}}
\newcommand{\Thick}{\ctt{\operatorname{hick}}}
\def\resp{respectively}
\def\ora{\overrightarrow}
\title{Topological Fukaya category of tagged arcs}
\author[Cho]{Cheol-Hyun Cho}
\address{Department of Mathematical Sciences, Research Institute in Mathematics\\ Seoul National University\\ Gwanak-gu\\Seoul \\ South Korea}
\email{chocheol@snu.ac.kr}
\author[Kim]{Kyoungmo Kim}
\address{Department of Mathematical Sciences\\ Seoul National University\\ Gwanak-gu\\Seoul \\ South Korea}
\email{kyoungmo@snu.ac.kr}
\date{}
\begin{document}

	\maketitle
	
\begin{abstract}
A tagged arc on a surface is introduced by Fomin, Shapiro, and Thurston to study cluster theory on marked surfaces.
Given a tagged arc system on a graded marked surface, we define its $\bzz$-graded $\Aoo$-category, generalizing the construction of
Haiden, Katzarkov, and Kontsevich for arc systems. When a tagged arc system arises from a non-trivial involution on a marked surface, we show that this $\Aoo$-category is quasi-isomorphic to the invariant part of the topological Fukaya category under the involution. In particular, this identifies tagged
arcs with non-geometric idempotents of Fukaya category.
\end{abstract}
		
	\tableofcontents

\section{Introduction}

Fomin, Shapiro, and Thurston introduce the concept of tagged arcs in \cite{FST08} to study cluster theory on marked surfaces. A tagged arc is an arc with additional data of tagging at the endpoints lying on interior marked points. Here, a tagging is a choice of an element of $\super$. An ideal triangulation of a marked surface by tagged arcs and flips of triangulations (given by replacing a diagonal by the other one) relate the combinatorial geometry of marked surfaces with cluster algebras. In particular, tagged arcs solve the problem of flipping self-folded triangles.

On the other hand, gentle algebra is a special-type of associative algebra introduced in \cite{AS87}. Its derived category has been intensively studied both in algebraic and
in geometric perspectives. Algebraically, indecomposable objects in the derived categories of gentle algebras are studied in \cite{BM03}, \cite{BMM03}, \cite{BD05}, \cite{BD17Arx}, and morphisms between them are studied in \cite{ALP16}. A combinatorial derived invariant of gentle algebra is introduced in \cite{AG08}. Also, in \cite{SZ03}, they show that the class of gentle algebras is closed under derived equivalence. In \cite{assem2010gentle}, gentle algebras are realized as Jacobian algebras of ideal triangulations of boundary-marked surfaces.
 
Surprising relations between geometry of surfaces, in particular its Fukaya category, and gentle algebras have been found.
In \cite{Bocklandt16} and \cite{HKK17}, Bocklandt and Haiden-Katzarkov-Kontsevich introduce topological Fukaya category of surface, which is a topological version of  (partially)  wrapped Fukaya category in \cite{FSS09}, \cite{AS10}, \cite{Auroux10}, \cite{Auroux10_2}, \cite{Sylvan19}.  Topological Fukaya category gives an $\Aoo$-enhancement of derived category of graded homologically smooth and proper gentle algebra. In \cite{OPS18Arx}, Opper, Plamondon, and Schroll give a geometric model for derived category of any ungraded gentle algebra. Also in \cite{BC21}, Baur and Sim\~{o}es give a geometric model for module category of finite dimensional gentle algebra.  These geometric models provide geometric interpretations of many algebraic properties. 
In \cite{LP20}, Lekili and Polishchuk give the converse construction of \cite{HKK17} and find a geometric interpretation of the derived invariant in \cite{AG08}. Also, in \cite{APS23}, 
Amiot, Plamondon, and Schroll use the model of \cite{OPS18Arx} to interpret the invariant for any gentle algebra.

Skew-gentle algebra is introduced by Gei\ss \; and de~la Pe\~{n}a  \cite{GP99} as a generalization of a gentle algebra.
Labardini-Fragoso associates a quiver with potential to an ideal triangulation by tagged arcs of a marked surface in \cite{Labardini09}.
Gei\ss, Labardini-Fragoso, and Schr\"{o}er \cite{GLS16} and Qiu and Zhou \cite{QZ17} show that the Jacobian algebra of an appropriate triangulation is a skew-gentle algebra.

It is known that skew-gentle algebras are related to $\super$-orbifold surfaces. Suppose that a marked surface and an associated arc system for a gentle algebra are equipped with
a $\super$-action. In \cite{AP21}, \cite{AB22}, \cite{LSV22}, \cite{Amiot23}, they use $\super$-equivariant theory to study indecomposable objects and derived-equivalences of skew-gentle algebras. In \cite{QZ17}, \cite{QZZ22Arx}, they introduce geometric models for the intersection numbers between tagged arcs and study the derived categories of (graded) skew-gentle algebras. 
The space of stability conditions on the latter are shown to be isomorphic to that of quadratic differentials in \cite{LQW23Arx}. Here, interior marked points for tagged arcs correspond to $\super$-orbifold points in the former, and we will not distinguish them from now on.
 
Therefore, it is natural to conjecture that the derived category of a skew-gentle algebra should be related to a partially wrapped Fukaya category of the corresponding $\super$-orbifold surface  (see \cite{LSV22}). In this paper and the sequel, we give an answer for this conjecture. Let us explain this in more detail.
Consider a graded marked $\super$-orbifold surface $(S, M, O, \eta)$.

Our first main observation is the precise relationship between tagged arcs on $S$ and the Fukaya category of $\super$-orbifold surface $S$.
Recall that tagged arcs are defined in a combinatorial way and have been lacking direct geometric interpretations (cf. \cite{QZZ22Arx}). We find that arcs ending at $\super$-orbifold points in the orbifold Fukaya category admits non-trivial idempotents. These types of non-geometric idempotents do not seem to appear in the smooth Fukaya categories.
We will explain the correspondence between idempotents and tagged arcs. This explains why tagged arcs are relevant to the study of skew-gentle algebras in light of the above conjecture.

Next, we define topological Fukaya categories of tagged arc systems of $\super$-orbifold surfaces, generalizing the construction of Haiden-Katzarkov-Kontsevich \cite{HKK17} for arc systems on smooth surfaces. Namely, we define morphisms between tagged arcs, and also define $\Aoo$-operations on morphisms in a combinatorial way.  

Contrary to smooth cases, when two tagged arcs meet at an interior marking, morphisms exist only in one direction (depending on the grading and the tagging).
We call this morphism an interior morphism, and call the imaginary non-existing morphism in the other direction a non-morphism.
Intuitively, as tagged arcs correspond to idempotents, a non-trivial morphism between arcs does not necessarily induce a non-trivial morphism between their idempotents. 

Another new feature is a composition sequence. Recall that a disc sequence in \cite{HKK17} provides contributions of ``holomorphic discs'' to $\Aoo$-operations.
First, we can generalize the notion of disc sequence in our setting by allowing foldings along arcs to interior marked points. Furthermore, if one of the corner of a disc is a
non-morphism, then the complementary morphism can be regarded as an output of an $\Aoo$-operation, and we call this a composition sequence. 
There is one more non-trivial operation, called $\fm^{\thick}$, when the operation involves two different tagged arcs of the same underlying arc. 
\begin{thm}\label{thm:Intro1}
	Let $(S, M, O, \eta)$ be a graded marked $\super$-orbifold surface and $\Gamma$ be a tagged arc system.  We can define a $\Z$-graded unital $\Aoo$-category  $\cff_\Gamma(S, M, O, \eta)$. 
\end{thm}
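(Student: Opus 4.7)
The construction follows the framework of Haiden--Katzarkov--Kontsevich \cite{HKK17} for ordinary arc systems, extended to accommodate the three additional features highlighted before the statement: interior morphisms that exist in only one direction at each interior marked point, composition sequences built from a non-morphism corner, and the operation $\fm^{\thick}$ relating the two taggings of a common underlying arc. The plan is to specify the graded morphism spaces between tagged arcs, to define each type of $\Aoo$-operation as a signed sum over certain immersed polygons on $(S,M,O)$, and then to verify the $\Aoo$-relations together with unitality.

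For objects I take the tagged arcs of $\Gamma$, and I enumerate generators of hom-spaces as follows: angle generators at each boundary marked point exactly as in \cite{HKK17}; at each interior marked point, a single interior morphism whose direction is dictated by $\eta$ together with the product of the two taggings, with the formally opposite direction recorded as a \emph{non-morphism} used only for bookkeeping; and, when two tagged arcs share the same underlying arc with distinct taggings, a thick generator. Degrees are inherited from $\eta$, with the local orbifold model supplying the degrees of the new generators. The operations $\fm^k$ are then assembled from three kinds of contributions: generalized disc sequences, namely immersed polygons allowed to fold along an arc through an interior marked point; composition sequences, in which one corner carries a non-morphism and the complementary side of the polygon is declared to be the output; and the insertion of the thick generator $\fm^{\thick}$ between paired taggings. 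Units are built from the short boundary angles of \cite{HKK17}, augmented at each interior endpoint by the prescribed interior identity contribution.

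The main obstacle is verifying the $\Aoo$-relations in the presence of these new terms. In \cite{HKK17} the verification proceeds by an involution on pairs of adjacent disc sequences: a near-degenerate polygon decomposes in two ways whose contributions cancel. I would reuse this pairing for the purely smooth contributions and then organize the remaining cancellations according to the number and type of non-smooth corners involved. Specifically, I expect three new cancellation patterns. First, a composition sequence whose non-morphism corner lies at an interior marked point pairs with a genuine disc sequence on the opposite side of that point, the two sharing their complementary output. Second, a composition sequence pairs with an application of $\fm^{\thick}$ to an interior morphism followed by a smaller disc. Third, two successive $\fm^{\thick}$ insertions at the same underlying arc cancel by the involutivity of the tagging together with the grading rule for interior morphisms.

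Each of these cancellations is localized around a single interior marked point, so the verification reduces to a finite case analysis on the standard local models of $\super$-orbifold discs; the signs are fixed by insisting on compatibility with the HKK sign conventions in the limit where tagged arcs collapse to ordinary ones. Unitality then follows by the usual argument that each summand of the proposed unit appears in exactly one disc or composition sequence containing a prescribed morphism, and that their combined contribution reproduces that morphism; the only new input is that an interior identity must cancel against the unique interior morphism ending at the same marked point, which is handled by the same local model.
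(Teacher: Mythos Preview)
Your outline captures the ingredients at the right level of abstraction, but it misses the mechanism that actually makes the $\Aoo$-relations close, and it contains two concrete errors about the structure of the category.

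First, the cancellations are \emph{not} pairwise in the way you describe. The operations $\fm^{\con}$, $\fm^{\disc}$, $\fm^{\comp}$ carry explicit weight factors $(1/2)^{\Phi}$, where $\Phi$ counts interior morphisms and folded arcs in the sequence. When a disc splits into a composition sequence and a disc sequence (your ``first new cancellation pattern''), the composition output contributes with weight $1/2$ while each of the two associated folded discs contributes with weight $1/4$; the identity holds because $\tfrac{1}{2}-\tfrac{1}{4}-\tfrac{1}{4}=0$, not because of an involution on pairs. Without these weights there is no way to balance the three terms, and your proposal never introduces them. The signs are likewise not determined by ``compatibility with HKK in the smooth limit'': they depend on the taggings through a function $\sigma$ that records whether an interior morphism lies between notched arcs, and these signs are needed e.g.\ for the twisted complex associated to a folded disc sequence to be a genuine twisted complex.

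Second, two structural points are wrong. There is no ``thick generator'': the hom space between the two members of a thick pair is declared to be zero. The operation $\fm^{\thick}$ is a genuine $\fm_3$ taking a boundary morphism and a pair of interior morphisms whose outer arcs form a thick pair, and outputting the companion boundary morphism to the other tagging; it is not the insertion of a morphism. Relatedly, units are formal elements $e_\alpha$, not assembled from short angles plus an interior contribution.

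Finally, the claim that the verification ``localizes around a single interior marked point'' and reduces to standard orbifold-disc models is not how the argument goes, and I do not see how to make it work: the quadratic terms in the $\Aoo$-relation correspond to splittings of global immersed polygons, and the case analysis (disc/disc, disc/composition, composition/composition, and the several thick subcases) is organized by the combinatorics of how a dissecting arc meets the output corner and the folded interior markings. You will need the explicit weight and sign bookkeeping (Lemmas on how $\Phi$ and $\Sigma$ behave under each type of splitting) to carry this through.
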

Different choices of tagged arc systems on $(S, M, O, \eta)$ result in Morita equivalent $\Aoo$-categories.
\begin{thm}\label{thm:Intro2}
	Let $(S, M, O, \eta)$ be a graded marked $\super$-orbifold surface and $\Gamma_1, \Gamma_2$ be tagged arc systems on it. Then, two $\Aoo$-categories $\cff_{\Gamma_1}(S, M, O, \eta)$ and $\cff_{\Gamma_2}(S, M, O, \eta)$ are Morita equivalent.
\end{thm}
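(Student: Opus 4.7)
The plan is to adapt the strategy from \cite{HKK17}: decompose any passage between two tagged arc systems into a finite sequence of elementary local moves, and verify that each such move induces a quasi-equivalence between the Karoubi-twisted envelopes of the associated $\Aoo$-categories.

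First, I would establish a connectivity result: any two tagged arc systems $\Gamma_1$ and $\Gamma_2$ on $(S, M, O, \eta)$ can be connected by a finite sequence of two types of elementary operations, namely the standard arc flip across a quadrilateral, and a tag change at an interior marked point $o \in O$. The connectivity of untagged arc systems on bordered surfaces via flips is classical, while the incorporation of tags follows from the Fomin--Shapiro--Thurston theory of tagged triangulations \cite{FST08}; arc systems that are not full triangulations are handled by first completing them and then pruning, and the grading data $\eta$ is transported canonically across each local move.

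Second, for each elementary move $\Gamma \rightsquigarrow \Gamma'$, I would realize every new tagged arc of $\Gamma'$ as an object of $\PTw{\cff_\Gamma(S, M, O, \eta)}$, and symmetrically. For a flip, the new diagonal is a twisted complex (mapping cone) built from the morphism in $\cff_\Gamma$ corresponding to the unique boundary-turning path between the two affected arcs of the quadrilateral, precisely as in \cite{HKK17}. For a tag change at $o \in O$, the opposite-tagged arc appears as a complementary direct summand of the underlying arc in the Karoubi envelope; the idempotent producing this summand is built out of the interior morphisms at $o$ together with the thick operation $\fm^{\thick}$, corresponding to the identification of tagged arcs with the non-geometric idempotents highlighted in the introduction. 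Once both inclusions of generators are verified, both categories generate the same pre-triangulated Karoubi-closed envelope.

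Third, composing the quasi-equivalences $\PTw{\cff_\Gamma} \simeq \PTw{\cff_{\Gamma'}}$ obtained for each elementary step along the sequence produced in the first step yields the Morita equivalence $\cff_{\Gamma_1}(S, M, O, \eta) \simeq \cff_{\Gamma_2}(S, M, O, \eta)$.

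The principal obstacle is the tag-change step. One must verify that the endomorphism algebra of an arc with at least one orbifold endpoint genuinely decomposes as a product of two pieces indexed by the two tags, i.e., that the constructed element is actually an idempotent up to $\Aoo$-homotopy, that its complement is also idempotent, and that the summand so produced is quasi-isomorphic to the tagged arc obtained from the local tag flip. This requires a careful case analysis according to whether the arc has one or two orbifold endpoints and according to the local grading at $o$, together with a check that $\fm^{\thick}$ and the interior morphisms interact as expected. A secondary subtlety is that the line field grading $\eta$ must be transported compatibly across each elementary move, especially across a tag change, where one has to track how the non-trivial local involution at $o$ interacts with the chosen winding numbers of $\eta$.
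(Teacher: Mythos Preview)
Your strategy is genuinely different from the paper's, and the tag-change step has a real gap.

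The paper does \emph{not} connect $\Gamma_1$ and $\Gamma_2$ directly by flips and tag changes. Instead it proceeds in two stages. First, using only the elementary move ``add or remove a single tagged arc'' (Proposition~\ref{prop:MoritaEquivalency}, proved via the twisted-complex Lemma~\ref{Lemma:TwistedComplex}), it shows that any tagged arc system is Morita equivalent to an \emph{involutive} one (Lemma~\ref{lemma:MakeInvolutive}). Second, it compares two involutive tagged arc systems by lifting them to $\iota$-invariant arc systems on the double cover $(\tilde S,\iota,\tilde M,\eta)$, identifying the orbifold category with the $\super$-invariant part of the HKK category upstairs (Theorem~\ref{theorem:AooOrbifoldAndInvolutive}), and then invoking HKK's Morita invariance on the smooth cover together with $\super$-equivariance (Theorem~\ref{theorem:MoritaIndependeceForInvolutiveSurfaces}). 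Thus the actual independence statement is ultimately borrowed from the smooth case via the cover, not proved intrinsically on the orbifold.

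Your tag-change step does not work as written. In $\cff_\Gamma(S,M,O,\eta)$ there is no ``underlying untagged arc'' object: the objects are already tagged arcs, and for a thick pair $(\alpha^+,\alpha^-)$ the hom space $\ho(\alpha^+,\alpha^-)$ is declared to be zero. The endomorphism algebra of a single tagged arc $\alpha^+$ contains no nontrivial idempotent whose complementary summand could be $\alpha^-$; the operation $\fm^{\thick}$ only fires when \emph{both} members of the thick pair are already present in $\Gamma$. The interpretation of taggings as idempotents that you invoke is precisely what becomes available \emph{after} passing to the cover (the idempotents $p^\pm_\gamma$ of $\Delta(\gamma)$ in Section~\ref{section:TFCofInvolutiveSurfaces}), and the paper uses that passage essentially. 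A secondary issue is that FST flip-connectivity concerns tagged triangulations of punctured surfaces, whereas the paper's tagged arc systems carry extra constraints (thick, good, full; Definitions~\ref{defn:ThickCondition}--\ref{defn:FullCondition}) and need not be triangulations, so the connectivity input you cite does not apply without further argument.
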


A special class of tagged arc systems, called {\em involutive tagged arc system},  can be obtained from the arc system of the $\super$-covering. 
These are defined as follows. For a $\super$-orbifold surface $S$, we have the $2$-to-$1$ branched covering given by a smooth surface $\tilde{S}$ with the deck transformation $\iota$ satisfying $\iota^2 = \id_{\tilde{S}}$.
Suppose that we have a marking $\tilde{M}$ and a grading $\eta$ which are preserved by $\iota$, and denote by $O$ the set of $\iota$-fixed points and by $M$ the induced marking on $S$.
Then, $(S,M,O,\eta)$ is a graded $\super$-orbifold surface. Let us choose an arc system $\tilde{\Gamma}$ on $\tilde{S}$ which is invariant under $\iota$.
We associate an involutive tagged arc system $\Gamma$ on $(S,M,O,\eta)$ corresponding to  $\tilde{\Gamma}$, which can be defined combinatorially as in Definition \ref{defn:InvolutiveCondition}.

There are three  main reasons to consider involutive tagged arc systems.
First, this allows us to relate tagged arcs and non-geometric idempotents of the Fukaya category, as we have explained earlier.
Second, we will prove the Morita invariance of the $\Aoo$-category of tagged arc system in Theorem \ref{thm:Intro2} using
the Morita invariance of the arc system in the cover (via Theorem \ref{thm:Intro3} and \ref{thm:Intro4}).
And third, involutive tagged arc systems will correspond to skew-gentle algebras.

Let us give more detailed explanations on each of these reasons. First, any involutive tagged arc system has a special property that there exists at most one underlying arc that meets a given interior marking.
If two tagged arcs meet at an interior (orbifold) marking, then they intersect transversely in the two-fold covering. As arcs in the arc system are disjoint, this
does not happen in the cover. In particular, there are no interior morphisms for the involutive tagged arc system. Hence, the special feature of interior morphisms and non-morphisms cannot be seen directly. Nonetheless, the derived Fukaya category has additional objects in the form of twisted complexes and interior morphisms arise in this setting.
In Section \ref{sec:tagide}, we will illustrate the geometric origin of our definition of interior morphisms between tagged arcs,
by computing morphisms between the corresponding idempotents.

Second, an involutive tagged arc system defines an  $\Aoo$-category  
by Theorem \ref{thm:Intro1}. We can give another definition by taking the $\super$-invariant part of the topological Fukaya category of the arc system in the cover.
From \cite{HKK17}, we have a $\Z$-graded $\Aoo$-category  $\cff_{\tilde{\Gamma}}(\tilde{S}, \tilde{M}, \eta)$. This admits a strict $\super$-action from $\iota$ and
its $\super$-invariant part, after taking idempotent completion, defines the $\Aoo$-category $\cff_{\tilde{\Gamma}}(\tilde{S}, \iota, \tilde{M}, \eta)$.
In this $\Aoo$-category, the $\iota$-invariant arc $\gamma$ has two non-geometric idempotents, hence idempotent completion is necessary.
We prove that the Morita equivalent class of the $\Aoo$-category is independent of the choice of $\iota$-invariant arc system on $\tilde{S}$.
\begin{thm}\label{thm:Intro3}
	Let $(\tilde{S}, \iota, \tilde{M}, \eta)$ be an involutive graded marked surface and $\tilde{\Gamma}_1,\tilde{\Gamma}_2$ be $\iota$-invariant arc systems. Then, two $\Aoo$-categories $\cff_{\tilde{\Gamma}_1}(\tilde{S}, \iota, \tilde{M}, \eta)$ and $\cff_{\tilde{\Gamma}_2}(\tilde{S}, \iota, \tilde{M}, \eta)$ are Morita equivalent.
\end{thm}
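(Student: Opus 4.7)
The plan is to bootstrap from the Morita invariance of topological Fukaya categories of arc systems on smooth surfaces proved in \cite{HKK17}, promoting their equivalences to the $\iota$-equivariant setting and then descending to invariants. First I would observe that any two $\iota$-invariant arc systems $\tilde\Gamma_1,\tilde\Gamma_2$ on $(\tilde S,\tilde M)$ can be connected by a finite sequence of $\iota$-equivariant flips of two kinds: (a) the simultaneous flip of a pair $\{\tilde\alpha,\iota(\tilde\alpha)\}$ of disjoint arcs each disjoint from the branch locus, and (b) the flip of a single $\iota$-invariant arc. Existence of such an equivariant path follows from connectedness of the flip graph on the quotient orbifold and the lift of each individual flip to the cover. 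It then suffices to prove Morita invariance under each elementary equivariant flip.

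For an ordinary flip $\tilde\Gamma\rightsquigarrow\tilde\Gamma'$ in the cover, \cite{HKK17} produce a Morita equivalence by writing each new arc of $\tilde\Gamma'$ as an explicit twisted complex on the arcs of $\tilde\Gamma$. In case~(a), $\iota$ interchanges the two independent HKK cones, so the resulting equivalence is manifestly $\iota$-equivariant. In case~(b) the single twisted complex must be shown to carry a strict $\super$-action whose underlying data match the local $\iota$-symmetric picture near the fixed point (or swapped boundary markings) on which the arc lies. This should come from the geometric naturality of HKK's cone, whose components (arc-to-arc morphisms and their structure coefficients) are read off from a local model that is $\iota$-symmetric by hypothesis.

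Once each elementary equivariant equivalence is in hand, I would pass to the $\iota$-invariant part. Because taking $\super$-fixed sub-$\Aoo$-categories and then idempotent completion are both $2$-functorial constructions compatible with Morita equivalence, the equivariant equivalences above descend to Morita equivalences between $\cff_{\tilde\Gamma_1}(\tilde S,\iota,\tilde M,\eta)$ and $\cff_{\tilde\Gamma_2}(\tilde S,\iota,\tilde M,\eta)$. Concretely, an $\iota$-invariant arc $\tilde\alpha$ splits into its two non-geometric eigen-idempotents in the completion, and the HKK twisted complex in the cover decomposes compatibly, producing the required generators on the invariant side from generators on the other.

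I expect the main obstacle to lie in case~(b) when the $\iota$-invariant arc being flipped meets the fixed locus. There the two sides of the arc are swapped by $\iota$, so the signs in the differential of HKK's twisted complex interact non-trivially with the grading $\eta$ and with the tagging convention of Definition~\ref{defn:InvolutiveCondition}. Controlling these signs so that the $\iota$-action is strict rather than merely homotopically coherent is precisely where the asymmetry between interior morphisms and non-morphisms, together with the thick operation $\fm^{\thick}$, is naturally forced, and a careful local case analysis around each branch point should be the decisive technical step.
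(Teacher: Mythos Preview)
Your high-level strategy---establish a $\super$-equivariant Morita equivalence upstairs on $(\tilde S,\tilde M,\eta)$ and then descend to invariants via idempotent completion---matches the paper's. But the paper's route is considerably more direct and avoids flips entirely. Rather than connecting $\tilde\Gamma_1$ and $\tilde\Gamma_2$ by an equivariant flip sequence, the paper simply shows that the canonical HKK functor $\cff_{\tilde\Gamma_1}(\tilde S,\tilde M,\eta)\to\Tw\cff_{\tilde\Gamma_2}(\tilde S,\tilde M,\eta)$ is itself $\super$-equivariant. The point is that the explicit twisted complex $T_\gamma$ representing an arc $\gamma\in\tilde\Gamma_1$ (built from a formal subsystem $(\tilde\Gamma_2)_f\subset\tilde\Gamma_2$ as in \cite{BM03,OPS18Arx}) depends on a choice, and one may build $T_{\iota_*\gamma}$ using the formal subsystem $\iota_*((\tilde\Gamma_2)_f)$; this forces $\iota_*(T_\gamma)=T_{\iota_*\gamma}$ on the nose. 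Equivariance of the equivalence $\Tw\cff_{\tilde\Gamma_1}\simeq\Tw\cff_{\tilde\Gamma_2}$ follows, and the descent step is then a one-liner.

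By contrast, your flip-based approach has two genuine soft spots. First, the assertion that equivariant flip connectivity ``follows from connectedness of the flip graph on the quotient orbifold and the lift of each individual flip to the cover'' is not justified: the arc systems here are not triangulations, flips at orbifold points do not in general lift to single flips upstairs, and you would need an independent argument for connectivity in the $\iota$-invariant flip graph. Second, your final paragraph conflates two different categories. The operation $\fm^{\thick}$ and the interior-morphism/non-morphism dichotomy live in the tagged-arc category $\cff_\Gamma(S,M,O,\eta)$ on the \emph{quotient}; they play no role whatsoever in the definition or comparison of the invariant categories $\cff_{\tilde\Gamma}(\tilde S,\iota,\tilde M,\eta)$, which are full subcategories of $\Pi(\Tw\cff_{\tilde\Gamma}(\tilde S,\tilde M,\eta)^\iota)$ built only from the ordinary HKK structure. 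The sign analysis you anticipate for case~(b) is therefore a red herring for this theorem---it is relevant instead to Theorem~\ref{thm:Intro4}(2), where the two constructions are matched.
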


We show that these two constructions of $\Aoo$-categories are quasi-isomorphic to each other.
\begin{thm}\label{thm:Intro4}
	Let $(S, M, O, \eta)$ be a graded marked $\super$-orbifold surface and $(\tilde{S}, \iota, \tilde{M}, \eta)$ be its double cover.   
	\begin{enumerate}		
		\item There is a one-to-one correspondence between the following two sets: 
			$$\{\text{Involutive tagged arc systems on $(S, M, O, \eta)$}\} \rightarrow \{\text{$\iota$-invariant arc systems on $(\tilde{S}, \iota, \tilde{M}, \eta)$}\}.$$
		\item For an involutive tagged arc system $\Gamma$ of $(S, M, O, \eta)$ and the corresponding $\iota$-invariant arc system $\tilde{\Gamma}$ of $(\tilde{S}, \iota, \tilde{M}, \eta)$, there is a quasi-equivalence between two $\Aoo$-categories $\cff_{\Gamma}(S, M, O, \eta)$ and $\cff_{\tilde{\Gamma}}(\tilde{S}, \iota, \tilde{M}, \eta)$.
		\item Any tagged arc system can be transformed into a Morita equivalent involutive tagged arc system, by a sequence of  adding or subtracting tagged arcs 
		(such that intermediate ones are  also tagged arc systems).
	\end{enumerate}
\end{thm}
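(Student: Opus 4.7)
My plan is to handle the three assertions in turn, relying on Theorem \ref{thm:Intro1} for the $\Aoo$-structure on the orbifold side and on Theorem \ref{thm:Intro3} for Morita invariance on the cover. The geometric backbone is the branched double cover $\pi:\tilde S\to S$, ramified precisely over $O$, together with the picture developed in Section \ref{sec:tagide} that the two taggings at an orbifold point $o\in O$ correspond to the two idempotent summands of the $\iota$-invariant lift of an arc ending at $o$.

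For part (1), I would describe the correspondence by push-forward along $\pi$. Every arc in an $\iota$-invariant arc system $\tilde\Gamma$ is either swapped with a distinct partner (whose image under $\pi$ is a single arc in $S$ disjoint from $O$) or is itself $\iota$-invariant and passes through exactly one fixed point (whose image in $S$ is an arc ending at $O$). Combining this with the tagging data read off from the two local branches at each fixed point produces an involutive tagged arc system satisfying Definition \ref{defn:InvolutiveCondition}. The inverse map, obtained by pulling back along $\pi$ and splitting an arc ending at $O$ with two taggings into its symmetric pair of lifts, is verified to reproduce $\tilde\Gamma$ by a local inspection near each orbifold point and at the arcs disjoint from $O$.

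For part (2), I would build a strict $\Aoo$-functor $F:\cff_\Gamma(S,M,O,\eta)\to \cff_{\tilde\Gamma}(\tilde S,\iota,\tilde M,\eta)$ sending each tagged arc to a distinguished summand of its lift in the idempotent-completed $\super$-invariant category. An arc in $\Gamma$ disjoint from $O$ is sent to the object given by its $\iota$-symmetric pair of lifts, and a tagged arc $\gamma^\epsilon$ ending at orbifold points is sent to the idempotent summand of its $\iota$-invariant lift indexed by $\epsilon\in\super$. The bulk of the work is to match morphism complexes and the various $\Aoo$-operations: ordinary boundary morphisms correspond to $\iota$-equivariant boundary morphisms upstairs; interior morphisms at orbifold endpoints correspond to the generators between the two idempotent summands in the direction dictated by $\eta$; disc sequences on $S$ lift to (single or paired) disc sequences on $\tilde S$, with foldings along arcs through $O$ explained by the branching; composition sequences correspond to disc sequences on $\tilde S$ whose corner becomes invisible after passing to the target idempotent; and $\fm^{\thick}$ between two taggings of a common underlying arc realizes morphisms between the two idempotent summands of the invariant lift. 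Essential surjectivity of $F$ on the homotopy category follows from how idempotent completion is defined on the cover side, while the quasi-isomorphism on morphism complexes reduces to local computations at each orbifold point combined with the smooth case of \cite{HKK17}.

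For part (3), the plan is to connect any tagged arc system $\Gamma$ to an involutive one by a finite sequence of add/subtract moves whose intermediate stages are again tagged arc systems and which preserves Morita equivalence class. I would first complete $\Gamma$ to a tagged triangulation by adding tagged arcs one at a time, each added arc being generated by the previous ones in the derived completion so that Morita class is unchanged; then use connectedness of the tagged-flip graph from \cite{FST08} to transform this triangulation into an involutive tagged triangulation, realizing each tagged flip as an add-then-subtract move in the spirit of the smooth argument of \cite{HKK17}; finally, one subtracts arcs to reach the desired involutive tagged arc system. The main obstacle is part (2): correctly identifying all novel $\Aoo$-operations (composition sequences and $\fm^{\thick}$) with disc sequences modulo the $\iota$-action, while tracking the grading shift caused by $\pi$ locally doubling the angular coordinate at orbifold points. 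A secondary difficulty in part (3) is ensuring that each intermediate collection along the flip sequence is genuinely a tagged arc system, which constrains the order in which mutations may be performed.
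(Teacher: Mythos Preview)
Your treatment of part~(1) is essentially the paper's argument (Lemma~\ref{lemma:InvolutiveIsInvolutive}): push forward along $\pi$, with ordinary/special arcs upstairs corresponding to arcs of interior number $0$/$1$ downstairs, and the inverse given by the lift $\pi^*$. One small correction: the tagging is not ``read off from the two local branches''; rather, a special arc upstairs produces \emph{both} taggings downstairs as a thick pair, which is why the involutive condition in Definition~\ref{defn:InvolutiveCondition} forces every arc of interior number~$1$ to be thick.

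For part~(2) you have missed the key structural simplification, and as a result you set yourself a much harder task than the one actually required. By Definition~\ref{defn:InvolutiveCondition}, an involutive tagged arc system allows at most one underlying arc at each interior marking. Consequently there are \emph{no} interior morphisms at all between arcs of~$\Gamma$: no composition sequences, and no inputs on which $\fm^{\thick}$ is defined (its second and third arguments must be interior morphisms). The paper states this explicitly just after Definition~\ref{defn:InvolutiveCondition} and again in the proof of Theorem~\ref{theorem:AooOrbifoldAndInvolutive}. So the ``main obstacle'' you single out---matching $\fm^{\comp}$ and $\fm^{\thick}$ with disc sequences modulo $\iota$---does not exist. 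What remains is to match $\fm^{\con}$ and $\fm^{\disc}$ (with the weight~$\tfrac12$ coming from folding along thick arcs), and this is a direct matrix computation with the explicit idempotent generators $\Delta(\theta)^{a}_{b}$ of Definition~\ref{defn:delarc}; the paper carries it out case by case. Your description of $\fm^{\thick}$ as ``realizing morphisms between the two idempotent summands'' is also off: $\fm^{\thick}$ is a ternary $\Aoo$-operation, not a morphism, and for a thick pair one has $\ho(\alpha^+,\alpha^-)=0$ by definition.

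For part~(3) your route genuinely differs from the paper's. The paper (Lemma~\ref{lemma:MakeInvolutive}) gives a direct five-step algorithm that, using only Proposition~\ref{prop:MoritaEquivalency}, removes arcs of interior number~$2$, then eliminates all interior morphisms, and finally installs thick pairs---never invoking triangulations or flip connectivity. Your plan to pass through a tagged triangulation and use the flip graph of \cite{FST08} is plausible in spirit, but you would have to verify that the cluster-theoretic notion of tagged triangulation there is compatible with the thick/good/full conditions of Definitions~\ref{defn:ThickCondition}--\ref{defn:FullCondition} at every intermediate step, and that each flip can be realized as an add-then-subtract move of \emph{tagged arc systems} in the sense of this paper. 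That compatibility is not automatic (the good condition restricts the cyclic order of taggings at each interior marking), and the paper's direct algorithm sidesteps the issue entirely.
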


In the sequel, we will find that different choices of tagged arc systems result in different derived-tame algebras.
Being a skew-gentle algebra is not a derived invariant notion. Namely, there are other types of algebras that are not skew-gentle themselves, but derived equivalent to some skew-gentle algebras. Our $\Aoo$-category of a tagged arc system provides an efficient tool to study these algebras.
In particular, involutive tagged arc systems correspond to skew-gentle algebras. As a corollary, we know algebras corresponding general tagged arc systems are all derived equivalent to skew-gentle algebra. For example, we reprove that derived-tame cyclic Nakayama algebra in \cite{BGV21} and generalized gentle algebras in \cite{Fonseca22} are derived equivalent to skew-gentle algebras. In addition, we prove the algebras introduced in \cite[Definition 5.7]{BD18Arx} are derived-equivalent to skew-gentle algebras.

Before we finish the introduction, let us illustrate our construction for the case of $D_n$-quiver.
Namely, we consider tagged arc systems on a disc with one interior marking, such that the endomorphism algebras of the direct sum of chosen tagged arcs in the $\Aoo$-category are path algebras of $D_n$-quivers.
\begin{example}
	$$\begin{tikzcd} 
		&  &  & \delta^+ \\
		\alpha \arrow[r] &\beta \arrow[r] & \gamma \arrow[ru] \arrow[rd] & \\
		&  &  & \delta^-
	\end{tikzcd} \quad 
	\begin{tikzcd} 
		&  &  & \epsilon \arrow[ld] \\
		\alpha \arrow[r] &\beta \arrow[r] & \gamma \arrow[rd] & \\
		&  &  & \delta^+
	\end{tikzcd}$$
	Let us consider a disc with four boundary markings and one interior marking. Its double cover is also a disc and we can take a $\super$-invariant line field on it.
	 Let $\Gamma_a, \Gamma_b$, and $\Gamma_c$ be the tagged arc systems as illustrated in Figure \ref{fig:IntroExample}(a), (b), and (c), \resp. 
		\def\mySize{0.3\linewidth}
	\begin{figure}[h!]
		\centering
		\begin{subfigure}[b]{\mySize}
			\includegraphics[width=\linewidth]{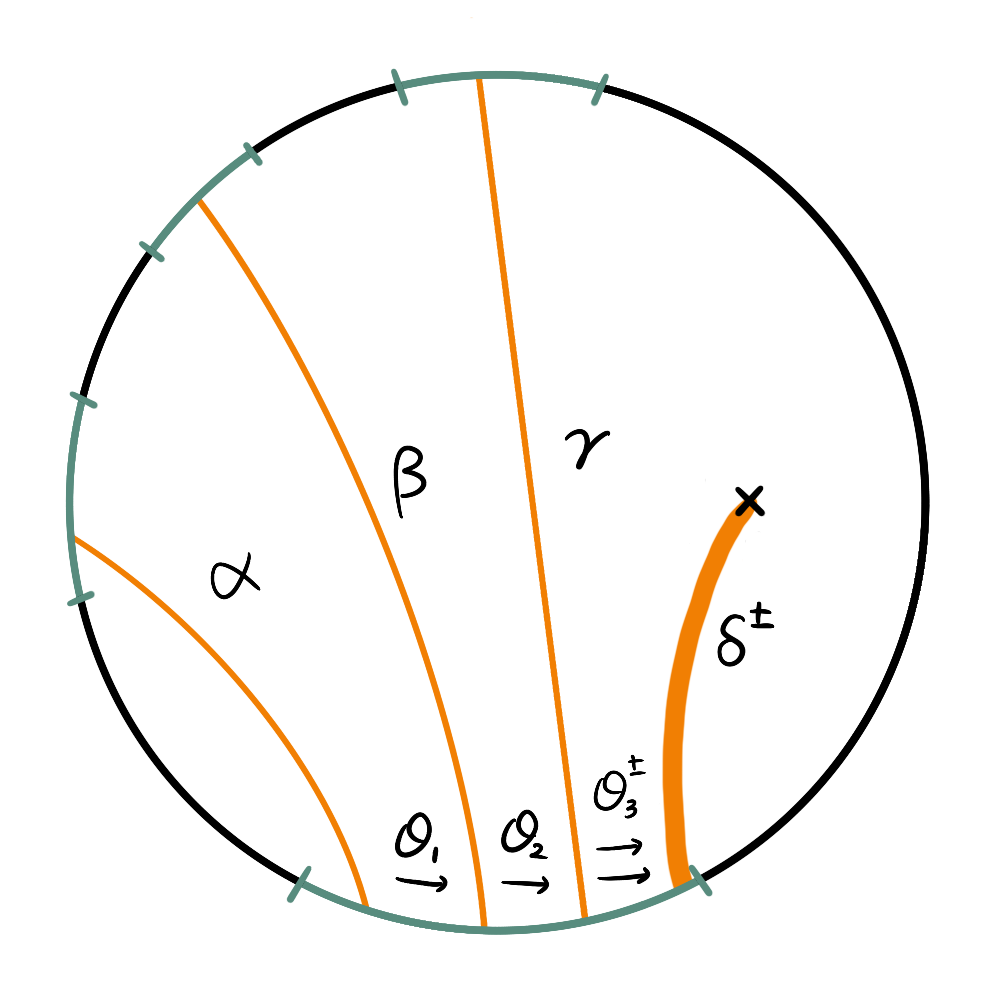}
			\caption{}
		\end{subfigure}
		\begin{subfigure}[b]{\mySize}
			\includegraphics[width=\linewidth]{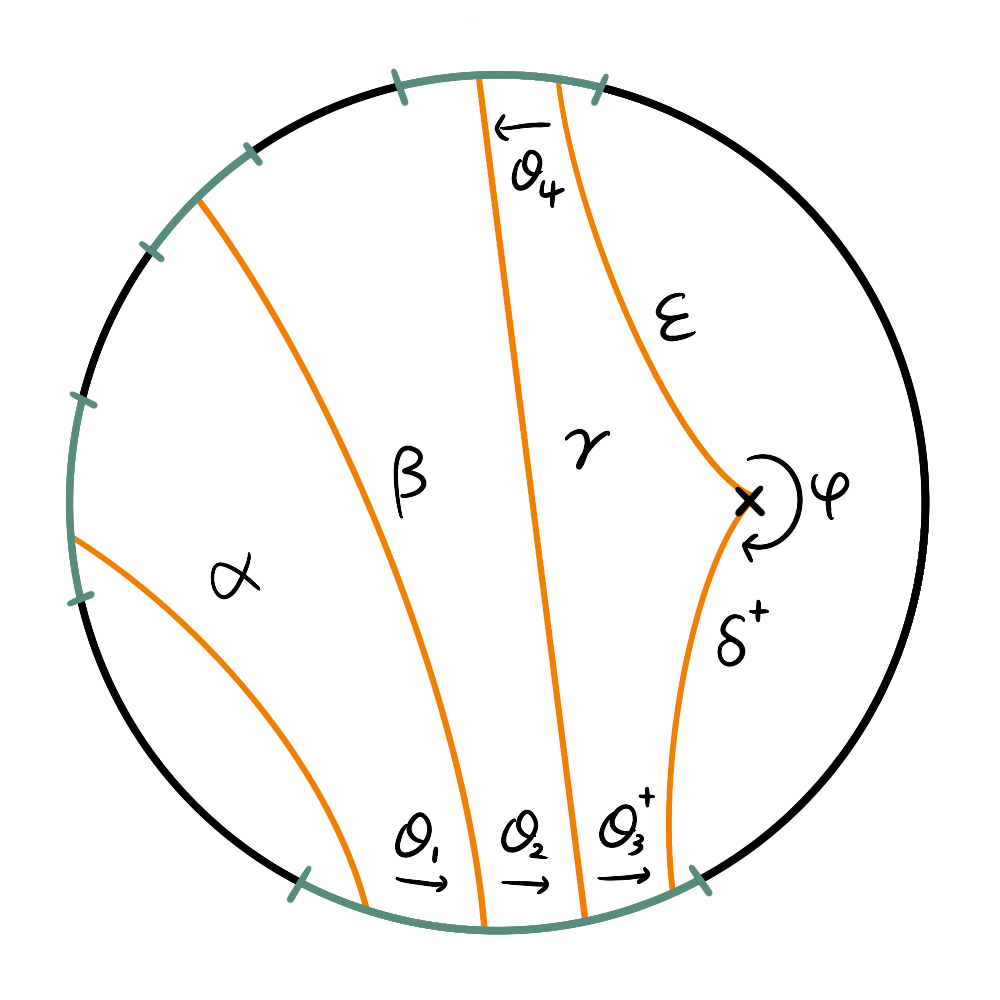}
			\caption{}
		\end{subfigure}
		\begin{subfigure}[b]{\mySize}
			\includegraphics[width=\linewidth]{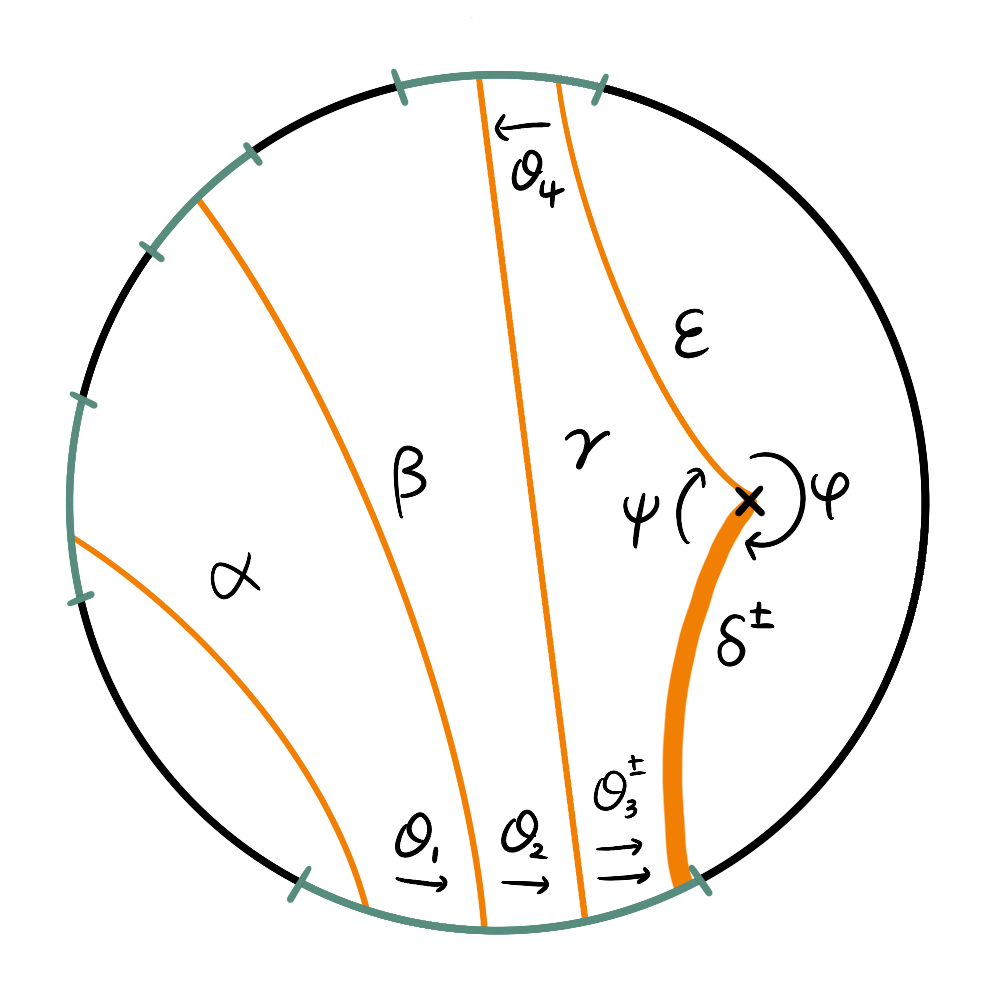}
			\caption{}
		\end{subfigure}
		\caption{Disc with three arc systems}
		\label{fig:IntroExample}
	\end{figure}
	 \begin{itemize}
	 \item[(a)]  $\Gamma_a$ consists of 5 tagged arcs where the pair $(\delta^+, \delta^-)$ is a thick pair. Namely $\delta^+$ and $\delta^-$ have the same underlying arc, but have opposite taggings at the interior marked point. There is a morphism $\theta_3^+$ (resp. $\theta_3^-$) from $\gamma$ to $\delta^+$ (resp. $\delta^-$). Morphisms can be concatenated via $\Aoo$-operation $\fm_2^\con$,
	 and the corresponding endomorphism algebra is the path algebra of the first $D_n$-quiver above. This is a skew-gentle quiver.
	 \item[(b)] $\Gamma_b$ consists of 5 tagged arcs as in the figure. We can choose a line field and grading so that we have an interior morphism $\phi$ from the arc $\epsilon$ to $\delta^+$. (There is no morphism from $\delta^+$ to $\epsilon$, and we say it is a non-morphism.) Three arcs $(\epsilon,\gamma,\delta^+)$ bound a disc but the corner at the interior marking is a non-morphism. In this case, we have a new $\Aoo$-operation $\fm^\comp$, called a composition sequence, and we have  $\fm_2^\comp(\theta_4, \theta_3^+) = \phi$. The corresponding endomorphism algebra is the path algebra of the second $D_n$-quiver above. Note that $\phi$ represents the path from $\epsilon$ to $\delta^+$. This second quiver is not skew-gentle.
	\item[(c)] $\Gamma_c$ consists of 6 tagged arcs. Now there are two interior morphisms $\psi\in \ho(\delta^-,\epsilon)$ and $\phi \in \ho(\epsilon, \delta^+)$.
	There is a disc sequence  $(\theta_3^-, \psi, \theta_4)$ which defines $\fm_3(\theta_3^-, \psi, \theta_4) = \frac{1}{2}e_{\gamma}$.

	\end{itemize}
Both $\Gamma_a$ and $\Gamma_b$ are Morita equivalent to $\Gamma_c$ from  Proposition \ref{prop:MoritaEquivalency}. Thus, we can deduce via geometry that two quivers above are derived equivalent.  Note that $\Aoo$-categories for (a) and (b) are formal and (c) is not formal.
\end{example}

Upon circulation of this work, we have learned that there is a similar work in preparation by Barmeier, Schroll, and Wang.

\section*{Organization}

In Section $2$, we recall the concept of topological Fukaya category following \cite{HKK17}. Then, we generalize the notion to $\super$-orbifold surface in Section $3$. In this section, we introduce tagged arc system, interior morphism, and disc and composition sequences. In Section $4$, we define $\Aoo$-category associated with tagged arc system and topological Fukaya category associated with $\super$-orbifold surface by introducing Theorem \ref{thm:Intro1}. In Section $5$, we prove Theorem \ref{thm:Intro2} and the first part of Theorem \ref{thm:Intro4}. In Section $6$, we define topological Fukaya category in involutive setting and prove Theorem \ref{thm:Intro3} and the second and third parts of Theorem \ref{thm:Intro4}. In Section $7$, we prove Theorem \ref{thm:Intro1}.

\section*{Convention}

We fix an algebraically closed field $\field$ with characteristic $0$. Any $\Aoo$-category in this paper is unital. We use the sign convention in \cite{Seidel08}, but we use the reverse order for composition. That is, for objects $X_0, \dots, X_n$, the $\Aoo$-structure map is a map $$\fm_n : \ho(X_0, X_1) \otimes \dots \otimes \ho(X_{n-1}, X_n) \rightarrow \ho(X_0,X_n).$$

\section*{Acknowledgment}
This work started as an attempt to understand the geometry behind the Burban and Drozd's theory of non-commutative curves in \cite{BD18Arx}, and we would like to thank Igor Burban for the explanations  and encouragements. We would like to thank Sibylle Schroll for explaining her work on skew-gentle algebras. We  also thank Kyungmin Rho for 
sharing his ideas and helpful discussions on this topic and other joint works.
\section{Topological Fukaya categories of surfaces}\label{section:TFCofSurfaces}

In this section, we recall the concept of topological Fukaya category mainly following \cite{HKK17}. Section \ref{subsection:MarkedSurfaces} and \ref{subsection:Grading} contain basic definitions and properties of graded boundary-marked surfaces and graded curves. Then, in Section \ref{subsection:TFCofSurfaces}, we review the notion of topological Fukaya category. For more details, see \cite{HKK17}.

\subsection{Boundary-marked surfaces}\label{subsection:MarkedSurfaces}

Let us consider a compact oriented surface $S$ with nonempty boundary $\partial S$. We set an orientation on $S$ \textbf{clockwise}. Also, we use the induced orientation on $\partial S$ as the clockwise rotation of inward vector field.

A {\em boundary marking} on $S$ is an orientation preserving embedding $M : \coprod_{i=1}^r I_i \rightarrow S$ from a disjoint union of finite number of unit intervals with the canonical orientation. We call both the restriction $M_i = M|_{I_i}$ and its image $\im(M_i)$ a {\em marking}. The pair $(S, M)$ is called a {\em boundary-marked surface} if each boundary component of $S$ contains at least one marking.

A {\em curve} on $(S, M)$ is an immersion $\gamma : (I, \partial I) \rightarrow (S, M)$. In particular, $M_i$ itself and a restriction of it are curves and we call them {\em boundary curves}. We say two boundary curves $\theta_1$ and $\theta_2$ are {\em concatenable} when $\theta_1(1) = \theta_2(0)$, and denote the {\em concatenation} by $\theta_1 \bullet \theta_2$.

We say two curves are {\em isotopic} to each other when they are homotopic as maps from $(I, \partial I)$ to $(S, M)$. An {\em arc} is an embedded curve $\gamma : I \rightarrow S$ such that $\gamma$ intersects $M$ transversely, $\gamma^{-1}(M) = \partial I$, and $\gamma$ is not isotopic to a boundary curve. By abuse of notation, we also denote by $\gamma$ the image of the arc $\gamma$. An {\em arc system} is a collection of arcs $\Gamma = \{\gamma_1, \dots, \gamma_n\}$ satisfying the following two conditions.
\begin{itemize}
	\item For each pair $i\neq j$, $\gamma_i \cap \gamma_j = \emptyset$ and $\gamma_i \not\simeq \gamma_j$.
	\item Each component of $S \setminus \bigcup_{i=1}^n\gamma_i$ is a topological disc with at most one unmarked boundary component.
\end{itemize}

\begin{example}\label{example:TheDisc}
	Let us consider the $2$-dimensional disc $D$ with $n$-boundary markings $M_n$ and an arc system consisting of $n$-arcs which are isotopic to unmarked boundary components. The case of $n=4$ is illustrated in Figure \ref{fig:DiscWithArcSystem}(a). Such a disc with arc system can always be made into the canonical configuration, as in Figure \ref{fig:DiscWithArcSystem}(b), up to isotopy. Although it is not an arc system in the rigorous sense as each arc is on the boundary, it is useful to regard it as an boundary-marked surface when we define $\Aoo$-structure for topological Fukaya category. Let us denote the canonical $n$-boundary marking, the arcs, and the boundary morphisms by $M^\can_n$, $\delta_1, \dots, \delta_n$, and $\theta^\can_1, \dots, \theta^\can_n$, \resp.
	\begin{figure}[h!]
		\centering
		\begin{subfigure}[b]{0.3\linewidth}
			\includegraphics[width=\linewidth]{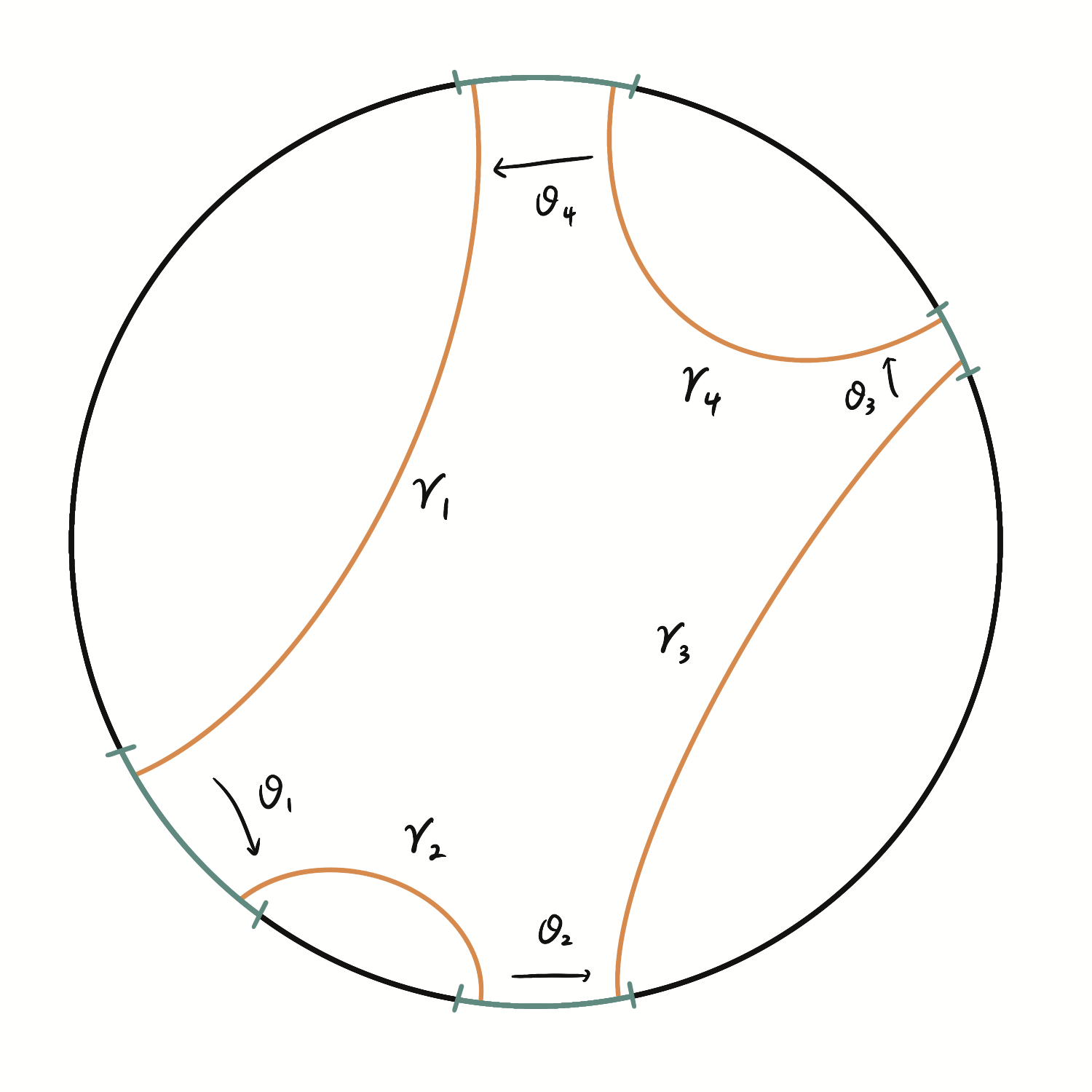}
			\caption{Non-canonical arc system}
		\end{subfigure}
		\begin{subfigure}[b]{0.3\linewidth}
			\includegraphics[width=\linewidth]{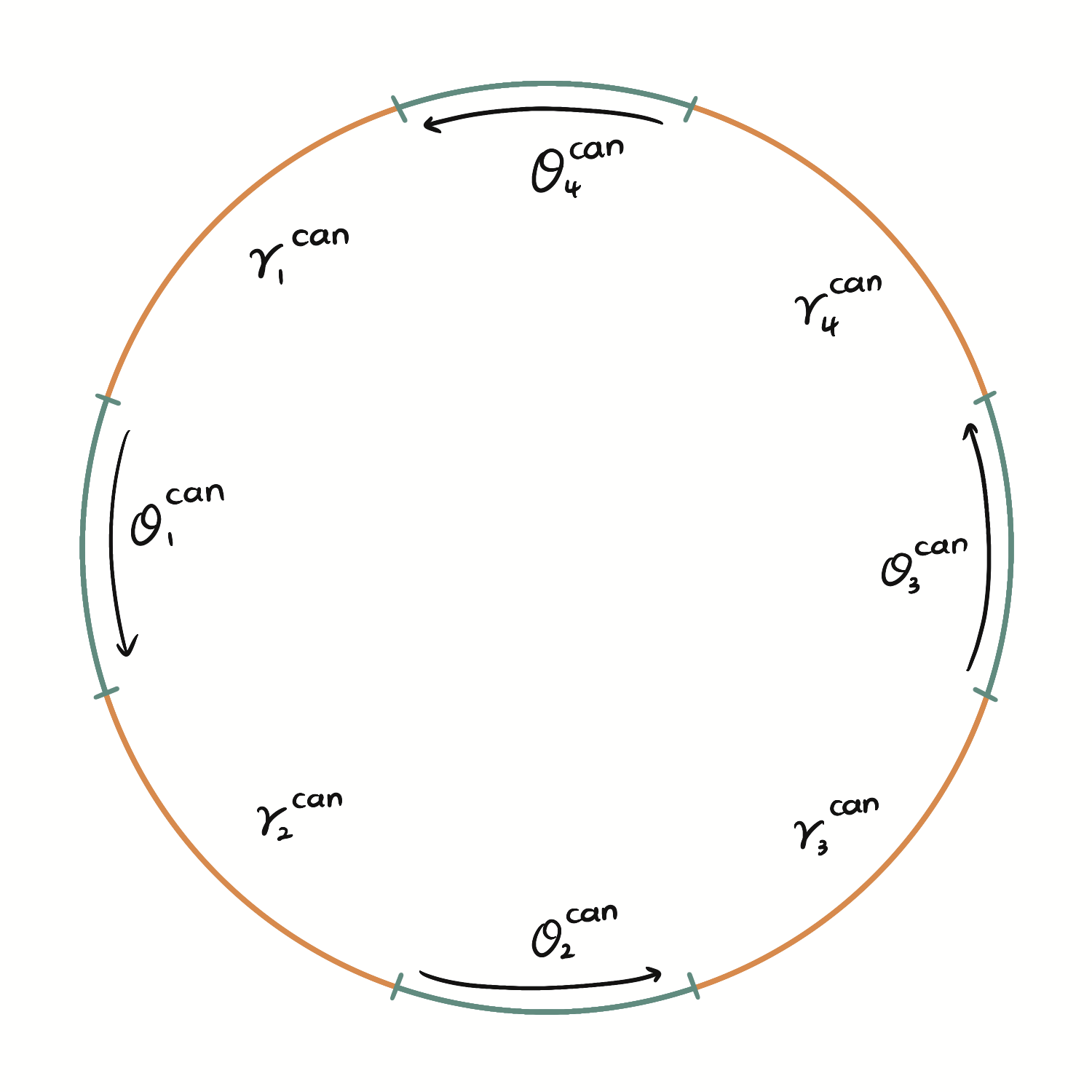}
			\caption{Canonical arc system}
		\end{subfigure}
	\caption{Disc with arc systems}
	\label{fig:DiscWithArcSystem}
	\end{figure}
\end{example}

Let $(S_1, M_1)$ and $(S_2, M_2)$ be two boundary-marked surfaces. A {\em morphism} from $(S_1, M_1)$ to $(S_2, M_2)$ is an orientation preserving immersion $f : S_1 \rightarrow S_2$ such that $f(M_1) \subseteq M_2$. For a curve $\gamma : (I, \partial I) \rightarrow (S_1, M_1)$, the morphism $f$ induces the curve $f_*\gamma$ on $(S_2, M_2)$. Also $f$ sends a boundary morphism $\theta$ on $M_1$ to the boundary morphism $f_*\theta$ on $M_2$. Now let us recall the definition of disc sequence.
\begin{defn}\label{defn:DiscSequence}
	Let $(S, M)$ be a boundary-marked surface and $\gamma_1, \dots, \gamma_n$ be arcs on it. A {\em disc sequence} is a sequence $(\theta_1, \dots, \theta_n)$ of boundary morphisms $\theta_i : \gamma_i \rightarrow \gamma_{i+1}$, where $\gamma_{n+1} \deq \gamma_1$, such that there is a morphism from $(D, M^\can_n)$ to $(S, M)$ satisfies the following two conditions.
	\begin{itemize}
		\item For each $i$, $f_*\gamma^{\can}_i = \gamma_i$.
		\item For each $i$, $f_*(\theta^{\can}_i) = \theta_i$.
	\end{itemize}
\end{defn}

\subsection{Gradings}\label{subsection:Grading}
The {\em projective tangent bundle} $\bpp(TS)$ is a fiber bundle on $S$ whose fiber at $p$ is the projective space $\bpp(T_pS)$. The orientation on $S$ induces an orientation on $\bpp(T_pS)$ for each $p \in S$. So we have the preferred generator $\omega_p$ of $\pi_1(\bpp(T_pS)) = \bzz$. For distinct $x, y \in \bpp(T_pS)$, we denote the shortest clockwise path from $x$ to $y$ by $\kappa^x_y$.

A {\em line field} is a smooth global section of $\bpp(TS)$. Two line fields $\eta$ and $\eta'$ are said to be {\em equivalent} if there is a path from $\eta$ to $\eta'$ in the space of line fields. For example, any line fields on a contractible surface are all equivalent. A {\em graded boundary-marked surface} is a triple $(S, M, \eta)$ of a surface $S$, a boundary marking $M$, and a line field $\eta$ on $S$.

Let $\gamma : I \rightarrow S$ be a curve and consider the pullback bundle $\gamma^*(\bpp(TS))$ on $I$. Then, we have two sections $\gamma^*(\eta) \deq \eta \circ \gamma$ and $\dot{\gamma}$. A {\em grading} of $\gamma$ is a homotopy class of paths $\hat{\gamma}$ from $\gamma^*(\eta)$ to $\dot{\gamma}$ in the space of sections of $\gamma^*(\bpp(TS))$. A {\em graded curve} is a pair $(\gamma, \hat{\gamma})$. We often write the graded curve by $\gamma$ omitting the grading $\hat{\gamma}$. When the curve is an arc, we call it a {\em graded arc}.

Let $(\alpha, \hat{\alpha})$ and $(\beta, \hat{\beta})$ be graded curves intersecting at $p = \alpha(t) = \beta(s)$ transversely for $t, s\in I$. Then, the {\em intersection index} is given by the winding number of the following loop.
$$\hat{\alpha}(t) \cdot \kappa^{\dot{\alpha}(t)}_{\dot{\beta}(s)} \cdot (\hat{\beta}(s))^{-1} \in \pi_1(\bpp(T_pS), \eta_p) \cong \bzz.$$
The following lemma is immediate from the definition so we omit the proof.
\begin{lemma}\label{lemma:SumOfIntersectionIndex}
	Let $(\gamma, \hat{\gamma})$ be the third graded curve such that $\gamma(u) = p$ for some $u\in I$. Also assume that $\dot{\alpha}(s), \dot{\beta}(t)$, and $\dot{\gamma}(u)$ are all distinct. Then,
	$$i_p(\alpha, \beta) + i_p(\beta, \gamma) = \begin{cases} i_p(\alpha, \gamma) &\text{ if $(\dot\alpha(s), \dot\beta(t), \dot\gamma(u))$ is clockwise}, \\ i_p(\alpha, \gamma) + 1 &\text{ if $(\dot\alpha(s), \dot\beta(t), \dot\gamma(u))$ is counterclockwise.} \end{cases}$$
\end{lemma}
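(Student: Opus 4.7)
The plan is to unwind the definitions and reduce the claim to a comparison of clockwise arcs inside the fiber circle $\bpp(T_pS)\cong S^1$. By definition,
\[
i_p(\alpha,\beta)=\text{wind}\bigl(\hat\alpha(t)\cdot\kappa^{\dot\alpha(t)}_{\dot\beta(s)}\cdot\hat\beta(s)^{-1}\bigr),
\]
and analogously for $i_p(\beta,\gamma)$ and $i_p(\alpha,\gamma)$. Since winding number is a homomorphism $\pi_1(\bpp(T_pS),\eta_p)\to\bzz$, and since the loop representing $i_p(\alpha,\beta)+i_p(\beta,\gamma)$ may be taken to be the concatenation of the two defining loops (in which the middle factor $\hat\beta(s)^{-1}\cdot\hat\beta(s)$ is nullhomotopic), one obtains
\[
i_p(\alpha,\beta)+i_p(\beta,\gamma)=\text{wind}\bigl(\hat\alpha(t)\cdot\kappa^{\dot\alpha(t)}_{\dot\beta(s)}\cdot\kappa^{\dot\beta(s)}_{\dot\gamma(u)}\cdot\hat\gamma(u)^{-1}\bigr).
\]
Comparing this with the loop defining $i_p(\alpha,\gamma)$, the problem reduces to comparing, rel endpoints inside $\bpp(T_pS)$, the concatenated path $\kappa^{\dot\alpha(t)}_{\dot\beta(s)}\cdot\kappa^{\dot\beta(s)}_{\dot\gamma(u)}$ with the single shortest clockwise path $\kappa^{\dot\alpha(t)}_{\dot\gamma(u)}$; by hypothesis the three tangent directions $\dot\alpha(t),\dot\beta(s),\dot\gamma(u)$ are pairwise distinct, so all three $\kappa$-arcs are well defined.

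Next I would split into the two possible cyclic orders on the circle. If $\dot\alpha(t),\dot\beta(s),\dot\gamma(u)$ appear in this order clockwise on $\bpp(T_pS)$, then the two shortest clockwise arcs concatenate into an arc that does not complete a full revolution and in fact traces out precisely the shortest clockwise arc from $\dot\alpha(t)$ to $\dot\gamma(u)$. Thus $\kappa^{\dot\alpha(t)}_{\dot\beta(s)}\cdot\kappa^{\dot\beta(s)}_{\dot\gamma(u)}\simeq\kappa^{\dot\alpha(t)}_{\dot\gamma(u)}$ rel endpoints, and the two winding numbers agree. If instead the clockwise order is $\dot\alpha(t),\dot\gamma(u),\dot\beta(s)$ (the counterclockwise case of the lemma), then $\kappa^{\dot\alpha(t)}_{\dot\beta(s)}$ must already sweep past $\dot\gamma(u)$ en route to $\dot\beta(s)$, and $\kappa^{\dot\beta(s)}_{\dot\gamma(u)}$ continues clockwise past $\dot\alpha(t)$ before arriving at $\dot\gamma(u)$. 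The concatenation therefore winds the fiber exactly once more than the direct arc, so it is homotopic rel endpoints to $\kappa^{\dot\alpha(t)}_{\dot\gamma(u)}\cdot\omega_p$. Since $\omega_p$ is the preferred generator used to measure winding numbers, this contributes exactly $+1$, yielding the second case of the formula.

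The main obstacle is not conceptual but notational: one must carefully track which of the two cyclic orders is in force at $p$ and verify that the homotopies above are indeed rel endpoints (so that they may be pre- and post-composed with the grading segments $\hat\alpha(t)$ and $\hat\gamma(u)^{-1}$ without changing winding numbers). Beyond that, the proof requires only the additivity of winding numbers under concatenation and the elementary combinatorics of three distinct points on an oriented circle.
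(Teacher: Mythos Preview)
Your argument is correct and is exactly the unwinding-of-definitions computation one expects. The paper in fact omits the proof entirely, stating only that the lemma ``is immediate from the definition''; your proposal spells out precisely those details (additivity of winding number, cancellation of the $\hat\beta$-segment, and the elementary dichotomy of three points on a circle), so there is nothing to compare.
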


A {\em shift} of a graded arc $(\alpha, \hat{\alpha})$ is given by composing $\omega_p$ with $\hat{\alpha}$ pointwisely. More precisely, for an integer $n\in \bzz$, the grading of the $n\tth$-shift $\alpha[n]$ is given by $\hat{\alpha} \cdot \omega^n$, where $(\hat{\alpha} \cdot \omega)_p = \hat{\alpha}(p) \cdot (\omega_p)^n$. For another graded curve $\beta$ intersecting with $\alpha$ transversely at $p$, we have $$i_p(\alpha[n], \beta[m]) = i_p(\alpha, \beta) + n - m.$$

Now let us recall the notion of boundary morphism. Let $\alpha$ and $\beta$ be graded arcs in $(S, M, \eta)$ with endpoints $p$ and $q$, \resp, on the same marking so that there is a boundary path $\theta^p_q$. We call $\theta^p_q$ a {\em boundary morphism} from $\alpha$ to $\beta$ and its {\em degree} is given by $$\deg{\theta^p_q} \deq i_p(\alpha, \theta^p_q) - i_q(\beta, \theta^p_q).$$
Note that the degree $\deg{\theta^p_q}$ is independent of grading of $\theta^p_q$. We denote by $\Theta(\alpha, \beta)$ the set of all boundary morphisms from $\alpha$ to $\beta$.

\begin{lemma}\label{lemma:DegreeConditionForDiscSequence}
	Let $(\theta_1, \dots, \theta_n)$ be a disc sequence. Then, we have $$\deg{\theta_1} + \dots + \deg{\theta_n} = n-2.$$
\end{lemma}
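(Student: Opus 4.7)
The plan is to reduce the problem to a planar computation on the canonical disc $D$ and then finish with a tangent-rotation identity of Gauss–Bonnet type. Since the morphism $f\colon(D,M^{\can}_n)\to(S,M)$ of Definition \ref{defn:DiscSequence} is an orientation-preserving immersion between two-dimensional surfaces, it is a local diffeomorphism near every intersection point; pulling back $\eta$ thus identifies each $\deg_\eta\theta_i$ with the degree of the canonical morphism $\theta^{\can}_i$ measured against $f^*\eta$. Hence it suffices to prove $\sum_i\deg\theta^{\can}_i=n-2$ on $(D,M^{\can}_n)$ for an arbitrary line field. Because $D$ is contractible, all line fields on it are homotopic, and each degree is $\bzz$-valued and hence constant along such a homotopy, so one may pick any convenient line field---say constant in a planar chart---and by an isotopy of the arc system arrange the $\delta_i$ as the sides of a convex inscribed $n$-gon in $D$, so that every intersection at a marking becomes transverse.

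At a transverse intersection, unwinding the definition of $i_p$ in the $\bzz$-cover of $\bpp(T_pS)$ gives
$$\pi\cdot i_p(\alpha,\beta) \;=\; \phi_\alpha(p) + \ell_p - \phi_\beta(p),$$
where $\phi_\alpha(p),\phi_\beta(p)\in\mathbb{R}$ are continuous lifts of the graded tangent phases from $\bpp(T_pS)\cong\mathbb{R}/\pi\bzz$, and $\ell_p\in(0,\pi)$ is the length of $\kappa^{\dot\alpha(p)}_{\dot\beta(p)}$. Substituting into $\deg\theta^{\can}_i = i_{p_i}(\delta_i,\theta^{\can}_i)-i_{q_i}(\delta_{i+1},\theta^{\can}_i)$ and summing, the phase terms telescope side-by-side along each arc $\delta_i$ and each morphism $\theta^{\can}_i$, giving
$$\pi\cdot\sum_{i=1}^{n}\deg\theta^{\can}_i \;=\; \sum_i\Delta\phi(\delta_i) + \sum_i\Delta\phi(\theta^{\can}_i) + \sum_i(\ell_{p_i}-\ell_{q_i}),$$
where $\Delta\phi$ denotes the total graded phase change along a smooth segment.

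Finally, I would identify the right-hand side with the total clockwise rotation of the boundary tangent around the polygon: one full traversal in the induced orientation rotates the tangent by $-2\pi$ in the clockwise convention. Decomposing this rotation into smooth contributions $\Delta\phi$ on each side plus corner jumps at the $2n$ corners, and using $\ell_{p_i}=\pi-B_{p_i}$ together with $\ell_{q_i}=A_{q_i}$ to express the $\kappa$-lengths via the counterclockwise exterior angles $B_{p_i},A_{q_i}\in(0,\pi)$, a short manipulation yields $\sum_i\Delta\phi + \sum_i(\ell_{p_i}-\ell_{q_i}) = (n-2)\pi$, whence $\sum_i\deg\theta_i = n-2$. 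The main technical obstacle will be the bookkeeping of sign conventions---clockwise surface orientation, the generator $\omega_p$, the opposite senses of $\kappa^x_y$ and $\kappa^y_x$, and the passage between rotations in $T_pS$ and in its projective quotient $\bpp(T_pS)$; restricting to a convex polygon keeps every exterior angle in $(0,\pi)$, so each corner contribution is unambiguous and no reflex-angle case analysis intervenes.
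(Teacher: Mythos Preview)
Your approach is correct and essentially the same as the paper's: both reduce to the canonical disc via $f^*\eta$, use contractibility to choose a constant line field and straight-line arcs, and then invoke the angle sum of an $n$-gon. The paper is shorter only because it observes in one line that with parallel lines and straight arcs the degree $\deg\theta_i$ equals the interior angle at the corresponding corner divided by $\pi$, whereas you compute the same quantity via phase lifts, telescoping, and the tangent-rotation identity; your extra bookkeeping is valid but unnecessary once that direct identification is made.
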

\begin{proof}
	Let $f : (D, M^\can_n) \rightarrow (S, M)$ be an immersion associated with the disc sequence. Then, the pull-backed section $f^*\eta$ defines a line field on the disc $D$. Also, one can prove that intersection index doesn't change if we perturb the line field or arcs isotopically. Since any bundle on $D$ is trivial, we may assume the line field on $D$ is given by foliation of parallel straight lines. We may further assume the arcs are straight lines. Then, the degree of boundary morphisms are nothing but the inner angles between lines divided by $\pi$. So the sum of degrees is the sum of inner angles of $n$-gon divided by $\pi$, which is exactly $(n-2)$.
\end{proof}

A {\em graded arc system} on $(S, M, \eta)$ is a collection of graded arcs $\Gamma = \{\gamma_1, \dots, \gamma_n\}$ such that the underlying arcs form an arc system on $(S, M)$. If it has no disc sequence, then we say the system {\em formal}. More precisely, formal arc system is defined as follows.
\begin{defn}\label{defn:FormalArcSystem}
	Let $\Gamma = \{\gamma_1, \dots, \gamma_n\}$ be a graded arc system on $(S, M, \eta)$. We say $\Gamma$ is {\em formal} if each disc component of $$S \setminus \bigcup_{i=1}^n\gamma_i$$ has exactly one unmarked boundary component.
\end{defn}

\subsection{Topological Fukaya categories of graded boundary-marked surfaces}\label{subsection:TFCofSurfaces}

A graded arc system $\Gamma$ on a graded boundary-marked surface $(S, M, \eta)$ gives rise to an $\Aoo$-category $\cff_\Gamma(S, M, \eta)$. It turns out that the Morita equivalence class of it is independent of the choice of $\Gamma$. This gives the {\em topological Fukaya category} of $(S, M, \eta)$.

First let us recall the definition of the associated $\Aoo$-category.
\begin{defn}\cite[Section 3.3]{HKK17}
	Let $(S, M, \eta)$ be a graded boundary-marked surface and $\Gamma$ be a graded arc system. Then, an $\Aoo$-category $\cff_\Gamma(S, M, \eta)$ consists of the following data.
	\begin{itemize}
		\item The set of objects is $\Gamma$.
		\item The basis for morphism space consists of boundary morphisms and the unit. More precisely, for two graded arcs $\alpha, \beta \in \Gamma$,
		$$\ho_{\cff_\Gamma(S, M, \eta)}(\alpha, \beta) \deq \begin{cases}  \field\left<\Theta(\alpha, \alpha)\right>\oplus \field\left<e_{\alpha}\right> &\text{if $\alpha = \beta$}, \\\field\left<\Theta(\alpha, \beta)\right> &\text{if $\alpha \neq \beta$.} \end{cases}$$
		\item The differential $\fm_1$ is set to be zero.
		\item For two concatenable boundary morphisms $\theta_1$ and $\theta_2$, $\fm_2(\theta_1, \theta_2) \deq (-1)^{\deg{\theta_1}}\theta_1\bullet \theta_2$.
		\item Let $(\theta_1, \dots, \theta_n)$ be a disc sequence with $\theta_i : \gamma_i \rightarrow \gamma_{i+1}$, where $\gamma_{n+1}$ is set to be $\gamma_1$. Also, let $\phi : \alpha \rightarrow \gamma_1$ and $\psi : \gamma_n \rightarrow \beta$ be boundary morphisms concatenable with $\theta_1$ and $\theta_n$, \resp. Then,		
		\begin{align*}
			\fm_n(\theta_1, \dots, \theta_n) &\deq e_{\gamma_1}, \\
			\fm_n(\phi \bullet \theta_1, \dots, \theta_n) &\deq (-1)^{\deg{\phi}}\phi, \\
			\fm_n(\theta_1, \dots, \theta_n \bullet \psi) &\deq \psi.
		\end{align*}
		\item The other $\fm_n$'s are all zero.
	\end{itemize}
\end{defn}

\begin{figure}[h!]
	\centering
	\begin{subfigure}[b]{0.3\linewidth}
		\includegraphics[width=\linewidth]{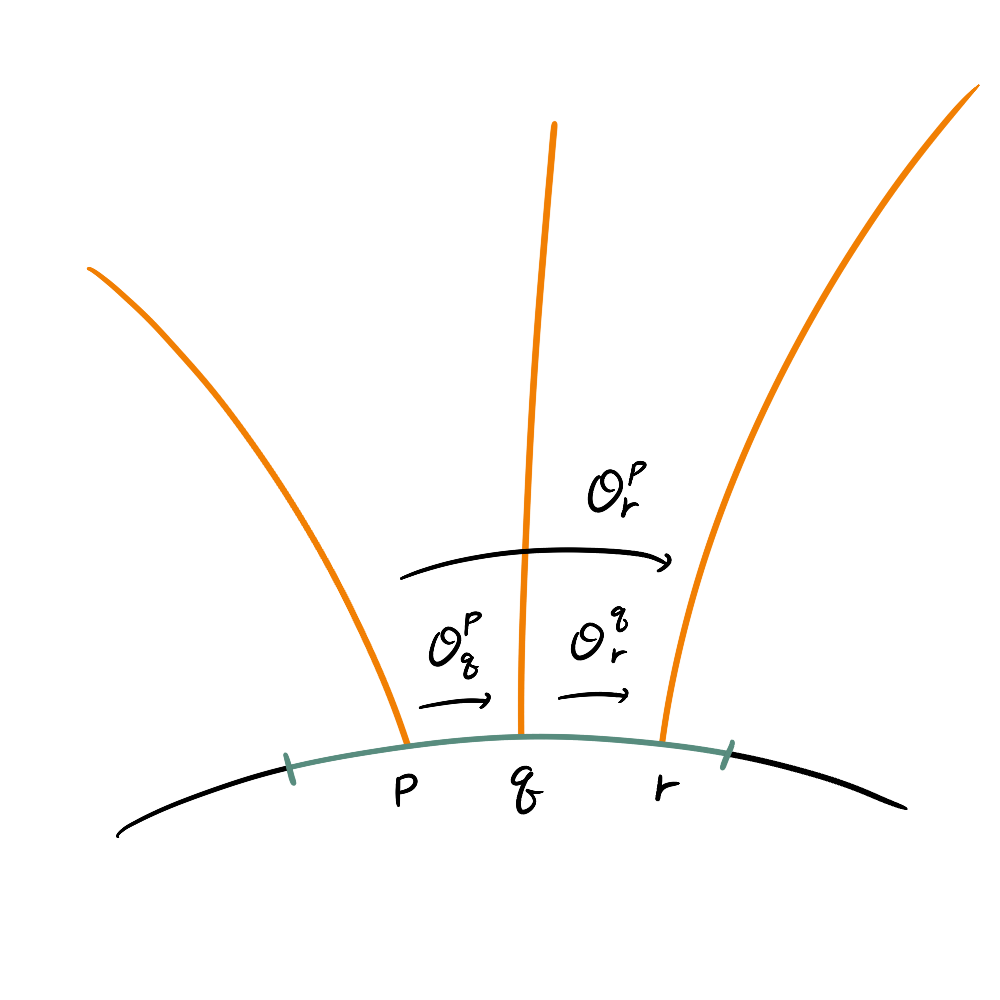}
		\caption{Composition of boundary morphisms}
	\end{subfigure}
	\begin{subfigure}[b]{0.3\linewidth}
		\includegraphics[width=\linewidth]{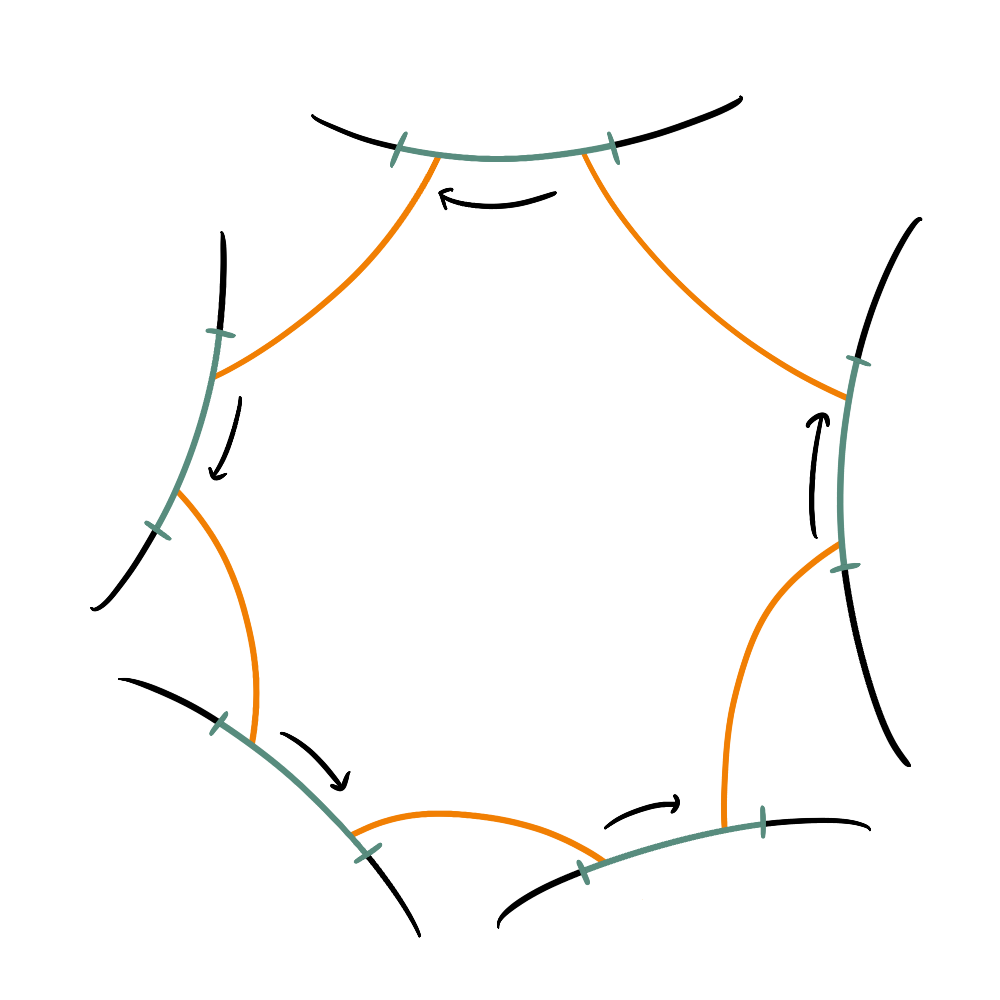}
		\caption{Disc sequence}
	\end{subfigure}
	\caption{$\Aoo$-structure for topological Fukaya category of surface}
	\label{fig:AooForSurface}
\end{figure}

\begin{thm}\cite[Lemma 3.2 and Proposition 3.3]{HKK17}\label{Theorem:MoritaEquivalencyOfTopologicalFukayaCategoryForSurfaces}
	Let $(S, M, \eta)$ be a graded boundary-marked surface. Then the following hold.
	\begin{enumerate}
		\item For two graded arc system $\Gamma_1 \subseteq \Gamma_2$, the inclusion functor $$\cff_{\Gamma_1}(S, M, \eta) \rightarrow \cff_{\Gamma_2}(S, M, \eta)$$ induces a quasi-equivalence $\Aoo$-functor $$\Tw(\cff_{\Gamma_1}(S, M, \eta)) \rightarrow \Tw(\cff_{\Gamma_2}(S, M, \eta)).$$
		\item The Morita equivalence class of $\cff_\Gamma(S, M, \eta)$ independents of $\Gamma$.
	\end{enumerate}
\end{thm}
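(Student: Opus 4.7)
The plan is to establish part (1) first, and derive part (2) from it by a standard refinement/flip argument.

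For part (1), I would first observe that the inclusion $\Gamma_1 \subseteq \Gamma_2$ induces a strictly fully faithful $\Aoo$-functor $\cff_{\Gamma_1}(S,M,\eta) \hookrightarrow \cff_{\Gamma_2}(S,M,\eta)$: both the generating sets $\Theta(\alpha,\beta)$ of the morphism spaces and the $\Aoo$-operations (determined by concatenation and disc sequences, each of which is intrinsic to the arcs involved) depend only on the arcs themselves and the ambient data $(S,M,\eta)$, not on the surrounding arc system. Full faithfulness is preserved upon passing to twisted complexes, so the essence of (1) is essential surjectivity of $\Tw(\cff_{\Gamma_1}) \to \Tw(\cff_{\Gamma_2})$ up to quasi-isomorphism. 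Concretely, every $\gamma \in \Gamma_2$ must be realized as a twisted complex of objects of $\Gamma_1$.

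I would prove this by induction on the minimal transversal intersection number $n(\gamma) \deq |\gamma \cap \bigcup_{\alpha \in \Gamma_1}\alpha|$. For the base case $n(\gamma)=0$, the arc $\gamma$ sits inside a single connected component of $S \setminus \bigcup\Gamma_1$, which by the arc-system axiom is a disc with at most one unmarked boundary component. A direct case analysis shows that any non-boundary-parallel arc in such a region with endpoints on $M$ is isotopic to a bounding arc already lying in $\Gamma_1$ (the case with an unmarked boundary circle requires checking that no new isotopy class arises once the endpoints are fixed on $M$). For the inductive step $n(\gamma) \geq 1$, choose $\alpha \in \Gamma_1$ meeting $\gamma$ at a point $p$, and resolve the crossing by pushing the two halves of $\gamma$ slightly off $\alpha$ toward a nearby endpoint of $\alpha$, producing two arcs $\gamma', \gamma''$ with $n(\gamma'), n(\gamma'') < n(\gamma)$. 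The local picture near $p$ provides boundary morphisms between $\alpha$, $\gamma'$, $\gamma''$, and $\gamma$, whose degrees are dictated by Lemma~\ref{lemma:SumOfIntersectionIndex}; these assemble into a one-sided twisted complex on $\{\gamma', \alpha[k]\}$ (with grading shift $k$ forced by the local intersection index) whose cone is quasi-isomorphic to $\gamma$. Applying the inductive hypothesis to $\gamma'$ and $\gamma''$ then places $\gamma$ in $\Tw(\cff_{\Gamma_1})$.

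For part (2), it suffices to connect any two arc systems $\Gamma_1, \Gamma_2$ by a zigzag of single-arc inclusions of arc systems. Extend each $\Gamma_i$ to a formal arc system (triangulation) $T_i$ by adding finitely many arcs; by part (1) this gives Morita equivalences $\cff_{\Gamma_i} \simeq \cff_{T_i}$. Any two triangulations of $(S,M)$ are related by a finite sequence of arc flips, and each flip $T \to T' = (T \setminus \{a\}) \cup \{a'\}$ factors through the intermediate arc system $T \setminus \{a\}$, which sits inside both $T$ and $T'$ by single-arc inclusions. Applying part (1) along this chain assembles the desired Morita equivalence $\cff_{\Gamma_1} \simeq \cff_{\Gamma_2}$.

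The main obstacle is the explicit construction of the twisted complex in the inductive step of part (1): one must pin down the correct boundary morphisms, their grading shifts, and verify both that the Maurer--Cartan equation for twisted complexes is satisfied and that the cone indeed represents $\gamma$ in cohomology. This requires a careful local analysis near the crossing point $p$, using Lemma~\ref{lemma:SumOfIntersectionIndex} to keep track of intersection indices, together with the sign conventions of the $\Aoo$-structure. A secondary difficulty is the base case in which the complementary disc has an unmarked boundary circle, where one must confirm that no genuinely new arc class can appear once the endpoints on $M$ are fixed.
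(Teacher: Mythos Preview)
The paper does not prove this theorem; it is quoted from \cite{HKK17}. However, the paper does sketch the key mechanism immediately after Proposition~\ref{prop:MoritaEquivalency}: given a disc sequence $(\theta_1,\dots,\theta_r)$, the arc $\gamma_r$ is quasi-isomorphic to the twisted complex
\[
\gamma_1 \xrightarrow{\theta_1} \gamma_2 \xrightarrow{\theta_2} \cdots \xrightarrow{\theta_{r-2}} \gamma_{r-1},
\]
and this is the engine behind part~(1). Your outline for part~(2) via refinement to formal systems and flips is the standard one and matches what the cited argument does.

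Your argument for part~(1), however, has a genuine gap. Since $\Gamma_1 \subseteq \Gamma_2$ and arcs in an arc system are pairwise disjoint, every $\gamma \in \Gamma_2$ already satisfies $n(\gamma)=0$. Thus your induction is vacuous and the entire content of (1) is your base case --- which is false as stated. A disc component of $S\setminus\bigcup\Gamma_1$ can be an $r$-gon with $r\ge 4$, and a diagonal of such a polygon is \emph{not} isotopic to any bounding arc in $\Gamma_1$. So ``a direct case analysis shows that any non-boundary-parallel arc \dots\ is isotopic to a bounding arc already lying in $\Gamma_1$'' simply fails, already for a square.

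What is actually needed in the base case is precisely the disc-sequence lemma: the arc $\gamma$ cuts its ambient polygon into two sub-polygons, at least one of which has no unmarked boundary segment (since the original polygon has at most one). That sub-polygon furnishes a disc sequence with $\gamma$ as one side and the remaining sides in $\Gamma_1$; the twisted complex above then realizes $\gamma$ in $\Tw(\cff_{\Gamma_1})$. The crossing-resolution idea you describe is the right tool for representing an \emph{arbitrary} arc (one that genuinely intersects $\Gamma_1$) --- this is essentially the construction of $T_\gamma$ recalled in Section~\ref{section:TFCofInvolutiveSurfaces} --- but it does not bypass the polygon/disc-sequence computation, which is where the actual work in (1) lies.
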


\begin{defn}\label{definition:TopologicalFukayaCategory}
	The {\em topological Fukaya category} of a graded boundary-marked surface $(S, M, \eta)$ is the $\Aoo$-category $$\cff(S, M, \eta) \deq \Tw(\cff_\Gamma(S, M, \eta))$$ for a graded arc system $\Gamma$.
\end{defn}

\section{Tagged arc systems}\label{section:TFCofOrbiSurfaces}
In this section, we introduce the tagged arc system and  generalize Sections \ref{subsection:MarkedSurfaces} and \ref{subsection:Grading} to this setting.
The concept of tagged arc is first introduced in \cite{FST08}. It is a graded curve each of whose endpoints is either a boundary marking or an interior marking,
with a choice of an element in $\super = \{0, 1\}$ at each interior marking endpoint. 

\subsection{Graded marked $\super$-orbi-surfaces and tagged arcs}\label{subsection:TaggedArcs}
\begin{defn}
	A {\em marked surface} is a triple $(S, M, O)$, where $(S, M)$ is a boundary-marked surface and $O$ is a finite subset of $\operatorname{Int}(S)$ whose elements are called {\em interior markings}.
\end{defn}
We define arcs on a marked surface as follows.
\begin{defn}
	An {\em arc} is an embedding $\gamma : (I, \partial I) \rightarrow (S, M \cup O)$ such that $\gamma^{-1}(M \cup O) = \partial I$ and it is not isotopic to a boundary path nor a constant path. We denote the set of interior endpoints of $\gamma$ by $O(\gamma)$ and define the {\em interior number} $\nu(\gamma)$ as the cardinality of $O(\gamma)$.
For an interior marking $p \in O(\gamma)$, we say $\gamma$ is {\em hanging} at $p$. 
\end{defn}

In order to define the notion of  grading to a marked surface and arcs, we regard the surface $S$ as an orbifold surface, by equipping 
the points in $O$ with $\super$-orbifold structure. Namely, a neighborhood of any point in $O$ in $S$ admits a uniformizing cover with $\super$-action.
Thus $(S,O)$ is  a $\super$-orbifold surface with boundary. 
It is well-known that $(S,O)$ admits a 2-fold branched covering $$\pi : \tilde{S} \rightarrow S $$
by a smooth surface $\tilde{S}$ with boundary whose ramification locus is $O$. 
We fix $\tilde{S}$ and $\pi$ from now on.

Denote by $\iota : \tilde{S} \rightarrow \tilde{S}$ the deck transformation.
We have $\iota^2=\id_{\tilde{S}}$ and hence $\iota$ is an involution. 
The inverse image $\pi^{-1}(M)$ is an invariant boundary marking on $\tilde{S}$ and we denote it by $\tilde{M}$.
Let  $\eta$ be an $\iota$-invariant line field on $\tilde{S}$.
It is not hard to see that such $\eta$ exists: in a uniformizing cover at $\super$-points, line fields may be given by horizontal or vertical ones.
We call these data, abbreviated by the tuple $(S, O, M, \eta)$, a {\em graded marked $\super$-orbi-surface}, or more simply, a {\em graded marked orbi-surface}.

We define a {\em grading} on an arc $\gamma$ as a grading $\hat{\gamma}$ on the lift  $\tilde{\gamma}$ that is compatible with $\super$-action.
Let us explain what this means for each type of arc.
\begin{itemize}
	\item $\nu(\gamma) = 0$. Then, $\gamma$ does not pass through orbifold points in $O$. It has two disjoint lifts $\tilde{\gamma}$ and $\iota_*\tilde{\gamma}$, which are arcs on $(\tilde{S}, \tilde{M})$. Note that $\tilde{\gamma}$ is gradable since it is an arc, this induces a grading on $\iota_*\tilde{\gamma}$.
	\item  $\nu(\gamma) = 1$. Then, $\pi^{-1}(\gamma)$ is a one dimensional $\super$-invariant submanifold on $\tilde{S}$ with boundary on $\tilde{M}$. We choose a parametrization $\tilde{\gamma} : [0,2] \rightarrow \tilde{S}$ so that
	$\iota \circ \tilde{\gamma}(t) = \tilde{\gamma}(2-t)$ and hence $\pi \circ \tilde{\gamma}(1) \in O(\gamma)$.
	Also, $\tilde{\gamma}$ and $\iota_*\tilde{\gamma}$ can be graded as before.
	 	\item  $\nu(\gamma) = 2$. Then, $\pi^{-1}(\gamma)$ is a $\super$-invariant circle on $\tilde{S}$.
	We choose a parametrization $\tilde{\gamma}:\mathbb{R}/2\mathbb{Z} \rightarrow \tilde{S}$ so that
	$\iota \circ \tilde{\gamma}(t) = \tilde{\gamma}(2-t)$. 
\begin{lemma}
	Let $\gamma$ be an arc on $S$ with $\nu(\gamma) = 2$. Then, its lift $\tilde{\gamma}$ is gradable.
\end{lemma}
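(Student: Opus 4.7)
The plan is to reduce the problem to computing a winding number and then exploit the $\iota$-symmetry of $\tilde{\gamma}$.

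First, I recall the standard fact (already implicit in the setup of Section \ref{subsection:Grading}) that for an immersed closed loop $\tilde{\gamma}:\mathbb{R}/2\mathbb{Z}\rightarrow \tilde{S}$, a grading is a homotopy of sections of $\tilde{\gamma}^*(\mathbb{P}(T\tilde{S}))$ from $\tilde{\gamma}^*\eta$ to $\dot{\tilde{\gamma}}$, and such a homotopy exists if and only if the two sections have the same winding number along $\mathbb{R}/2\mathbb{Z}$, equivalently, iff the \emph{relative winding number} $w(\tilde{\gamma},\eta)\in \mathbb{Z}$ of $\dot{\tilde{\gamma}}$ with respect to $\eta$ vanishes. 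So the task is to show $w(\tilde{\gamma},\eta)=0$.

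The key input is the identity $\iota\circ\tilde{\gamma}(t)=\tilde{\gamma}(2-t)$. I will compute $w(\tilde{\gamma},\eta)$ in two different ways using this identity. On one hand, $\iota$ is an orientation-preserving diffeomorphism of $\tilde{S}$ that preserves $\eta$; the relative winding number is natural under such diffeomorphisms, so
$$w(\iota\circ\tilde{\gamma},\,\eta)=w(\iota\circ\tilde{\gamma},\,\iota_*\eta)=w(\tilde{\gamma},\eta).$$
On the other hand, $t\mapsto \tilde{\gamma}(2-t)$ is the same loop as $\tilde{\gamma}$ traversed in reverse, and reversing the parametrization of a loop negates the winding number of the tangent direction relative to any fixed line field (the angle differential $d\theta$ changes sign), so
$$w(\iota\circ\tilde{\gamma},\,\eta)=-w(\tilde{\gamma},\eta).$$
Combining the two equalities yields $2w(\tilde{\gamma},\eta)=0$ in $\mathbb{Z}$, hence $w(\tilde{\gamma},\eta)=0$ and $\tilde{\gamma}$ is gradable.

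The potentially delicate point is justifying that everything is well-defined at the two ramification points in $O(\gamma)$, which the loop passes through. At such a point $p$, in a uniformizing chart the involution $\iota$ acts as $z\mapsto -z$, so $d\iota|_p=-\id$; this is orientation-preserving and fixes every line through the origin, so $\iota_*\eta=\eta$ still holds pointwise at $p$ and the naturality step is unaffected. Moreover, in $\mathbb{P}(T_p\tilde{S})$ we have $[d\iota(\dot{\tilde{\gamma}})]=[\dot{\tilde{\gamma}}]$, so the class of the tangent line is continuous across $p$, which is all one needs to make sense of the winding number as an integer. With these routine checks in place the argument above goes through, and the main content of the proof is the two-line symmetry argument giving $w=-w$.
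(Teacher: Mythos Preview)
Your proof is correct, but it takes a different route from the paper's. The paper argues constructively: it picks any homotopy $\alpha$ from $\dot{\tilde{\gamma}}|_{[0,1]}$ to $\eta|_{[0,1]}$ over the half-interval (which exists since $[0,1]$ is contractible), and then extends it to the whole circle by the $\iota$-symmetry, setting $\hat{\alpha}(2-t)=\iota_*\hat{\alpha}(t)$; consistency at the fixed points $t=0,1$ uses exactly your observation that $d\iota_p=-\id$ acts trivially on $\mathbb{P}(T_p\tilde{S})$. Your argument instead identifies the obstruction as the relative winding number $w(\tilde{\gamma},\eta)\in\mathbb{Z}$ and uses the same symmetry to obtain $w=-w$. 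Both proofs exploit the identity $\iota\circ\tilde{\gamma}(t)=\tilde{\gamma}(2-t)$ in essentially dual ways: the paper's construction is more hands-on and in fact produces an $\iota$-invariant grading, while your obstruction-theoretic argument is cleaner and makes transparent \emph{why} the symmetry forces gradability without ever writing down the homotopy.
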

\begin{proof}
	To show the loop is gradable, we have to find a homotopy between $\dot{\tilde{\gamma}}$ and $\eta$ along $\tilde{\gamma}$. Since $\tilde{\gamma}(2-t) = (\iota \circ \tilde{\gamma})(t)$ and $\eta$ is $\iota$-invariant, we have $$\dot{\tilde{\gamma}}(2-t) = \iota_*\dot{\tilde{\gamma}}(t), \quad \eta_{\tilde{\gamma}(2-t)} = \iota_*\eta_{\tilde{\gamma}(t)}.$$ Let $\alpha$ be any homotopy from $\dot{\tilde{\gamma}}|_{[0, 1]}$ to $\eta|_{[0, 1]}$. Then, define $\hat{\alpha}$ so that $\hat{\alpha}|_{[0, 1]} = \alpha$ and $\hat{\alpha}(2-t) = \hat{\alpha}(t)$. This gives a homotopy from $\dot{\tilde{\gamma}}$ to $\eta$.
\end{proof}
\end{itemize}
For each arc $\gamma$, we fix a lift $\tilde{\gamma}$ as above.
The parametrization of $\tilde{\gamma}$ provides an orientation of $\tilde{\gamma}$. 
Note that in the cases of $\nu(\gamma) =1,2$, $\tilde{\gamma}$ and $\iota \circ \tilde{\gamma}$ has the same underlying curve but have opposite orientations.
 
Now we define tagging of a graded arc as a choice of an element of $\super$ at each interior endpoint.
\begin{defn}
	Let $(\gamma, \hat{\gamma})$ be a graded arc on $S$.  A {\em tagging} on $\gamma$ is a function $\tau : O(\gamma) \rightarrow \super$. 
	If $\tau(p) =1$ (resp. $\tau(p)=0$),  $\; \gamma$ is said to be  {\em notched} (resp. {\em plain}) at $p$. 
		A {\em tagged arc} is a triple $(\gamma, \hat{\gamma}, \tau)$. For a tagged arc $(\gamma, \hat{\gamma}, \tau)$ and an integer $n \in \bzz$, its {\em $n\tth$-shift} is $$(\gamma, \hat{\gamma}, \tau)[n] \deq (\gamma, \hat{\gamma}[n], \tau+n).$$
\end{defn}
We often simply write a tagged arc as $\gamma$ omitting the grading $\hat{\gamma}$ or the tagging $\tau$.
A notched arc is indicated by a small stick near $p$ as in other literature (see Figure \ref{fig:InteriorMorphism}).

We will show in Section \ref{section:TFCofInvolutiveSurfaces} that tagging can be understood geometrically as a choice of 
{\em idempotent} in the Fukaya category of $\super$-orbifold surfaces.

\subsection{Interior morphisms}\label{subsection:InteriorMorphisms}
We define morphisms between tagged arcs intersecting at the interior marking.
In the standard Floer theory, if two curves $L_1$ and $L_2$ transversely intersect at a point $p$, then 
this gives rise to two generators $\alpha_p \in CF(L_1,L_2), \overline{\alpha}_p \in CF(L_2,L_1)$.

The distinctive feature in our tagged setting will be that when two tagged arcs intersect at the interior marking $p$,
among two seemingly possible generators at $p$, only one of them will be declared to be a morphism, and this choice
depends on the degree of the intersection and the data of tagging.
A geometric reason for this phenomenon will be explained in Section \ref{section:TFCofInvolutiveSurfaces}:
roughly speaking the data of tagging is a choice of an idempotent, and thus morphisms only exist between the correct pair of
idempotents. Also, in our case, exactly one of the two possible generators has the correct pair. 

In this section, we give its combinatorial definition using the degree and tagging data as follows.
Let $(\alpha,\sigma)$ and $(\beta,\tau)$ be tagged arcs  hanging at $p \in O$. 
Their lifts $\tilde{\alpha}$ and $\tilde{\beta}$ intersect at $\tilde{p} = \pi^{-1}(p) \in \tilde{S}$ giving two morphisms:
$$p^\alpha_\beta \in CF(\tilde{\alpha},\tilde{\beta}), p_\alpha^\beta \in CF(\tilde{\beta},\tilde{\alpha}).$$
The intersection index 
 $i_{\tilde{p}}(\tilde{\alpha}, \tilde{\beta})$ from the gradings is independent of the choice of lifts, and we denote it as $i_p(\alpha, \beta)$.
 
We will discard the morphism  $p^\alpha_\beta$ from $(\alpha,\sigma)$ to $(\beta,\tau)$ if 
$$ \sigma(p) -  \tau(p) \neq i_p(\alpha,\beta).$$
Then, one may observe that exactly one out of $\{p^\alpha_\beta,  p_\alpha^\beta\}$ is
discarded from $i_p(\alpha, \beta) = 1 - i_p(\beta,\alpha)$.
  
\begin{defn}\label{defn:intmor}
	Let $(S, O, M, \eta)$ be a graded marked orbi-surface and $(\alpha, \sigma), (\beta, \tau)$ be tagged arcs on $S$ hanging at $p \in O$. When $\sigma(p)  -\tau(p)=i_p(\alpha, \beta)$, we say $p$ defines an {\em interior morphism} from $(\alpha, \sigma)$ to $(\beta, \tau)$ and we denote it by $p^{\alpha, \sigma}_{\beta, \tau}$. Its {\em degree} is given by $$\deg{p^{\alpha, \sigma}_{\beta, \tau}} \deq i_p(\alpha, \beta).$$
\end{defn}

Here, we give an example. Figure \ref{fig:InteriorMorphism}(b) illustrates possible choices of taggings of $\alpha$ and $\beta$ and the corresponding interior morphisms. The opposite morphism has been discarded, and will not be considered as a morphism. (In this example, as in Figure \ref{fig:InteriorMorphism}(a), we assume that $\eta$ is an oriented line field  of $\tilde{S}$, thus the parity of $i_p(\alpha,\beta)$ agrees with signed intersection number  of $\tilde{\alpha}$ and $\tilde{\beta}$ at $p$.)

\begin{figure}[h!]
	\centering
	\begin{subfigure}[b]{0.4\linewidth}
		\includegraphics[width=\linewidth]{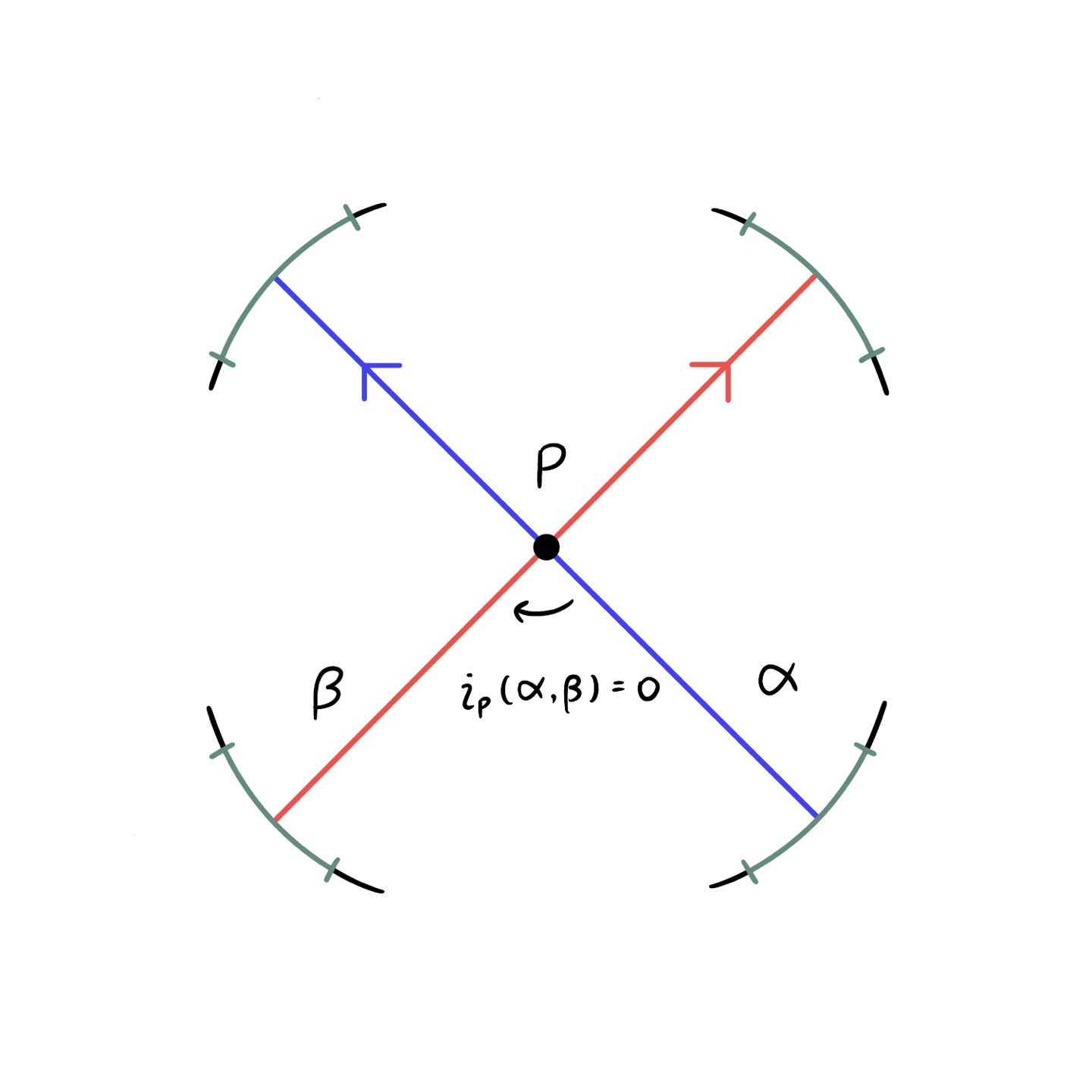}
		\caption{Intersection morphism on the covering}
	\end{subfigure}
	\begin{subfigure}[b]{0.4\linewidth}
		\includegraphics[width=\linewidth]{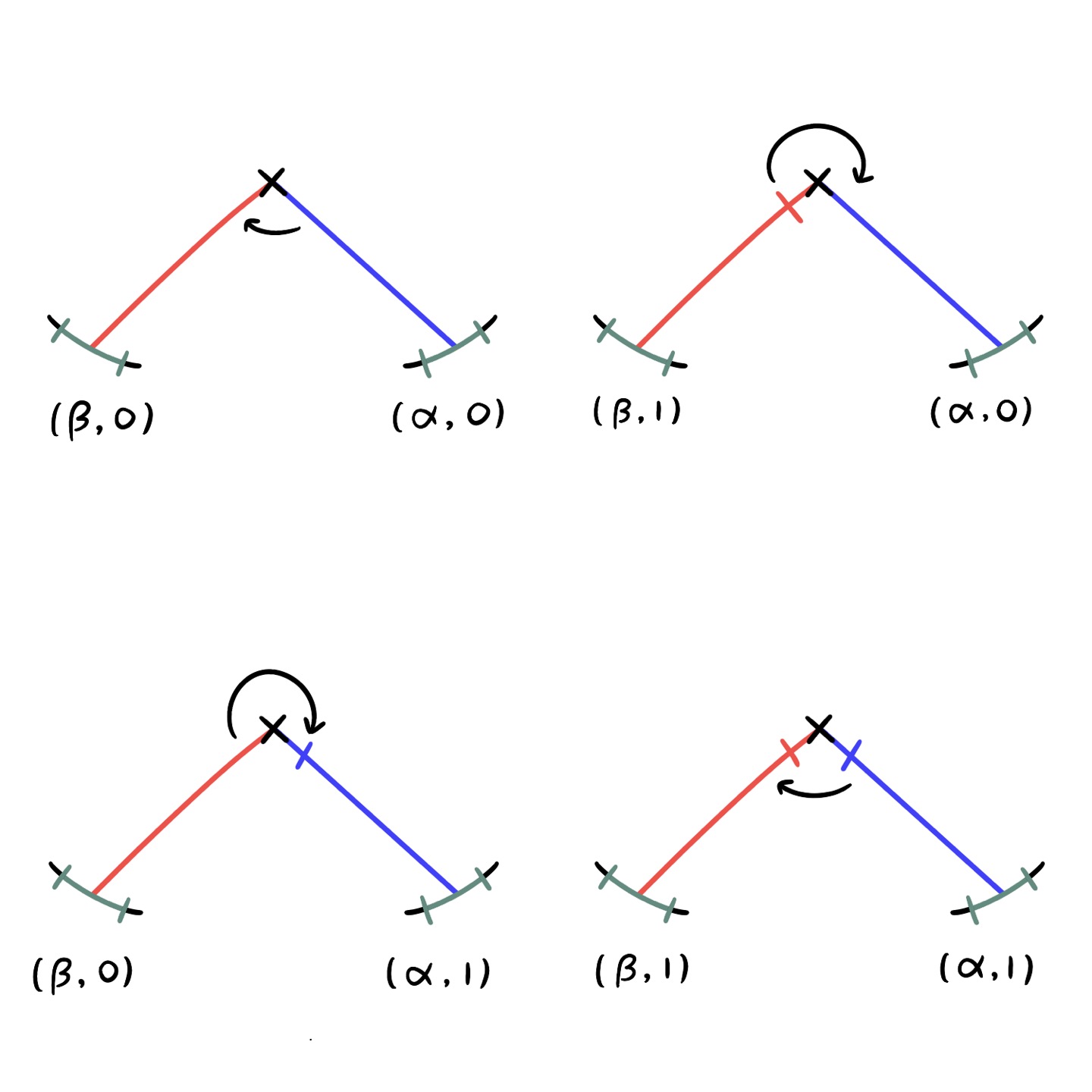}
		\caption{Interior morphisms on the orbi-surface}
	\end{subfigure}
	\caption{Interior morphisms}
	\label{fig:InteriorMorphism}
\end{figure}

\begin{lemma}\label{lemma:InteriorMorphismProperties}
	Suppose that an interior marking $p$ defines an interior morphism from $(\alpha, \sigma)$ to $(\beta, \tau)$. Then the following hold.
	\begin{enumerate}
		\item The intersection $p$ does not define an interior morphism from $(\beta, \tau)$ to $(\alpha, \sigma)$.
		\item For any $m, n \in \bzz$, the intersection $p$ defines an interior morphism from $(\alpha, \sigma)[m]$ to $(\beta, \tau)[n]$.
	\end{enumerate}
\end{lemma}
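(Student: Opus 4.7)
The plan is to test the defining congruence $\sigma(p) - \tau(p) = i_p(\alpha,\beta)$ in $\super$ directly against the analogous candidate equations. Part (1) will rely on the basic parity identity $i_p(\alpha,\beta) + i_p(\beta,\alpha) = 1$, already hinted at in the paragraph preceding Definition \ref{defn:intmor}. Part (2) is a direct unwinding of the shift rule together with the formula $i_p(\alpha[m],\beta[n]) = i_p(\alpha,\beta) + m - n$ recalled in Section \ref{subsection:Grading}.

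For part (1), the identity $i_p(\alpha,\beta) + i_p(\beta,\alpha) = 1$ follows from the definition of intersection index via winding numbers in $\bpp(T_{\tilde p}\tilde{S})$: concatenating the two loops representing $i_{\tilde p}(\tilde\alpha,\tilde\beta)$ and $i_{\tilde p}(\tilde\beta,\tilde\alpha)$ collapses, after cancellation of $(\hat\beta)^{-1}\cdot \hat\beta$ and conjugation by $\hat\alpha$, to $\kappa^{\dot{\tilde\alpha}}_{\dot{\tilde\beta}} \cdot \kappa^{\dot{\tilde\beta}}_{\dot{\tilde\alpha}}$, a loop that winds around $\bpp(T_{\tilde p}\tilde S)$ clockwise exactly once and so represents $\omega_{\tilde p}$. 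Suppose now toward contradiction that $p$ also defines an interior morphism from $(\beta,\tau)$ to $(\alpha,\sigma)$. Then in $\super$ we would have both $\sigma(p) - \tau(p) \equiv i_p(\alpha,\beta)$ and $\tau(p) - \sigma(p) \equiv i_p(\beta,\alpha)$; summing these yields $0 \equiv 1 \pmod{2}$, a contradiction.

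For part (2), I would simply unwind the shift from Section \ref{subsection:TaggedArcs}: $(\alpha,\sigma)[m] = (\alpha, \hat\alpha[m], \sigma+m)$ and similarly for $(\beta,\tau)[n]$. The condition for $p$ to define an interior morphism from the $m$-shift to the $n$-shift reads, in $\super$,
$$(\sigma(p)+m) - (\tau(p)+n) \;=\; i_p(\alpha[m], \beta[n]).$$
Both sides reduce modulo $2$ to $i_p(\alpha,\beta) + m - n$, using the shift formula for intersection indices on the right and the standing hypothesis $\sigma(p) - \tau(p) = i_p(\alpha,\beta)$ on the left.

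Neither step is computationally deep. The only conceptual point worth emphasizing is the parity identity $i_p(\alpha,\beta) + i_p(\beta,\alpha) = 1$, which is precisely the mechanism forcing the asymmetry between the two would-be generators $p^\alpha_\beta$ and $p_\alpha^\beta$ and so makes tagged interior morphisms genuinely one-directional.
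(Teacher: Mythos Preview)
Your proof is correct and follows essentially the same approach as the paper, which simply records the two key identities $i_p(\beta,\alpha) = 1 - i_p(\alpha,\beta)$ and $i_p(\alpha[m],\beta[n]) = i_p(\alpha,\beta) + m - n$ and leaves the rest implicit. Your write-up is more detailed (in particular you sketch why the first identity holds via winding numbers), but the underlying argument is the same.
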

\begin{proof}
	These follow from $i_p(\beta, \alpha) = 1-i_p(\alpha, \beta)$ and $i_p(\alpha[m], \beta[n]) = i_p(\alpha, \beta) + m - n$. 
\end{proof}

\subsection{Disc and composition sequences}\label{subsection:DiscAncCompositionSequences}
For the $\Aoo$-structure of tagged arcs, we introduce the notion of disc and composition sequences.
Let us first adapt the notion of disc sequence  for tagged arcs.
We will allow two new phenomena. One is to allow interior morphisms between two tagged arcs as a 
part of a disc sequence (see Figure \ref{fig:AooForOrbiSurface}(a)). Also, we allow some of the boundary arcs of a disc to be folded and mapped to the double of tagged arcs (see Figure \ref{fig:AooForOrbiSurface}(b)).
Let us formally define it using the canonical arc system of the disc $(D, M^\can_n)$ in Example \ref{example:TheDisc}.
\begin{figure}[h!]
	\centering
	\begin{subfigure}[b]{0.3\linewidth}
		\includegraphics[width=\linewidth]{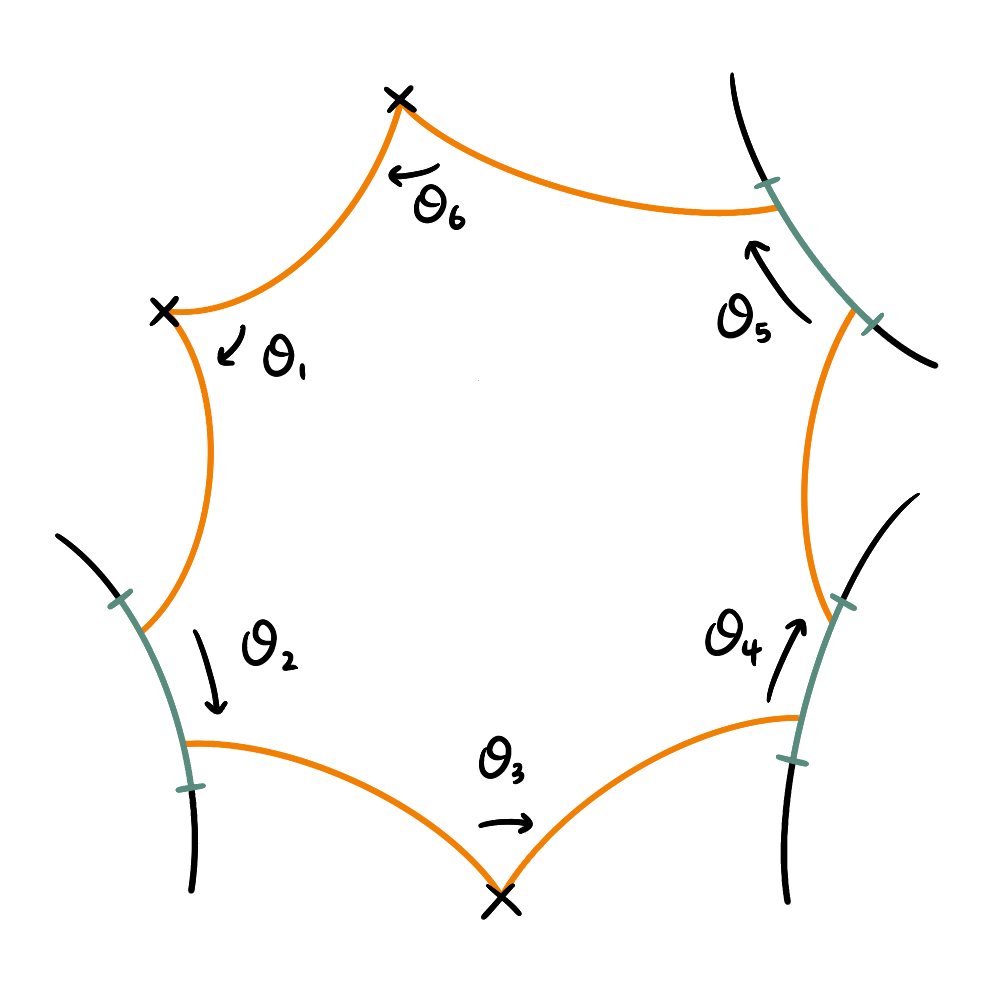}
		\caption{Disc sequence}
		\label{fig:AooForOrbiSurface:DiscSequence}
	\end{subfigure}
	\begin{subfigure}[b]{0.3\linewidth}
		\includegraphics[width=\linewidth]{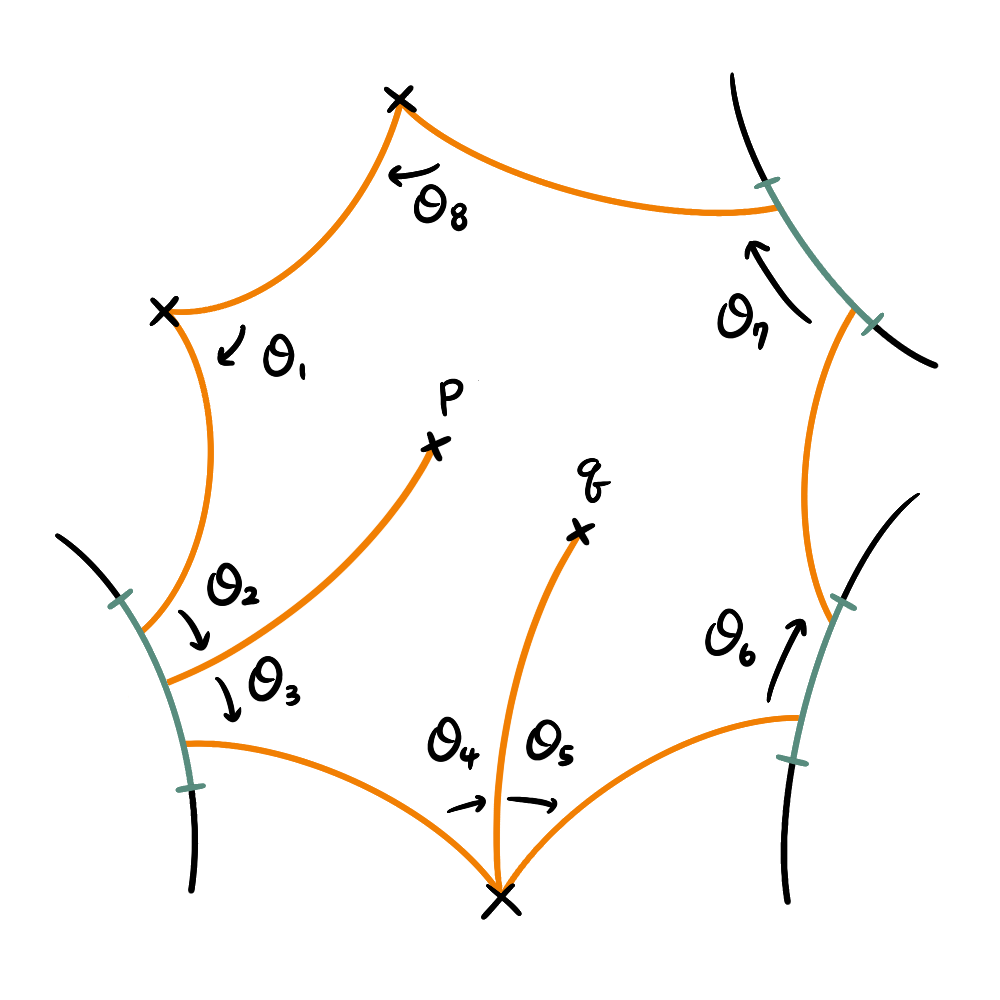}
		\caption{Folded disc sequence}
		\label{fig:AooForOrbiSurface:FoldedDiscSequence}
	\end{subfigure}
	\begin{subfigure}[b]{0.3\linewidth}
		\includegraphics[width=\linewidth]{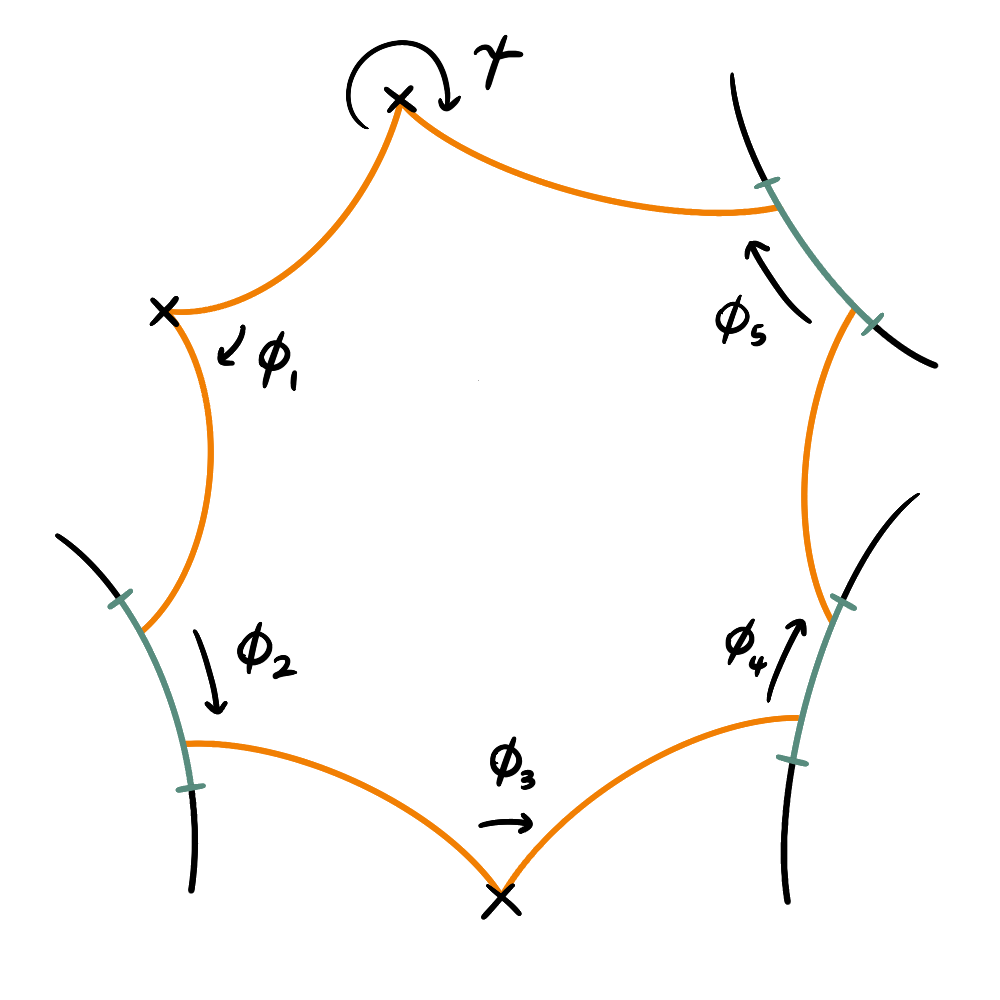}
		\caption{Composition sequence}
		\label{fig:AooForOrbiSurface:CompositionSequence}
	\end{subfigure}
	\caption{$\Aoo$-structure with tagged arcs}
	\label{fig:AooForOrbiSurface}
\end{figure}

\begin{defn}
	Let $(S, O, M, \eta)$ be a graded marked orbi-surface and $\gamma_1, \dots, \gamma_n$ be tagged arcs. Also, let $\phi_i : \gamma_i \rightarrow \gamma_{i+1}$ be boundary or interior morphisms. We call the sequence $(\phi_1, \dots, \phi_n)$ a {\em disc sequence} if there is an orientation preserving immersion (on the interior) $f : (D, M^\can_n) \rightarrow (S, M \cup O)$ that satisfies the following two conditions.
	\begin{itemize}
		\item For each $i$, $f (\gamma_i^\can)$ equals $\gamma_i$ (up to reparametrization of the domain) or the double cover of $\gamma_i$ branched at one point in the middle (in the latter case, we say that $(\theta_{i-1},\theta_i)$ is folded).
		 \item For each $i$, $f_*(\theta^\can_i) = \phi_i$.
	\end{itemize}
We will set $\gamma_{n+1} \deq \gamma_1$ and $\theta_{n+1} \deq \theta_1$ for notational convenience.
\end{defn}
In Figure \ref{fig:AooForOrbiSurface}(b), it is folded between $\theta_2$ and $\theta_3$ (along $\gamma_3$) and between $\theta_4$ and $\theta_5$ (along $\gamma_4$). Note that $\theta_2$ and $\theta_3$ are concatenable, and so are $\theta_4$ and $\theta_5$. These are the only folded pairs in the disc sequence.

Now, we also need a new notion, called a composition sequence (see Figure \ref{fig:AooForOrbiSurface}(c)).
We defined the notion of interior morphism between two tagged arcs hanging at the same marking. (The opposite of an interior morphism was not a morphism.)
A composition sequence is similar to a disc sequence, but occurs when the output of the disc operation is such an
interior morphism. (Hence, the opposite of this output is a non-morphism.)
\begin{defn}
	Let $(S, O, M, \eta)$ be a graded marked orbi-surface and $\gamma_1, \dots \gamma_n, \gamma_{n+1}$ be tagged arcs. Also, let $\phi_i : \gamma_i \rightarrow \gamma_{i+1}$ be boundary or interior morphisms and $\psi : \gamma_1 \rightarrow \gamma_{n+1}$ be an interior morphism at an orbifold point $p$. We call the sequence $(\phi_1, \dots, \phi_n ; \psi)$ a {\em composition sequence} if there is an orientation preserving immersion (on the interior) $f : (D, M^\can_{n+1}) \rightarrow (S, M \cup O)$ that satisfies the following two conditions.
	\begin{itemize}
		\item For each $i$,	$f (\gamma_i^\can)$ equals $\gamma_i$ (up to reparametrization of the domain) or the double cover of $\gamma_i$ branched at one point in the middle.
		\item For each $i=1, \dots, n$, $f_*(\theta^\can_i) = \phi_i$ and $f_*(\theta^\can_{n+1}) = p$.
	\end{itemize}
	We say, for $i=1, \dots, n-1$, the pair $(\phi_i, \phi_{i+1})$ are {\em folded} if they are concatenable. 
	If the disc is folded along $\gamma_1$ (resp. $\gamma_n$), then we will say $(\psi, \phi_1)$ (resp. $(\phi_n, \psi)$) is {\em folded} in this case.
\end{defn}
\begin{remark}
Note that if $(\psi, \phi_1)$ is folded, then $\psi$ has a decomposition $\phi_1 \bullet \theta$ for some nonzero interior morphism $\theta$. Similarly, if $(\phi_n, \psi)$ is folded, then $\psi$ has a decomposition $\theta \bullet \phi_n$.
\end{remark}
For a composition sequence $(\phi_1, \dots, \phi_n ; \psi)$, $\psi$ is uniquely determined by $(\phi_1, \dots, \phi_n)$. So we sometimes omit $\psi$. We call $\psi$ the {\em value} of the composition sequence. 
The following can be proved as in Lemma \ref{lemma:DegreeConditionForDiscSequence}. We will use this in Section \ref{sec:Aooness} repeatedly.
\begin{lemma}\label{Lemma:DegreeFormulae}
	The following hold.
	\begin{enumerate}
		\item Let $(\phi_1, \dots, \phi_n)$ be a disc sequence. Then, $\deg{\phi_1} + \dots + \deg{\phi_n} = n-2$.
		\item Let $(\phi_1, \dots, \phi_n ; \psi)$ be a composition sequence. Then, $\deg{\phi_1} + \dots + \deg{\phi_n} = \deg{\psi} + n-2$.
	\end{enumerate}
\end{lemma}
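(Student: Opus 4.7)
The plan is to mimic the proof of Lemma~\ref{lemma:DegreeConditionForDiscSequence}. For (1), given a disc sequence $(\phi_1,\dots,\phi_n)$, take the associated immersion $f:(D,M^\can_n)\to(S,M\cup O)$ and pull back $\eta$ to a line field $f^*\eta$ on $D$. The line field $\eta$ extends continuously across all of $S$, including orbifold points (where $\iota$-invariance of its lift to $\tilde S$ selects a well-defined direction), so $f^*\eta$ extends continuously over $D$ even across preimages of orbifold points and across fold points. Since $D$ is contractible, $f^*\eta$ is homotopic to a parallel line field and the $\gamma_i^\can$'s are isotopic to straight lines. Intersection indices and degrees being invariant under such isotopies, we may assume $D$ is a straight-sided $n$-gon with a parallel line field, whose inner angle sum is $(n-2)\pi$.

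It then suffices to show that the inner angle $\alpha_i$ at the corner labelled $\theta_i^\can$ satisfies $\alpha_i=\deg{\phi_i}\cdot\pi$, regardless of whether $\phi_i$ is a boundary or an interior morphism. For boundary morphisms this is precisely the flat-angle identification used in the original proof. For an interior morphism at an orbifold point $p$, a simply connected neighborhood of the corresponding corner $c$ in $D$ allows us to lift $f$ to the uniformizing cover $\tilde U\to U$ near $p$; this lift identifies the projective tangent directions of $\gamma_i^\can,\gamma_{i+1}^\can$ at $c$ with those of $\tilde\gamma_i,\tilde\gamma_{i+1}$ at $\tilde p$, and identifies $f^*\eta$ at $c$ with $\eta$ at $\tilde p$. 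By Definition~\ref{defn:intmor}, $\deg{\phi_i}=i_{\tilde p}(\tilde\gamma_i,\tilde\gamma_{i+1})$, and the same flat-angle computation as in the boundary case then gives $\alpha_i=\deg{\phi_i}\cdot\pi$. At fold points, $\gamma_i^\can$ remains a smooth arc in the boundary of $D$ with no corner, and $f^*\eta$ stays continuous across the fold, so folds contribute no extra corners or angle defects.

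For (2), the associated immersion uses an $(n+1)$-gon with inner angle sum $(n-1)\pi$. The first $n$ corners contribute $\sum_{i=1}^n\deg{\phi_i}\cdot\pi$ as above. At the final corner $\theta_{n+1}^\can$, the boundary of $D$ traverses from $\gamma_{n+1}^\can$ into $\gamma_1^\can$ through the orbifold point $p$, which is the direction opposite to the declared morphism $\psi:\gamma_1\to\gamma_{n+1}$. The same local-lifting argument therefore identifies $\alpha_{n+1}$ with $i_p(\gamma_{n+1},\gamma_1)\cdot\pi$, and by the identity $i_p(\beta,\alpha)=1-i_p(\alpha,\beta)$ used in Lemma~\ref{lemma:InteriorMorphismProperties}, this equals $(1-\deg{\psi})\pi$. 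Summing all $(n+1)$ inner angles gives $\sum_{i=1}^n\deg{\phi_i}+1-\deg{\psi}=n-1$, which rearranges to the claimed identity $\sum\deg{\phi_i}=\deg{\psi}+n-2$.

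The main technical obstacle is the angle identification at an interior-morphism corner: one must carefully track the grading paths and tangent directions through the local lift, using that the two possible local lifts (related by $\iota$) yield the same projective intersection index, which is precisely the independence-of-lift built into Definition~\ref{defn:intmor}. Handling folded configurations is essentially automatic, since a fold leaves the $D$-boundary smooth and the pulled-back line field continuous, and thus does not alter the angle sum.
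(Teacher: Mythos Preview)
Your proof is correct and follows exactly the approach the paper indicates: it reduces to the inner-angle sum of a Euclidean polygon by trivializing the pulled-back line field on the disc, just as in Lemma~\ref{lemma:DegreeConditionForDiscSequence}. The paper gives no further details beyond that reference, whereas you spell out the two new ingredients (the local lift at an interior-morphism corner identifying the angle with $i_{\tilde p}(\tilde\gamma_i,\tilde\gamma_{i+1})$, and the observation that folds contribute no extra corner or angle defect), so your argument is a faithful and more explicit version of what the paper intends.
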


\subsection{Tagged arc systems}\label{subsection:TaggedArcSystems}
Let us generalize the notion of arc system for tagged arcs. First, we define an arc system using the underlying arcs. With grading and tagging, we will define a pre-tagged arc system. Then, we will define a {\em tagged arc system} as a pre-tagged arc system satisfying {\em thick}, {\em good}, and {\em full} conditions.

\begin{defn}\label{defn:ArcSystemForOrbiSuefaces}
	Let $(S, M, O, \eta)$ be a graded marked orbi-surface. An {\em arc system} is a collection $\Gamma = \{\gamma_1, \dots, \gamma_n\}$ of arcs satisfying the following three conditions (see Figure \ref{fig:ArcSystem}).
	\begin{itemize}
		\item For each pair $i\neq j$, $\gamma_i \cap \gamma_j \subset O$ and $\gamma_i \not\simeq \gamma_j$. If they meet at $O$, then they are transversal.
		\item Each component of $S \setminus \bigcup_{i=1}^n \gamma_i$ is a topological disc satisfying one of the following conditions.
		\begin{itemize}
			\item It is a disc with at most one unmarked boundary and no interior marking.
			\item It is a disc with at most one interior marking and no unmarked boundary.
		\end{itemize}
		\item $\bigcup_{\gamma \in \Gamma}\gamma$ is a forest. That is, it is a disjoint union of trees.
		\item Let $\gamma \in \Gamma$ be a tagged arc with $\nu(\gamma) = 2$. Then, there is $p\in O(\gamma)$ and a disc component $D \in S \setminus \bigcup_{i=1}^n\gamma_i$ such that $p \in \overline{D}$ and $D$ has an unmarked boundary component.
	\end{itemize}
\end{defn}

\begin{remark}
	The last condition guarantees that there are no disc sequences or composition sequences containing a tagged arc of interior number $2$ in their interior.
\end{remark}

\begin{figure}[h!]
	\centering
	\begin{subfigure}[b]{0.3\linewidth}
		\includegraphics[width=\linewidth]{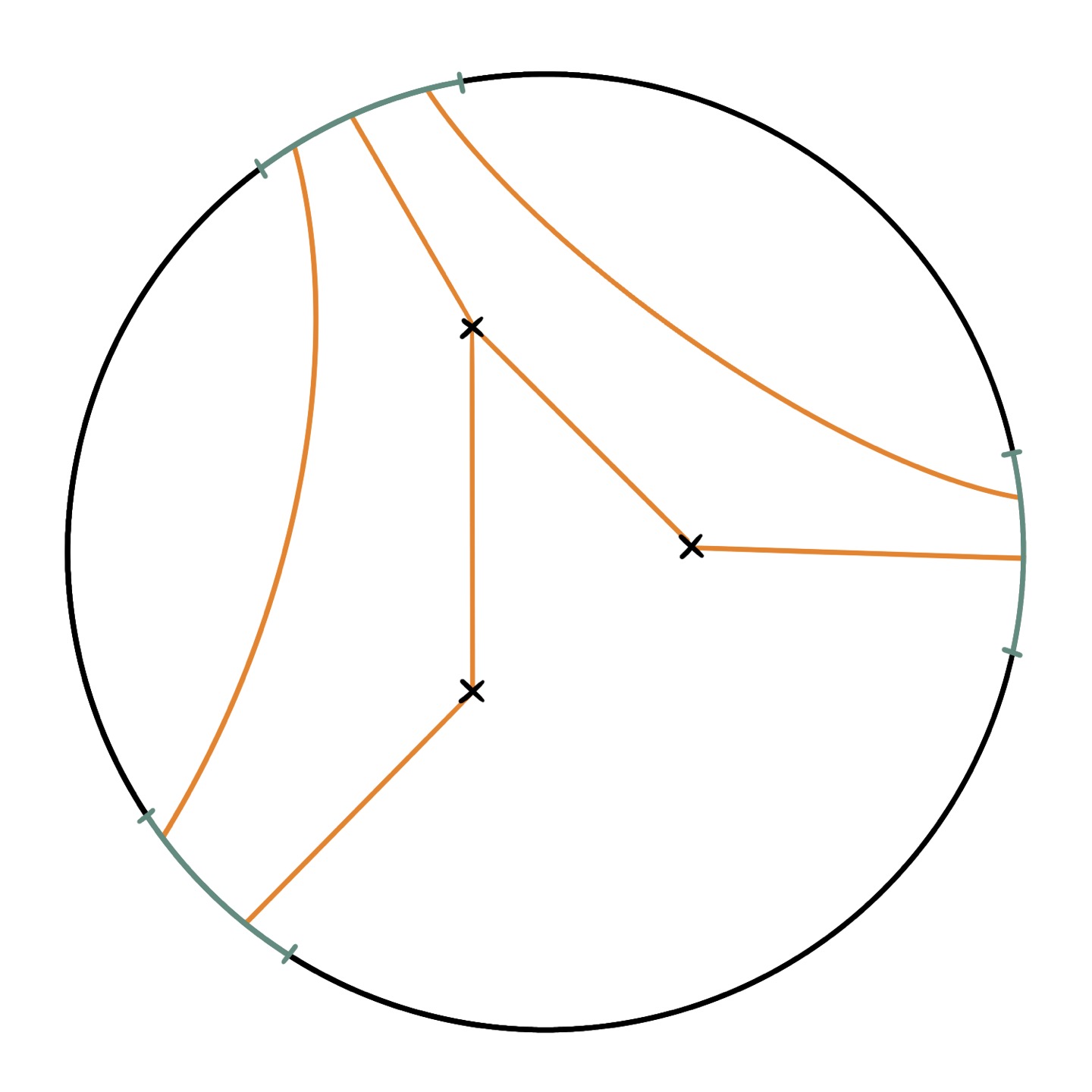}
		\caption{Valid arc system}
	\end{subfigure}
	\begin{subfigure}[b]{0.3\linewidth}
		\includegraphics[width=\linewidth]{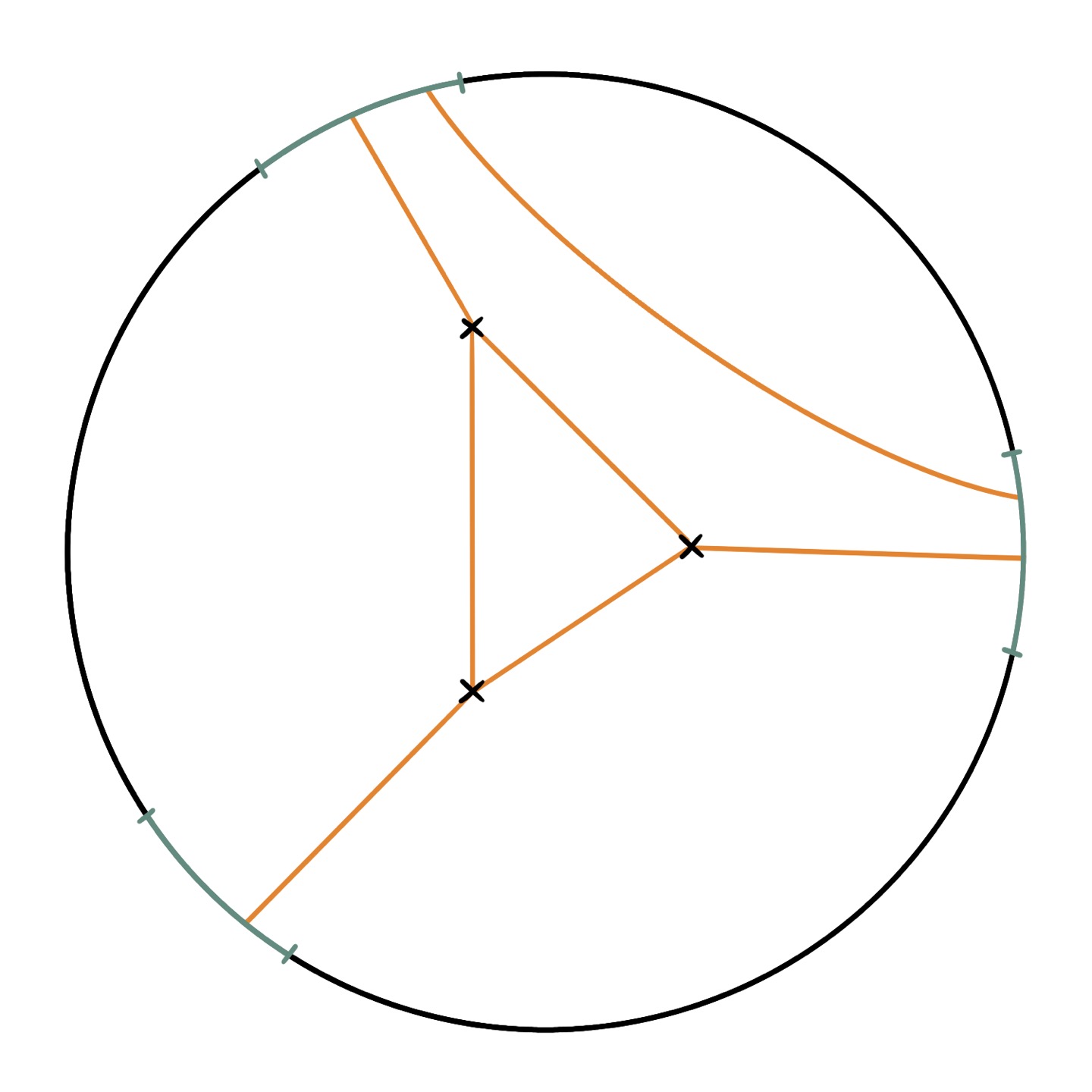}
		\caption{Violate the third condition}
	\end{subfigure}
	\begin{subfigure}[b]{0.3\linewidth}
		\includegraphics[width=\linewidth]{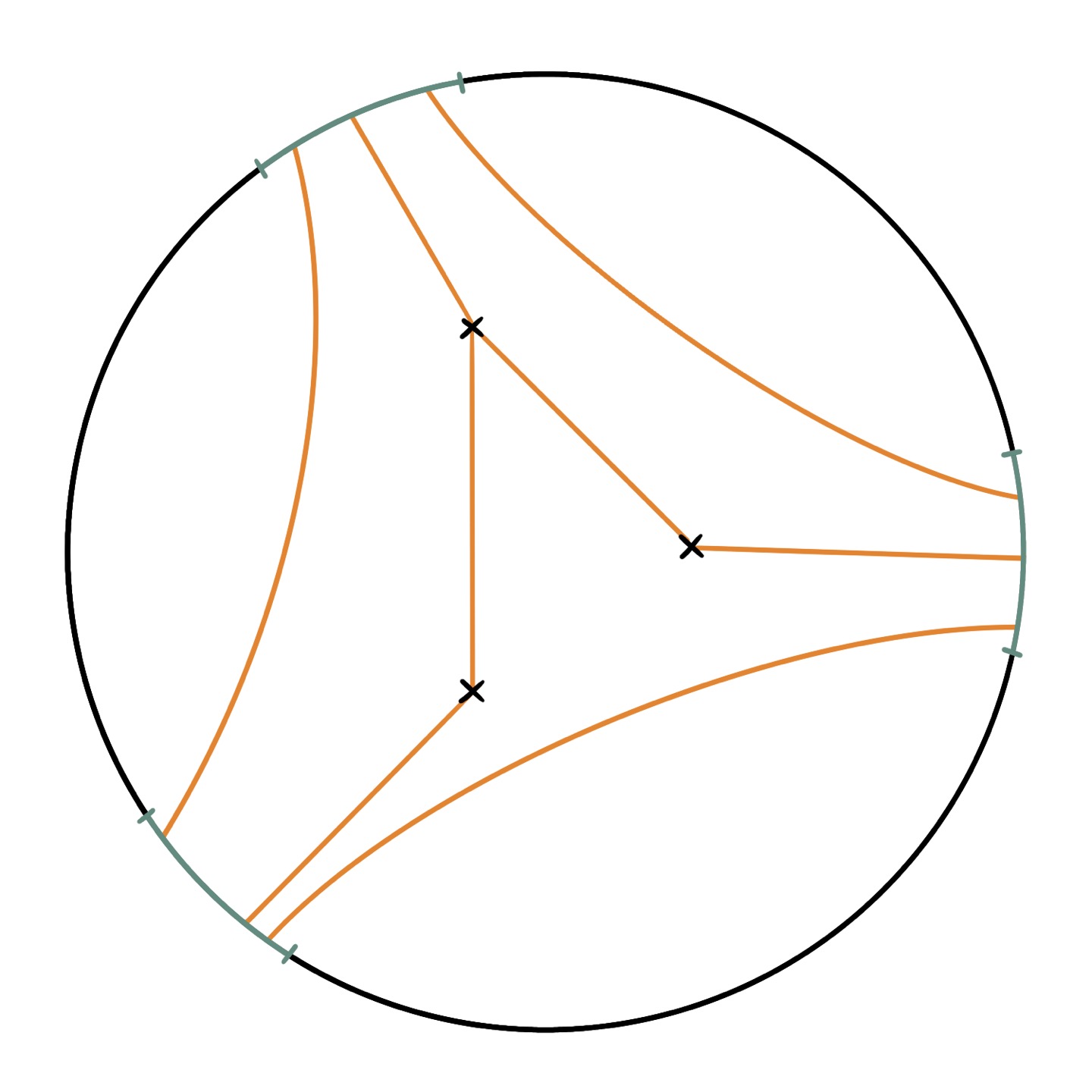}
		\caption{Violate the fourth condition}
	\end{subfigure}
	\caption{Examples of arc and non-arc systems}
	\label{fig:ArcSystem}
\end{figure}

\begin{defn}
	For a collection of tagged arcs $\Gamma = \{(\gamma_1, \tau_1), \dots, (\gamma_n, \tau_n)\}$, we denote by $\underline{\Gamma}$ the set of underlying arcs. We call $\Gamma$ a {\em pre-tagged arc system} if $\underline{\Gamma}$ is an arc system.
\end{defn}

We sometimes allow two different taggings of  a single underlying arc in a pre-tagged arc system. The precise condition is formulated as a thick pair and the thick condition.
\begin{defn}\label{defn:ThickPair}
	Let $(\alpha, \sigma)$ and $(\beta, \tau)$ be two tagged arcs. We say the pair $((\alpha, \sigma), (\beta, \tau))$ a {\em thick pair} if  they have the same underlying arc $\gamma$ with $\nu(\gamma) \geq 1$ and satisfies one of the following condition.
	\begin{itemize}
		\item If $\nu(\gamma) = 1$, $(\beta,\tau)$ can be obtained by 
		changing the tagging of $(\alpha,\sigma)$ and by taking an overall shift.
		(i.e. $(\beta, \tau) = (\alpha, -\sigma)[n]$. )
		\item If  $\nu(\gamma) = 2$, $(\beta,\tau)$ can be obtained by changing the tagging of $(\alpha,\sigma)$ at one of its end point and by taking an overall shift.
	\end{itemize}
	 In this case, let $p$ be the endpoint  where the tagging is changed. We will say the underlying arc is {\em thick} at $p$.
\end{defn}

\begin{defn}\label{defn:ThickCondition}
	We say a pre-tagged arc system $\Gamma$ is {\em thick} if the following conditions hold. 
	\begin{itemize}
		\item  Given an underlying arc $\gamma \in \underline{\Gamma}$, there are at most two tagged arcs in $\Gamma$ whose underlying arc is $\gamma$. If there are two, then they form a thick pair. (Hence $\gamma$ is thick.)
		\item If $\gamma$ is thick at $p$ and $\nu(\gamma) = 2$, then $\gamma$ is the only arc hanging at $p$.
	\end{itemize}
\end{defn}
The last condition is imposed to have simpler  $\Aoo$-operations. In particular, we can work with
a nice  tagged arc system, which will be introduced soon.

Let $\Gamma$ be a pre-tagged arc system satisfying thick condition. Now let us
explain the {\em good} condition. This concerns the ordering of tagged arcs hanging at a single point $p \in O$.
\begin{defn}
	For an orbifold point $p \in O$, we take a sequence of tagged arcs hanging at $p$:
	$$\left((\gamma_1, \tau_1), \dots, (\gamma_n, \tau_n)\right)$$
	for any $n \geq 2$.
	Assume that none of them are thick and that they are ordered clockwise at $p$. 
	We say the sequence is {\em good} if $p$ defines the interior morphism from $(\gamma_i, \tau_i)$ to $(\gamma_{i+1}, \tau_{i+1})$ for all $i=1, \dots, n-1$.
\end{defn}
Figure \ref{fig:GoodAndBadTriple}(a) illustrates a good sequence of length 3. Here, the arrows indicate interior morphisms.
Note that interior morphisms in a good sequence can be {\em concatenated}. Figure \ref{fig:GoodAndBadTriple}(b) illustrates a non-good sequence.
\begin{lemma}
	If $\left((\gamma_1, \tau_1), \dots, (\gamma_n, \tau_n)\right)$ is good, then
	 $p$ defines the interior morphism from  $(\gamma_i, \tau_i)$ to $(\gamma_{j}, \tau_{j})$
	 for any $1 \leq i < j \leq n$.
\end{lemma}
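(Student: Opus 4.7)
The plan is to proceed by induction on the separation $d = j - i$, with the base case $d = 1$ being exactly the goodness hypothesis. For the inductive step, I assume the claim for separation $k \geq 1$ and prove it for $k + 1$. The key identity to establish is the additivity
$$i_p(\gamma_i, \gamma_{i+k+1}) = i_p(\gamma_i, \gamma_{i+k}) + i_p(\gamma_{i+k}, \gamma_{i+k+1});$$
combined with the inductive hypothesis and goodness, this will telescope to the required equality $\tau_i(p) - \tau_{i+k+1}(p) = i_p(\gamma_i, \gamma_{i+k+1})$ in $\super$, which by Definition \ref{defn:intmor} is precisely the condition that $p$ defines an interior morphism from $(\gamma_i, \tau_i)$ to $(\gamma_{i+k+1}, \tau_{i+k+1})$.

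To establish the additivity, I work with the lifts $\tilde{\gamma}_\bullet$ on the branched double cover $\tilde{S}$, recalling that $i_p(\alpha, \beta) \deq i_{\tilde{p}}(\tilde{\alpha}, \tilde{\beta})$ is computed via the $\iota$-invariant line field $\eta$ on $\tilde{S}$ as in Section \ref{subsection:Grading}. Applying Lemma \ref{lemma:SumOfIntersectionIndex} to the triple $(\tilde{\gamma}_i, \tilde{\gamma}_{i+k}, \tilde{\gamma}_{i+k+1})$ at $\tilde{p}$ yields the additivity with no correction term, provided their tangent lines at $\tilde{p}$ appear in clockwise order in $\bpp(T_{\tilde{p}}\tilde{S})$. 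Since $\pi : \tilde{S} \to S$ is orientation preserving and locally modelled on $z \mapsto z^2$ near $\tilde{p}$, it induces a bijection between tangent lines at $\tilde{p}$ and tangent directions at $p$ that merely doubles angles; in particular the assumed clockwise ordering of $\gamma_1, \dots, \gamma_n$ at $p$ transfers to a clockwise ordering of the lifts' tangent lines at $\tilde{p}$, and any subtriple of a clockwise-ordered list remains clockwise.

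With additivity in hand, the induction closes immediately: adding the inductive hypothesis $\tau_i(p) - \tau_{i+k}(p) = i_p(\gamma_i, \gamma_{i+k})$ and the goodness equation $\tau_{i+k}(p) - \tau_{i+k+1}(p) = i_p(\gamma_{i+k}, \gamma_{i+k+1})$, and using the additivity above, produces $\tau_i(p) - \tau_{i+k+1}(p) = i_p(\gamma_i, \gamma_{i+k+1})$ as an equality in $\super$, as required. The main obstacle is the verification that clockwise-ordered arcs at the orbifold point $p$ on $S$ lift to clockwise-ordered tangent lines at $\tilde{p}$ on $\tilde{S}$, so that the clockwise case of Lemma \ref{lemma:SumOfIntersectionIndex} applies without the $+1$ correction; once the branched local model is written out, this reduces to the elementary observation that the squaring map on $\bpp(T_{\tilde{p}}\tilde{S}) \to \bpp(T_p S)$ is an orientation-preserving bijection.
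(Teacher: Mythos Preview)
Your proof is correct and follows essentially the same approach as the paper: both reduce to the additivity $i_p(\gamma_i,\gamma_{i+k}) + i_p(\gamma_{i+k},\gamma_{i+k+1}) = i_p(\gamma_i,\gamma_{i+k+1})$ via Lemma~\ref{lemma:SumOfIntersectionIndex}, then telescope the tagging condition. The paper compresses this to the case $n=3$ and leaves the clockwise hypothesis of Lemma~\ref{lemma:SumOfIntersectionIndex} implicit; your version spells out both the induction and the transfer of the clockwise ordering through the branched-cover model, but the argument is the same.
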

\begin{proof}
	It is enough to show the lemma for $n=3$. Since $p$ defines $p^{\gamma_1, \tau_1}_{\gamma_2, \tau_2}$ and $p^{\gamma_2, \tau_2}_{\gamma_3, \tau_3}$, we have $$\tau_2(p) = \tau_1(p) + i_p(\gamma_1, \gamma_2), \quad \tau_3(p) = \tau_2(p) + i_p(\gamma_2, \gamma_3).$$ Using Lemma \ref{lemma:SumOfIntersectionIndex}, we have $i_p(\gamma_1, \gamma_2) + i_p(\gamma_2, \gamma_3) = i_p(\gamma_1, \gamma_3)$. So we get $\tau_3(p) = \tau_1(p) + i_p(\gamma_1, \gamma_3)$, which implies $p$ defines $p^{\gamma_1, \tau_1}_{\gamma_3, \tau_3}$.
\end{proof}
Thus a subsequence of a good sequence is good as well.
Later we will use this concatenation to define $\fm_2$ multiplication of $\Aoo$ structure,
so let us make the following definition.
\begin{defn}
	If $\left((\alpha, \rho), (\beta, \sigma), (\gamma, \tau)\right)$ is a good triple hanging at $p \in O$, we say $p^{\alpha, \rho}_{\beta, \sigma}$ and $p^{\beta, \sigma}_{\gamma, \tau}$ are {\em concatenable} and define the concatenation 
$$p^{\alpha, \rho}_{\beta, \sigma} \bullet p^{\beta, \sigma}_{\gamma, \tau} \deq p^{\alpha, \rho}_{\gamma, \tau}.$$
\end{defn}

One can also check that if the triple is not good so that $p$ defines interior morphisms $p^{\alpha, \rho}_{\gamma, \tau}$ and $p^{\beta, \sigma}_{\alpha, \rho}$, then $p$ defines the interior morphism $p^{\gamma, \tau}_{\beta, \sigma}$ using similar arguments (see Figure \ref{fig:GoodAndBadTriple} right).

\begin{figure}[h!]
	\centering
	\begin{subfigure}[b]{0.4\linewidth}
		\includegraphics[width=\linewidth]{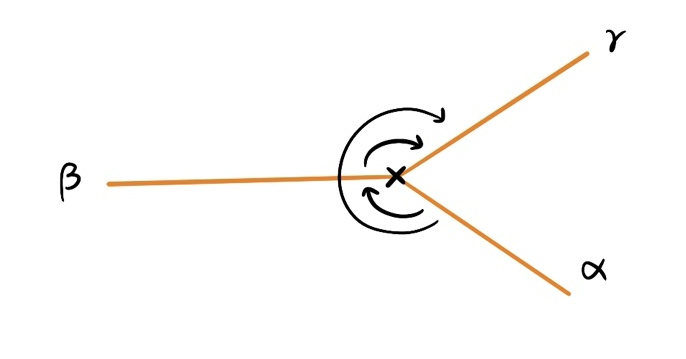}
		\caption{}
	\end{subfigure}
	\begin{subfigure}[b]{0.4\linewidth}
		\includegraphics[width=\linewidth]{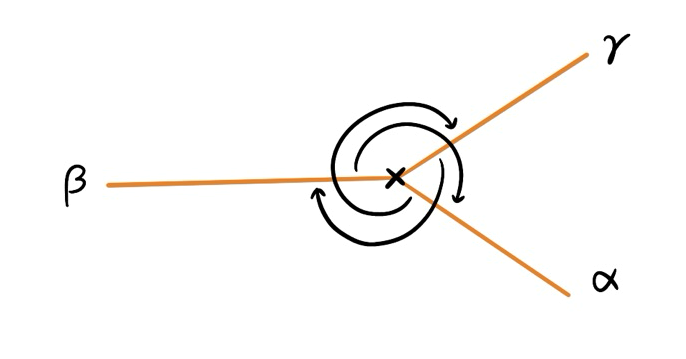}
		\caption{}
	\end{subfigure}
	\caption{Good triple and not-good triple}
	\label{fig:GoodAndBadTriple}
\end{figure}
 
 Now, we are ready to define the good condition of pre-tagged arc system.
 For an interior marking $p \in O$, let $\Gamma_p$ be the set of tagged arcs hanging at $p$. It has a cyclic ordering by clockwise orientation. We will consider  {\em linear}  subordering of certain subsets of $\Gamma_p$ and require them to form good sequences.
\begin{defn}\label{defn:GoodCondition}
	Let $\Gamma$ be a thick pre-tagged arc system and $p\in O$ be an interior marking. We say $\Gamma$ is {\em good} at $p$ if one of the following holds. Let $\Gamma_p = \{(\gamma_{i_1}, \tau_{i_1}), \dots, (\gamma_{i_r}, \tau_{i_r})\}$ be the set of tagged arcs hanging at $p$, which is ordered clockwise.
	\begin{itemize}
		\item $\Gamma_p$ has at most one thick pair. If it has a thick pair, then $\left((\gamma_{i_1}, \tau_{i_1}), (\gamma_{i_r}, \tau_{i_r})\right)$ is the thick pair.
		\item If $\Gamma_p$ has no thick pair, then there is $1 \leq k \leq r$ such that the following sequence is good: $$\left((\gamma_{i_k}, \tau_{i_k}), \dots, (\gamma_{i_r}, \tau_{i_r}), (\gamma_{i_1}, \tau_{i_1}), \dots (\gamma_{i_{k-1}}, \tau_{i_{k-1}})\right).$$
		\item If $\Gamma_p$ has a thick pair, then following  two sequences are good: $$\left((\gamma_{i_1}, \tau_{i_1}), \dots, (\gamma_{i_{r-1}}, \tau_{i_{r-1}})\right), \quad \left((\gamma_{i_2}, \tau_{i_2}), \dots, (\gamma_{i_r}, \tau_{i_r})\right).$$ 	\end{itemize}
		We say $\Gamma$ is {\em good} if it is good at all interior marking $p \in O$.
\end{defn}

We finally define the full condition.
\begin{defn}\label{defn:FullCondition}
	Let $\Gamma$ be a pre-tagged arc system satisfying thick and good conditions. Then, we say $\Gamma$ is {\em full} if each component of $S\setminus \bigcup_{\gamma \in \underline{\Gamma}}\gamma$ is a topological disc satisfying the following conditions.
	\begin{itemize}
		\item If it has neither an interior marking nor an unmarked boundary, then it defines a disc sequence or a composition sequence. If it is a folded composition sequence, then each folded arc must be thick. If it is a folded disc sequence, then at most one folded arc can be not thick.
		\item If it has an interior marking, then it defines a disc sequence. If it is folded, then each folded arc must be thick.
		\item If it has an unmarked boundary, then it defines a disc sequence after adjoining an arc following the unmarked boundary. If it is folded, then each folded arc must be thick.
	\end{itemize}
\end{defn}

\begin{remark}
	The conditions can be summarized as follows. Let $D$ be a disc component of $S\setminus \bigcup_{\gamma \in \underline{\Gamma}}\gamma$. Let us say an interior marking $p \in \overline{D} \cap O$ {\em inward} if the interior morphism defined by $p$ points to $D$ and {\em outward} otherwise. Then, $D$ must satisfy the following.
	\begin{align*}
		&\#(D \cap O) + \#\{\text{unmarked boundary components of $D$}\}\\
		+& \#\{\text{outward interior markings of $D$}\} + \#\{\text{not thick folded arcs of $D$}\} \leq 1.
	\end{align*}
\end{remark}

Putting together three conditions, we get the following definition of tagged arc system.
\begin{defn}\label{defn:NiceCondition}
	Let $(S, M, O, \eta)$ be a graded marked orbi-surface. A {\em tagged arc system} is a pre-tagged arc system $\Gamma$ which is thick, good, and full. We say $\Gamma$ is {\em nice} if every interior morphism between tagged arcs in $\Gamma$ is of degree $0$.  Here, boundary morphisms are allowed to have arbitrary degrees.
\end{defn}

\begin{lemma}\label{Lemma:MakeBeNice}
	Any tagged arc system is nice up to degree shift. More precisely, let $\Gamma = \{(\gamma_1, \tau_1), \dots, (\gamma_n, \tau_n)\}$ be a tagged arc system. Then, there is a sequence of integers $\overrightarrow{\mu} = (\mu_1, \dots, \mu_n)$ such that the collection $$\Gamma[\overrightarrow{\mu}] \deq \{(\gamma_1, \tau_1)[\mu_1], \dots, (\gamma_n, \tau_n)[\mu_n]\}$$ is a nice tagged arc system.
\end{lemma}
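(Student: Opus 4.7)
The plan is to translate the niceness condition into a system of prescribed difference equations on a graph and solve it by propagating shifts along a spanning tree.

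By the shift formula $i_p(\alpha[m],\beta[n]) = i_p(\alpha,\beta) + m - n$ together with Lemma \ref{lemma:InteriorMorphismProperties}(2), an interior morphism $p^{\gamma_i,\tau_i}_{\gamma_j,\tau_j}$ of degree $d$ remains an interior morphism after applying shifts $\mu_i, \mu_j$, now with degree $d + \mu_i - \mu_j$. Hence the requirement that every interior morphism have degree $0$ translates to the system $\mu_j - \mu_i = i_p(\gamma_i,\gamma_j)$ taken over all interior morphisms in $\Gamma$. Applying Lemma \ref{lemma:SumOfIntersectionIndex} to the clockwise triples furnished by good sequences shows these numbers are additive under concatenation, so it suffices to impose the equation only for \emph{consecutive} pairs in the good sequences at each interior marking.

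Build a graph $H$ with vertex set $\Gamma$ whose edges record these consecutive-pair constraints, each labeled by the required integer difference. Solving the system is equivalent to finding a vertex labelling realising the prescribed edge differences, which is possible iff the signed sum of labels around every cycle of $H$ vanishes; we claim $H$ is actually a forest. Suppose for contradiction there is a simple cycle $v_1 - v_2 - \dots - v_k - v_1$ in $H$ with the edge $(v_j, v_{j+1})$ supported at $q_j \in O$ (indices cyclic). Each $v_j$ hangs at both $q_{j-1}$ and $q_j$, so the underlying arc $\underline{v}_j$ is an edge of the surface graph $G \deq \bigcup_{\gamma \in \underline{\Gamma}} \gamma$ with endpoints $\{q_{j-1}, q_j\}$, and the cycle yields a closed walk $q_1 \to q_2 \to \dots \to q_k \to q_1$ in $G$. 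Since $G$ is a forest by Definition \ref{defn:ArcSystemForOrbiSuefaces}, every edge must be traversed an even number of times. But adjacent vertices of $H$ have distinct underlying arcs, since Definition \ref{defn:GoodCondition} forbids two thick-pair members from being consecutive in a good sequence; repeated edges can thus arise only from non-consecutive thick-pair vertices. A case analysis using the extra restriction in Definition \ref{defn:ThickCondition} that a thick arc of interior number $2$ is the sole arc at its thick endpoint rules all such configurations out, since each would force two distinct arcs of $\underline{\Gamma}$ to share both endpoints, violating the forest property.

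With $H$ a forest, pick a root in each connected component, assign it shift $0$, and propagate along the tree using the prescribed differences to produce $\ora{\mu} = (\mu_1, \ldots, \mu_n)$. It remains to check that $\Gamma[\ora{\mu}]$ is a tagged arc system: the thick condition is preserved (a thick pair with parameter $n$ transforms into one with parameter $n' = n + \mu_j - \mu_i$), the good condition is preserved because the defining congruence $\sigma(p) - \tau(p) \equiv i_p(\alpha,\beta) \pmod 2$ is invariant under simultaneous shifting, and the full condition is unaffected since disc and composition sequences do not involve grading. By construction every interior morphism in $\Gamma[\ora{\mu}]$ has degree $0$. The main obstacle will be the forest claim for $H$: although the underlying arcs form a forest and adjacent vertices of $H$ have distinct underlying arcs, thick-pair vertices share underlying arcs, and one must carefully exploit the thick-endpoint restriction to rule out every possible non-trivial closed walk.
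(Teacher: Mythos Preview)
Your strategy---encode the degree-zero constraints as edge labels on a graph, show the graph is a forest, then propagate shifts along a spanning tree---is exactly the paper's approach, and your reduction to consecutive pairs via Lemma~\ref{lemma:SumOfIntersectionIndex} is correct. The substantive difference is in how the forest claim is established. You work with the full vertex set $\Gamma$ (keeping both members of each thick pair) and try to derive a contradiction by projecting a hypothetical cycle in $H$ to a closed walk in the surface forest $\bigcup_{\gamma\in\underline\Gamma}\gamma$; this is where you correctly flag the difficulty, since thick-pair vertices share underlying arcs and the backtracking argument must invoke the second clause of Definition~\ref{defn:ThickCondition} to reach a contradiction. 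That case analysis can be made to work but is fiddly. The paper sidesteps it: it first passes to a set $\Gamma'$ of representatives modulo thick pairs, builds the analogous graph $G$ on $\Gamma'$, and proves $G$ is a forest by an Euler-characteristic count (relating $v_G,e_G$ to the vertex and edge counts of the surface forest). The thick pairs are then shifted individually at the very end, using that each thick-pair member sits at an end of its good sequence (so only one new constraint needs to be satisfied, which a single shift handles). Your route is more uniform but buys that uniformity with a harder combinatorial lemma; the paper's route is slightly more ad hoc but makes the forest step almost immediate.

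One small correction: your justification that ``the good condition is preserved because the defining congruence $\sigma(p)-\tau(p)\equiv i_p(\alpha,\beta)\pmod 2$ is invariant'' is not quite the right reason---the interior-morphism condition is an equality, not a congruence. The correct argument is simply Lemma~\ref{lemma:InteriorMorphismProperties}(2): shifting both arcs preserves the property of $p$ defining an interior morphism, so every good sequence remains good after shifting.
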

\begin{proof}
	By Lemma \ref{lemma:InteriorMorphismProperties},  $\Gamma[\overrightarrow{\mu}]$ is still a tagged arc system for any sequence of integers $\overrightarrow{\mu}$. Let us explain how to choose an appropriate $\overrightarrow{\mu}$ to make $\Gamma$ nice.
		
	Let $\sim$ be an equivalence relation on $\Gamma$ defined as $(\alpha, \tau) \sim (\beta, \sigma)$ if and only if they are the same or form a thick pair. Let $\Gamma'$ be a set of representatives of $\Gamma/\sim$. Let us define a graded directed graph $G$ as follows. Its vertices are tagged arcs in $\Gamma'$ whose interior number is nonzero and edges are irreducible interior morphisms with the same grading.
	
	We claim that each component of $G$ is a tree. Note that the construction above gives a one-to-one correspondence between components of $G$ and $\Gamma'$. Let us assume $G$ is connected. Also let us denote by $v_G, e_G$, $v_{\Gamma'}$, and $e_{\Gamma'}$ the number of vertices and edges of $G$ and $\Gamma'$, \resp. By the good condition and absence of thick arcs the number of irreducible interior morphisms defined by $p$ is one less then the number of arcs hanging at $p$. Thus we have $$e_G = \sum_{p \in O}\left(\operatorname{val}(p) -1\right) = 2e_\Gamma - v_\Gamma.$$ Together with $$v_G = \text{ the number of arcs in $\Gamma'$} = e_\Gamma,$$ we get $$v_G - e_G = e_G - (2e_G - v_G) = v_\Gamma - e_\Gamma = 1.$$ Here, the last equality comes from the fact that $\Gamma$ is a tree. This proves $G$ is a tree.
	
	Let us take any vertex of $G$ as the root and denote the distance from the root to a vertex $v$ by $d(v)$. We prove the lemma by induction on distance. Suppose that we have shifted degrees of arcs corresponding vertices $v$ such that $d(v) < k$ for some $k \in \bzz_{>0}$ so that edges between them are of degree $0$. Let $\alpha \in \Gamma'$ be an arc with $d(\alpha) = k$. As $G$ is a tree, there is a unique arc $\beta$ with $d(\beta) = k-1$ with an interior morphism $\phi$ between $\alpha$ and $\beta$. By shifting $\beta$ by $\pm\deg{\phi}$, we can make $\phi$ to be of degree $0$. Since there are no edges between vertices of distance $k$, we can shift degree of all arcs of distance $k$ at the same time. By induction on distance, we can make any interior morphisms between arcs in $\Gamma'$ to be of degree $0$.
	
	Let $\alpha$ be a tagged arc in $\Gamma \setminus \Gamma'$ with its thick pair $\beta$. Suppose that $\nu(\alpha) = 1$ and $\alpha$ is hanging at $p$. Then, either $\alpha$ is a source or a sink in $\Gamma_p$. So we can shift $\alpha$ so that degree of interior morphisms defined by $p$ is $0$. Now suppose that $\nu(\alpha) = 2$ and $O(\alpha) = \{p, q\}$. Then, by thick condition either $\Gamma_p = \{\alpha, \beta\}$ or $\Gamma_q = \{\alpha, \beta\}$. So this case is essentially the same with the previous case. This proves the lemma.
\end{proof}

\section{$\Aoo$-categories of tagged arc systems}\label{section:AooCategoryofTaggedArcs}
In this section, we define an $\Aoo$-category $\cff_\Gamma(S, M, O, \eta)$ associated with a nice tagged arc system $\Gamma$ of a graded marked orbi-surface $(S, M, O, \eta)$.
Since any tagged arc system is nice up to degree shift,  a general construction for any tagged arc system can be obtained by adapting the signs 
(see \cite{fu02} or \cite[Section $3$I]{Seidel18}).
 If there are no interior markings (orbifold points), then the resulting $\Aoo$-category is the same as that of \cite{HKK17}. 

Objects of the $\Aoo$-category $\cff_\Gamma(S, M, O, \eta)$ are  tagged arcs.
Morphisms are given by boundary and interior morphisms (together with an identity
morphism). For a composable sequence of morphisms $\overrightarrow{\theta}$, the desired $\Aoo$-operation will be
defined as a sum of several types of operations (for $n \geq 2$ while $\fm_1= \fm_0=0$):
$$\fm_n(\overrightarrow{\theta}) \deq \fm^\con_n(\overrightarrow{\theta}) + \fm^\disc_n(\overrightarrow{\theta}) + \fm^\comp_n(\overrightarrow{\theta}) + \fm^\thick_n(\overrightarrow{\theta}).$$
Actual definition involves delicate signs and weights, so let us start with the concatenation operation $\fm^\con_n$ and introduce relevant notions.
Concatenation operation  $\fm^\con_n$ is non-trivial if and only if $n=2$.
Recall that we denote by using $\bullet$ the geometric morphism obtained by the concatenation of two interior or two boundary morphisms.
We record it as an $\fm^\con_2$ operation as follows.
 \begin{figure}[h!]
	\centering
	\includegraphics[width=0.8\linewidth]{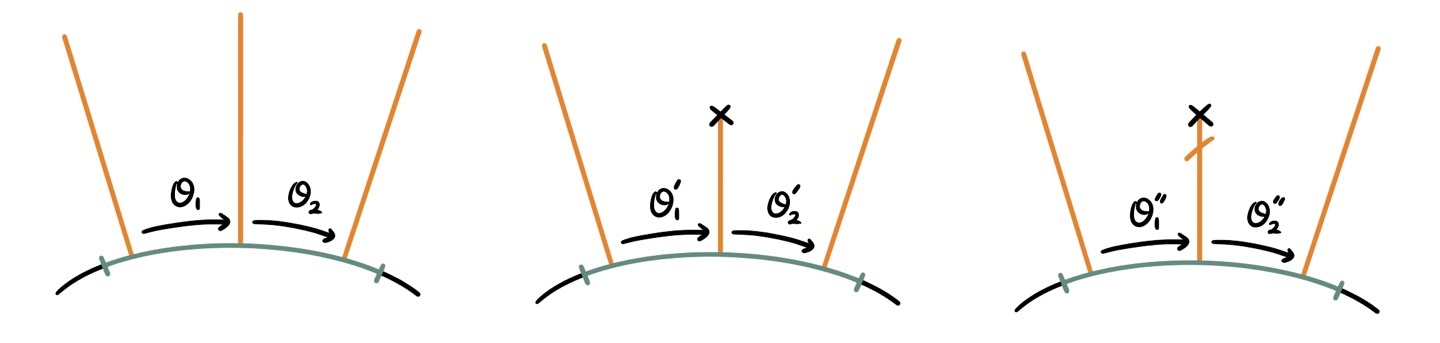}
	\caption{Composition of boundary morphisms}
	\label{fig:tagcomp1}
\end{figure}
Figure \ref{fig:tagcomp1} illustrates some of possible cases for boundary morphisms.
Then, the compositions for the cases of Figure \ref{fig:tagcomp1} will be defined as
\begin{align*}
	\fm_2^\con(\theta_1,\theta_2) &= (-1)^{\deg{\theta_1}} \theta_1 \bullet \theta_2, \\
	\fm_2^\con(\theta_1',\theta_2') &= (-1)^{\deg{\theta'_1}}\frac{1}{2} \theta'_1 \bullet \theta'_2, \\
	\fm_2^\con(\theta_1'',\theta_2'') &= -(-1)^{\deg{\theta''_1}}\frac{1}{2} \theta''_1 \bullet \theta''_2.	
\end{align*}
The same holds regardless of intersection number and tagging of the first or third arc.
\begin{figure}[h!]
	\centering
	\includegraphics[width=0.9\linewidth]{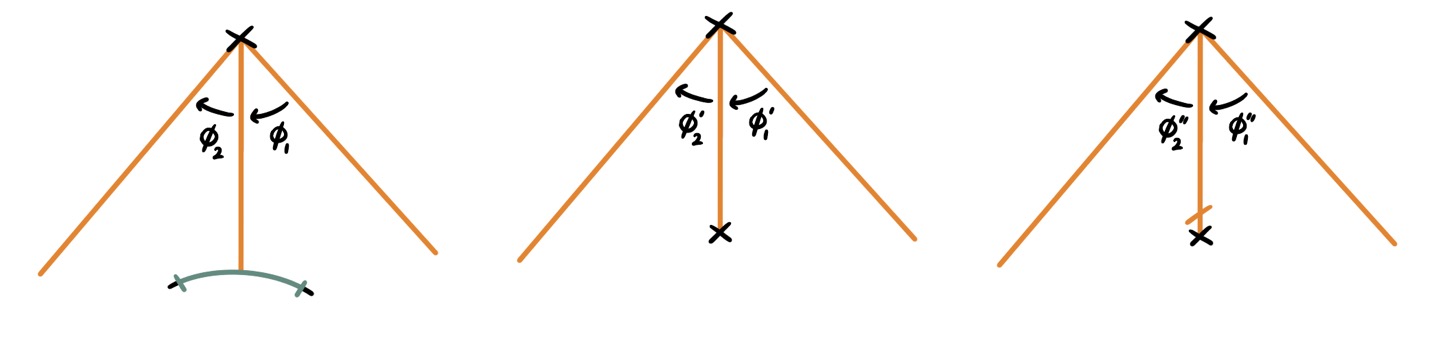}
	\caption{Composition of interior morphisms}
	\label{fig:tagcomp2}
\end{figure}
Similarly, the compositions for interior morphisms in Figure  \ref{fig:tagcomp2} will be defined as  
\begin{align*}
	\fm_2^\con(\phi_1,\phi_2) &=  \phi_1 \bullet \phi_2, \\
	\fm_2^\con(\phi_1',\phi_2') &= \frac{1}{2} \phi_1' \bullet \phi_2', \\
	\fm_2^\con(\phi_1'',\phi_2'') &= - \frac{1}{2} \phi_1'' \bullet \phi_2''.	
\end{align*}
Again, the same holds regardless of tagging at the interior marking defining the morphisms and interior number and tagging of the first and third arcs.
As niceness implies $(-1)^{\deg{\phi_1}}=1$, the formulas are similar to boundary cases.

To handle interior numbers and signs more efficiently, we introduce following notations. 
First, let us call a boundary or interior morphism a {\em basic morphism} (so it is not a unit nor a non-trivial linear combination of different types of morphisms).
\begin{defn}
For a basic morphism $\theta$, we define 
$$ \left[\theta\right] = \begin{cases}  1 &\text{if $\theta$ is an interior morphism,} \\ 0 &\text{otherwise.} \end{cases}$$
 For two basic morphisms $\theta_1 : \gamma_1 \rightarrow \gamma_2$ and $\theta_2 : \gamma_2 \rightarrow \gamma_3$, we define 
 $$\left<\theta_1, \theta_2\right> \deq \begin{cases}1 &\text{if $\theta_1$ and $\theta_2$ are concatenable boundary morphisms and $\nu(\gamma_2) = 1$,} \\ 1 &\text{if $\theta_1$ and $\theta_2$ are concatenable  interior morphisms and $\nu(\gamma_2) = 2$,} \\ 0 &\text{otherwise.} \end{cases}$$ 
 \end{defn} 
Here $ \left<\theta_1, \theta_2\right>$ measures whether disc can be folded between $\theta_1$ and $\theta_2$.
Thus, for the second and third cases of Figure  \ref{fig:tagcomp1} and Figure \ref{fig:tagcomp2}, we have $ \left<\theta_1, \theta_2\right> =1$
whereas for the first case we have $ \left<\theta_1, \theta_2\right> =0$. 

\begin{defn}
For a nice tagged arc system, we define a sign for a basic morphism as follows.
$$ \sigma(\theta) \deq \begin{cases}  0 &\text{if $\theta$ is a boundary morphism,} \\ \tau_1(p) & \text{if $\theta$ is an interior morphism $p^{\alpha, \tau_1}_{\beta, \tau_2}$.}  \end{cases}$$
From the nice condition, if $p^{\alpha, \tau_1}_{\beta, \tau_2}$ is an interior morphism, we have $\tau_1(p)=\tau_2(p)$.

We define a sign for a pair of basic morphisms by the tagging of  the other end point of the possible folded arc in the middle. That is, we define
$$\sigma(\theta_1, \theta_2) \deq \begin{cases} 0 &\text{if $\left<\theta_1, \theta_2\right> =0$,} \\
 \tau(q) &\text{if $\left<\theta_1, \theta_2\right> = 1$ and $q$ is the other interior marking endpoint of the middle arc $(\gamma_2, \tau)$.} \end{cases}$$  
\end{defn}

One can check that the previous concatenation formulas can now be written in a uniform way \eqref{eq:mncon} in Definition \ref{defn:FukayaCategoryForMarkedSurface}.

\begin{defn}
For a disc sequence $\overrightarrow{\theta} = (\theta_1, \dots, \theta_n)$, we define its {\em weight} and {\em sign} as follows.
\begin{align*}
	\Phi(\overrightarrow{\theta}) &\deq \sum_{i=1}^n \big(\left[\theta_i\right] + (1- \left[\theta_i\right])\left<\theta_i, \theta_{i+1}\right> \big), \\
	\Sigma(\overrightarrow{\theta}) &\deq \sum_{i=1}^n  \big( \sigma(\theta_i) ( 1- \left<\theta_i, \theta_{i+1}\right>) \big).
\end{align*}
\end{defn}
For a not-folded disc sequence $\overrightarrow{\theta}$, the weight counts the number of interior morphisms among $(\theta_1,\ldots, \theta_n)$.
For a folded disc sequence $\overrightarrow{\theta}$, the weight counts in addition the number of folded arcs attached to the boundary
(not attached to the interior marking). The sign $\Sigma$ counts the number of interior morphisms between notched arcs minus the number of folded notched arcs among them.

\begin{defn}
For a composition sequence $\overrightarrow{\phi} = (\phi_1, \dots, \phi_n)$ with value $\psi$, we define its {\em weight} and {\em sign} as follows.
\begin{align*}
	\Phi(\overrightarrow{\phi} ; \psi) &\deq \sum_{i=2}^{n - 1}\left[\phi_i\right] + \sum_{i=1}^{n - 1}\big(\left( 1-  \left[\phi_i\right] \right)\left<\phi_i, \phi_{i+1}\right>\big), \\
	\Sigma(\overrightarrow{\phi} ; \psi) &\deq \sigma(\psi, \phi_1) + \sigma(\phi_n, \psi) - \sigma(\phi_1)  +  \sum_{i=1}^{n - 1} \big( \sigma(\phi_i)(1-\left<\phi_i, \phi_{i+1}\right>)\big).
\end{align*}
 Here, if $(\psi, \phi_1)$ is folded, there exist $\theta$ such that $\psi = \phi_1 \bullet \theta$. In addition, we define 
 $$\left<\psi, \phi_1\right> \deq \left<\phi_1, \theta\right>, \quad \sigma(\psi, \phi_1) \deq \sigma(\phi_1, \theta).$$
 We define $\left<\phi_n, \psi\right>$ and $\sigma(\phi_n, \psi)$ in a similar way.  They are defined to be zero if not folded. 
\end{defn}
Now, we are ready to define the desired $A_{\infty}$-category.

\begin{defn}\label{defn:FukayaCategoryForMarkedSurface}
	Let $(S, M, O, \eta)$ be a graded marked orbi-surface and $\Gamma$ be a nice tagged arc system. Then, define an $\Aoo$-category $\cff_\Gamma(S, M, O, \eta)$ as follows.
	\begin{itemize}
		\item The set of objects is $\Gamma$.
		\item The basis for morphism space consists of boundary morphisms, interior morphisms, and the unit. More precisely, for distinct tagged arcs $\alpha, \beta$, the hom space is given as follows. Here, $\Theta(\alpha, \beta)$ is the set of basic morphisms from $\alpha$ to $\beta$.
		 \begin{align*}
			\ho_{\cff_\Gamma(S, M, O, \eta)}(\alpha, \alpha) &\deq \field\left<\Theta(\alpha, \alpha)\right> \oplus \field\left<e_\alpha\right> \\
			\ho_{\cff_\Gamma(S, M, O, \eta)}(\alpha, \beta) &\deq \begin{cases} \field\left<\Theta(\alpha, \beta)\right> &\text{ if $(\alpha, \beta)$ is not a thick pair} \\  0 &\text{ if $(\alpha, \beta)$ is a thick pair} \end{cases}
		\end{align*}
	\item The $\fm_1$ is set to be zero.
	\item For two concatenable boundary or interior morphisms $\theta_1$ and $\theta_2$, 
	\begin{equation}\label{eq:mncon}
	\fm^\con_2(\theta_1, \theta_2) \deq (-1)^{\deg{\theta_1} + \sigma(\theta_1, \theta_2)}\left(\frac{1}{2}\right)^{\left<\theta_1, \theta_2\right>}\theta_1\bullet \theta_2.\end{equation}
	\item Let $\overrightarrow{\phi} = (\phi_1, \dots, \phi_n)$ be a disc sequence with $\phi_i : \gamma_ i \rightarrow \gamma_{i+1}$ (See Figure \ref{fig:AooForOrbiSurface}(a) and (b)).
Let $\theta : \alpha \rightarrow \gamma_1$ and $\psi : \gamma_n \rightarrow \beta$ be boundary or interior morphisms concatenable with $\phi_1$ and $\phi_n$, \resp. Then,
	\begin{align*}
		\fm^\disc_n(\phi_1, \dots, \phi_n) &\deq (-1)^{\Sigma(\overrightarrow{\phi})}\left(\frac{1}{2}\right)^{\Phi(\overrightarrow{\phi}) - \left<\phi_n, \phi_1\right>}e_{\gamma_1}, \\
		\fm^\disc_n(\theta \bullet \phi_1, \dots, \phi_n) &\deq (-1)^{\Sigma(\overrightarrow{\phi}) + \deg{\theta} + \sigma(\theta, \phi_1)}\left(\frac{1}{2}\right)^{\Phi(\overrightarrow{\phi}) - \left<\theta, \phi_1\right>}\theta, \\
		\fm^\disc_n(\phi_1, \dots, \phi_n \bullet \psi) &\deq (-1)^{\Sigma(\overrightarrow{\phi}) + \sigma(\phi_n, \psi)}\left(\frac{1}{2}\right)^{\Phi(\overrightarrow{\phi}) - \left<\phi_n, \psi\right>}\psi.
	\end{align*}
	\item For a composition sequence $\overrightarrow{\phi} = (\phi_1, \dots, \phi_n)$ with value $\psi$ (see Figure \ref{fig:AooForOrbiSurface}(c)),
	$$\fm^\comp_n(\phi_1, \dots, \phi_n) \deq (-1)^{\Sigma(\overrightarrow{\phi} ; \psi)}\left(\frac{1}{2}\right)^{\Phi(\overrightarrow{\phi} ; \psi)}\psi.$$
	\item Let $\theta_i : \gamma_i \rightarrow \gamma_{i+1}$, $i=1, 2, 3$ be such that $\theta_1$ is a boundary morphism, $\theta_2$ and $\theta_3$ are interior morphisms, and $(\gamma_2, \gamma_4)$ is a thick pair. Then, there is another boundary morphism $\theta_1^\vee : \gamma_1 \rightarrow \gamma_4$. Here, we call the pair $(\theta_2, \theta_3)$ and the triple $(\theta_1 ; (\theta_2, \theta_3))$ a thick pair and thick triple, \resp. Then,
	$$\fm^\thick_3(\theta_1, \theta_2, \theta_3) \deq (-1)^{\deg{\theta_1} + 1 + \sigma(\theta_2) + \sigma(\theta_2, \theta_3)}\left(\frac{1}{2}\right)^{1 + \left<\theta_2, \theta_3\right>} \theta_1^\vee.$$
	\item Let $\theta_i : \gamma_i \rightarrow \gamma_{i+1}$, $i=1, 2, 3$ be such that $\theta_3$ is a boundary morphism, $\theta_1$ and $\theta_2$ are interior morphisms, and $(\gamma_1, \gamma_3)$ is a thick pair. Then, there is another boundary morphism ${}^\vee\theta_3 : \gamma_1 \rightarrow \gamma_4$. Here, we call the pair $(\theta_1, \theta_2)$ and the triple $((\theta_1, \theta_2) ; \theta_3)$ a thick pair and thick triple, \resp. Then,
	$$\fm^\thick_3(\theta_1, \theta_2, \theta_3) \deq (-1)^{\sigma(\theta_1) + \sigma(\theta_1, \theta_2)}\left(\frac{1}{2}\right)^{1 + \left<\theta_1, \theta_2\right>} {}^\vee\theta_3.$$
	\item The value of $\fm^\con, \fm^\disc, \fm^\comp$, and $\fm^\thick$ for other inputs are all zero. Then, for a sequence $\overrightarrow{\theta} = (\theta_1, \dots, \theta_n)$, we define $$\fm_n(\overrightarrow{\theta}) \deq \fm^\con_n(\overrightarrow{\theta}) + \fm^\disc_n(\overrightarrow{\theta}) + \fm^\comp_n(\overrightarrow{\theta}) + \fm^\thick_n(\overrightarrow{\theta}).$$
	\end{itemize}	
\end{defn}

\begin{remark}
	Let $(\theta_1; (\theta_2, \theta_3))$ be a thick triple. Then, since $\theta_2$ and $\theta_3$ are interior morphisms, we have $$\deg{\theta_1^\vee} = \deg{\theta_1} + \deg{\theta_2} + \deg{\theta_3} - 1 = \deg{\theta_1} - 1.$$ To force every interior morphism have degree $0$, $\theta_1$ cannot be an interior morphism. This is the reason why we need the second condition of thick condition.	
\end{remark}

Let us unravel the definition in some cases.
Suppose that a disc sequence $\overrightarrow{\phi}= (\phi_1, \dots, \phi_n)$ is not folded. Let $k$ be the number of interior morphisms among them,
and let $k'$ be the number of interior morphisms between notched arcs (recall that due to the nice condition either both $\gamma_i$ and $\gamma_{i+1}$ are notched at $p$ or both are not). Then $\fm_n^\disc$ can be expressed as, when $(\theta, \phi_1)$ and $(\phi_n, \psi)$ are not folded,
	\begin{align*}
		\fm^\disc_n(\phi_1, \dots, \phi_n) &= (-1)^{k'}\left(\frac{1}{2}\right)^k e_{\gamma_1}, \\
		\fm^\disc_n(\theta \bullet \phi_1, \dots, \phi_n) &= (-1)^{k' + |\theta|}\left(\frac{1}{2}\right)^k \theta, \\
		\fm^\disc_n(\phi_1, \dots, \phi_n \bullet \psi) &= (-1)^{k'}\left(\frac{1}{2}\right)^k \psi.
	\end{align*}
Intuitively, $\left(\frac{1}{2}\right)$ factor is a contribution from idempotents, and negative signs are from the notched arcs.

Suppose that a disc sequence $\overrightarrow{\phi}= (\phi_1, \dots, \phi_n)$ has $l_1$ folded arc attached to boundary markings and $l_2$ folded
arc attached to interior markings. Let $k$ be the number of interior morphisms among $(\phi_1, \dots, \phi_n)$.
Let $k'$ be the number of interior morphisms between notched arcs and $k''$ be the number of folding pairs among these notched arcs. 
Then, $\fm_n^\disc$ can be expressed as 
\begin{equation}
\fm^\disc_n(\phi_1, \dots, \phi_n) = (-1)^{k'-k''}\left(\frac{1}{2} \right)^{k+l_1- \left<\phi_n, \phi_1\right>} e_{\gamma_1}.
\end{equation}

Suppose that a composition sequence $\overrightarrow{\phi} = (\phi_1, \dots, \phi_n)$ with value $\psi$ is not folded. 
Let $j$ be the number of interior morphisms among  $(\phi_2, \dots, \phi_{n - 1})$ (namely, we ignore the first and last morphisms $\phi_1$ and $\phi_n$),
and let $j'$ be the number of interior morphisms between notched arcs among $(\phi_2, \dots, \phi_{n-1})$.
Then, $\fm_n^\comp$ can be expressed as 
$$\fm^\comp_n(\phi_1, \dots, \phi_n) = (-1)^{j'}\left(\frac{1}{2}\right)^j \psi.$$

Suppose that a composition sequence $\overrightarrow{\phi} = (\phi_1, \dots, \phi_n)$ with value $\psi$ has a folding but not along $\gamma_1$ nor $\gamma_n$. Then, define $j$ and $j'$ as above. Let $j''$ be the number of folding pairs among notched arcs at interior markings.
 Let $l_1$ (resp. $l_2$) be the number of folded arcs attached to the boundary markings (resp. interior markings).
$$\fm^\comp_n(\phi_1, \dots, \phi_n) = (-1)^{j'-j'' + \sigma(\phi_1) + \sigma(\phi_n)}\left(\frac{1}{2}\right)^{j+l_1} \psi.$$

Although the appearance of $\fm^\thick_3$ might look surprising, it has a geometric explanation.
An arc of interior number $1$ has a lift in the double covering surface, which is preserved under $\super$-action. After suitable perturbations of arcs involved in the operation $\fm^\thick_3$, one can see a disc sequence among them.
However, we will proceed with the above algebraic definition.

These data indeed form an $\Aoo$-category.
\begin{thm}\label{thm:ItIsAooCategory}
	Let $(S, M, O, \eta)$ be a graded marked orbi-surface and $\Gamma$ be a nice tagged arc system in it. Then, the data $\cff_\Gamma(S, M, O, \eta)$ defined in Definition \ref{defn:FukayaCategoryForMarkedSurface} is an $\Aoo$-category.
\end{thm}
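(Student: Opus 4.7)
The strategy is the standard one for topological Fukaya $A_\infty$-relations: verify
$$\sum_{i,j}(-1)^{\maltese_i}\fm_{n-j+1}\bigl(\theta_1,\ldots,\fm_j(\theta_i,\ldots,\theta_{i+j-1}),\ldots,\theta_n\bigr)=0$$
for every composable input by pairing up each nonvanishing summand with a cancelling partner coming from a single cut of an underlying immersed disc. When no interior morphism appears and all arcs in the input have $\nu=0$, this reduces to the HKK statement implicit in Theorem \ref{Theorem:MoritaEquivalencyOfTopologicalFukayaCategoryForSurfaces}, so new work is required only for summands that involve at least one interior morphism, one folded edge, or a thick operation.

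Organized by the types of the inner and outer operations, the nonvanishing nested terms fall into four families: (i) two $\fm^\con_2$'s composed, giving extended associativity, where the interior-morphism concatenation is well-defined thanks to the good condition at each $p\in O$; (ii) an $\fm^\con_2$ glued to the distinguished boundary- or interior-morphism output of an $\fm^\disc$ or $\fm^\comp$; (iii) an $\fm^\disc$ or $\fm^\comp$ nested inside a larger $\fm^\disc$ or $\fm^\comp$, in which the cancelling pair corresponds to the two ways of cutting the combined disc along an internal chord; and (iv) the thick operation $\fm^\thick_3$ combined with an $\fm^\con_2$ or a small disc producing the complementary boundary morphism $\theta_1^\vee$ or ${}^\vee\theta_3$. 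I would go through these cases by the type of the cut arc (honest interior, hanging at an orbifold point, folded along the boundary, folded along an orbifold point), check that the two contributions of each pair have the same target, and reduce the bookkeeping to three local identities on the weight--sign data: additivity of the weight $\Phi$ along the cut, a mod-$2$ additivity of the sign $\Sigma$, and a matching identity for the pair-sign $\sigma(\theta,\theta')$ at the endpoints that survive the cut. The weight identity is essentially combinatorial — each interior morphism and each boundary-side folded arc of the big disc lies in exactly one sub-piece — and the sign identity follows from Lemma \ref{Lemma:DegreeFormulae} together with the definition of $\sigma$ in terms of taggings; the Koszul shift $\maltese_i$ from the $A_\infty$ axiom then exactly absorbs the remaining $(-1)^{\deg{\theta_1}+\cdots}$ factors coming from $\fm^\con_2$.

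The main obstacle is the thick family, since the output $\theta_1^\vee$ of $\fm^\thick_3$ lives on the thick partner $\gamma_4\neq\gamma_2$ and so cannot come from an immersed disc in $(S,M\cup O)$ itself. I would handle this by viewing $\gamma_2$ and $\gamma_4$ as two orthogonal idempotents of a single virtual object — the geometric picture made rigorous in Section \ref{section:TFCofInvolutiveSurfaces} by passing to the double cover, where the thick pair lifts to a single arc and $\fm^\thick_3$ becomes an honest $\fm^\disc_3$. This lift explains both the weight $\bigl(\tfrac{1}{2}\bigr)^{1+\langle\theta_2,\theta_3\rangle}$ (one $\tfrac{1}{2}$ from the idempotent splitting, the rest from any folding along the thick arc) and the sign $(-1)^{\deg{\theta_1}+1+\sigma(\theta_2)+\sigma(\theta_2,\theta_3)}$ (the $+1$ is the shift coming from swapping partners, $\sigma(\theta_2)$ tracks the tagging of the interior input, $\sigma(\theta_2,\theta_3)$ tracks the folded endpoint), and makes its cancelling partner — an $\fm^\con_2$ with one factor being $\theta_1^\vee$ — visible geometrically. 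Finally, one must verify that the good and full conditions (Definitions \ref{defn:GoodCondition} and \ref{defn:FullCondition}) leave no unpaired summand: the good condition forbids the only interior-marking configurations that would produce unmatched triples of concatenated interior morphisms, and the full condition forces each disc region of $S\setminus\bigcup\underline{\Gamma}$ to carry exactly one distinguishing feature, which is precisely what is needed to put the disc/composition operations into bijection with cut pairs.
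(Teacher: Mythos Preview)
Your overall strategy matches the paper's: organize nonvanishing nested terms by the types of the inner and outer operations and verify cancellation in each configuration, reducing the bookkeeping to additivity identities for $\Phi$ and $\Sigma$ along cuts (the paper records these as Lemmas \ref{Lemma:PhiSigmaDD}--\ref{Lemma:PhiSigmaT}). However, your treatment of the thick family has a genuine gap.

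You propose to handle $\fm^\thick_3$ by passing to the double cover, where the thick pair lifts to a single arc and the operation becomes an honest disc. But the correspondence with the double cover (Theorem \ref{theorem:AooOrbifoldAndInvolutive}) is established only for \emph{involutive} tagged arc systems, and an involutive system has no interior morphisms at all --- hence no thick triples and no $\fm^\thick$. The lift you appeal to therefore does not exist in the situations where $\fm^\thick$ actually occurs; indeed, for a non-involutive system the lifts of arcs hanging at the same $p\in O$ intersect at $\tilde p$, so you do not even get an arc system upstairs. Moreover, the paper's reduction of general tagged arc systems to involutive ones (Lemma \ref{lemma:MakeInvolutive}, via Lemma \ref{Lemma:TwistedComplex}) already presupposes Theorem \ref{thm:ItIsAooCategory}, so invoking the cover here would be circular. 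The paper is explicit about this: after introducing $\fm^\thick$ it notes that a geometric explanation exists but ``we will proceed with the above algebraic definition,'' and Section \ref{sec:Aooness} verifies the thick cancellations directly (see \S\ref{Appendix:ThickTriple} and Lemma \ref{Lemma:PhiSigmaT}). Your identification of the cancelling partner as ``an $\fm^\con_2$ with one factor being $\theta_1^\vee$'' is also too narrow: in the actual case analysis $\fm^\thick$ cancels against combinations such as $\fm^\disc\circ\fm^\con$ and $\fm^\comp\circ\fm^\thick$, depending on the surrounding configuration.

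A second, smaller issue: the cancellation is not always a matter of pairs. When a disc splits into a disc and a composition sequence (equations \eqref{eq:DCComp}--\eqref{eq:DCConR}), \emph{three} terms cancel with weights $\tfrac{1}{2}-\tfrac{1}{4}-\tfrac{1}{4}$, and similar triples recur throughout the composition-sequence cases in \S\ref{Appendix:CompositionSequence}. Your outline (``pairing up each nonvanishing summand with a cancelling partner'') glosses over this, but these non-pairwise cancellations are precisely why the $\tfrac{1}{2}$ weights are forced, and they require the explicit enumeration the paper carries out.
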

Its proof will be given in Section \ref{sec:Aooness}, and we only illustrate the idea here (without sign).

Consider a disc $D$ that defines a disc sequence  
as in Figure \ref{fig:AooForOrbiSurface}(a) or (b). Suppose that there is a tagged arc $\alpha$ on $D$ (in the tagged arc system) that divides $D$ into two pieces $D_1$ and $D_2$.
We will consider how this can be realized as a composition of two $\Aoo$-operations.

We can divide this into three cases, depending the number of ends of $\alpha$ lying on  folded interior markings of $D$.
The first case (i) is when none of the ends of $\alpha$ lie on  folded interior markings of $D$, and 
we will see that  this corresponds to the composition of two disc sequences.
The second case (ii) is when exactly one end lies on folded interior markings $D$,
and we will see that this corresponds to the composition of a disc sequence and a composition sequence.
The last case is when two ends lie on folded interior markings,
but this is excluded by the assumption of our tagged arc system that at least one of $D_1$ and $D_2$ must have an unmarked boundary component
(but this cannot happen because $D$ is a disc sequence).
\begin{itemize}
\item {\bf (Disc splits into two discs)} 
This is the case (i) as illustrated in Figure \ref{fig:DiscSequenceDecomposition}(a).
In this case, the arguments are similar to that of \cite{HKK17}, except that we allow folded discs as in Figure \ref{fig:AooForOrbiSurface}(b).
Suppose the inputs for the $\Aoo$-formula  are 
$$(\phi_1,\cdots, \phi_i, \psi_1, \cdots, \psi_n \bullet \phi_{i+1}, \cdots, \phi_m).$$
When $i > 0$, the splitting as in Figure \ref{fig:DiscSequenceDecomposition}(a) produces the following two canceling terms.
\begin{equation}\label{eq:dd}
\fm_k(\phi_1,\cdots,\phi_i, \fm_n(\psi_1,\cdots,\psi_n \bullet \phi_{i+1}),\cdots,\phi_m), \fm_{m+n-1}(\phi_1,\cdots, \fm_2(\phi_i,  \psi_1),\cdots,\psi_n \bullet \phi_{i+1} ,\cdots,\phi_m)
\end{equation}
In the case of $i=0$, two canceling terms are given by
\begin{equation}\label{eq:dd2} 
\fm_m( \fm_n(\psi_1,\cdots,\psi_n \bullet \phi_{1}),\cdots,\phi_m), \fm_{n}(\psi_1,\cdots, \fm_m( \psi_n \bullet \phi_1,\cdots,\phi_m))
\end{equation}
We will check later that they have the same weights (the same factor of $\frac{1}{2}$) and opposite signs.

\def\mySize{0.3\linewidth}
 \begin{figure}[h!]
 	\centering
 	\begin{subfigure}[b]{\mySize}
 		\includegraphics[width=\linewidth]{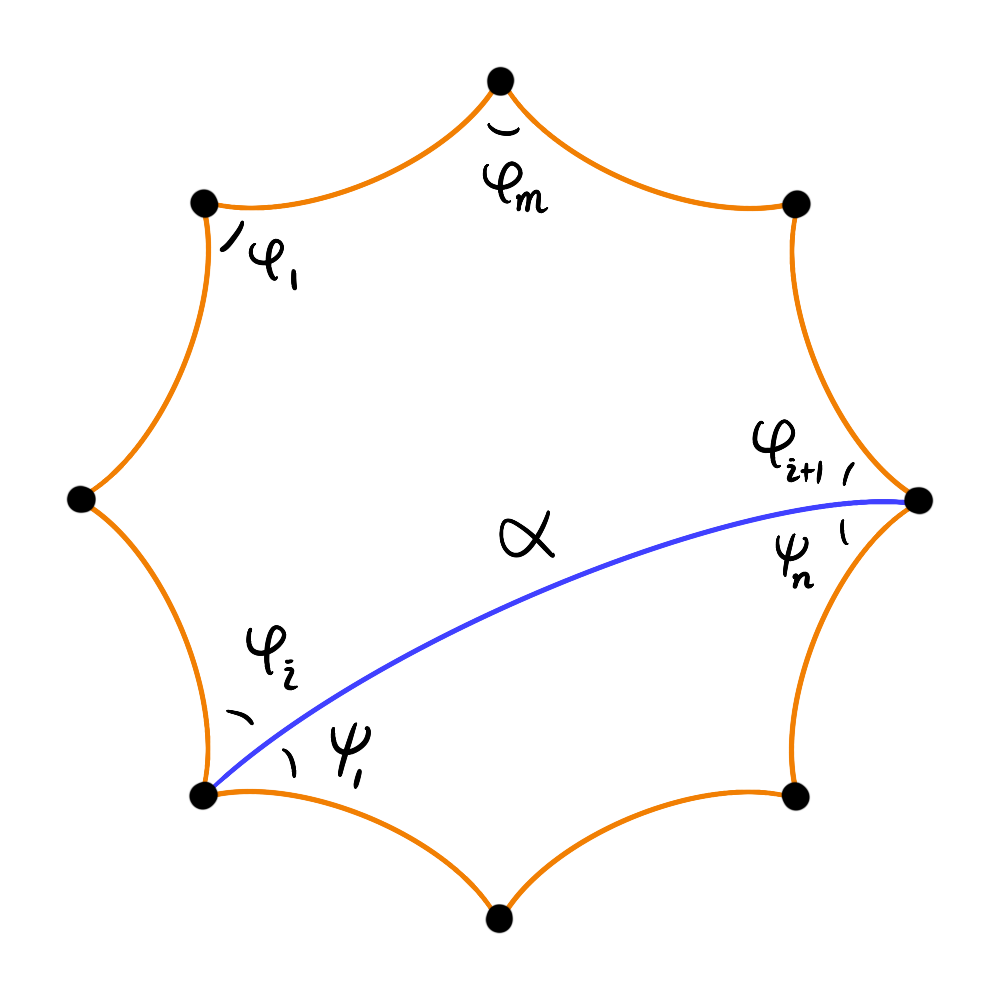}
 		\caption{}
 	\end{subfigure}
 	\begin{subfigure}[b]{\mySize}
 		\includegraphics[width=\linewidth]{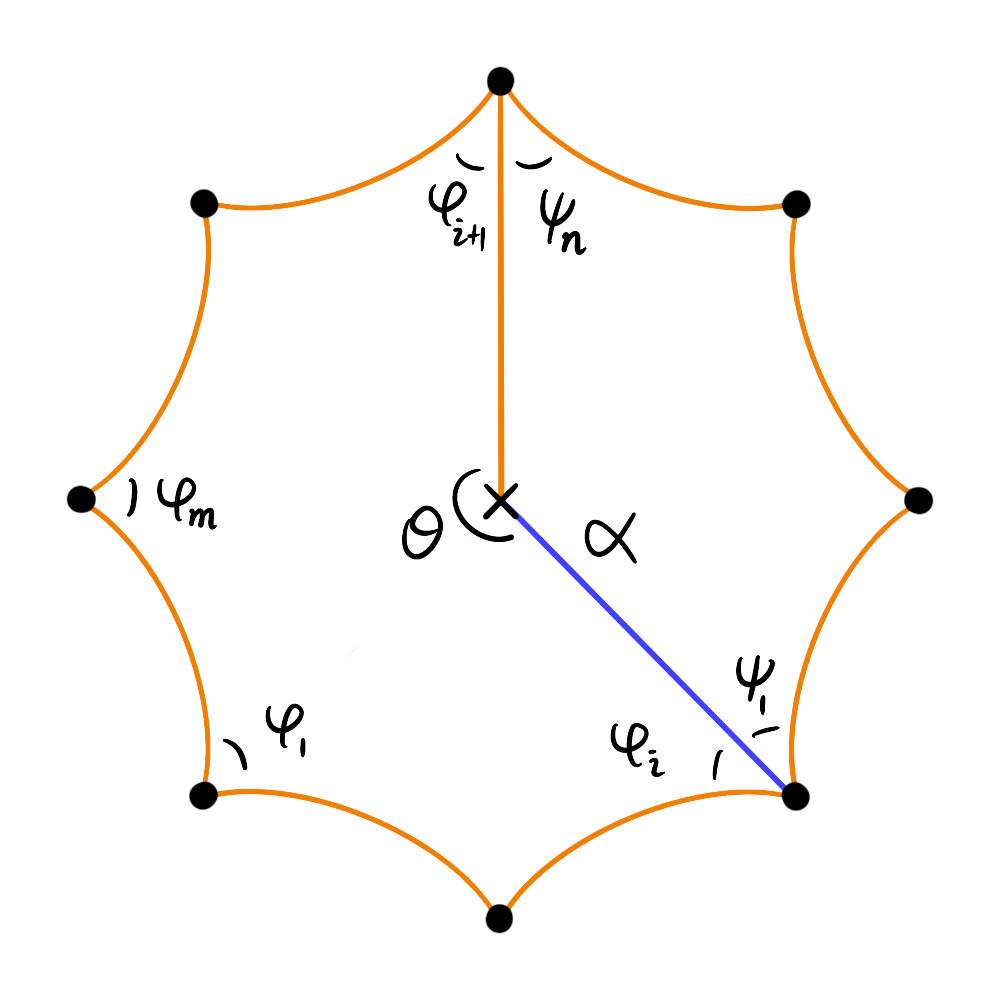}
 		\caption{}
 	\end{subfigure}
	\caption{Two ways of disc sequence splitting}
	\label{fig:DiscSequenceDecomposition}
\end{figure}

\item {\bf (Disc splits into composition and disc sequences)}
Let us discuss the case (ii) as illustrated in Figure \ref{fig:DiscSequenceDecomposition}(b).
This gives rise to the following three canceling terms
Suppose the inputs for the $\Aoo$-formula are 
$$(\phi_1,\cdots, \phi_i, \psi_1, \cdots, \psi_n, \phi_{i+1}, \cdots, \phi_m).$$
Then, we have the following three possible quadratic expressions.
\begin{equation}\label{eq:DCComp}
\fm_{m+1}^\disc(\phi_1,\cdots,\phi_i, \fm_n^\comp(\psi_1,\cdots,\psi_n), \phi_{i+1},\cdots,\phi_m)
\end{equation}
\begin{equation}\label{eq:DCConL}
\fm_{m+1}^\disc(\phi_1,\cdots,\fm_2^\con(\phi_i, \psi_1),\cdots,  \psi_n, \phi_{i+1} ,\cdots,\phi_m)
\end{equation}
\begin{equation}\label{eq:DCConR}
\fm_{m+1}^\disc(\phi_1,\cdots,\phi_i, \psi_1,\cdots, \fm_2^\con(\psi_n ,\phi_{i+1}),\cdots,\phi_m)
\end{equation}
Figure \ref{fig:DiscSequenceDecomposition}(b) represents the expression \eqref{eq:DCComp} and Figure \ref{fig:DiscDecomposition2}(a) and (b) represent the folded disc sequences for the expressions \eqref{eq:DCConL} and \eqref{eq:DCConR}, respectively.

The interior marking (at the output of $\fm_n^\comp$) contributes weight $\frac{1}{2}$ to $\fm_{m+1}^\disc$ operation, but weight $1$ to $\fm_n^\comp$ in the first formula. So the total weight is $\frac{1}{2}$.
In the second and third formula, both $\fm_{m+1}^\disc$ and $\fm_2^\con$ have weight $\frac{1}{2}$, so the total weight is $\frac{1}{4}$. Thus they are canceled in the form $\frac{1}{2} - \frac{1}{4} - \frac{1}{4}$.
Therefore, it is crucial for the $\Aoo$-identities that we have  these weights.
Here, we assume that $1<i<m$. The other cases have to be handled differently.
\def\mySize{0.3\linewidth}
 \begin{figure}[h!]
	\centering
	\begin{subfigure}[b]{\mySize}
		\includegraphics[width=\linewidth]{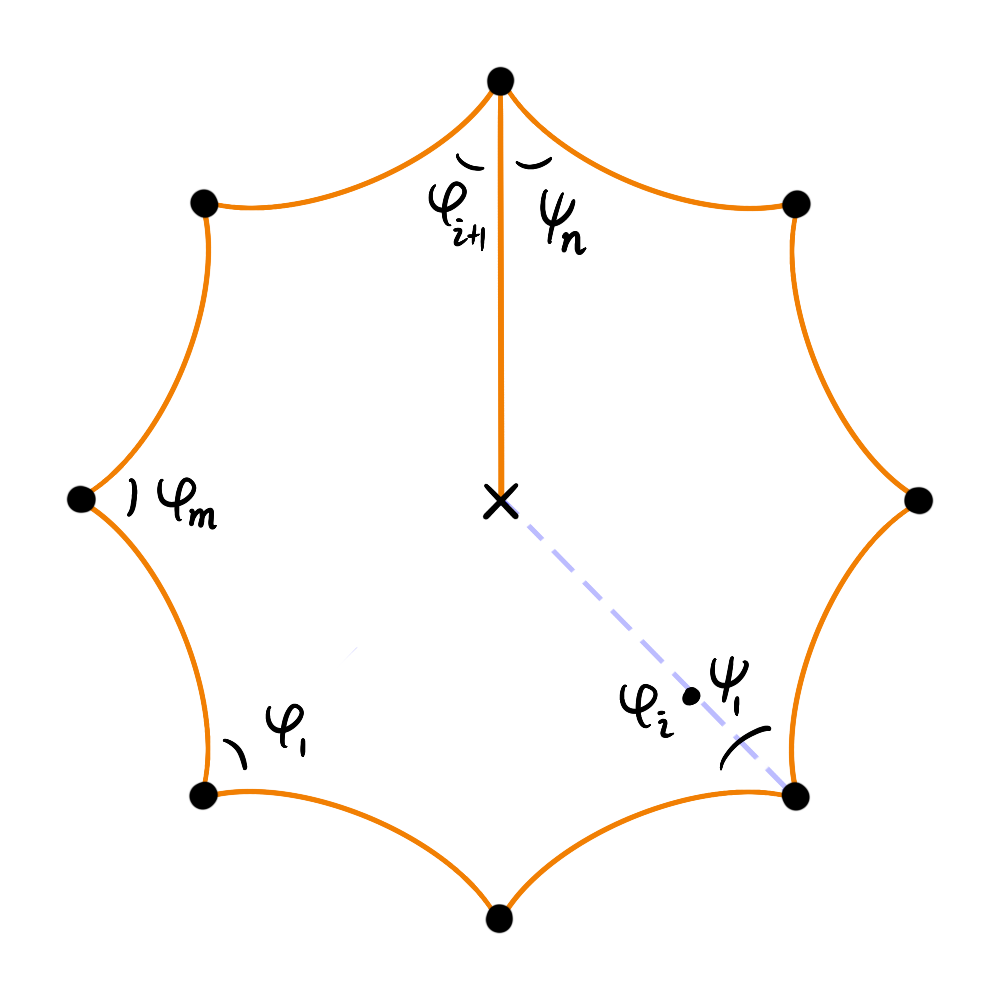}
		\caption{}
	\end{subfigure}
	\begin{subfigure}[b]{\mySize}
		\includegraphics[width=\linewidth]{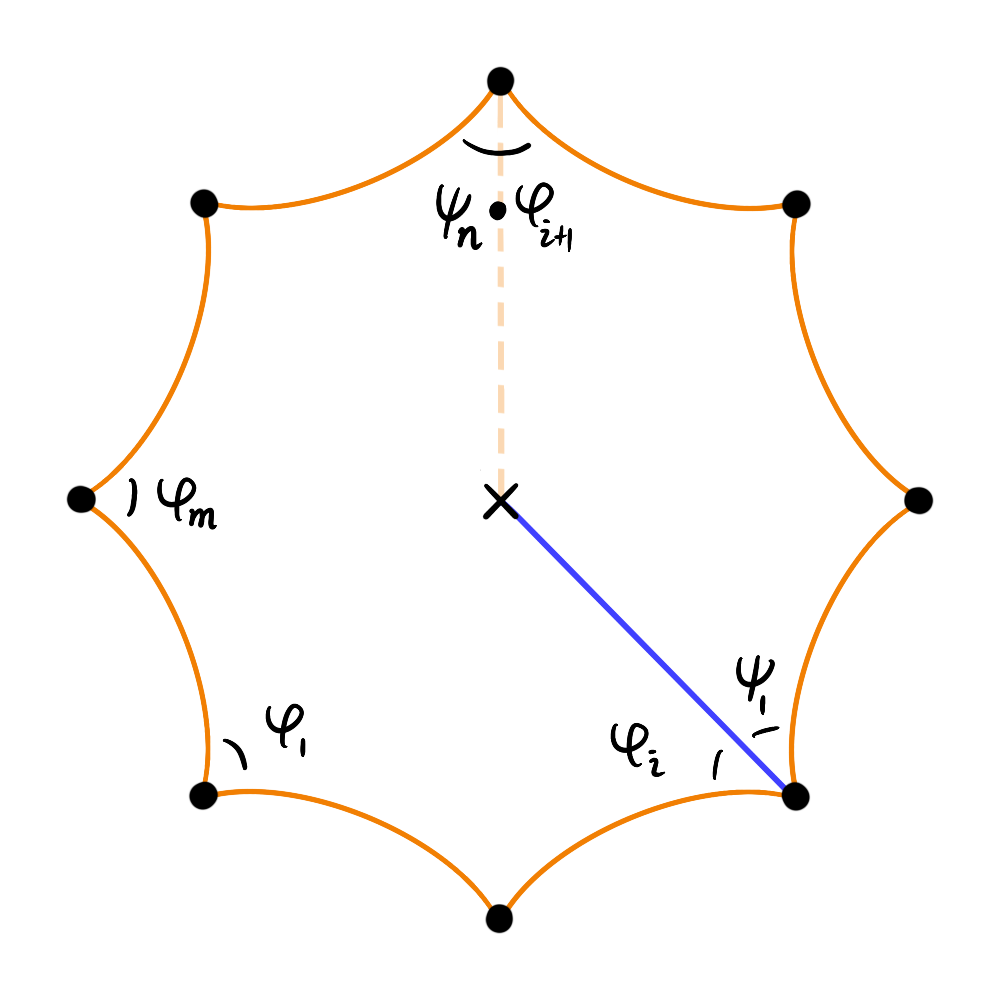}
		\caption{}
	\end{subfigure}
	\caption{Folded discs for $\Aoo$-identity of case (ii)}
	\label{fig:DiscDecomposition2}
\end{figure}
\end{itemize}

Let us also illustrate other properties of our $\Aoo$-identities.

\begin{itemize}
	\item {\bf (Composition sequence splits)}
	The basic idea is similar to the case of disc sequence, but there are more variations depending on whether a splitting involves the first or last input of a composition sequence or not. A composition sequence can be split in three ways : (iii) one disc sequence and one composition sequence, (iv) two composition sequences, and (v) one disc sequence and a disc which is neither a disc sequence nor a composition sequence.
	\def\mySize{0.245\linewidth}
	\begin{figure}[h!]
		\centering
		\begin{subfigure}[b]{\mySize}
			\includegraphics[width=\linewidth]{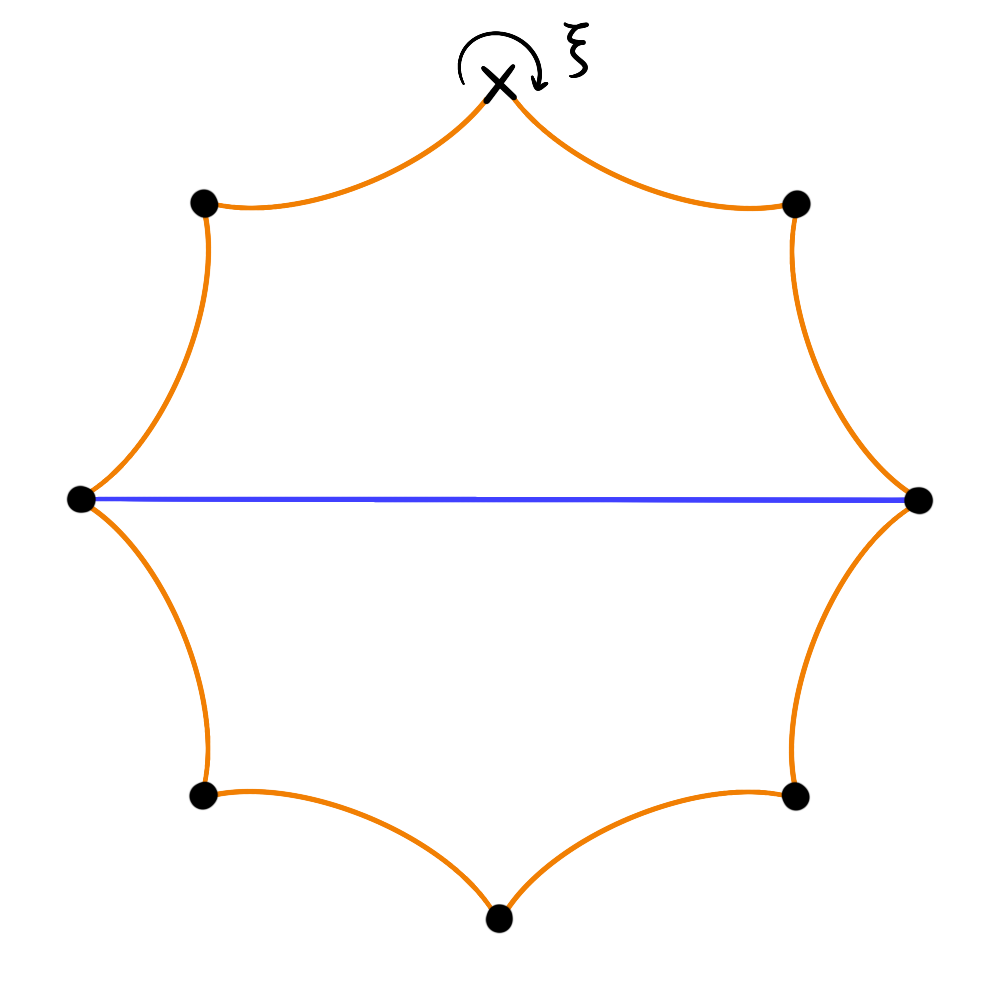}
			\caption{}
		\end{subfigure}
		\begin{subfigure}[b]{\mySize}
			\includegraphics[width=\linewidth]{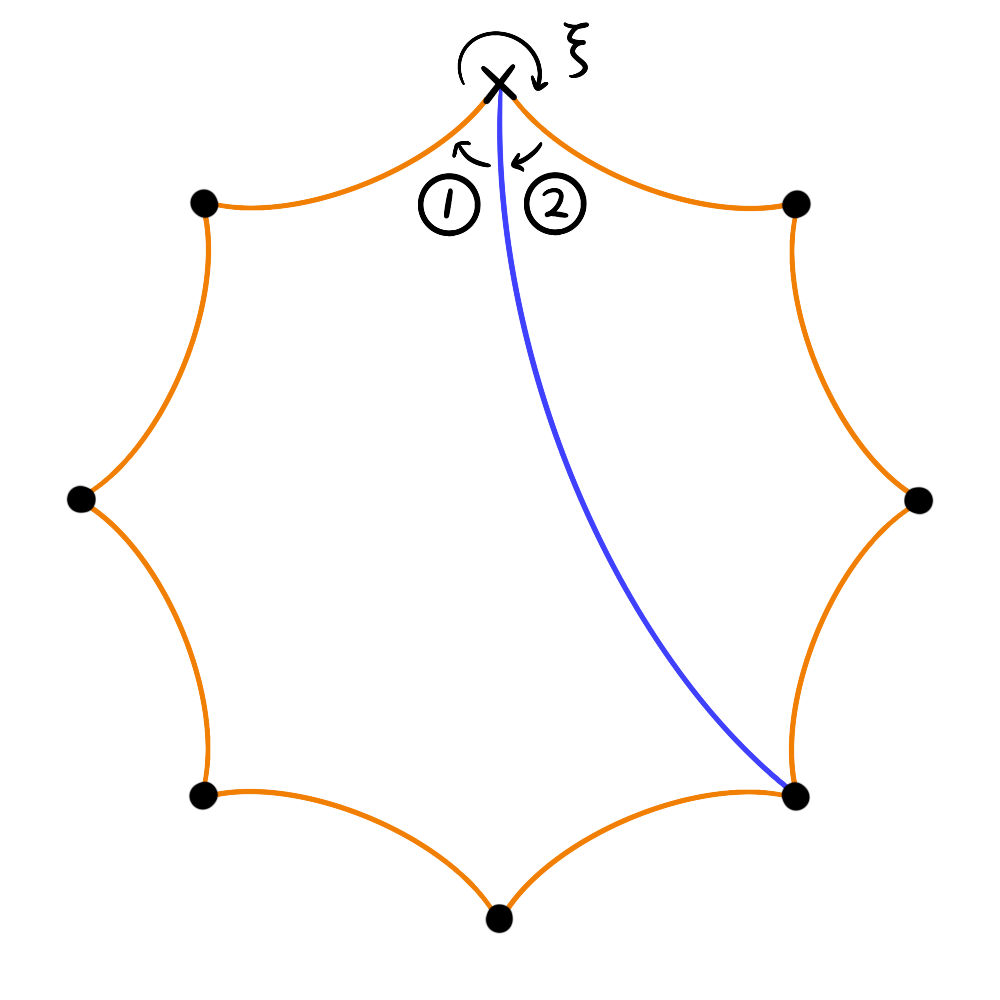}
			\caption{}
		\end{subfigure}
		\begin{subfigure}[b]{\mySize}
			\includegraphics[width=\linewidth]{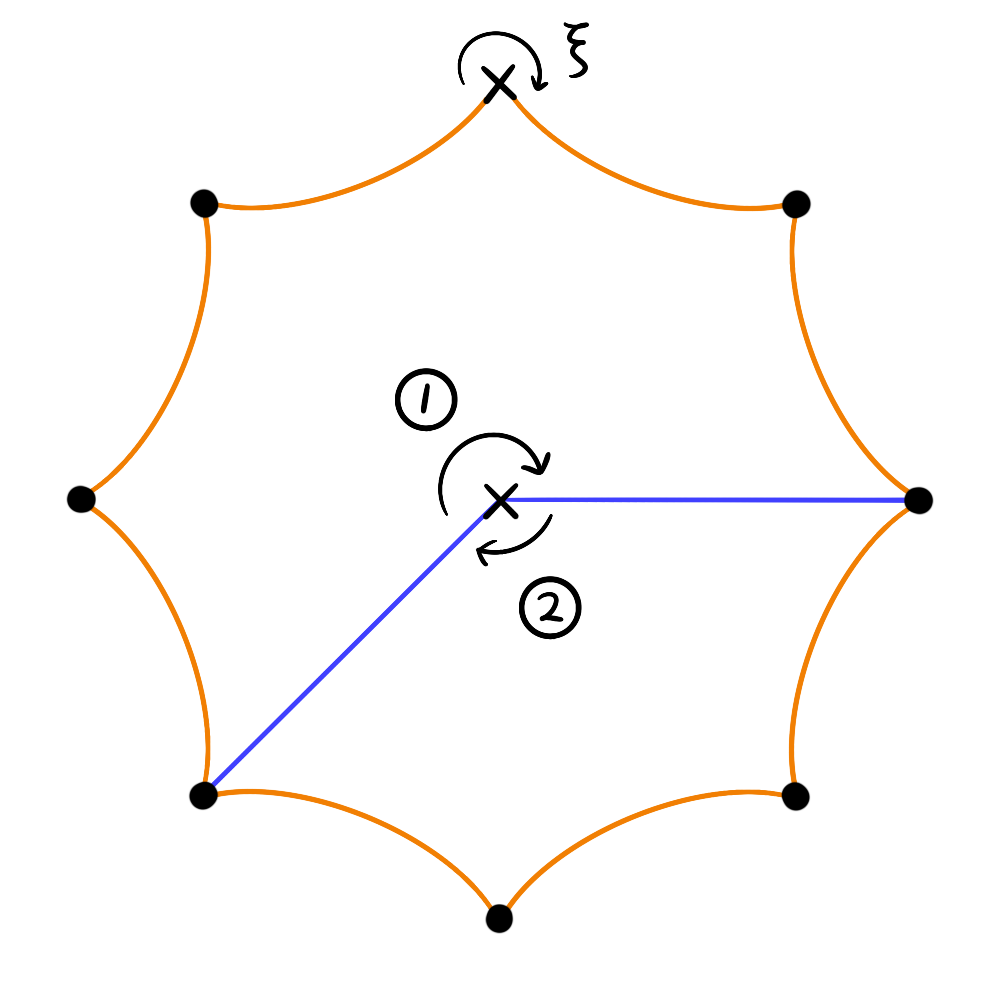}
			\caption{}
		\end{subfigure}
		\begin{subfigure}[b]{\mySize}
			\includegraphics[width=\linewidth]{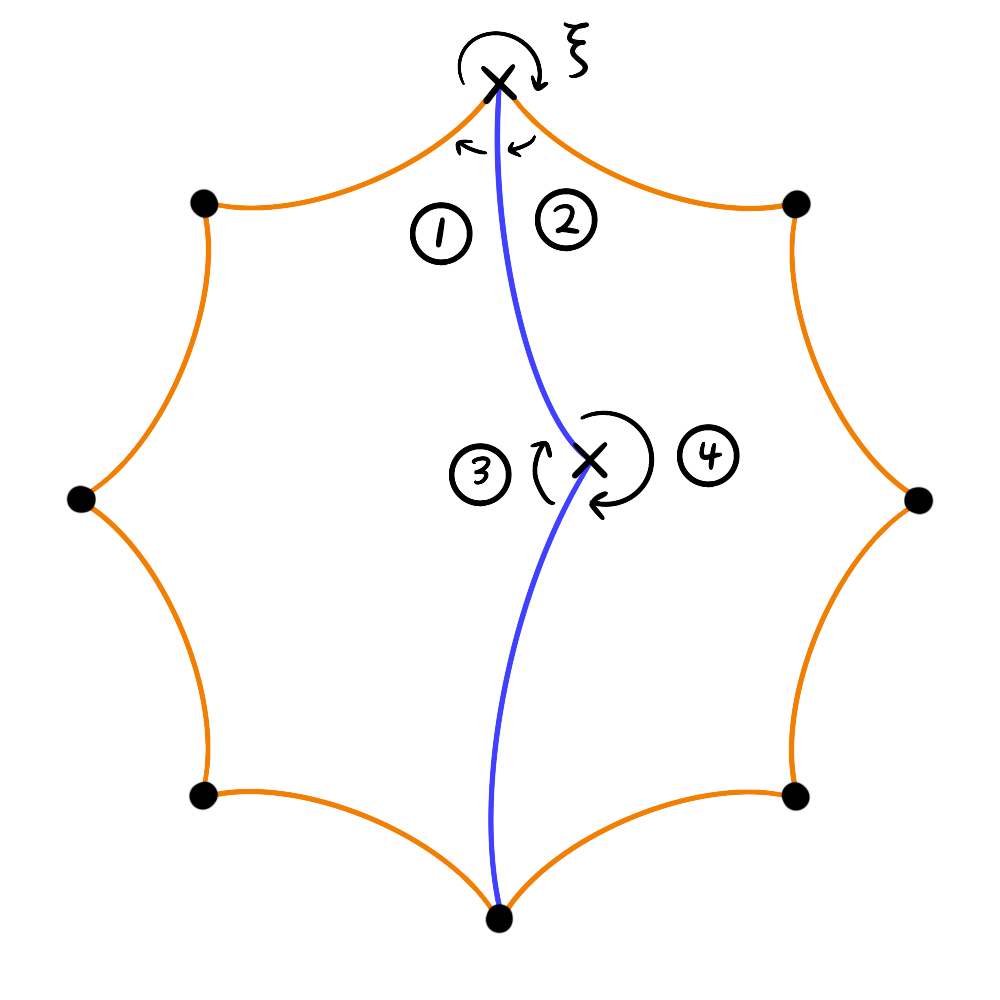}
			\caption{}
		\end{subfigure}
		\caption{Splitting of composition sequences}
		\label{fig:CompDecomposition1}
	\end{figure}
	\begin{lemma}\label{lem:comdecomp}
		A composition sequence splits into the following cases as in Figure \ref{fig:CompDecomposition1}.
		A disc $D$ of a composition sequence with an output $\xi$ decomposes in one of the following way. Let $\alpha$ be the dissecting arc.
		\begin{enumerate}
			\item Figure \ref{fig:CompDecomposition1}(a), when $\alpha$ does not meet neither folded interior markings nor the output $\xi$. The disc $D$ decomposes as (iii).
			\item Figure \ref{fig:CompDecomposition1}(b), when $\alpha$ does not meet folded interior markings but meet the output $\xi$. The disc $D$ decomposes as (iii).
			This divides into two subcases: (b)-1 if {\em $\textcircled{1}$} is an interior morphism (hence {\em $\textcircled{2}$} is a non-morphism), and the case (b)-2 on the other case.
			\item Figure \ref{fig:CompDecomposition1}(c), when $\alpha$ meets a folded interior marking but does not meet the output $\xi$. This has two subcases: (c)-1 If {\em $\textcircled{1}$} is an interior morphism, the disc $D$ decomposes as (iv), (c)-2 if {\em $\textcircled{2}$} is an interior morphism, then the disc $D$ decomposes as (v).
			\item Figure \ref{fig:CompDecomposition1}(d), when $\alpha$ meets both a folded interior marking and the output $\xi$. This divides into four subcases: the disc $D$ decomposes as (v), (iv), (iv), and (v), \resp.
			$$\begin{cases}
			 (d)-1 & \text{{\em $\textcircled{1}$, $\textcircled{3}$} are interior morphisms (hence {\em $\textcircled{2}$, $\textcircled{4}$} are non-morphisms)}, \\
			 (d)-2 & \text{{\em $\textcircled{2}$, $\textcircled{3}$} are interior morphisms (hence {\em $\textcircled{1}$, $\textcircled{4}$} are non-morphisms)}, \\
			 (d)-3 & \text{{\em $\textcircled{1}$, $\textcircled{4}$} are interior morphisms (hence {\em $\textcircled{2}$, $\textcircled{3}$} are non-morphisms)}, \\
			 (d)-4 & \text{{\em $\textcircled{2}$, $\textcircled{4}$} are interior morphisms (hence {\em $\textcircled{1}$, $\textcircled{3}$} are non-morphisms)}.
			 \end{cases}$$
		\end{enumerate}
	\end{lemma}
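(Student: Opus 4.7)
The dissecting arc $\alpha$ splits $D$ into two sub-discs $D_1$ and $D_2$, and its two endpoints lie on markings of $D$. These endpoints are either on boundary markings, on the output interior marking carrying $\xi$, or on interior markings at which $D$ is folded; the four-fold topological case split (a)--(d) exhausts these possibilities. My strategy is to run through each case and determine the type of each $D_i$ by inspecting its corners, using the fact that a disc sequence has every corner an interior or boundary morphism while a composition sequence has exactly one ``non-morphism'' corner, whose reverse direction is the output interior morphism; a sub-disc with two or more non-morphism corners is neither.

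\textbf{Key observation at new interior corners.} When $\alpha$ ends at an interior marking $q$, it creates two new corners at $q$, one in each sub-disc, sitting between $\alpha$ and a neighboring arc of $D$. By Definition \ref{defn:intmor} together with Lemma \ref{lemma:SumOfIntersectionIndex}, exactly one of these two new corners is an interior morphism and the other is a non-morphism, since $i_q(\alpha,\gamma) + i_q(\gamma,\alpha) = 1$ forces a unique admissible direction for each pair of arcs meeting at $q$. In case (a) no new interior corners are produced, so the sub-disc containing $\xi$ retains the original non-morphism output and is a composition sequence while the other sub-disc is a disc sequence, giving type (iii). In case (b), the new corner at $\xi$ is an interior morphism on one side and a non-morphism on the other; subcase (b)-1 makes the corresponding sub-disc a disc sequence paired with a composition sequence, and (b)-2 swaps the roles, again yielding type (iii). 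In case (c), $\xi$ sits entirely on one sub-disc while the new corners at the folded marking distribute as above: (c)-1 places one non-morphism on each side, yielding two composition sequences, and hence type (iv); (c)-2 leaves the $\xi$-side with two non-morphism corners---a disc that is neither---together with a disc sequence, giving type (v). Finally, case (d) is the independent superposition of (b) and (c), since $\alpha$ now has two interior endpoints at distinguished markings; each of the four subcases (d)-1 through (d)-4 is read off from the resulting distribution of non-morphism corners, matching the types (v), (iv), (iv), (v) claimed.

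\textbf{Main obstacle.} The routine part is the topology: the two sub-discs clearly inherit their corners from those of $D$, plus the two new corners at the endpoints of $\alpha$. The substantive work is the bookkeeping of non-morphism corners in each subcase, since a sub-disc is a disc sequence iff it has zero non-morphism corners, a composition sequence iff it has exactly one (which is then forced to be the output), and neither iff it has two or more. This count reduces entirely to the observation above that at each interior endpoint of $\alpha$ exactly one of the two new corners is a non-morphism, combined with the fact that the original non-morphism at $\xi$ remains on the $\xi$-containing sub-disc in cases (a) and (c), whereas in cases (b) and (d) the endpoint of $\alpha$ at $\xi$ replaces that corner by two new candidate corners whose morphism/non-morphism status is governed by the tagging of $\alpha$ at $\xi$. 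Once this accounting is carried out, each subcase matches the types claimed in Figure \ref{fig:CompDecomposition1}, completing the proof.
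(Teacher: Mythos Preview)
Your counting argument is the right idea, and in fact the paper gives no proof of this lemma at all---it is stated as an organizing classification, with the sentence ``In Section~\ref{sec:Aooness}, we will spell out the corresponding disc/composition sequences and compute related weights and signs'' immediately after, and the actual content is dispersed across Lemmas~\ref{Lemma:PhiSigmaDCM}--\ref{Lemma:PhiSigmaXDL}. So you are supplying a conceptual argument where the paper supplies none.

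There is, however, a genuine gap in your key observation. Your justification ``$i_q(\alpha,\gamma)+i_q(\gamma,\alpha)=1$ forces a unique admissible direction for each pair'' works at a \emph{folded} interior marking, because there the two new corners are between $\alpha$ and the \emph{same} arc $\gamma$ (the folded arc), just in opposite directions. But at the output marking $p$ in cases (b) and (d), the two new corners are between $\alpha$ and two \emph{different} arcs, namely $\gamma_1$ and $\gamma_{n+1}$: one sub-disc has the corner $(\alpha\to\gamma_1)$ and the other has $(\gamma_{n+1}\to\alpha)$. A priori these could both be non-morphisms (if $\gamma_1<\alpha<\gamma_{n+1}$ in the good linear order at $p$, then $p^{\gamma_1}_\alpha$ and $p^\alpha_{\gamma_{n+1}}$ exist while neither of the corners you need exists), which would give type (iv) rather than (iii). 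What rules this out is the geometric constraint that $\alpha$ lies in $D$'s angular sector at $p$: since $\xi:\gamma_1\to\gamma_{n+1}$ is a morphism, we have $\gamma_1<\gamma_{n+1}$ in the good order, and $D$'s sector is the clockwise arc from $\gamma_{n+1}$ to $\gamma_1$, which is precisely where the linearization of the cyclic order is cut. Hence $\alpha$ is either before $\gamma_1$ or after $\gamma_{n+1}$ in the good order, never strictly between them, and exactly one of the two corners is a morphism. You should invoke the good condition (Definition~\ref{defn:GoodCondition}) and this positional argument explicitly, rather than the single-pair identity.
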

	In Section \ref{sec:Aooness}, we will spell out the corresponding disc/composition sequences and compute related weights and signs.
	
	\item {\bf (Thick pairs)} Suppose that we are given a disc with two markings and one interior marking $p$. Let $\alpha^+, \alpha^-, \beta$, and $\gamma$ be tagged arcs on the disc as in Figure \ref{fig:ThickAooExample}. Here, $(\alpha^+, \alpha^-)$ is a thick pair and assume $p$ defines $p^{\alpha^+}_\beta$ and $p^\beta_{\alpha^-}$. Then, we have one disc sequence $(p^{\beta}_{\alpha^-}, \theta^-, \phi)$ and one composition sequence $(\theta^+, \phi ; p^{\alpha^+}_\beta)$. In particular, $\fm_2(p^{\alpha^+}_\beta, \fm_3(p^\beta_{\alpha^-}, \theta^-, \phi)) = \pm\frac{1}{2}p^{\alpha^+}_\beta$. Then, there has to be a term cancel this. This is why we need $\fm^\thick$ operation. Indeed, we have $$\fm_2(\fm_3(p^{\alpha^+}_\beta, p^\beta_{\alpha^-}, \theta^-), \phi) = \pm\frac{1}{2}\fm_2(\theta^+, \phi) = \pm\frac{1}{2}p^{\alpha^+}_\beta.$$

	\def\mySize{0.3\linewidth}
	\begin{figure}[h!]
		\centering
		\includegraphics[width=\mySize]{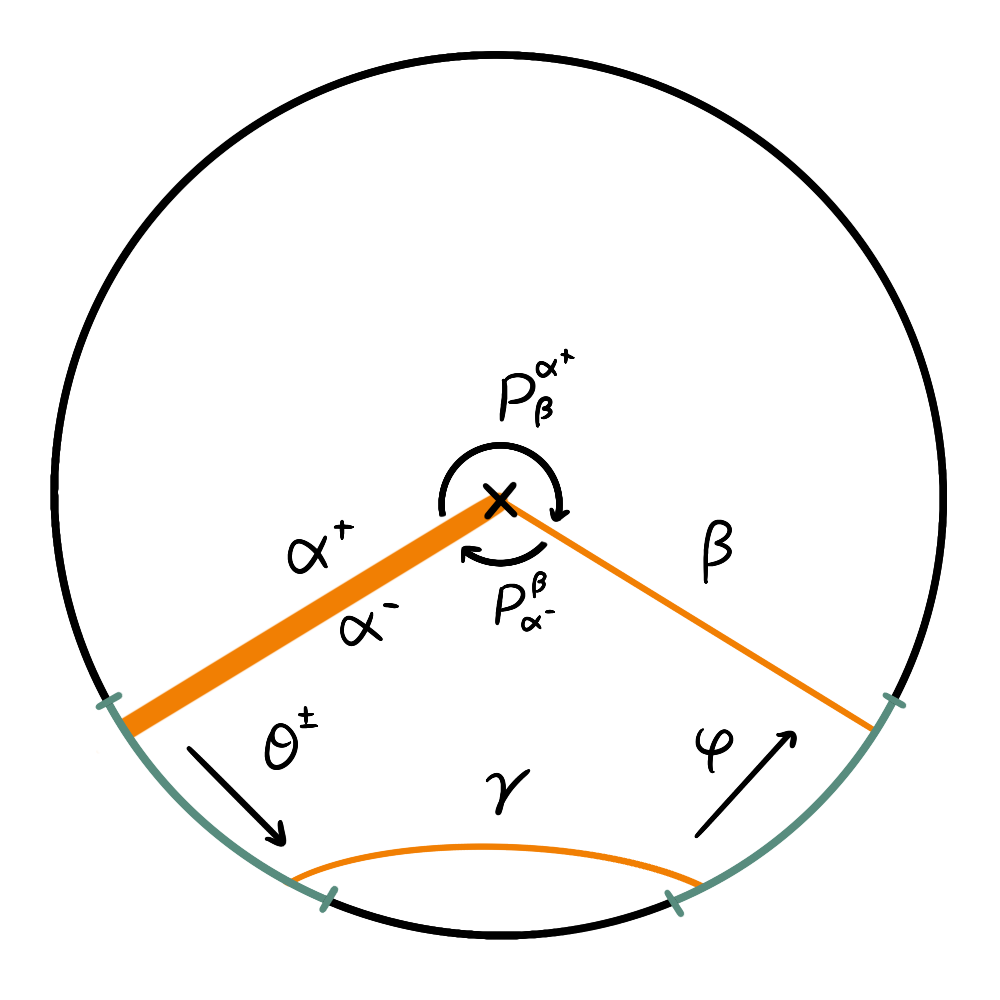}
		\caption{Example of $\Aoo$-identity including $\fm^\thick$}
		\label{fig:ThickAooExample}
	\end{figure}

	\item {\bf (Negative sign for notched interior morphisms)} It is not immediately clear why these extra signs are necessary.
	One important usage is for the generation of the Fukaya category. This will be discussed in complete generality in the next section, so let us only explain the idea by
	a single example. Consider a disc with one interior marking, and a tagged arc system as in Figure \ref{fig:TwistedComplexExample}. Here, we have the thick pair  $(\gamma_2^+,\gamma_2^-)$,
	and morphisms $\theta_1^\pm \in \ho^1(\gamma_1,\gamma_2^\pm), \theta_2^\pm \in \ho^1(\gamma_2^\pm, \gamma_3)$.
	\def\mySize{0.3\linewidth}
	\begin{figure}[h!]
		\centering
		\includegraphics[width=\mySize]{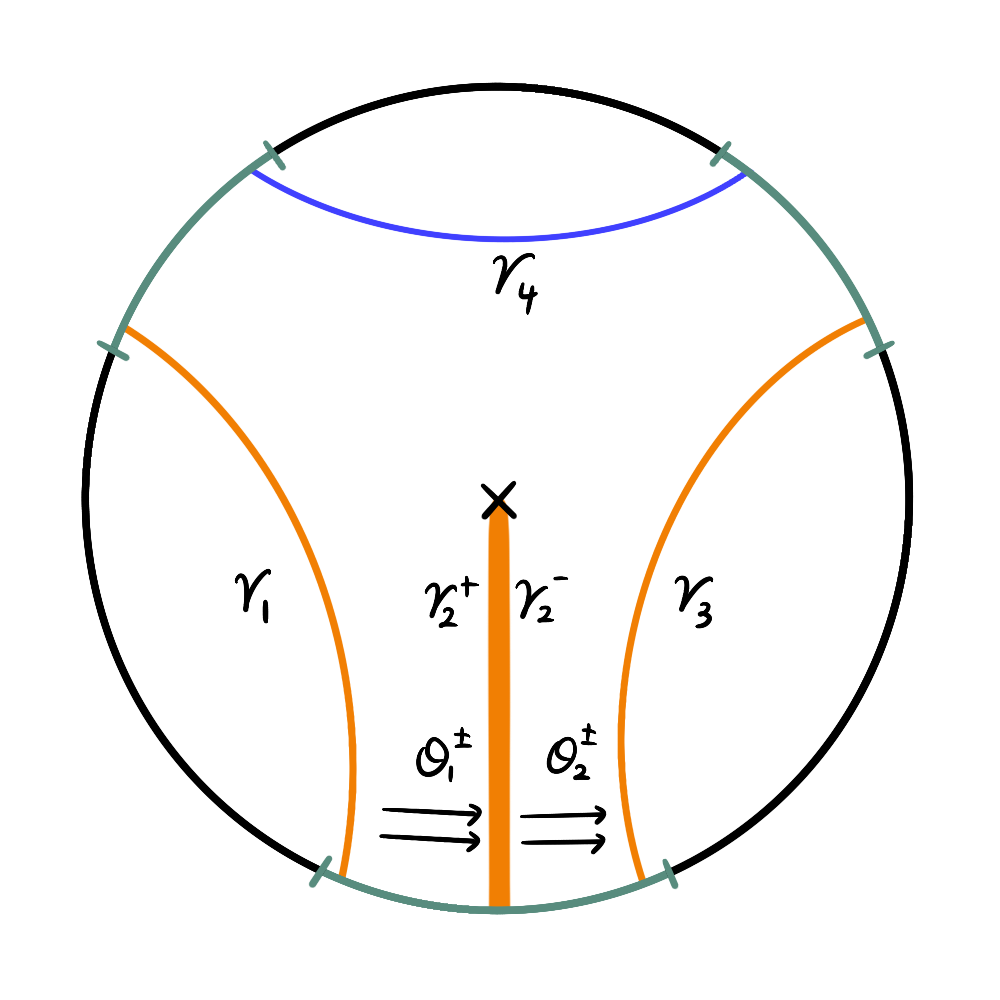}
		\caption{Example of a twisted complex}
		\label{fig:TwistedComplexExample}
	\end{figure}
	\begin{lemma}
	We have a twisted complex, which is quasi-isomorphic to the arc $\gamma_4$,
	$$\gamma_1 \overset{(\theta_1^+,\theta_1^-)}{\longrightarrow} \gamma_2^+ \oplus \gamma_2^- \overset{(\theta_2^+,\theta_2^-)}{\longrightarrow} \gamma_3.$$
	\end{lemma}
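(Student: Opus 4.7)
The plan is to realize $T := \bigl(\gamma_1 \to \gamma_2^+ \oplus \gamma_2^- \to \gamma_3\bigr)$ as a cone-style resolution of $\gamma_4$ through the thick pair $(\gamma_2^+, \gamma_2^-)$. I will (i) verify the Maurer--Cartan equation for the proposed differential, (ii) produce an explicit closed morphism comparing $T$ and $\gamma_4$, and (iii) show that this comparison morphism induces an isomorphism on cohomology.

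For step (i), let $d$ be the matrix differential assembled from $(\theta_1^+,\theta_1^-)$ and $(\theta_2^+,\theta_2^-)$, and check that $\sum_{n \geq 2} \fm_n(d,\ldots,d) = 0$. Since $\fm_1 = 0$, the only quadratic contribution is $\fm^\con_2(\theta_1^+, \theta_2^+) + \fm^\con_2(\theta_1^-, \theta_2^-)$: two parallel boundary paths $\gamma_1 \to \gamma_3$ passing on opposite sides of the interior marking. The sign rule in \eqref{eq:mncon} involves $\sigma(\theta_1^\pm, \theta_2^\pm)$, which records the tagging of $\gamma_2^\pm$ at the interior marking; because $\gamma_2^+$ is plain and $\gamma_2^-$ is notched, the two terms pick up opposite signs and cancel. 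Any higher contributions from disc, composition, or thick sequences bounded by these four arcs must then be enumerated, and in the configuration of Figure \ref{fig:TwistedComplexExample} they will vanish by the same sign-parity mechanism or by matching $\frac{1}{2}$-weights.

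For step (ii), I would identify the natural boundary morphisms from $\gamma_4$ to $\gamma_2^\pm$ arising from the endpoints that $\gamma_4$ shares with $\gamma_2^\pm$ in the disc, and assemble them into a candidate closed morphism $f : \gamma_4 \to T$ whose only nonzero component targets the middle term $\gamma_2^+ \oplus \gamma_2^-$. Closedness of $f$ under the twisted differential reduces to the vanishing of a short list of disc and composition sequences that pair one leg of $d$ with one component of $f$; the required cancellations are again produced by pairing the plain and notched versions of $\gamma_2$, supplemented by $\fm^\thick_3$ whenever a folded thick arc is needed to match a $\frac{1}{2}$-weight.

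For step (iii), rather than compute $H^\bullet(T)$ by inspection, I would argue structurally using that the thick pair $(\gamma_2^+, \gamma_2^-)$ behaves like an orthogonal decomposition of the underlying thick arc into idempotent summands: the differentials $(\theta_1^+,\theta_1^-)$ and $(\theta_2^+,\theta_2^-)$ then cancel the two idempotent summands of $\gamma_2^+ \oplus \gamma_2^-$ against $\gamma_1$ and $\gamma_3$ respectively, leaving a single surviving cohomology class whose geometric representative must be $\gamma_4$. The main obstacle throughout will be bookkeeping: every $\frac{1}{2}$ coming from $\fm^\thick_3$, $\fm^\comp$, or folded $\fm^\disc$ must mesh consistently with the negative signs attached to notched interior morphisms, so that the cancellations above actually occur on the nose. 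Once the sign-and-weight calculus of Definition \ref{defn:FukayaCategoryForMarkedSurface} is set up carefully, the comparison $f$ is closed and a quasi-isomorphism by the cohomology computation just described.
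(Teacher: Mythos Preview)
Your step (i) is correct and is exactly what the paper does: only $\fm_2^{\con}$ contributes, and the two terms cancel because $\sigma(\theta_1^+,\theta_2^+)=0$ while $\sigma(\theta_1^-,\theta_2^-)=1$, giving $\tfrac12\theta_3-\tfrac12\theta_3=0$. There are no higher contributions to worry about in this configuration, so your hedging about disc/composition/thick sequences is unnecessary.

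Step (ii) contains a genuine geometric error. In the configuration of Figure~\ref{fig:TwistedComplexExample}, the thick arc $\gamma_2^\pm$ has a single boundary endpoint (call it $A$), and the boundary morphisms $\theta_1^\pm,\theta_2^\pm$ all live at $A$. The arcs $\gamma_1,\gamma_3$ meet $\gamma_2^\pm$ at $A$, but $\gamma_4$ sits on the opposite side of the folded disc: its endpoints are the \emph{other} endpoints of $\gamma_1$ and $\gamma_3$, not $A$. Hence there are no boundary morphisms $\gamma_4\to\gamma_2^\pm$ at all, and your proposed comparison map with ``only nonzero component targeting the middle term'' cannot be built. The paper (deferring to Lemma~\ref{Lemma:TwistedComplex}) instead uses the two remaining edges of the disc sequence, namely the boundary morphisms $\theta_3':\gamma_3\to\gamma_4$ and $\theta_4:\gamma_4\to\gamma_1$, as mutually inverse quasi-isomorphisms between $T$ and $\gamma_4$; these live at the extremal terms of the complex, not the middle. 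Checking that $\theta_3'$ and (a scalar multiple of) $\theta_4$ are quasi-inverse reduces to evaluating $\fm_r^{\disc}$ on the cyclic rotations of the full disc sequence $(\theta_1,\theta_2,\theta_3',\theta_4)$, which produces the identity on each summand.

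Your step (iii) is too impressionistic to stand on its own: the phrase ``cancel the two idempotent summands of $\gamma_2^+\oplus\gamma_2^-$ against $\gamma_1$ and $\gamma_3$'' does not correspond to any actual computation in this category, and without the correct comparison map from step (ii) there is nothing to compute cohomology against. Once you replace your middle-term map by $\theta_4:\gamma_4\to\gamma_1$ and $\theta_3':\gamma_3\to\gamma_4$, the quasi-isomorphism follows from the disc-sequence identity as in Lemma~\ref{Lemma:TwistedComplex}, with no need for a separate cohomology calculation.
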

	\begin{proof}
		To show that it defines a twisted complex, it is enough to check that the following vanishes:
		$$\fm_2^\con(\theta_1^+,\theta_2^+) + \fm_2^\con(\theta_1^-,\theta_2^-)$$
		By definition, it equals $\frac{1}{2}\theta_3 - \frac{1}{2}\theta_3=0$ and we obtain the claim. Isomorphism to $\gamma_4$ will be proved at the next section in greater generality.
	\end{proof}
\end{itemize}

\section{Morita equivalence}\label{section:MoritaEquivalence}
We have defined an $\Aoo$-category of tagged arc system on a graded marked orbi-surface $(S, M, O, \eta)$.
We will investigate an equivalence relation among different tagged arc systems in this section.
We will show that we can transform any tagged arc system in its equivalence class into a special type, called an {\em involutive} arc system.
\begin{defn}
	 We say two tagged arc systems $\Gamma_1$ and $\Gamma_2$  for a graded marked orbi-surface $(S, M, O, \eta)$ are {\em Morita equivalent} if their associated $\Aoo$-categories $\cff_{\Gamma_1}(S, M, O, \eta)$ and $\cff_{\Gamma_2}(S, M, O, \eta)$ are Morita equivalent. That is, their idempotent completions of triangulated enhancements $$\Pi(\Tw(\cff_{\Gamma_1}(S, M, O, \eta))), \quad \Pi(\Tw(\cff_{\Gamma_2}(S, M, O, \eta)))$$ are quasi-equivalent. In this case, we denote by $\Gamma_1 \simeq \Gamma_2$.
\end{defn}
We consider the case of an inclusion.
\begin{prop}\label{prop:MoritaEquivalency}
	Let $(S, M, O, \eta)$ be a graded marked orbi-surface and $\Gamma$ be a tagged arc system. Suppose that for an arc $\gamma \in \Gamma$, $\Gamma' = \Gamma \setminus \{\gamma\}$ is also a tagged arc system. Then, the natural inclusion functor $$\cff_{\Gamma'}(S, M, O, \eta) \rightarrow \cff_{\Gamma}(S, M, O, \eta)$$ is a Morita equivalence. In particular, $\Gamma$ and $\Gamma'$ are Morita equivalent.
\end{prop}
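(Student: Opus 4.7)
The plan is to split the statement into two assertions and prove them in order: (i) the inclusion $\iota_* : \cff_{\Gamma'}(S,M,O,\eta) \to \cff_\Gamma(S,M,O,\eta)$ is cohomologically fully faithful, and (ii) the object $\gamma$ (together with its thick partner if one exists in $\Gamma$) lies in $\Pi(\Tw(\iota_*(\cff_{\Gamma'})))$ up to quasi-isomorphism, so that $\iota_*$ is essentially surjective on idempotent-completed twisted complexes. Part (i) should be essentially immediate from the construction: both hom spaces and the defining $\Aoo$-operations $\fm^{\con}, \fm^{\disc}, \fm^{\comp}, \fm^{\thick}$ depend only on the tagged arcs being composed, on the local geometry around an interior marking, and on the existence of immersions $(D, M^{\can}_n) \to (S, M\cup O)$ with prescribed boundary data. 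None of this data changes when we throw in the extra tagged arc $\gamma$, since adding $\gamma$ can only subdivide disc components of $S\setminus \bigcup_{\delta\in \underline{\Gamma'}}\delta$ and neither the morphism sets $\Theta(\alpha,\beta)$ for $\alpha,\beta\in \Gamma'$ nor the set of disc/composition sequences among arcs of $\Gamma'$ is affected.

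For part (ii), I would argue locally using the fact that $\Gamma'$ is still a tagged arc system. Thus $\gamma$ sits inside finitely many disc components of $S\setminus \bigcup_{\delta\in \underline{\Gamma'}}\delta$, and the tagged arcs of $\Gamma'$ bounding those components give the boundary data from which $\gamma$ must be reconstructed. The model cases are (a) $\gamma$ is an ordinary arc (no interior endpoints), which reduces to the HKK construction of Theorem \ref{Theorem:MoritaEquivalencyOfTopologicalFukayaCategoryForSurfaces} using an iterated mapping cone along the boundary morphisms $\theta_i$ of the polygon containing $\gamma$; (b) $\gamma$ has an interior endpoint at some $p\in O$ at which some $\delta\in\Gamma'$ already hangs, which is handled by the thick-pair-style twisted complex illustrated in Figure \ref{fig:TwistedComplexExample}; (c) $\gamma$ has an interior endpoint at which no arc of $\Gamma'$ hangs, which can be reduced to case (a) after passing through a larger disc bounded entirely by boundary-type arcs of $\Gamma'$; and (d) the thick-pair case where both members of the pair sit in $\Gamma$ but are absent from $\Gamma'$, handled simultaneously using the direct sum of the two idempotents. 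In each case I write down the explicit twisted complex $T$ whose underlying direct sum is a shift of arcs in $\iota_*(\Gamma')$ and whose differential is made from the relevant boundary and interior morphisms $\theta^\pm_i$, and then verify that $T$ is a twisted complex and is quasi-isomorphic to $\gamma$ (or splits off $\gamma$ as a direct summand after passage to $\Pi$).

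The verification that $T$ is a twisted complex uses Lemma \ref{Lemma:DegreeFormulae} to ensure degree consistency and uses the explicit weight-$\tfrac{1}{2}$ formulas from Definition \ref{defn:FukayaCategoryForMarkedSurface}: the typical cancellation that makes $\fm_2^{\con}(\theta_i^+,\theta_{i+1}^+) + \fm_2^{\con}(\theta_i^-,\theta_{i+1}^-) = \tfrac{1}{2}\theta - \tfrac{1}{2}\theta = 0$, as in the warm-up lemma just before Section \ref{section:MoritaEquivalence}, is the archetype. For the quasi-isomorphism step, the computation of $\ho(T,\gamma)$ and $\ho(\gamma,T)$ reduces to a count of disc and composition sequences joining $\gamma$ to the polygon of $\Gamma'$-arcs around it, and one exhibits mutually inverse closed morphisms whose compositions equal the respective identities up to $\fm_1$-exact terms.

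The main obstacle is step (ii) in the presence of interior endpoints, folded arcs, and thick pairs. Keeping track of the weights of $\tfrac{1}{2}$ coming from interior markings and from folded disc/composition sequences, together with the signs governed by $\sigma(\theta)$ and $\sigma(\theta_i,\theta_{i+1})$ and by the nice-up-to-shift normalization of Lemma \ref{Lemma:MakeBeNice}, is precisely what makes the candidate twisted complex close up. I expect the thick pair case (d) to be the most delicate, as the complex must simultaneously split the two idempotents of the underlying arc, so that after idempotent completion both tagged versions of $\gamma$ appear; here the operation $\fm^{\thick}_3$ is the structural ingredient that guarantees the relevant squares in the twisted complex commute in $H^*$.
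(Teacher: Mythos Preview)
Your overall strategy is sound, and part (i) is correct for the reason you give. For part (ii), however, your case analysis on the type of $\gamma$ is more elaborate than needed and contains a confusion.

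The paper takes a unified route. Instead of splitting on whether $\gamma$ has interior endpoints or a thick partner, it first argues that there is always a \emph{disc sequence} $(\theta_1,\ldots,\theta_r)$ in $\Gamma$ having $\gamma$ as one of its boundary arcs, in which every folded arc other than $\gamma$ is thick. This is where the hypothesis that $\Gamma'$ is still a tagged arc system is used: if $\gamma$ borders a single disc component then $\gamma$ is folded there and fullness of $\Gamma'$ forces the remaining folded arcs to be thick; if $\gamma$ borders two components, fullness of $\Gamma'$ forces at least one of them to be such a disc sequence. The paper then proves a single general twisted-complex lemma (Lemma~\ref{Lemma:TwistedComplex}): for any disc sequence with all folded arcs thick, the ``doubled'' complex
\[
\Delta(\gamma_1)\xrightarrow{\Delta(\theta_1)}\Delta(\gamma_2)[s_2]\xrightarrow{\Delta(\theta_2)}\cdots\xrightarrow{\Delta(\theta_{r-2})}\Delta(\gamma_{r-1})[s_{r-1}]
\]
is a twisted complex quasi-isomorphic to $\Delta(\gamma_r)$, where $\Delta(\gamma_i)$ is $\gamma_i$ if unfolded and $\gamma_i^+\oplus\gamma_i^-[d_i]$ if folded. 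Taking $\gamma_r=\gamma$, this subsumes your cases (a), (b), (c) at once, and the idempotent completion recovers $\gamma$ as a summand of $\Delta(\gamma)$ if $\gamma$ happens to be folded.

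Your case (d) does not arise: since $\Gamma'=\Gamma\setminus\{\gamma\}$ removes a single tagged arc, any thick partner of $\gamma$ remains in $\Gamma'$. Also, the role you anticipate for $\fm_3^{\thick}$ is absorbed entirely by the $\Delta$-doubling: the Maurer--Cartan equation for the complex above reduces to the two-term cancellation $\fm_2((\theta_i)^{\sigma}_+,(\theta_{i+1})^+_{\sigma'})+\fm_2((\theta_i)^{\sigma}_-,(\theta_{i+1})^-_{\sigma'})=0$, which follows from $\fm_2^{\con}$ alone because the two taggings contribute opposite signs. The quasi-isomorphism is then checked by computing $\fm_2^{\Tw}(\Delta(\theta_r),\Delta(\theta_{r-1}))$ and its reverse via the disc sequence, with the weight factors $(1/2)^{\Phi}$ organising themselves into a single overall scalar times the identity.
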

Given a disc sequence $(\theta_1, \dots, \theta_r)$ for surfaces in Section \ref{section:TFCofSurfaces}, $\gamma_r$ is
isomorphic to the twisted complex 
$$\gamma_1 \xrightarrow{\theta_1} \cdots    \xrightarrow{\theta_{r-2}} \gamma_{r-1}$$
as shown in  \cite[Section 3.3]{HKK17}.
We will show a tagged arc version of this lemma, and the above proposition follows immediately.

We introduce some notations.
Let $\Gamma$ be a nice tagged arc system on $(S, M, O, \eta)$, and $(\theta_1, \dots, \theta_r)$ be a disc sequence   with $\theta_i: \gamma_i \rightarrow \gamma_{i+1}$ for each $i$ such that each folded arc is thick.
If $\gamma_i$ is not folded, we also write $\gamma_i$ as $\gamma_i^0$.
If $\gamma_i$ is folded, then we write $\gamma_i$ as $\gamma_i^+$ and denote by $\gamma_i^-$ its thick pair.
Hence, we introduce the following notation to make folded arc thick.
$$\Delta(\gamma_i) \deq 
	\begin{cases} 
		\gamma^0_i 
		&\text{if $\gamma_i$ is not folded,} \\
		\gamma^+_i \oplus \gamma^-_i[d_i] 
		&\text{if $\gamma_i$ is folded.} 
	\end{cases}$$
	
The shift $d_i \in \mathbb{Z}$ will be defined below.
We introduce the index set $I_i$ as $I_i = \{0\}$ if $\gamma_i$ is not folded, and $I_i =\{+,-\}$ if it is folded.
We also make the basic morphism $\theta_i:\gamma_i \rightarrow \gamma_{i+1}$ into a thick version.
Namely, there are basic morphisms $(\theta_i)^{\sigma_1}_{\sigma_2}: \gamma^{\sigma_1}_i \rightarrow \gamma_{i+1}^{\sigma_2}$
for $\sigma_1 \in I_i, \sigma_2 \in I_{i+1}$. These form the following $|I_{i+1}|\times |I_i|$ matrix $\Delta(\theta_i)$.
\begin{equation}\label{eq:dtheta}
	\Delta(\theta_i) \deq 
	\begin{cases} 
		\begin{bmatrix} 
			(\theta_i)^0_0
		\end{bmatrix} 
		&\text{if both $\gamma_i$ and $\gamma_{i+1}$ are not folded,} \\
		\begin{bmatrix} 
			(\theta_i)^+_0 \\ (\theta_i)^-_0
		\end{bmatrix} 
		&\text{if $\gamma_i$ is not folded and $\gamma_{i+1}$ is folded,} \\ 
		\begin{bmatrix} 
			(\theta_i)^0_+ & (\theta_i)^0_- 
		\end{bmatrix} 
		&\text{if $\gamma_i$ is folded and $\gamma_{i+1}$ is not folded,} \\ 
		\begin{bmatrix} 
			(\theta_i)^+_+ & (\theta_i)^-_+ \\ (\theta_i)^+_- & (\theta_i)^-_- 
		\end{bmatrix} 
		&\text{if both $\gamma_i$ and $\gamma_{i+1}$ are folded.}
	\end{cases}
	\end{equation}
In the definition, we shift $\gamma_i^-$ so that the grading of  $\gamma^+_i$ and $\gamma^-_i[d_i]$ are the same.
This  makes $\Delta(\theta_i)$ homogeneous.

\begin{lemma}\label{Lemma:TwistedComplex}
	The complex $T$ given by 
	$$ \Delta(\gamma_1) \xrightarrow{\Delta(\theta_1)} \Delta(\gamma_2)[s_2] \xrightarrow{\Delta(\theta_2)} \Delta(\gamma_3)[s_3] \rightarrow \dots \rightarrow \Delta(\gamma_{r-1})[s_{r-1}]$$
	is a twisted complex and quasi-isomorphic to $\Delta(\gamma_r)[-| \theta_r |]$.
	Here $s_i = \deg{\Delta(\theta_1)} + \dots + \deg{\Delta(\theta_{i - 1})} - i + 1$.
\end{lemma}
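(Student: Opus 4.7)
The plan is to prove the lemma in two stages. First, I would verify that $\delta = \sum_i \Delta(\theta_i)$ makes $T$ into a twisted complex by checking the Maurer--Cartan equation $\sum_{k\geq 1}\fm_k(\delta,\ldots,\delta)=0$. Second, I would exhibit an explicit closed degree-zero morphism of twisted complexes $f : \Delta(\gamma_r)[-\deg{\theta_r}] \to T$ built from the disc-sequence $A_\infty$-operations, and verify that it is a quasi-isomorphism.

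For the first stage, the key geometric observation inherited from \cite{HKK17} is that in a disc sequence the boundary morphisms $\theta_i$ and $\theta_{i+1}$ issue from opposite ends of $\gamma_{i+1}$ and are therefore non-concatenable along the boundary. Hence $\fm_2^\con$ vanishes on every matrix entry of the product $\Delta(\theta_{i+1})\cdot\Delta(\theta_i)$. Because the $\theta_i$ are boundary morphisms, no $\fm_3^\thick$ contribution appears either (that operation requires two interior-morphism inputs). Finally, no proper cyclic subsequence of $\{\theta_1,\ldots,\theta_{r-1}\}$ bounds a disc, composition, or thick configuration inside the ambient disc, so no higher $\fm_k^\disc$ or $\fm_k^\comp$ contributes. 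The Maurer--Cartan equation is thus trivially satisfied, provided the shifts $d_i$ on the summand $\gamma_i^-[d_i]$ of each thick pair are chosen so that $\Delta(\theta_i)$ becomes homogeneous of its declared degree; this uniquely determines $d_i$.

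For the second stage, I would use the disc-sequence identities of Definition \ref{defn:FukayaCategoryForMarkedSurface}: $\fm_r^\disc(\theta_1,\ldots,\theta_r)$ is a scalar multiple of $e_{\gamma_1}$, and the truncated variants $\fm_{r-k+1}^\disc(\theta_k,\ldots,\theta_{r-1},\theta_r\bullet\psi)$ are scalar multiples of $\psi$. These package into a closed degree-zero morphism $f:\Delta(\gamma_r)[-\deg{\theta_r}] \to T$ whose component $f_1:\Delta(\gamma_r)[-\deg{\theta_r}] \to \Delta(\gamma_1)$ is $\theta_r$ itself (now a degree-zero morphism after shifting), and whose higher components $f_k$ into $\Delta(\gamma_k)[s_k]$ are obtained by cutting the disc along the chord from the appropriate endpoint of $\gamma_r$ to that of $\gamma_k$. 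Closedness of $f$ is precisely the relation studied in the splitting analysis used to prove Theorem \ref{thm:ItIsAooCategory}. To show $f$ is a quasi-isomorphism, I would induct on $r$: splitting the disc along an internal chord yields two shorter disc sequences and exhibits $T$ as an iterated cone, reducing to the digon case $r=2$, in which $\gamma_1$ and $\gamma_2[-\deg{\theta_2}]$ are mutually inverse via $\fm_2^\con(\theta_1,\theta_2)=\pm e_{\gamma_1}$ and $\fm_2^\con(\theta_2,\theta_1)=\pm e_{\gamma_2}$.

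The hardest part will be the bookkeeping of the signs $\sigma$ (from notched arcs) and the factors of $1/2$ (from folded or thick summands), especially in the folded case where $\Delta(\gamma_i) = \gamma_i^+ \oplus \gamma_i^-[d_i]$. Verifying closedness of $f$ and constructing a quasi-inverse in the presence of folded legs requires carefully matching the coefficients of $\fm_n^\disc$ in Definition \ref{defn:FukayaCategoryForMarkedSurface} against the shifts $d_i$, and ensuring that the induction step preserves the thickness, goodness, and fullness conditions on the intermediate tagged arc systems produced by the chord splittings.
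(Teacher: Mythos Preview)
Your Stage 1 argument has a genuine gap in the folded case. You claim that $\theta_i$ and $\theta_{i+1}$ ``issue from opposite ends of $\gamma_{i+1}$ and are therefore non-concatenable,'' but this is exactly what fails when $\gamma_{i+1}$ is folded: the disc is folded along $\gamma_{i+1}$, so both $\theta_i$ and $\theta_{i+1}$ sit at the \emph{same} (boundary) endpoint of $\gamma_{i+1}$ and are concatenable. Thus the individual matrix entries $\fm_2^\con\big((\theta_i)^{\sigma_i}_{\pm},(\theta_{i+1})^{\pm}_{\sigma_{i+2}}\big)$ do \emph{not} vanish. The paper handles this by a cancellation between the $+$ and $-$ summands: using the sign $(-1)^{\sigma(\theta_i,\theta_{i+1})}$ in $\fm_2^\con$ and the relation $\tau_+ = \tau_- + d_{i+1} + 1$ coming from the full condition, one gets
\[
\fm_2\big((\theta_i)^{\sigma_i}_{+},(\theta_{i+1})^{+}_{\sigma_{i+2}}\big) + \fm_2\big((\theta_i)^{\sigma_i}_{-},(\theta_{i+1})^{-}_{\sigma_{i+2}}\big) = \tfrac{1}{2}(-1)^{\tau_+}\theta_i\bullet\theta_{i+1} + \tfrac{1}{2}(-1)^{\tau_-+d_{i+1}}\theta_i\bullet\theta_{i+1} = 0.
\]
This cancellation is the actual content of the Maurer--Cartan check in the tagged setting, and your outline misses it.

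For Stage 2, your approach (build a multi-component closed morphism $f$ and prove it is a quasi-isomorphism by induction on $r$ via chord splittings) differs from the paper's, which is more direct: it takes the single-block morphisms $\Delta^B(\theta_r):\Delta(\gamma_r)[-\deg{\theta_r}]\to T$ (into the first summand only) and $\Delta^B(\theta_{r-1}):T\to\Delta(\gamma_r)[-\deg{\theta_r}]$ (from the last summand only), and computes both twisted compositions explicitly. The composition $\fm_2^{0,\delta,0}(\Delta^B(\theta_r),\Delta^B(\theta_{r-1}))$ unfolds to a sum over $I_1\times\cdots\times I_r$ of $\fm_r^\disc$-terms, each contributing the same scalar multiple of the identity; the $2^\nu$ terms (where $\nu$ is the number of folded arcs) combine with the weight $(1/2)^\Phi$ to give a nonzero scalar. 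The reverse composition is handled the same way. This avoids induction entirely. Also, in your proposed base case $r=2$ you write $\fm_2^\con(\theta_1,\theta_2)=\pm e_{\gamma_1}$, but this should be $\fm_2^\disc$; the two morphisms of a length-$2$ disc sequence are not concatenable in general.
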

Assuming this lemma, let us give a proof of Proposition \ref{prop:MoritaEquivalency}.
\begin{proof}
	We claim that there is a disc sequence containing $\gamma$ in $\Gamma$ such that any folded arc other than $\gamma$ are all thick. Suppose that $\gamma$ is a boundary of only one disc component $D$. Then, $\gamma$ is folded. Since $\Gamma \setminus \{\gamma\}$ is still a tagged arc system, $D$ has to be a disc sequence such that every folded arc other than $\gamma$ is thick. Now suppose that $\gamma$ is a boundary of two disc components $D_1$ and $D_2$. Then, $D_1 \cup D_2$ has at most one unmarked boundary, folded arc which is not thick, or outward interior marking. Therefore, either $D_1$ or $D_2$ defines a disc sequence with the desired property.
	
	Then, by Lemma \ref{Lemma:TwistedComplex}, the object $\Delta(\gamma)$ is quasi-isomorphic to a twisted complex consisting of tagged arcs of $\Gamma'$. Thus $\gamma$ is in the category $\Pi\Tw(\cff_{\Gamma'}(S, M, O, \eta))$, which proves the inclusion functor from $\cff_{\Gamma'}(S, M, O, \eta)$ to $\cff_{\Gamma}(S, M, O, \eta)$ is a Morita equivalence.	
\end{proof}

Now let us begin the proof of Lemma \ref{Lemma:TwistedComplex}.
\begin{proof}
	Let us first show that $T$ is a twisted complex. Since $(\theta_1, \dots, \theta_{r-2})$ forms neither a disc sequence nor a composition sequence, the only nonzero terms in the Maurer-Cartan equation are $\fm_2(\Delta(\theta_i), \Delta(\theta_{i+1}))$, for $i=1, \dots, r-2$. However, this also becomes $0$. If $\gamma_{i+1}$ is not folded, then, for any $\sigma_i \in I_i$ and $\sigma_{i+2} \in I_{i+2}$, $$\fm_2(\theta^{\sigma_i}_0, \theta^0_{\sigma_{i+2}}) = 0$$ as they are not concatenable. Now suppose that $\gamma_{i+1}$ is folded so that $\Delta(\gamma_{i+1}) = \gamma_{i+1}^+ \oplus \gamma_{i+1}^-[d_{i+1}]$. Let $\tau_{\pm}$ be the tagging of $\gamma_{i+1}^\pm$ at the interior marking opposite to $\theta_i$. Then, by the full condition, we have $\tau_+ = \tau_- + d_{i+1} + 1$. So, we have $$\fm_2((\theta_i)^{\sigma_i}_+, (\theta_{i+1})^+_{\sigma_{i+2}}) + \fm_2((\theta_i)^{\sigma_i}_-, (\theta_{i+1})^-_{\sigma_{i+2}}) = \frac{1}{2}(-1)^{\tau_+}\theta_i \bullet \theta_{i+1} + \frac{1}{2}(-1)^{\tau_- + d_{i+1}}\theta_i\bullet \theta_{i+1} = 0.$$ This shows the complex $T$ is a twisted complex.
	
	Now let us show the quasi-isomorphicity. It is enough to show that following two morphisms (from the last, and to the first term of $T$)
	\begin{equation}\label{eq:tiso}
	\Delta(\theta_{r-1}): T \rightarrow \Delta(\gamma_r)[-\deg{\theta_r}], \quad \Delta(\theta_r): \Delta(\gamma_r)[-\deg{\theta_r}] \rightarrow T
	\end{equation}
	are quasi-isomorphisms and they are quasi-inverse  to each other (up to scaling).
	
	Denote by $\Sigma$, $\Phi$ and $\nu$ the sign, the weight of the disc sequence $(\theta_1, \dots, \theta_r)$, and the number of folded arcs among $(\gamma_1, \dots, \gamma_r)$, \resp.
	The differentials in the twisted complex $T$ is denoted by $\delta$. Namely, one can think of $\delta$ as a $(r-1) \times (r-1)$ block matrix
	whose $(i,i+1)$st block is $\Delta(\theta_i)$ of size $|I_{i+1}|\times |I_i|$ and 0 matrices on other blocks.
	In this setting, two morphisms in \eqref{eq:tiso} can be written as the following block matrices. We write $\Delta^B(\theta_r)$ the $1 \times (r-1)$  block matrix whose first block is  $\Delta(\theta_r)$ and $\Delta^B(\theta_{r-1})$ the $(r-1) \times 1$ block matrix whose last block is $\Delta(\theta_{r-1})$.
	
	First, let us compute $\fm^{0, \delta, 0}_2(\Delta^B(\theta_r), \Delta^B(\theta_{r-1}))$. We have
	\begin{align*}
	 \fm^{0, \delta, 0}_2(\Delta^B(\theta_r), \Delta^B(\theta_{r-1}))  =&\fm_r(\Delta^B(\theta_r), \delta, \dots, \delta, \Delta^B(\theta_{r-1})) \\
	 =& \fm_r(\Delta(\theta_r), \Delta(\theta_1), \Delta(\theta_2), \dots, \Delta(\theta_{r-1})) \\
	 =& \sum_{( \sigma_1, \dots, \sigma_{r-1}, \sigma_r) \in I_1 \times \dots \times I_{r}}\fm_r((\theta_r)^{\sigma_r}_{\sigma_1}, (\theta_1)^{\sigma_1}_{\sigma_2}, \dots, (\theta_{r-1})^{\sigma_{r-1}}_{\sigma_r}) . 
	\end{align*}	
		When $\gamma_r$ is not folded, this equals
		$$(-1)^{\Sigma}\left(\frac{1}{2}\right)^{\Phi} (2)^\nu e_{\gamma^0_r} =(-1)^{\Sigma}\left(\frac{1}{2}\right)^{\Phi-\nu} e_{\gamma^0_r} .$$ 
		When $\gamma_r$ is folded, this equals
	\begin{align*}
	& \sum_{(\sigma_1, \dots, \sigma_{r-1}) \in I_1 \times \dots \times I_{r-1}}
	\begin{bmatrix} 
		\fm_r((\theta_r)^+_{\sigma_1}, (\theta_1)^{\sigma_1}_{\sigma_2}, \dots, (\theta_{r-1})^{\sigma_{r-1}}_+)  & 0 \\ 
		0 & \fm_r((\theta_r)^-_{\sigma_1}, (\theta_1)^{\sigma_1}_{\sigma_2}, \dots, (\theta_{r-1})^{\sigma_{r-1}}_-) \\
	\end{bmatrix} \\
	=&(-1)^\Sigma\half{\Phi-\nu}\begin{bmatrix}
			e_{\gamma^+_r} & 0 \\ 0 & e_{\gamma^-_r}
		\end{bmatrix} =  (-1)^\Sigma\half{\Phi-\nu}e_{\Delta(\gamma_r)}
	\end{align*}

	Now let us compute the converse composition. Let  $\Delta^B(\theta_i), 1 \leq i \leq r-2$  be the
	$(r-1) \times (r-1)$ block matrix with the only non-zero block is the $(i,i+1)$st block, which is  the matrix $\Delta(\theta_i)$ in \eqref{eq:dtheta}.
	\begin{align*}
		&\fm_2^{\delta, 0, \delta}(\Delta^B(\theta_{r-1}), \Delta^B(\theta_r) ) \\
		&= \fm_2(\Delta^B(\theta_{r-1}), \Delta^B(\theta_r)) + \sum_{i=1}^{r-1}\fm_r(\underbrace{\delta, \dots, \delta}_\text{$(r - i - 1)$ times}, \Delta^B(\theta_{r-1}), \Delta^B(\theta_r), \underbrace{\delta, \dots, \delta}_{\text{$(i - 1)$ times}}) \\
		&=\fm_2(\Delta^B(\theta_{r-1}), \Delta^B(\theta_r)) + \sum_{i=1}^{r-1}\fm_r(\Delta^B(\theta_i), \dots, \Delta^B(\theta_{r-1}), \Delta^B(\theta_r), \Delta^B(\theta_1), \dots, \Delta^B(\theta_{i-1})) \\
		&=\fm_2(\Delta^B(\theta_{r-1}), \Delta^B(\theta_r))  + \sum_{i=1}^{r-1}(-1)^\Sigma\left(\frac{1}{2}\right)^{\Phi-\nu}e_{\Delta(\gamma_i)}.
	\end{align*}
	Here, $\fm_2(\Delta^B(\theta_{r-1}), \Delta^B(\theta_r)) = 0$ by the same reason with the first paragraph.
	Therefore, we know $\Delta^B(\theta_{r-1})$ and $(-1)^\Sigma\half{\nu - \Phi}\Delta^B(\theta_r)$ are quasi-inverse to each other.
\end{proof}

Now let us define a notion of involutivity of a tagged arc system.
\begin{defn}\label{defn:InvolutiveCondition}
	Let $(S, M, O, \eta)$ be a graded marked orbi-surface and $\Gamma$ be a tagged arc system. Then, we say $\Gamma$ is {\em involutive} if the underlying arc system $\underline{\Gamma}$ satisfies the following two conditions.
	\begin{itemize}
		\item For each interior marking $p\in O$, there is at most one arc in $\underline{\Gamma}$ hanging at $p$.
		\item Each arc $\alpha \in \underline{\Gamma}$ has $\nu(\alpha) \leq 1$ and if $\nu(\alpha) = 1$, then it is thick.
	\end{itemize}
\end{defn}
In particular, an involutive tagged arc system has no interior morphisms. Thus it has no composition sequences. Also, the weight of a disc sequence $(\theta_1, \dots, \theta_n)$ is the number of folded pairs $(\theta_i, \theta_{i+1})$ in it.
Using Proposition \ref{prop:MoritaEquivalency}, we get the following result.
\begin{lemma}\label{lemma:MakeInvolutive}
	Let $(S, M, O, \eta)$ be a graded marked orbi-surface and $\Gamma$ be a tagged arc system. Then, there is an involutive tagged arc system $\Gamma'$ which is Morita equivalent to $\Gamma$.
\end{lemma}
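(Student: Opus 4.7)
The plan is to build the desired involutive system by iteratively applying Proposition \ref{prop:MoritaEquivalency}, using add/remove moves that each reduce a suitable complexity measure of $\Gamma$. A natural measure is the sum over interior markings of $\max(0, k_p-1)$, where $k_p$ is the number of underlying arcs hanging at $p$, plus the number of arcs with $\nu(\gamma)=2$, plus the number of arcs with $\nu(\gamma)=1$ that are not yet thick. At each step I produce a tagged arc system Morita equivalent to the previous one with strictly smaller measure, and the procedure terminates at an involutive tagged arc system.

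First I would eliminate arcs $\gamma$ with $\nu(\gamma)=2$. Let $p,q\in O$ be its two orbifold endpoints. The fourth bullet of Definition \ref{defn:ArcSystemForOrbiSuefaces} furnishes a disc component $D$ of $S\setminus\bigcup\underline{\Gamma}$ with $p\in\overline{D}$ whose boundary has an unmarked component. Using that unmarked component, I would introduce a new arc $\gamma'$ running from the boundary through $D$ to $p$, with $\nu(\gamma')=1$, chosen so that $\Gamma\cup\{\gamma'\}$ is again a tagged arc system (checking the thick, good, and full conditions locally near $p$). Proposition \ref{prop:MoritaEquivalency} then allows one to delete $\gamma$, whose role for the full condition is now played by $\gamma'$, and the result is Morita equivalent to $\Gamma$ with one fewer arc of interior number $2$.

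Next, suppose every arc has $\nu\le 1$ but some $p\in O$ has $k\ge 2$ distinct underlying arcs $\gamma_1,\dots,\gamma_k$ hanging at it, listed clockwise. The good condition organizes them into a good sequence, and the full condition controls the discs between consecutive $\gamma_i$'s. I would perform a flip at $p$: add an arc $\gamma_1'$ obtained by rerouting $\gamma_1$ around $\gamma_2$ so that its $p$-end is replaced by the other endpoint of $\gamma_2$, and then delete $\gamma_1$. Each of these two moves is an add/remove preserving the tagged arc system property, hence preserving Morita equivalence, and together they reduce the count at $p$ by one (possibly shifting the issue to another marking, but the overall measure strictly decreases when applied in an appropriate order). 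Iterating leaves at most one underlying arc at each interior marking. Finally, for each remaining $\gamma$ with $\nu(\gamma)=1$ that is not thick, I would add its thick partner (same underlying arc with the opposite tagging at its interior endpoint); the thick condition is immediate, goodness is vacuous since the only arcs at $p$ form the thick pair, and the full condition is preserved since no disc-component structure changes.

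The main obstacle is verifying at each step that the intermediate collection continues to satisfy the thick, good, and full conditions of Definitions \ref{defn:ThickCondition}--\ref{defn:FullCondition}. In particular, the full condition demands that every disc component of the complement be either a disc sequence or a composition sequence with the prescribed folding/thickness constraints, so the flip move must be carried out with careful local case analysis of the disc components adjacent to $\gamma_1$ and $\gamma_2$ (distinguishing whether their other endpoints are on the boundary, at orbifold points, or bounded by unmarked boundary components) to confirm the new configuration still meets these conditions. Once this case analysis is completed, termination of the procedure and involutivity of the final system are immediate from the monotonicity of the complexity measure.
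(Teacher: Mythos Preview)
Your overall strategy of iterating Proposition \ref{prop:MoritaEquivalency} via add/remove moves is the paper's. The specific moves differ, and the difference matters for the verification you leave open.

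For reducing the number of arcs at $p$, the paper does not flip: given an irreducible interior morphism $p^\alpha_\beta$, the adjacent disc $D$ is either already a disc sequence or becomes one after adding a single arc isotopic to its unmarked boundary, and then $\beta$ is simply removed. This is one modification touching two disc components, and your worry about ``shifting the issue to another marking'' never arises. Your flip (add $\gamma_1'$, then remove $\gamma_1$) is two modifications touching more components, so the thick/good/full checks you defer are correspondingly heavier. For $\nu(\gamma)=2$, the paper separates closed and open adjacent discs and adds auxiliary arcs to boundary markings or as thick pairs so that $\gamma$ ends up on a disc sequence and can be deleted; your $\gamma'$ landing at the orbifold point $p$ additionally forces a good-condition check at $p$ and a check of the fourth bullet of Definition \ref{defn:ArcSystemForOrbiSuefaces} for the surviving $\gamma$ in the intermediate system $\Gamma\cup\{\gamma'\}$. (Minor: your summed complexity measure can stay constant when removing a $\nu=2$ arc and adding a non-thick $\nu=1$ arc, e.g.\ when $k_q=1$; a lexicographic order repairs this.)

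The substantive gap is that the deferred case analysis is the proof. Proposition \ref{prop:MoritaEquivalency} only applies once the intermediate collection is known to be a tagged arc system, so exhibiting moves for which this holds, and verifying it, is precisely the content of the lemma. The paper's moves are chosen to make this routine; yours may well admit such a verification, but none is supplied.
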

\begin{proof}
	We prove the lemma by induction on $\#\{\gamma \in \Gamma : \nu(\gamma) = 2\}$ and $\#\Gamma_p$ for each $p\in O$. In order to prove, we fix some temporal notions. We mean by a disc component the closure of a component of $S \setminus \bigcup_{\gamma \in \Gamma}\gamma$. We say a disc component {\em open} if it has an unmarked boundary component and {\em closed} otherwise. We call tagged arcs, simply, arcs. We say an arc $\gamma$ with $\nu(\gamma) = 2$ is {\em flat} when there is only one disc component containing $\gamma$ as boundary but $\gamma$ is not folded.
	
	\textbf{Step 0.} Suppose that $\Gamma_p$ has less than two arcs. If $\Gamma_p$ is an empty set, then add a thick pair $(\alpha_+, \alpha_-)$ from $p$ to a marked boundary component. If $\Gamma_p$ has only one arc $\alpha_+$, then add a thick pair $\alpha_-$ of $\alpha_+$. By Proposition \ref{prop:MoritaEquivalency}, we have $\Gamma \simeq \Gamma \cup \{\alpha_+, \alpha_-\}$. So we may assume $\Gamma$ has no such an interior marking $p$.
	
	\textbf{Step 1.} Let $\gamma$ be an arc with $\nu(\gamma) = 2$, which is a boundary of a closed disc component $D$. Our goal is to remove $\gamma$ from $\Gamma$. Since $D$ is closed, we can add arcs in $D$ without changing Morita equivalent class of $\Gamma$ as long as it is a tagged arc system. If $\gamma$ is flat, add an arc from an endpoint of $\gamma$ to a marked boundary component of $D$ so that $\gamma$ is not flat anymore. (See Figure \ref{fig:Step1}. In the figure, orange lines, green lines, and $\times$ stand for tagged arcs, boundary markings, and interior markings. Also, $\bullet$ stand for either boundary markings or interior markings.) So we may assume boundaries of disc components are not flat.
	
\begin{figure}[h!]
	\centering
	\begin{subfigure}[b]{0.3\linewidth}
		\includegraphics[width=\linewidth]{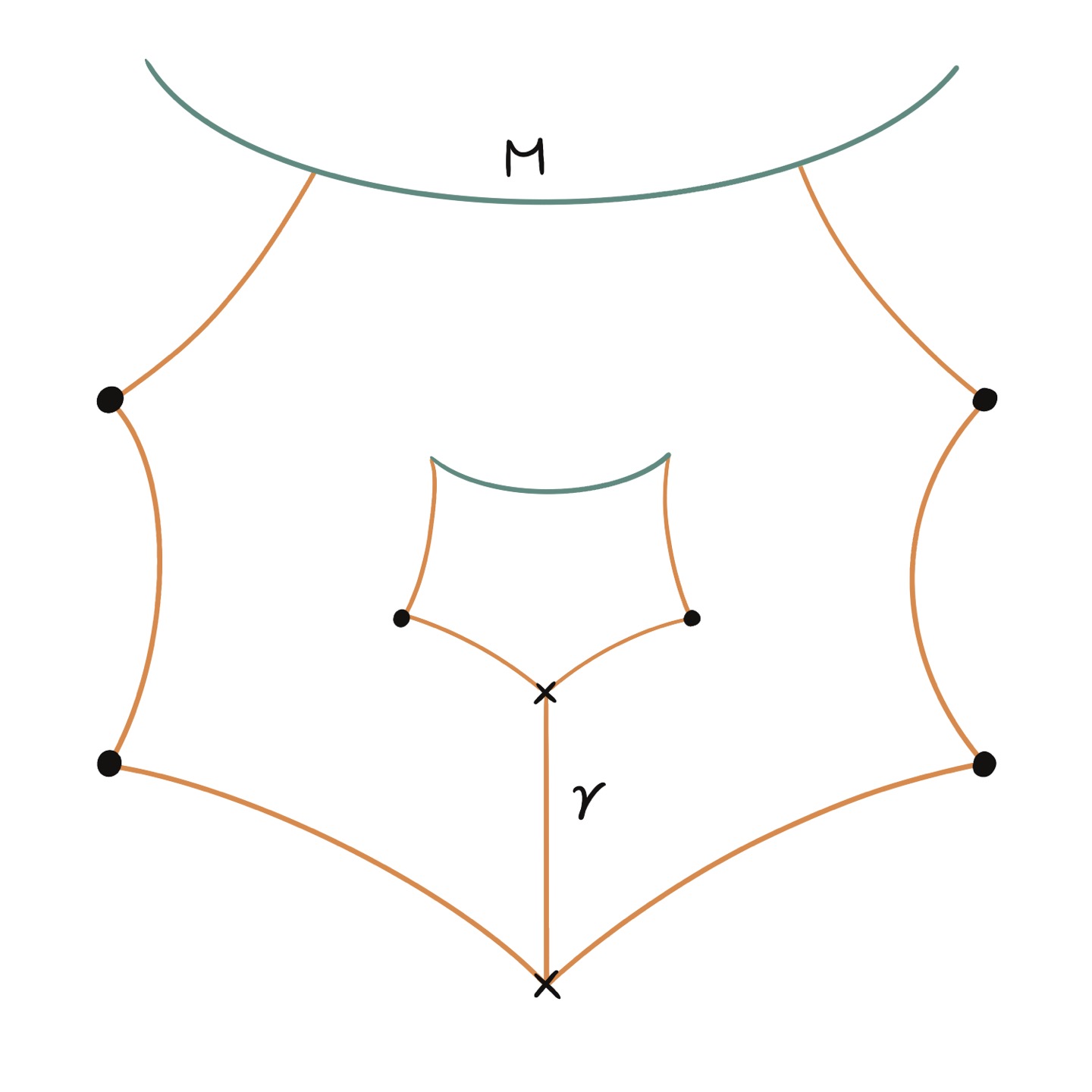}
	\end{subfigure}
	\begin{subfigure}[b]{0.3\linewidth}
		\includegraphics[width=\linewidth]{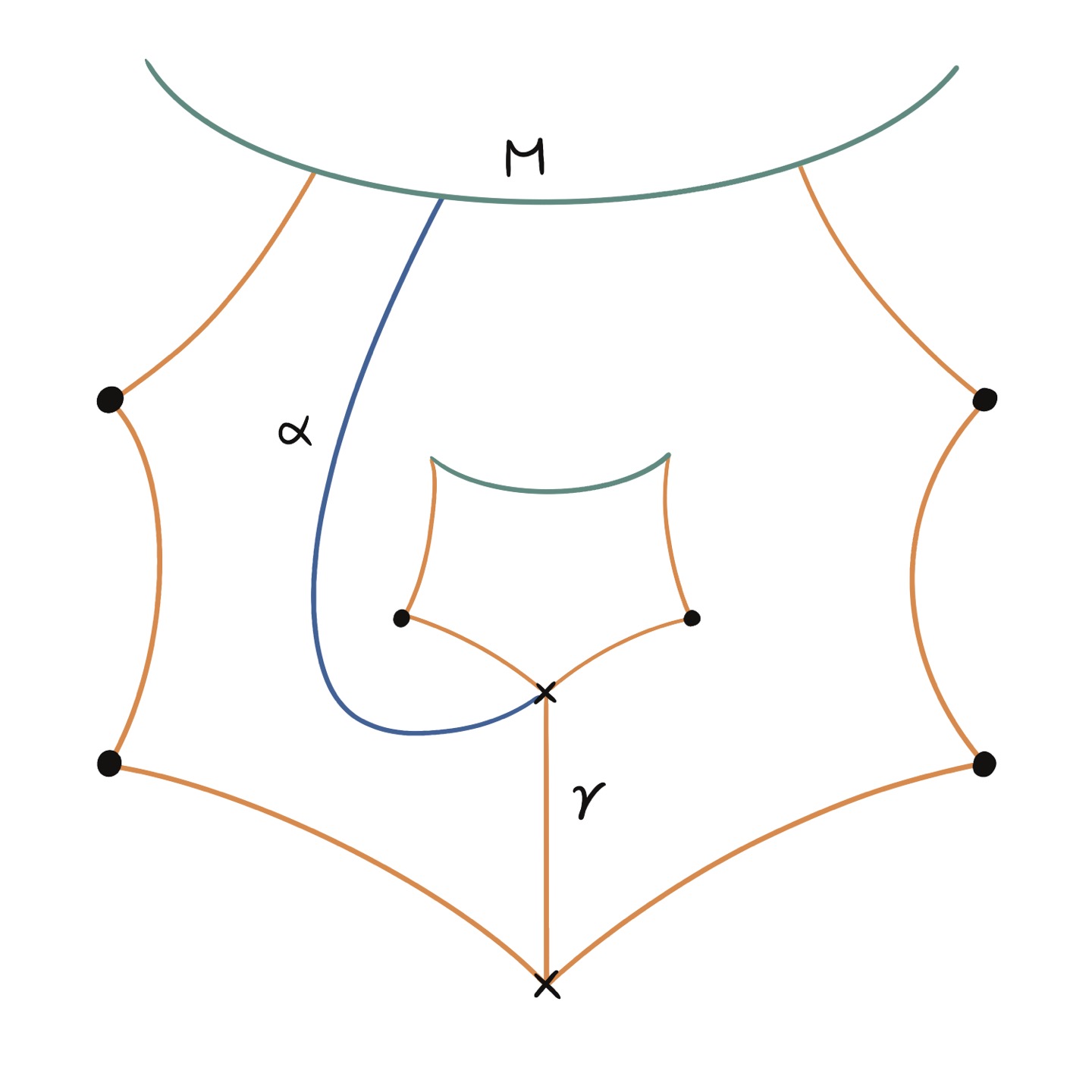}
	\end{subfigure}
	\caption{Step 1}
	\label{fig:Step1}
\end{figure}

	\textbf{Step 2.} The setting is the same with Step 1. If $D$ defines a disc sequence, then we can remove $\gamma$ by using Proposition \ref{prop:MoritaEquivalency}. Now suppose that $D$ has an outward interior marking $p$. Then, $\Gamma_p$ has no thick pair (by the thick condition). Let $\alpha$ and $\beta$ be the last and first arcs hanging at $p$. If one of them is of interior number $1$, then we add its thick pair to $\Gamma$. Otherwise, we add a new thick pair from $p$ to a marked boundary component of $D$. In both cases, we get a new tagged arc system $\tilde{\Gamma}$ such that $\Gamma \simeq \tilde{\Gamma}$ and $\gamma$ is a boundary of a disc component defining a disc sequence. So we can remove $\gamma$ from $\tilde{\Gamma}$. The new system $\tilde{\Gamma}$ has one less arc of interior number $2$ than $\Gamma$ (see Figure \ref{fig:Step2}). In this way, we can remove arcs of interior number $2$ which is a boundary of a closed disc component. So we may assume $\Gamma$ has no such arcs. That is, any disc component whose boundary contains an arc of interior number two is open.
\begin{figure}[h!]
	\centering
	\begin{subfigure}[b]{0.3\linewidth}
		\includegraphics[width=\linewidth]{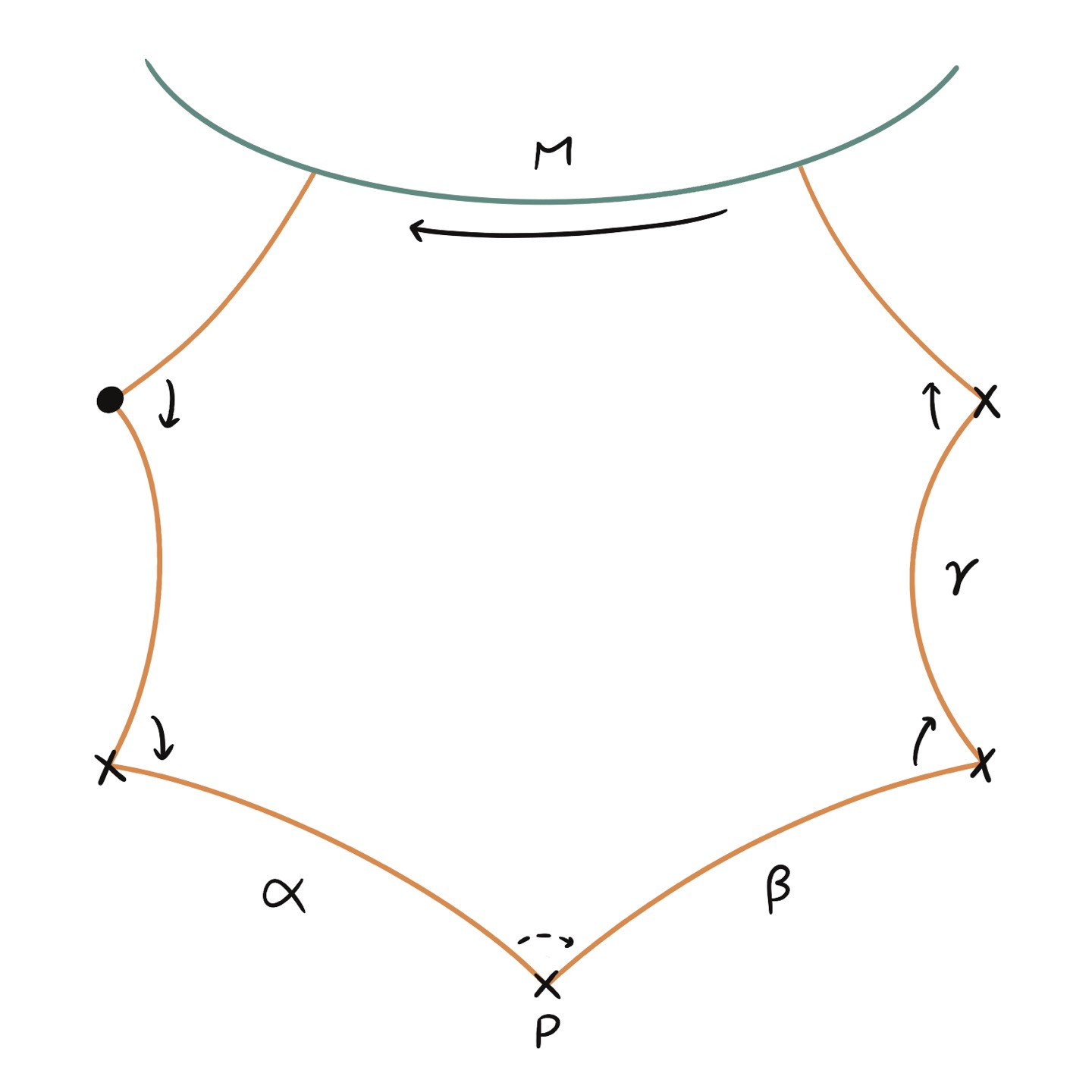}
	\end{subfigure}
	\begin{subfigure}[b]{0.3\linewidth}
		\includegraphics[width=\linewidth]{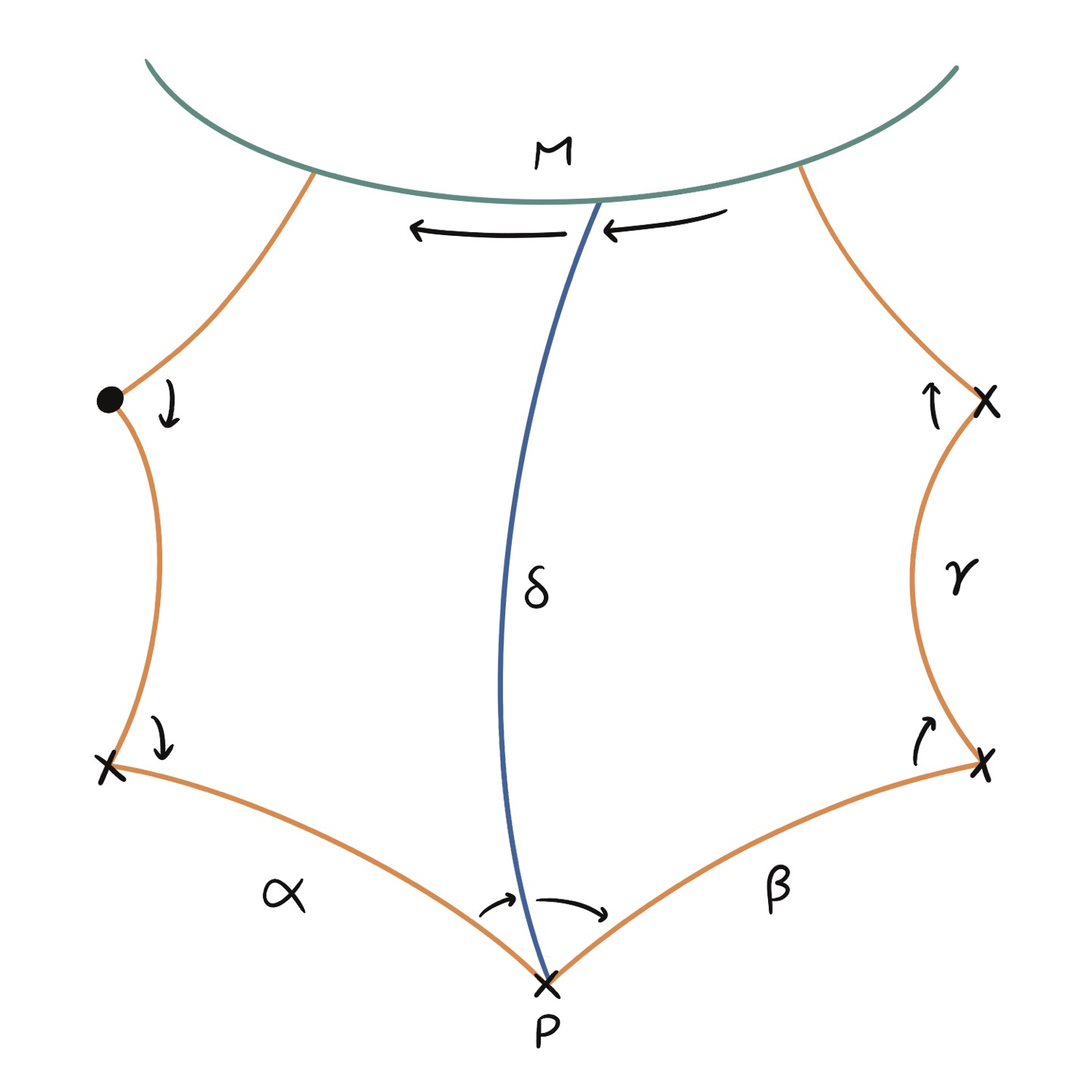}
	\end{subfigure}
	\caption{Step 2}
	\label{fig:Step2}
\end{figure}	

	\textbf{Step 3.} Let $D$ be an open disc component and $\theta_1, \dots, \theta_n$ the basic morphisms associated with $D$ with $\theta_i : \gamma_i \rightarrow \gamma_{i+1}$. Let $\gamma_r$ be the first arc of interior number $2$. Then, we add a new arc from $\theta_{r+1}$ to the marked boundary component where $\gamma_1$ lies (see Figure \ref{fig:Step3}). Then, we can remove $\gamma_r$ as in Step 2. In this way, we can remove all arcs of interior number $2$.
	\begin{figure}[h!]
	\centering
	\begin{subfigure}[b]{0.3\linewidth}
		\includegraphics[width=\linewidth]{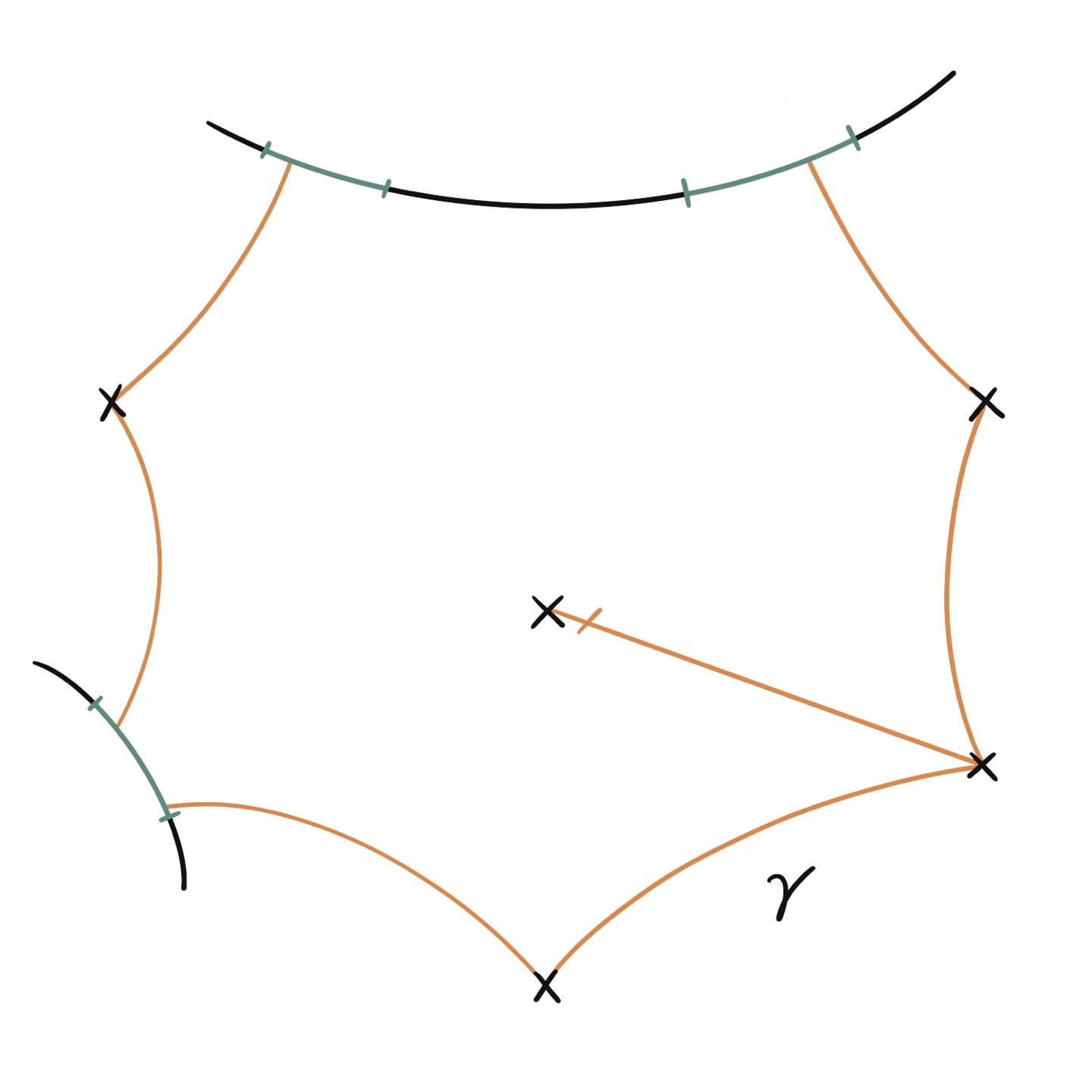}
	\end{subfigure}
	\begin{subfigure}[b]{0.3\linewidth}
		\includegraphics[width=\linewidth]{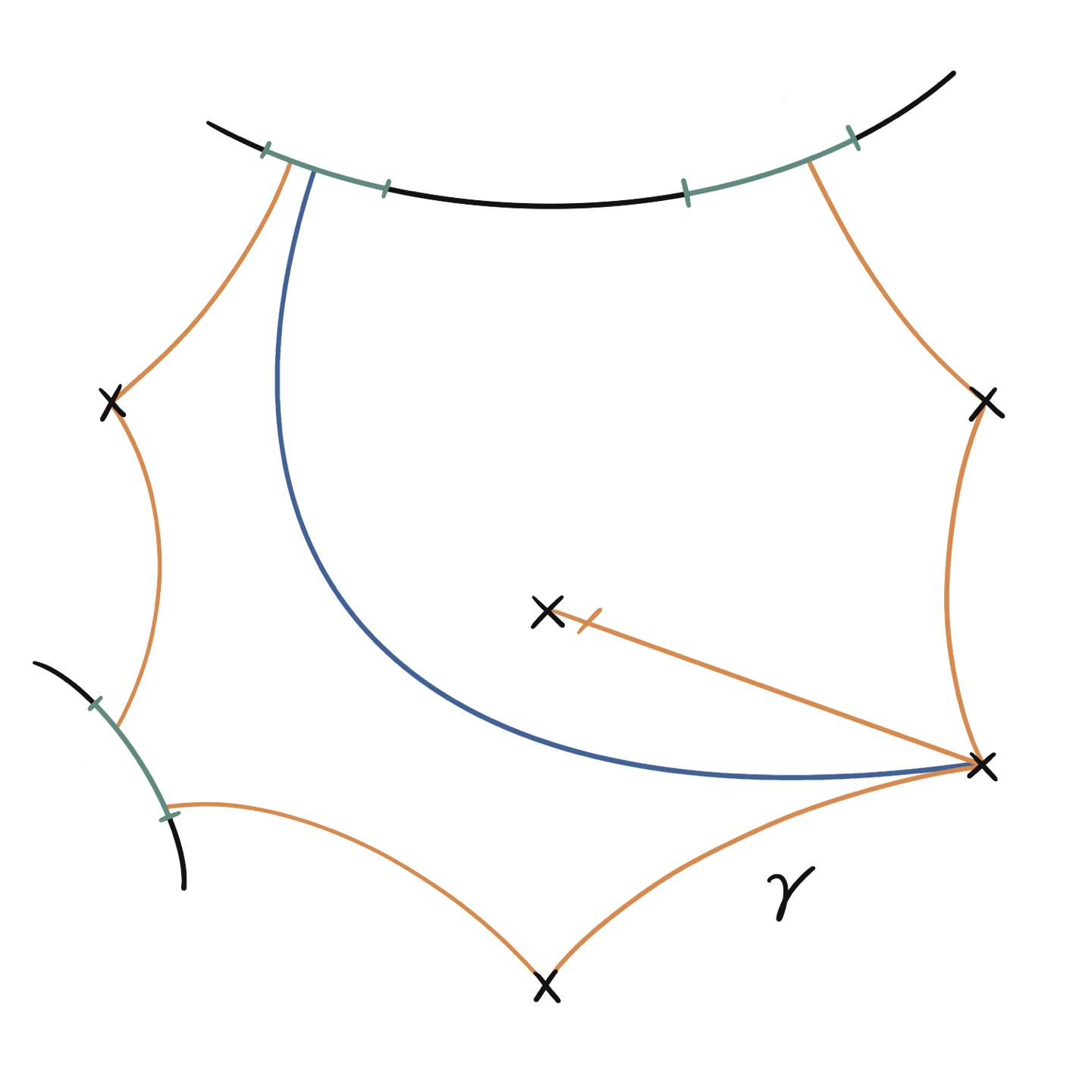}
	\end{subfigure}
	\caption{Step 3}
	\label{fig:Step3}
\end{figure}

	\textbf{Step 4.} Suppose that an interior marking $p$ defines a irreducible interior morphism $p^\alpha_\beta$ from $\alpha$ to $\beta$. Then, there is a disc component $D$ such that $\alpha$ and $\beta$ are its boundary components. If $D$ defines a disc sequence, we can remove $\beta$. If not, $D$ has an unmarked boundary component. After adding an arc isotopic to the unmarked boundary component, we can remove $\beta$. Then, we get a new tagged arc system which has one less arc of interior number $1$ than $\Gamma$. In this way, we can remove all interior morphisms.
	
	\textbf{Step 5.} For $p \in O$ such that $\Gamma_p$ has only one arc $\alpha$, we add the thick pair of $\alpha$. Therefore, we have constructed an involutive arc system $\Gamma'$ from $\Gamma$ without changing its Morita equivalent class.
\end{proof}

From Lemma \ref{lemma:MakeInvolutive} together with theory of involutive surfaces in Section \ref{section:TFCofInvolutiveSurfaces}, we can prove the Morita equivalence class of $\Aoo$-categories associated with tagged arc systems are independent of the choice of the system.
\begin{cor}
	Let $(S, M, O, \eta)$ be a graded marked orbi-surface. Then Morita equivalent class of the $\Aoo$-category $\cff_\Gamma(S, M, O, \eta)$ does not depend on the choice of tagged arc system $\Gamma$.
\end{cor}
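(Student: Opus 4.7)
The plan is to chain together the results on involutive reduction, the invariant covering model, and Morita invariance of $\iota$-invariant arc systems. Given two tagged arc systems $\Gamma_1$ and $\Gamma_2$ on $(S,M,O,\eta)$, I will produce a zig-zag of Morita equivalences connecting $\cff_{\Gamma_1}(S,M,O,\eta)$ and $\cff_{\Gamma_2}(S,M,O,\eta)$ by passing through involutive tagged arc systems on $S$ and then to $\iota$-invariant arc systems on the double cover $\tilde{S}$.

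First, by Lemma \ref{lemma:MakeInvolutive}, choose involutive tagged arc systems $\Gamma_1'$ and $\Gamma_2'$ with $\Gamma_i \simeq \Gamma_i'$ for $i=1,2$; here the Morita equivalences are realized by the inclusion/removal moves of Proposition \ref{prop:MoritaEquivalency}, which give quasi-equivalences after passing to twisted complexes and idempotent completions. Next, apply the correspondence of Theorem \ref{thm:Intro4}(1) to transport $\Gamma_1'$ and $\Gamma_2'$ to $\iota$-invariant arc systems $\tilde{\Gamma}_1'$ and $\tilde{\Gamma}_2'$ on $(\tilde{S},\iota,\tilde{M},\eta)$. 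By Theorem \ref{thm:Intro4}(2), each pair $\cff_{\Gamma_i'}(S,M,O,\eta)$ and $\cff_{\tilde{\Gamma}_i'}(\tilde{S},\iota,\tilde{M},\eta)$ is quasi-equivalent, hence in particular Morita equivalent.

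At this stage I reduce to comparing two $\iota$-invariant arc systems on the same involutive surface, which is exactly the content of Theorem \ref{thm:Intro3}: $\cff_{\tilde{\Gamma}_1'}(\tilde{S},\iota,\tilde{M},\eta)$ and $\cff_{\tilde{\Gamma}_2'}(\tilde{S},\iota,\tilde{M},\eta)$ are Morita equivalent. Concatenating with the previous equivalences yields the chain
\[
\cff_{\Gamma_1}(S,M,O,\eta)\simeq \cff_{\Gamma_1'}(S,M,O,\eta)\simeq \cff_{\tilde{\Gamma}_1'}(\tilde{S},\iota,\tilde{M},\eta)\simeq \cff_{\tilde{\Gamma}_2'}(\tilde{S},\iota,\tilde{M},\eta)\simeq \cff_{\Gamma_2'}(S,M,O,\eta)\simeq \cff_{\Gamma_2}(S,M,O,\eta),
\]
so $\Gamma_1\simeq \Gamma_2$, proving the corollary.

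The step I expect to be the main obstacle is only bookkeeping rather than substantive: the corollary is essentially a formal consequence of the three non-trivial inputs (Proposition \ref{prop:MoritaEquivalency}, Theorem \ref{thm:Intro3}, and Theorem \ref{thm:Intro4}), all of which are already available in the excerpt. The only thing to check carefully is that Morita equivalence of $\Aoo$-categories is transitive and closed under the quasi-equivalences furnished by Theorem \ref{thm:Intro4}(2), which is immediate from the definition via $\Pi\Tw(-)$. No new geometric input is required.
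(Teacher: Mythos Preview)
Your proposal is correct and follows essentially the same argument as the paper: reduce to involutive tagged arc systems via Lemma~\ref{lemma:MakeInvolutive}, lift to $\iota$-invariant arc systems on the double cover and identify the categories via Theorem~\ref{theorem:AooOrbifoldAndInvolutive} (your Theorem~\ref{thm:Intro4}(1)--(2)), then invoke Morita invariance on the cover from Theorem~\ref{theorem:MoritaIndependeceForInvolutiveSurfaces} (your Theorem~\ref{thm:Intro3}). The only cosmetic difference is that you cite the introduction-numbered theorems while the paper cites their in-text versions.
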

\begin{proof}
	Let $\Gamma_1$ and $\Gamma_2$ be tagged arc systems of $(S, M, O, \eta)$. Then, by Lemma \ref{lemma:MakeInvolutive}, there are involutive arc systems $\Gamma'_1$ and $\Gamma'_2$ such that $\Gamma_i \simeq \Gamma'_i$ for $i=1, 2$. Let $(\tilde{S}, \tilde{M}, \eta)$ be the $2$-fold branched covering of $(S, M, O, \eta)$ with the involution $\iota$. Then, by Lemma \ref{lemma:InvolutiveIsInvolutive}, the liftings $\tilde{\Gamma}'_1$ and $\tilde{\Gamma}'_2$ of $\Gamma'_1$ and $\Gamma'_2$ are involutive graded arc systems of $(\tilde{S}, \iota, \tilde{M}, \eta)$. Also, by Theorem \ref{theorem:AooOrbifoldAndInvolutive}, we have, for $i=1, 2$, $$\cff_{\tilde{\Gamma}'_i}(\tilde{S}, \iota, \tilde{M}, \eta) \cong \cff_{\Gamma'_i}(S, M, O, \eta).$$
	However, by Theorem \ref{theorem:MoritaIndependeceForInvolutiveSurfaces}, we know $\cff_{\tilde{\Gamma}'_1}(\tilde{S}, \iota, \tilde{M}, \eta)$ and $\cff_{\tilde{\Gamma}'_2}(\tilde{S}, \iota, \tilde{M}, \eta)$ are Morita equivalent to each other. Therefore, $\cff_{\Gamma'_1}(S, M, O, \eta)$ and $\cff_{\Gamma'_2}(S, M, O, \eta)$ are Morita equivalent, which implies Morita equivalency of $\cff_{\Gamma_1}(S, M, O, \eta)$ and $\cff_{\Gamma_2}(S, M, O, \eta)$.
\end{proof}

Now we define the topological Fukaya category of orbi-surfaces.
\begin{defn}
	The {\em topological Fukaya category} of a graded marked orbi-surface $(S, M, O, \eta)$ is the $\Aoo$-category $$\cff(S, M, O, \eta) \deq \Pi(\Tw(\cff_\Gamma(S, M, O, \eta)))$$ for a tagged arc system $\Gamma$.
\end{defn}

\section{Topological Fukaya categories of involutive surfaces}\label{section:TFCofInvolutiveSurfaces}
Let $(\tilde{S},\tilde{M},\eta)$ be a graded boundary-marked surface and $\Gamma$ be a graded arc system on it.
Let $\iota$ be a non-trivial involution (i.e. $\iota^2 = \id_{\tilde{S}}$) on $\tilde{S}$.
We assume that $\iota$ preserves an orientation of $\tilde{S}$, a boundary marking $\tilde{M}$ and the line field $\eta$. The topological Fukaya category  $\cff_{\Gamma}(\tilde{S}, \tilde{M}, \eta)$  in Section \ref{section:TFCofSurfaces} admits a $\super$-action from $\iota$, and by taking its $\super$-invariant part, we can define the $\Aoo$-category $\cff_\Gamma(\tilde{S}, \iota, \tilde{M}, \eta)$.

In this section, we observe that non-trivial idempotents arise in this setting. We investigate their properties and  find an explicit correspondence between this $\Aoo$-category $\cff_\Gamma(\tilde{S}, \iota, \tilde{M}, \eta)$ and the Fukaya category (constructed in Section \ref{section:TFCofOrbiSurfaces}) of {\em involutive} tagged arc system (defined in Definition \ref{defn:InvolutiveCondition}).

\subsection{Involutive graded marked surfaces}
\begin{defn}
We call the pair $(\tilde{S}, \iota)$ an {\em involutive surface}. Its orbit space $\tilde{S}/\iota$ is a $\super$-orbi-surface and we denote it by $S$ (with a projection $\pi:\tilde{S} \rightarrow S$).
We say a boundary marking $\tilde{M}$ on $\tilde{S}$ is {\em involutive} if $\iota \circ \tilde{M}$ and $\tilde{M}$ are the same map up to permutation. The boundary marking $\tilde{M}$ descends to a boundary marking $M$ of $S$. We say a line field $\eta$ on $\tilde{S}$ is {\em involutive} if $\iota^*(\eta) = \eta$. We denote by $\tilde{O}$ and $O$ the set of $\super$-fixed points in $\tilde{S}$ and its image $\pi(\tilde{O})$, \resp.
We call the tuple $(\tilde{S}, \iota, \tilde{M}, \eta)$ an {\em involutive graded boundary-marked surface}. 
\end{defn}
Note that the set $O$ is finite, and we have the induced graded  marked orbi-surface $(S,O,M, \eta)$.

Let $\alpha$ be a graded curve on an involutive graded boundary-marked surface $(\tilde{S}, \iota, \tilde{M}, \eta)$. The involution $\iota$ sends $\alpha$ to the graded curve $\iota_*\alpha$. For another graded curve $\beta$, intersecting transversely with $\alpha$ at $p$, we have $$i_{\iota(p)}(\iota_*\alpha, \iota_*\beta) = i_p(\alpha, \beta).$$ 

Now let us define the involutive version of graded arc system.
\begin{defn}
	Let $(\tilde{S}, \iota, \tilde{M}, \eta)$ be an involutive graded marked surface. We say a graded arc system $\Gamma = \{\gamma_1, \dots, \gamma_n\}$ on $(\tilde{S}, \tilde{M}, \eta)$ is {\em involutive} if $$\Gamma = \iota_*\Gamma \deq \{\iota_*\gamma_1, \dots, \iota_*\gamma_n\}.$$ We say a graded arc $\gamma_i$ is {\em special} if $\iota_*(\gamma_i) = \gamma_i$, and {\em ordinary} otherwise. We denote by $\Gamma_{\Ord}$ and $\Gamma_{\Sp}$ the set of ordinary and special graded arcs, \resp.
\end{defn}

\begin{lemma}
For an involutive graded arc system $\Gamma = \{\gamma_1, \dots, \gamma_n\}$, we have $$\#\Gamma_{\Sp} \leq \# \tilde{O}.$$
\end{lemma}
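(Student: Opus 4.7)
The plan is to construct an injection $\Phi \colon \Gamma_{\Sp} \hookrightarrow \tilde{O}$ by sending each special arc to its unique fixed point under $\iota$. First I would note that since $\Gamma = \iota_*\Gamma$ as a set of isotopy classes of graded arcs, for each special arc $\gamma_i$ the curves $\gamma_i$ and $\iota_*\gamma_i$ represent the same element of the arc system. After an isotopy supported away from $\tilde{M}$, I may therefore choose a representative in the isotopy class with $\iota(\gamma_i) = \gamma_i$ as a subset of $\tilde{S}$ (this is a standard averaging/equivariant-isotopy argument using that $\iota$ is a smooth involution with isolated fixed points).

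Given such a set-theoretically invariant representative, the involution $\iota$ restricts to a continuous involution on $\gamma_i \cong [0,1]$. If this restriction were the identity, then $\gamma_i \subseteq \operatorname{Fix}(\iota) = \tilde{O}$, contradicting the fact that $\tilde{O}$ is a finite (hence zero-dimensional) set while $\gamma_i$ is a one-dimensional embedded arc. Therefore $\iota|_{\gamma_i}$ is a non-trivial continuous involution of an interval, and any such involution is orientation-reversing with exactly one interior fixed point $p_i$. By construction $p_i \in \operatorname{Fix}(\iota) = \tilde{O}$. Define $\Phi(\gamma_i) \deq p_i$.

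To finish I would verify injectivity of $\Phi$. Suppose $\Phi(\gamma_i) = \Phi(\gamma_j) = p \in \tilde{O}$. Then $p$ lies in the interior of both $\gamma_i$ and $\gamma_j$. Since $(\tilde{S},\tilde{M})$ is a boundary-marked surface in the sense of Section \ref{subsection:MarkedSurfaces}, arcs in an arc system $\Gamma$ on $\tilde{S}$ satisfy $\gamma_i \cap \gamma_j = \emptyset$ for $i \neq j$ (there are no interior markings on $\tilde{S}$, and $p$ is an interior point, not a boundary marking). Hence $i = j$, proving $\Phi$ is injective, and we conclude $\#\Gamma_{\Sp} \leq \#\tilde{O}$.

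The only step I expect to require real care is securing the $\iota$-invariant representative in the isotopy class: one must rule out the situation where $\gamma_i$ is merely isotopic to $\iota_*\gamma_i$ without being pointwise setwise invariant. This is routine for an orientation-preserving smooth involution with discrete fixed set, but worth stating explicitly; everything else is immediate from the dimension count and from the disjointness built into the definition of an arc system.
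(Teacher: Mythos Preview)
Your proof is correct and follows the same approach as the paper: define a map $\Gamma_{\Sp}\to\tilde O$ by sending a special arc to its unique $\iota$-fixed point, and deduce injectivity from the pairwise disjointness of arcs in an arc system. The only difference is that the paper takes ``$\iota_*\gamma_i=\gamma_i$'' to mean the image of $\gamma_i$ is already setwise $\iota$-invariant (so that $\iota$ acts on the domain as an orientation-reversing reparametrization), whereas you insert an explicit equivariant-isotopy step to arrange this; that caution is reasonable but not needed under the paper's reading of the definition.
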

\begin{proof}
	As the involution $\iota$ acts as orientation-reversing reparametrization on the domain of special arcs, each special arc passes through precisely one fixed point. This defines a function $\phi : \Gamma_{\Sp} \rightarrow \tilde{O}$. Since two arcs do not intersect, $\phi$ is injective. This shows $\#\Gamma_{\Sp} \leq \#\tilde{O}$.
\end{proof}

\subsection{Topological Fukaya categories of involutive graded marked surfaces}

Let $(\tilde{S}, \iota, \tilde{M}, \eta)$ and $\Gamma$ be an involutive graded boundary-marked surface and an involutive graded arc system.  Since $\iota$ preserves $\eta$, the topological Fukaya category $\cff_\Gamma(\tilde{S}, \tilde{M}, \eta)$ admits a strict $\super$-action induced by $\iota$: 
a boundary morphism from $\gamma_1$ to $\gamma_2$ are
mapped to the boundary morphism from $\iota_*\gamma_1$ to $\iota_*\gamma_2$ with the same sign. 
For a special $\gamma$, two graded arcs $\gamma$ and $\iota_*(\gamma)$ have the same underlying curve (with the same grading) and there is a unique identity morphism between them.
The $\super$-action sends the identity to the identity in this case. Note that a morphism between a special arc $\gamma$ and an ordinary arc $\beta$ is not invariant by itself since the boundary morphism between $\gamma$ and $\beta$ goes to the boundary morphism between $ \iota_*\gamma(=\gamma)$ and $\iota_*(\beta) (\neq \beta)$. 
Here, a strict action means that 
$$\iota_* \fm_k(x_1, \dots, x_k) = \fm_k(\iota_*x_1, \dots, \iota_*x_k).$$
Given a strict $\super$-action on a unital $\Aoo$-category $\caa$, there is an induced strict  $\super$-action on $\Tw\caa$ as well.

For each arc $\alpha$ of $\tilde{S}$, the direct sum object  $\alpha \oplus \iota_*(\alpha)$ is invariant under $\super$-action.
In fact, $\super$-invariant part of twisted complexes $(\Tw\caa)^\super$ (on objects and morphisms) forms a unital $\Aoo$-category as well.

Later, we will describe morphisms in this category explicitly. It is necessary for us to choose a representative in each  $\super$-orbit of arcs of $\tilde{S}$ because some morphisms from/to idempotents might have different sign if we work with the other representative. There is a convenient way to choose representatives.

First, we choose and fix a representing arc (in $\tilde{S}$) among each $\iota$-orbit of arcs. Now, we define the notion of a preferred morphism.
The involution $\iota_*$ acts on the set of markings $\tilde{M}$ freely. Thus for each connected component $C$ of $\tilde{M}$, $\iota(C)$ is disjoint from $C$. 
We choose and fix a representing connected component in $\tilde{M}$ in each $\iota$-orbit.  Recall that boundary morphisms in $\tilde{S}$ are defined when along the boundary marking $\tilde{M}$. A {\em preferred} boundary morphisms are the ones that are defined along the chosen representing boundary markings.
Note that the concatenation of preferred boundary morphisms are preferred ones. From now on, if we write an orbit as $\{\alpha,\iota_*\alpha\}$, $\alpha$ is the chosen representative. Between $\{\theta, \iota_*\theta\}$, $\theta$ is the preferred morphism.

Let us define a strict functor $\Delta : \caa \rightarrow (\Tw\caa)^\super$ as follows.
	\begin{itemize}
		\item For an object $X\in \caa$, $\Delta(X) \deq X \oplus \iota_*(X)$.
		\item For a morphism $x : X_1 \rightarrow X_2$, $\Delta(x) \deq \begin{bmatrix} x & 0 \\ 0 & \iota_*(x) \end{bmatrix}$.
	\end{itemize}
	This is indeed a functor. First of all, $\Delta(x)$ is $\super$-invariant. Also, for any sequence of $n$ composable morphisms $x_1, \dots, x_n$,
	\begin{align*}
		\Delta(\fm_n(x_1, \dots, x_n)) =& \begin{bmatrix} \fm_n(x_1, \dots, x_n) & 0 \\ 0 & \iota_*\fm_n(x_1, \dots, x_n) \end{bmatrix} \\
		=& \begin{bmatrix} \fm_n(x_1, \dots, x_n) & 0 \\ 0 & \fm_n(\iota_*(x_1), \dots, \iota_*(x_n)) \end{bmatrix} \\
		=& \fm_n\left(\begin{bmatrix} x_1 & 0 \\ 0 & \iota_*(x_1) \end{bmatrix}, \dots, \begin{bmatrix} x_n & 0 \\ 0 & \iota_*(x_n) \end{bmatrix}\right) = \fm_n(\Delta(x_1), \dots, \Delta(x_n)).
	\end{align*}

One of the special feature in our setup is that when $\gamma$ is a special arc, $\Delta(\gamma$) has two nontrivial idempotents.
\begin{lemma}
	Let $\gamma$ be a special arc. Then, the following two morphisms are idempotents of the endomorphism algebra of $\Delta(\gamma)$.
	$$p^+_\gamma \deq \frac{1}{2}\begin{bmatrix} e_\gamma & e_\gamma \\ e_\gamma & e_\gamma \end{bmatrix}, \quad p^-_\gamma \deq \frac{1}{2}\begin{bmatrix} e_\gamma & -e_\gamma \\ -e_\gamma & e_\gamma \end{bmatrix}.$$
\end{lemma}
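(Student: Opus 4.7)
The plan is to verify the two identities $\fm_2(p^{\pm}_\gamma, p^{\pm}_\gamma) = p^{\pm}_\gamma$ by direct calculation, using that morphisms of a direct sum object in $\Tw\caa$ are represented as matrices and that composition in $\Tw\caa$ is given by matrix multiplication in which the scalar product of entries is replaced by $\fm_2$.

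First I would unpack the matrix convention. Since $\gamma$ is special, $\iota_*\gamma = \gamma$ as graded arcs, so the four entries of any $2\times 2$ block matrix morphism $\Delta(\gamma)\to\Delta(\gamma)$ all lie in $\End_{\caa}(\gamma)$. In particular, the entries $\pm e_\gamma$ appearing in $p^{\pm}_\gamma$ are (signed) units of this endomorphism algebra, and the unital axiom gives $\fm_2(e_\gamma,e_\gamma)=e_\gamma$.

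Next I would compute $\fm_2(p^+_\gamma,p^+_\gamma)$ entry-wise. Each entry of the product equals $\tfrac14$ times a sum of two terms of the form $\fm_2(e_\gamma,e_\gamma)=e_\gamma$, yielding
\begin{equation*}
\fm_2(p^+_\gamma,p^+_\gamma)=\frac14\begin{bmatrix} 2e_\gamma & 2e_\gamma \\ 2e_\gamma & 2e_\gamma \end{bmatrix}=p^+_\gamma.
\end{equation*}
The computation for $p^-_\gamma$ is identical, except that in the off-diagonal entries the two contributions are $\fm_2(e_\gamma,-e_\gamma)+\fm_2(-e_\gamma,e_\gamma)=-2e_\gamma$ while on the diagonal the two sign-squares give $+2e_\gamma$; assembling the matrix gives $\fm_2(p^-_\gamma,p^-_\gamma)=p^-_\gamma$.

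There is no real obstacle here once the matrix interpretation of $\fm_2$ on $\Tw\caa$ is fixed; the only subtlety worth flagging is the $\super$-invariance. The strict $\super$-action on $\Tw\caa$ conjugates a block matrix by the transposition swapping the $\gamma$- and $\iota_*\gamma$-summands (after applying $\iota_*$ entrywise), which here amounts to swapping rows and columns; both $p^+_\gamma$ and $p^-_\gamma$ are manifestly invariant under this swap, so they indeed lie in $(\Tw\caa)^{\super}$. As a small sanity check one may also note $p^+_\gamma+p^-_\gamma=e_{\Delta(\gamma)}$ and $\fm_2(p^+_\gamma,p^-_\gamma)=0$, exhibiting the nontrivial idempotent decomposition of $\Delta(\gamma)$ that motivates the subsequent correspondence with taggings.
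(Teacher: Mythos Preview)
Your proof is correct and is precisely the direct computation the paper alludes to but omits. The additional remarks on $\super$-invariance and the orthogonality $\fm_2(p^+_\gamma,p^-_\gamma)=0$ go slightly beyond what the lemma asserts, but they are accurate and relevant to how the idempotents are used immediately afterward.
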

We omit the proof as it is a direct computation.
From these, we get the following objects in the idempotent completion $\Aoo$-category $\Pi(\Tw(\cff_\Gamma(\tilde{S}, \tilde{M}, \eta)))^\iota$, for each special arc $\gamma$,
$$\Delta(\gamma)_+ \deq (\gamma, p^+_\gamma), \quad \Delta(\gamma)_- \deq (\gamma, p^-_\gamma).$$
This is our geometric interpretation of tagging. We remark that the idempotents are not involved in higher compositions as they consist of units.

Now let us define an $\Aoo$-category associated with an involutive graded arc system.

\begin{defn}
	Let $(\tilde{S}, \iota, \tilde{M}, \eta)$ be an involutive graded boundary-marked surface and $\Gamma$ be an involutive graded arc system. Then we define an $\Aoo$-category $\cff_\Gamma(\tilde{S}, \iota, \tilde{M}, \eta)$ as the full sub-$\Aoo$-category of $\Pi(\Tw(\cff_\Gamma(\tilde{S}, \tilde{M}, \eta))^\iota)$ with the set of objects $$\{\Delta(\alpha) : \{\alpha, \iota_*\alpha\} \subset \Gamma_\Ord\} \cup \{\Delta(\beta)_+, \Delta(\beta)_- : \beta \in \Gamma_\Sp\}.$$
\end{defn}
Let us compute its morphism spaces explicitly. First, the following is easy to check.
\begin{lemma}
For any special arc $\gamma$, we have
\begin{align*}
		\ho_{\cff_\Gamma(\tilde{S}, \iota, \tilde{M}, \eta)}(\Delta(\gamma)_\pm, \Delta(\gamma)_\pm) &= \field\langle e_{\Delta(\gamma)_{\pm}}\rangle, \\
		\ho_{\cff_\Gamma(\tilde{S}, \iota, \tilde{M}, \eta)}(\Delta(\gamma)_\pm, \Delta(\gamma)_\mp) &= 0. \\
\end{align*}
\end{lemma}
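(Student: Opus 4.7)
The plan is to reduce the lemma to a linear-algebra computation in the $\iota$-invariant endomorphism algebra of $\Delta(\gamma)$ inside $\Tw\cff_\Gamma(\tilde S,\tilde M,\eta)$. The key geometric input is that, for a special arc $\gamma$, the underlying Fukaya endomorphism ring equals $\field\langle e_\gamma\rangle$ already; once this is in place, the invariant subalgebra is easily described and the idempotent-completion morphism spaces are determined by the orthogonality of $p^+_\gamma$ and $p^-_\gamma$.

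First I would verify the geometric input $\End_{\cff_\Gamma(\tilde S,\tilde M,\eta)}(\gamma)=\field\langle e_\gamma\rangle$, i.e.\ $\Theta(\gamma,\gamma)=\emptyset$. Since $\iota_*\gamma=\gamma$ as graded arcs but $\tilde O$ is finite, $\iota|_\gamma$ is a nontrivial involution of an interval that fixes one interior point (in $\tilde O$) and swaps the two boundary endpoints $p_1,p_2\in\tilde M$. If $p_1$ and $p_2$ lay on the same marking component $I_i$, the involutivity condition $\iota\circ\tilde M=\tilde M$ (up to permutation) would force $\iota(I_i)=I_i$, whence the nontrivial involution $\iota|_{I_i}$ of the interval fixes an interior point of $I_i\subset\tilde M$, contradicting $\tilde O\subset\operatorname{int}\tilde S$. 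Therefore the endpoints lie on distinct components of $\tilde M$ and no boundary path joins them.

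Next I would compute the $\iota$-invariant part of $\End_{\Tw\cff_\Gamma}(\Delta(\gamma))$. Identifying $\iota_*\gamma\cong\gamma$, this space is the $2\times 2$ matrix algebra over $\field\langle e_\gamma\rangle$, and since the $\super$-action swaps the two summands of $\Delta(\gamma)$ while fixing $e_\gamma$, it sends $\begin{bmatrix}a&b\\c&d\end{bmatrix}$ to $\begin{bmatrix}d&c\\b&a\end{bmatrix}$. The invariant subalgebra is thus $2$-dimensional, spanned by $I_{\Delta(\gamma)}$ and the flip matrix $S=\begin{bmatrix}0&e_\gamma\\e_\gamma&0\end{bmatrix}$. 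Using $S^2=I_{\Delta(\gamma)}$ together with $p^\pm_\gamma=\tfrac12(I_{\Delta(\gamma)}\pm S)$, one checks by direct expansion that $(p^\pm_\gamma)^2=p^\pm_\gamma$, $p^+_\gamma p^-_\gamma=p^-_\gamma p^+_\gamma=0$, and $p^+_\gamma+p^-_\gamma=I_{\Delta(\gamma)}$.

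Finally, by the defining property of the idempotent completion, $\ho(\Delta(\gamma)_\epsilon,\Delta(\gamma)_{\epsilon'})=p^{\epsilon'}_\gamma\cdot\End^\iota(\Delta(\gamma))\cdot p^\epsilon_\gamma$; the orthogonality and idempotency established above immediately yield $\ho(\Delta(\gamma)_\pm,\Delta(\gamma)_\pm)=\field\langle p^\pm_\gamma\rangle=\field\langle e_{\Delta(\gamma)_\pm}\rangle$ and $\ho(\Delta(\gamma)_\pm,\Delta(\gamma)_\mp)=0$. The only delicate step is the geometric claim that the two endpoints of a special arc lie on different components of $\tilde M$; everything afterwards is formal matrix manipulation in a $2$-dimensional commutative algebra.
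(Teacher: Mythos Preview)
Your proof is correct and is essentially the direct computation the paper has in mind when it says ``the following is easy to check.'' The paper gives no further details for this lemma, so there is nothing substantive to compare.

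One small remark: the geometric input you isolate---that the two endpoints of a special arc lie on distinct components of $\tilde M$---is in fact recorded later in the paper as the statement that $\iota_*$ acts freely on the set of markings (just before the definition of preferred morphisms). Your argument for it via the fixed-point set $\tilde O$ lying in the interior is exactly the right justification. After that, the rest is, as you say, formal manipulation with the two orthogonal idempotents $p^\pm_\gamma=\tfrac12(I\pm S)$ in the $2$-dimensional invariant subalgebra.
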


Let $\alpha_1$ and $\alpha_2$ be ordinary arcs. Then, boundary morphisms
$$\theta_1 : \alpha_1 \rightarrow \alpha_2, \quad \theta_2 : \alpha_1 \rightarrow \iota_*(\alpha_2), \quad \theta_3 : \iota_*(\alpha_1) \rightarrow \alpha_2, \quad \theta_4 : \iota_*(\alpha_1) \rightarrow \iota_*(\alpha_2),$$
give the following morphisms from $\alpha_1 \oplus \iota_*(\alpha_1)$ to $\alpha_2 \oplus \iota_*(\alpha_2)$, under the functor $\Delta$, 
$$\Delta(\theta_1)^0_0 \deq \begin{bmatrix} \theta_1 & 0 \\ 0 & \iota_*(\theta_1) \end{bmatrix}, \quad \Delta(\theta_2)^0_0 \deq \begin{bmatrix} 0 & \iota_*(\theta_2) \\ \theta_2 & 0 \end{bmatrix}, \quad \Delta(\theta_3)^0_0 \deq \begin{bmatrix} 0&\theta_3 \\  \iota_*(\theta_3)  & 0 \end{bmatrix}, \quad \Delta(\theta_4)^0_0 \deq \begin{bmatrix}  \iota_*(\theta_4) & 0 \\0 &\theta_4 \end{bmatrix}.$$

Now suppose $\alpha$ is an ordinary arc, with a unique boundary morphism $\theta:\alpha \rightarrow \gamma$ to a special arc $\gamma$. Then,
$$\ho_{\Tw\cff_\Gamma(\tilde{S}, \tilde{M}, \eta)}(\Delta(\alpha), \Delta(\gamma)) =\left\{\begin{bmatrix} a\theta & b\iota_*(\theta) \\ c\theta & d\iota_*(\theta) \end{bmatrix} \mid a,b,c,d \in k \right\}.$$
The $\iota$-invariant part is given by $a=d, b=c$. By multiplying the idempotents from the right, we have
$$ \ho_{\cff_\Gamma(\tilde{S}, \iota, \tilde{M}, \eta)}(\Delta(\alpha), \Delta(\gamma)_\pm) =\frac{1}{2} \left\{\begin{bmatrix} (a\pm b)\theta & (b\pm a) \iota_*(\theta) \\ (b\pm a)\theta & (a\pm b)\iota_*(\theta) \end{bmatrix} \mid a,b \in k \right\}.$$
We choose a generator of the morphism space for each $\pm$ as 
$$\Delta(\theta)^0_\pm \deq \frac{1}{2}\begin{bmatrix} \theta & \pm\iota_*(\theta) \\ \pm\theta & \iota_*(\theta) \end{bmatrix}.$$
We remark that if we did not fix a preferred morphism (and a representative), then the above choice has an ambiguity of overall sign.

For two special arcs $\beta_1$ and $\beta_2$ with a preferred morphism $\theta : \beta_1 \rightarrow \beta_2$, the morphism $\iota_*(\theta)$ is also a morphism from $\beta_1$ to $\beta_2$. Moreover, these span the morphism space $\ho(\beta_1, \beta_2)$. Thus, the invariant morphism space is given by 
$$\ho(\Delta(\beta_1), \Delta(\beta_2))^{\super} = \field\left<\begin{bmatrix} a\theta + b\iota_*(\theta) & c\theta + d\iota_*(\theta) \\ d\theta + c\iota_*(\theta) & b\theta + a\iota_*(\theta) \end{bmatrix}\right>.$$
Then, by multiplying the idempotents, we get the following.
\begin{align*}
	&\frac{1}{2}\begin{bmatrix} e_{\beta_1} & \pm e_{\beta_1} \\ \pm e_{\beta_1} & e_{\beta_1} \end{bmatrix} \begin{bmatrix} a\theta + b\iota_*(\theta) & c\theta + d\iota_*(\theta) \\ d\theta + c\iota_*(\theta) & b\theta + a\iota_*(\theta) \end{bmatrix} \frac{1}{2}\begin{bmatrix} e_{\beta_2} & \pm' e_{\beta_2} \\ \pm' e_{\beta_2} & e_{\beta_2} \end{bmatrix} \\
	=& \frac{1}{4} \begin{bmatrix} (a\pm'd \pm(c\pm'b))\theta + (b\pm'c \pm(d\pm'a))\iota_*(\theta) & (\pm(a\pm'd) + (c\pm'b))\theta + (\pm(b\pm'c) + (d\pm'a))\iota_*(\theta) \\ (\pm(b\pm'c) + (d\pm'a))\theta + (\pm(a\pm'd) + (c\pm'b))\iota_*(\theta) & (b\pm'c \pm(d\pm'a))\theta + (a\pm'd \pm(c\pm'b))\iota_*(\theta) \end{bmatrix} \\
	=& \frac{1}{4}\begin{bmatrix} (a\pm'd \pm(c\pm'b))\theta \pm\pm' ((a\pm'd) \pm(c\pm'b)\iota_*(\theta) & \pm((a\pm'd)\pm(c\pm'b))\theta \pm'((a\pm'd) \pm(c\pm'b))\iota_*(\theta) \\ \pm'((a\pm'd) \pm(c\pm'b))\theta \pm((a\pm'd) \pm(c\pm'b))\iota_*(\theta) & \pm\pm'((a\pm'd) \pm(c\pm'b)\theta + (a\pm'd \pm(c\pm'b))\iota_*(\theta) \end{bmatrix} \\
	=& \frac{1}{4} (a \pm' d \pm(c\pm'b))\begin{bmatrix} \theta \pm\pm' \iota_*(\theta) & \pm\theta \pm' \iota_*(\theta) \\ \pm'\theta \pm\iota_*(\theta) & \pm\pm'\theta + \iota_*(\theta) \end{bmatrix}.
\end{align*}
Take these together, let us make the following definitions. 
Here, the sign rule will explained in Section \ref{subsection:InvolutiveVSOrbi}.
\begin{defn}\label{defn:delarc}
	Let $\alpha_1, \alpha_2$ be an ordinary arc and $\beta_1, \beta_2$ be special arcs of $\Gamma$.
	\begin{itemize}
		\item Let $\phi : \alpha_1 \rightarrow \alpha_2$ and $\psi : \alpha_1 \rightarrow \iota_*(\alpha_2)$ be preferred morphisms. Then, we define
		$$\Delta(\phi)^0_0 \deq \begin{bmatrix} \phi & 0 \\ 0 & \iota_*(\phi) \end{bmatrix}, \quad \Delta(\psi)^0_0 \deq \begin{bmatrix} 0 & \iota_*(\psi) \\ \psi & 0 \end{bmatrix}.$$
		\item Let $\phi : \alpha_1 \rightarrow \beta_2$ and $\psi : \beta_1 \rightarrow \alpha_2$ be preferred morphisms. Then, we define
		$$\Delta(\phi)^0_\pm \deq \begin{bmatrix} \phi/2 & \pm\iota_*(\phi)/2 \\ \pm\phi/2 & \iota_*(\phi)/2 \end{bmatrix}, \quad \Delta(\psi)^\pm_0 \deq \pm\begin{bmatrix} \psi/2 & \pm\psi/2 \\ \pm\iota_*(\psi)/2 & \iota_*(\psi)/2 \end{bmatrix}.$$
		\item Let $\theta : \beta_1 \rightarrow \beta_2$ be a preferred morphism. Then, we define
		$$\Delta(\theta)^\pm_{\pm'} \deq \pm\begin{bmatrix} (\theta \pm\pm'\iota_*(\theta))/4 & (\pm\theta \pm'\iota_*(\theta))/4 \\ (\pm'\theta \pm\iota_*(\theta))/4 & (\pm\pm'\theta + \iota_*(\theta))/4	\end{bmatrix}.$$
	\end{itemize}
\end{defn}
From the definition, we get the following relation between hom spaces of the original $\Aoo$-category and the invariant $\Aoo$-category.
\begin{lemma}\label{lemma:SkewGentleHom}
	Let $\alpha_1, \alpha_2$ be ordinary arcs and $\beta_1, \beta_2$ be special arcs of $\Gamma$. Then we have the following.
	\begin{align*}
		\ho_{\cff_\Gamma(\tilde{S}, \iota, \tilde{M}, \eta)}(\Delta(\alpha_1), \Delta(\alpha_2)_\pm) &\cong \ho_{\cff_\Gamma(\tilde{S}, \tilde{M}, \eta)}(\alpha_1, \alpha_2)/\left(\super\right), \\
		\ho_{\cff_\Gamma(\tilde{S}, \iota, \tilde{M}, \eta)}(\Delta(\alpha_1), \Delta(\beta_2)_\pm) &\cong \ho_{\cff_\Gamma(\tilde{S}, \tilde{M}, \eta)}(\alpha, \beta_2), \\
		\ho_{\cff_\Gamma(\tilde{S}, \iota, \tilde{M}, \eta)}(\Delta(\beta_1)_\pm, \Delta(\alpha_2)) &\cong \ho_{\cff_\Gamma(\tilde{S}, \tilde{M}, \eta)}(\beta_1, \alpha), \\
		\ho_{\cff_\Gamma(\tilde{S}, \iota, \tilde{M}, \eta)}(\Delta(\beta_1)_\pm, \Delta(\beta_2)_{\pm'}) &\cong \ho_{\cff_\Gamma(\tilde{S}, \tilde{M}, \eta)}(\beta_1, \beta_2)/\iota.
	\end{align*}
\end{lemma}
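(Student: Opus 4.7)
My plan is to deduce the lemma directly from the explicit formulas in Definition \ref{defn:delarc}, combined with a bookkeeping argument that identifies a basis of each $\iota$-invariant hom space after applying the projectors $p^\pm_\gamma$. The overall strategy will proceed in three steps: first, describe a basis of the full hom space in $\Tw(\cff_\Gamma(\tilde S, \tilde M, \eta))$ before taking invariants; second, extract the $\iota$-invariants; third, project by the idempotents $p^\pm$ on either side when the corresponding arc is special.

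Concretely, having fixed a representative in each $\iota$-orbit of arcs of $\Gamma$ and of boundary components of $\tilde M$, a basis of $\ho_{\Tw(\cff_\Gamma(\tilde S, \tilde M, \eta))}(\Delta(X), \Delta(Y))$ is the $2\times 2$ block matrix whose entries, indexed by choices of components of the source and target orbit, are basic morphisms in the cover. The involution $\iota$ acts by swapping the off-diagonal blocks and applying $\iota_*$ to each entry, so $\iota$-invariants have dimension equal to the number of $\iota$-orbits of preferred morphisms between the two orbits. The claimed isomorphisms then follow case by case. For ordinary--ordinary, no projection is needed, and the $\iota$-invariants are spanned by $\Delta(\phi)^0_0$ (for preferred $\phi : \alpha_1 \to \alpha_2$) and $\Delta(\psi)^0_0$ (for preferred $\psi : \alpha_1 \to \iota_*\alpha_2$); the quotient by $\super$ on the right-hand side records exactly the pairing of $\phi$ with $\iota_*\phi$. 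For ordinary--special (resp.\ special--ordinary), every entry of the matrix lives in $\ho(\alpha_1,\beta_2)$ (resp.\ $\ho(\beta_1, \alpha_2)$), the invariant subspace has dimension $2\dim\ho(\alpha_1,\beta_2)$ (resp.\ $2\dim\ho(\beta_1,\alpha_2)$), and projection by $p^\pm_{\beta_2}$ (resp.\ $p^\pm_{\beta_1}$) cuts this in half, leaving exactly one basis vector $\Delta(\phi)^0_\pm$ (resp.\ $\Delta(\psi)^\pm_0$) per preferred morphism. For special--special, invariant matrices form a space of dimension $2\dim\ho(\beta_1,\beta_2)$ since both $\theta$ and $\iota_*\theta$ occur as entries, and double projection between $p^\pm_{\beta_1}$ and $p^{\pm'}_{\beta_2}$ imposes the eigenrelation $\iota_*\theta = (\pm\pm')\theta$, cutting the dimension down to $\dim\ho(\beta_1,\beta_2)/\iota$, realised by the basis $\Delta(\theta)^\pm_{\pm'}$.

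The main obstacle I anticipate is notational rather than conceptual: one must verify that the explicit scalars $1/2$ and $1/4$ appearing in Definition \ref{defn:delarc}, together with the sign factors, assemble into an \emph{isomorphism} of hom spaces rather than a bijection of bases, and that the chosen representatives span distinct lines after projection. This reduces to the elementary fact that $\{p^+_\gamma, p^-_\gamma\}$ is a complete system of orthogonal idempotents in $\End(\Delta(\gamma))$ for every special $\gamma$, which follows immediately from $p^+_\gamma + p^-_\gamma = e_{\Delta(\gamma)}$ and $p^+_\gamma p^-_\gamma = p^-_\gamma p^+_\gamma = 0$. A secondary caveat is that changing the choice of preferred representative of a boundary morphism may alter the identifications by an overall sign; this is harmless at the level of underlying vector spaces, but must be tracked if the identifications are to be used later for $\Aoo$-structure maps.
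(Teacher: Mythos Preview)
Your approach is correct and coincides with the paper's: the paper carries out precisely the three steps you outline (basis of the $2\times 2$ matrix hom, $\iota$-invariants, projection by $p^\pm$) in the paragraphs immediately preceding Definition~\ref{defn:delarc}, and then the proof of the lemma is the single sentence ``Morphisms defined in Definition~\ref{defn:delarc} are basis of morphism spaces of $\cff_\Gamma(\tilde{S}, \iota, \tilde{M}, \eta)$.'' Your write-up is simply a more explicit rendering of that same computation; the only minor imprecision is in the special--special case, where projection does not impose an eigenrelation on $\theta$ but rather selects the line spanned by $\theta \pm\pm' \iota_*\theta$, which is exactly what the paper's matrix computation shows and gives the claimed dimension $\dim\ho(\beta_1,\beta_2)/\iota$.
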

\begin{proof}
Morphisms defined in Definition \ref{defn:delarc} are basis of morphism spaces of $\cff_\Gamma(\tilde{S}, \iota, \tilde{M}, \eta)$.
\end{proof}

Recall that the idempotent completion may be taken before or after taking the invariant part.
\begin{cor} \cite[Lemma 2.19]{Wu18}
	 $\Aoo$-categories $\Pi(\Tw(\cff_\Gamma(\tilde{S}, \iota, \tilde{M}, \eta)))$ and $\Pi(\Tw(\cff_\Gamma(\tilde{S}, \tilde{M}, \eta))^\iota)$ are quasi-equivalent.
\end{cor}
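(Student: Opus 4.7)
The plan is to show that the inclusion $\cff_\Gamma(\tilde{S}, \iota, \tilde{M}, \eta) \hookrightarrow \Pi(\Tw(\cff_\Gamma(\tilde{S}, \tilde{M}, \eta))^\iota)$, which is a full and faithful $\Aoo$-functor by construction, becomes a quasi-equivalence after passing to $\Pi\Tw$ on the left. Since $\Pi\Tw$ is idempotent (up to quasi-equivalence), it suffices to prove that the target is already generated, under twisted complexes and idempotent summands, by the objects $\Delta(\alpha)$ for ordinary orbits $\{\alpha,\iota_*\alpha\}\subset\Gamma_\Ord$ and $\Delta(\beta)_\pm$ for $\beta\in\Gamma_\Sp$.

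First I would set up the categorical framework. The strict $\super$-action on $\caa=\cff_\Gamma(\tilde{S}, \tilde{M}, \eta)$ induces strict $\super$-actions on both $\Tw\caa$ and $\Pi\Tw\caa$ (the latter because $\iota$ sends idempotents to idempotents), and taking $\super$-invariants commutes with both $\Tw$ and $\Pi$ in the sense that there are natural comparison functors
\begin{equation*}
\Tw(\caa^\iota)\longrightarrow (\Tw\caa)^\iota,\qquad \Pi(\caa^\iota)\longrightarrow (\Pi\caa)^\iota.
\end{equation*}
In characteristic zero with $|\super|=2$ invertible in $\field$, the averaging projector $\tfrac{1}{2}(\id+\iota_*)$ and its sibling $\tfrac{1}{2}(\id-\iota_*)$ act on every hom space and split each of these functors as summands after taking idempotent completion. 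This is the general mechanism underlying \cite[Lemma 2.19]{Wu18}, and I would invoke it formally to reduce the statement to the essential-surjectivity claim below.

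Next I would prove essential surjectivity. A general object of $\Pi(\Tw\caa)^\iota$ is an idempotent summand of an $\iota$-invariant twisted complex built from objects of $\Gamma$. By grouping the underlying generators according to their $\iota$-orbits, any such twisted complex is isomorphic to one built from the $\Delta$-thickenings $\Delta(\alpha)$ and $\Delta(\beta)$, with the differential being an invariant matrix of morphisms; this is because on each orbit $\{\alpha,\iota_*\alpha\}\subset\Gamma_\Ord$ the averaging projector identifies the invariant hom with the hom space of $\Delta(\alpha)$, and on a special arc $\beta$ the averaging projectors split $\Delta(\beta)\simeq \Delta(\beta)_+\oplus\Delta(\beta)_-$ inside $\Pi\caa^\iota$ via the idempotents $p^\pm_\beta$. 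Combined with Lemma \ref{lemma:SkewGentleHom}, which guarantees that all invariant morphisms between these generators already live in $\cff_\Gamma(\tilde{S}, \iota, \tilde{M}, \eta)$, this shows that every object of $\Pi(\Tw\caa)^\iota$ is quasi-isomorphic to an object of $\Pi\Tw(\cff_\Gamma(\tilde{S}, \iota, \tilde{M}, \eta))$.

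The main obstacle I anticipate is the bookkeeping of compatibility between idempotent completion, taking $\Tw$, and taking $\super$-invariants: one must check that the averaging projectors really do split every invariant morphism into pieces that are already accounted for by the chosen generators (and their shifts and cones), and that no genuinely new idempotent appears inside $(\Pi\Tw\caa)^\iota$ beyond those coming from the special-arc splittings $p^\pm_\beta$. Once this is verified, full faithfulness is automatic from the definition of $\cff_\Gamma(\tilde{S}, \iota, \tilde{M}, \eta)$ as a full subcategory, and the two $\Aoo$-categories in the statement become quasi-equivalent.
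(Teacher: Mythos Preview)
Your proposal is correct and is essentially an elaboration of the paper's own approach: the paper gives no explicit proof, merely stating the corollary with the citation to \cite[Lemma 2.19]{Wu18} and the remark that ``the idempotent completion may be taken before or after taking the invariant part.'' You have spelled out the generation/essential-surjectivity argument in detail, using Lemma~\ref{lemma:SkewGentleHom} exactly as the paper intends, so your writeup is more thorough than but compatible with the paper's treatment.
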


Now let us show that the Morita equivalence class of $\cff_\Gamma(\tilde{S}, \iota, \tilde{M}, \eta)$ does not depend of the choice of $\Gamma$. Let $\Gamma'$ be another involutive graded arc system. From Theorem \ref{Theorem:MoritaEquivalencyOfTopologicalFukayaCategoryForSurfaces}, there is an $\Aoo$-functor from $\cff_\Gamma(\tilde{S}, \tilde{M}, \eta)$ to $\Tw(\cff_{\Gamma'}(\tilde{S}, \tilde{M}, \eta))$. Following the algebraic construction in \cite[Definition 4.2]{BM03} and \cite[Theorem 2.12]{OPS18Arx}, we can find an explicit twisted complex associated with each arc in $\Gamma$ as follows.

Let $\Gamma'_f$ be a formal graded arc system contained in $\Gamma'$, which always exists. 
Then for a graded arc $\gamma$ in $\Gamma$, there is a unique sequence of graded arcs $(\gamma_1, \dots, \gamma_r)$ in $\Gamma'_f$ with the following properties.
\begin{itemize}
	\item For each $i = 1, \dots, (r-1)$, there is either a boundary morphism $\theta_i^+$ from $\gamma_i$ to $\gamma_{i+1}$ or $\theta_i^-$ from $\gamma_{i+1}$ to $\gamma_i$ of degree $\delta_i$. Let us denote $\epsilon_i$ the sign of the upper index of $\theta_i^\pm$.
	\item The concatenated path $\gamma_1 \bullet \theta_1^\pm \bullet \dots \bullet \theta_{r-1}^\pm \bullet \gamma_r$ is isotopic to $\gamma$.
\end{itemize}
Then let us define $d_k \deq \sum_{i=2}^k \epsilon_{i-1}(\deg{\theta_i^{\epsilon_i}} - 1)$ for $k=1, \dots, r$. In particular, $d_1$ is zero. Then, we define a twisted complex $T_\gamma \deq \bigoplus_{i=1}^r \gamma_i[d_i]$ with the twisting map $\oplus_{i=1}^{r-1}\theta_i^{\epsilon_i}$. Then, $T_\gamma$ is the image of $\gamma$ up to degree shift.

From this construction, we get the following lemma.
\begin{lemma}
	Let $\Gamma_1$ and $\Gamma_2$ be two involutive graded arc systems of $(\tilde{S}, \tilde{M}, \eta)$. Then the canonical functor $$\cff_{\Gamma_1}(\tilde{S}, \tilde{M}, \eta) \rightarrow \Tw\cff_{\Gamma_2}(\tilde{S}, \tilde{M}, \eta)$$ is $\super$-equivariant.
\end{lemma}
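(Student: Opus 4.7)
The plan is to realize the canonical functor $F : \cff_{\Gamma_1}(\tilde{S}, \tilde{M}, \eta) \rightarrow \Tw\cff_{\Gamma_2}(\tilde{S}, \tilde{M}, \eta)$ through an $\iota$-invariant formal subsystem $\Gamma'_f \subseteq \Gamma_2$, then invoke the uniqueness built into the twisted complex $T_\gamma$ to deduce strict equivariance on objects, and finally check that the same uniqueness transports morphisms equivariantly.

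First I would construct an $\iota$-invariant formal subsystem $\Gamma'_f$ of $\Gamma_2$. Because $\Gamma_2$ is involutive, the set of disc components of $\tilde{S} \setminus \bigcup_{\gamma \in \Gamma_2}\gamma$ carries an $\iota$-action. Reaching formality from $\Gamma_2$ amounts to deleting, within each disc component with more than one unmarked boundary, a subset of boundary arcs so that the enlarged components each have exactly one unmarked boundary. I would organize the deletions by $\iota$-orbits: on an orbit of size two I choose deletions on one representative and transport them by $\iota_*$ to the other; on a self-$\iota$-invariant disc component I pick an $\iota$-equivariant set of arcs to delete. This yields $\Gamma'_f$ with $\iota_*\Gamma'_f = \Gamma'_f$.

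Given such $\Gamma'_f$, for $\gamma \in \Gamma_1$ the sequence $(\gamma_1, \dots, \gamma_r)$ in $\Gamma'_f$, together with the boundary morphisms $\theta_i^{\epsilon_i}$ and shifts $d_k$, is determined uniquely by the isotopy class and grading of $\gamma$. Applying $\iota_*$ sends this data to $(\iota_*\gamma_1, \dots, \iota_*\gamma_r)$ with morphisms $\iota_*\theta_i^{\epsilon_i}$, which still lie in $\Gamma'_f$; since $\eta$ is $\iota$-invariant the degrees $\deg{\theta_i^{\epsilon_i}}$ are preserved, so the same shifts $d_k$ reappear. By the uniqueness clause of the construction, this is precisely the data producing $T_{\iota_*\gamma}$, and therefore $F(\iota_*\gamma) = \iota_* F(\gamma)$ as twisted complexes.

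Finally, for a boundary morphism $\phi : \gamma \to \gamma'$ in $\cff_{\Gamma_1}$, the image $F(\phi)$ is represented by a matrix whose entries are concatenations of boundary morphisms in $\Gamma'_f$ obtained from the sequences attached to $\gamma$ and $\gamma'$. Since $\iota$ is an orientation-preserving diffeomorphism preserving $\tilde{M}$ and $\eta$, it sends boundary morphisms to boundary morphisms and commutes with concatenation; applying $\iota_*$ entrywise to the matrix for $F(\phi)$ therefore yields the matrix for $F(\iota_*\phi)$. As $F$ is built strictly from this data (no nontrivial higher components arise because the construction is combinatorial), equivariance on morphisms extends automatically to all $\fm_k$. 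The main obstacle is the first step: verifying that equivariant choices during formalization can always be made without violating the at-most-one-unmarked-boundary condition within individual (possibly $\iota$-fixed) disc components, which requires a combinatorial case analysis on how $\iota$ can act on such a component.
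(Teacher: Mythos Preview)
Your argument is correct but takes a different route from the paper. You construct a single $\iota$-invariant formal subsystem $\Gamma'_f \subseteq \Gamma_2$ and then invoke uniqueness of the sequence $(\gamma_1,\dots,\gamma_r)$ to conclude $\iota_*T_\gamma = T_{\iota_*\gamma}$. The paper avoids building an invariant formal system altogether: it fixes an arbitrary formal subsystem $(\Gamma_2)_f$, uses it to define $T_\gamma$, and then uses the \emph{different} formal subsystem $\iota_*((\Gamma_2)_f)$ (which is still contained in $\Gamma_2$ since $\Gamma_2$ is involutive) to define $T_{\iota_*\gamma}$. Because $\iota$ transports the unique data for $\gamma$ relative to $(\Gamma_2)_f$ to the unique data for $\iota_*\gamma$ relative to $\iota_*((\Gamma_2)_f)$, the identity $\iota_*(T_\gamma) = T_{\iota_*(\gamma)}$ is immediate, with no case analysis. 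What your approach buys is a single formal system used uniformly for all arcs, so the functor is visibly well-defined and strictly equivariant without having to allow the auxiliary choice to vary; the cost is precisely the combinatorial obstacle you identify (equivariant deletions on $\iota$-fixed disc components), which the paper's trick bypasses entirely.
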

\begin{proof}
	For a graded arc $\gamma \in \Gamma_1$, we have constructed the twisted complex $T_\gamma$. Note that this may depend on the choice of a formal arc system $(\Gamma_2)_f$. However, we can take $\iota_*((\Gamma_2)_f)$ to define $T_{\iota_*(\gamma)}$. Then, we get $\iota_*(T_\gamma) = T_{\iota_*(\gamma)}$, which gives $\super$-equivariance of the canonical functor $$\cff_{\Gamma_1}(\tilde{S}, \tilde{M}, \eta) \rightarrow \Tw\cff_{\Gamma_2}(\tilde{S}, \tilde{M}, \eta).$$
\end{proof}
Then, we deduce the independence of the Morita class of $\cff_\Gamma(\tilde{S}, \iota, \tilde{M}, \eta)$ of $\Gamma$.
\begin{thm}\label{theorem:MoritaIndependeceForInvolutiveSurfaces}
	Let $\Gamma_1$ and $\Gamma_2$ be involutive graded arc systems of an involutive graded boundary-marked surface $(\tilde{S}, \iota, \tilde{M}, \eta)$. Then, there is an equivalence of $\Aoo$-categories $$\Pi(\Tw(\cff_{\Gamma_1}(\tilde{S}, \iota, \tilde{M}, \eta))) \simeq \Pi(\Tw(\cff_{\Gamma_2}(\tilde{S}, \iota, \tilde{M}, \eta))).$$
\end{thm}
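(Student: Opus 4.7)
The plan is to promote the non-equivariant Morita equivalence from Theorem \ref{Theorem:MoritaEquivalencyOfTopologicalFukayaCategoryForSurfaces} to an equivariant one using the $\super$-equivariance of the canonical functor established in the preceding lemma. First, I would invoke Theorem \ref{Theorem:MoritaEquivalencyOfTopologicalFukayaCategoryForSurfaces} to obtain a quasi-equivalence at the $\Tw$-level, extending the preceding lemma's equivariance from the canonical functor $F: \cff_{\Gamma_1}(\tilde{S}, \tilde{M}, \eta) \to \Tw\cff_{\Gamma_2}(\tilde{S}, \tilde{M}, \eta)$ to a strictly $\super$-equivariant quasi-equivalence
$$\Tw F: \Tw\cff_{\Gamma_1}(\tilde{S}, \tilde{M}, \eta) \longrightarrow \Tw\cff_{\Gamma_2}(\tilde{S}, \tilde{M}, \eta).$$
The induced $\super$-action on twisted complexes is componentwise, so equivariance of $F$ immediately lifts to $\Tw F$.

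Next, I would restrict $\Tw F$ to $\super$-invariant parts and show that
$$(\Tw F)^\iota: (\Tw\cff_{\Gamma_1}(\tilde{S}, \tilde{M}, \eta))^\iota \longrightarrow (\Tw\cff_{\Gamma_2}(\tilde{S}, \tilde{M}, \eta))^\iota$$
is a Morita equivalence. Quasi-full-faithfulness comes for free: since $\field$ has characteristic zero, the averaging operator $\frac{1}{2}(\id + \iota_*)$ realises $\ho^\iota$ as a direct summand of $\ho$, so taking $\super$-invariants is exact and commutes with cohomology; the $\super$-equivariant cohomology isomorphism induced by $\Tw F$ then restricts to an isomorphism on $\iota$-invariant subspaces. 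Quasi-essential surjectivity after idempotent completion is the substantive point: given an $\iota$-invariant $Y \in (\Tw\cff_{\Gamma_2})^\iota$, essential surjectivity of $\Tw F$ yields $X \in \Tw\cff_{\Gamma_1}$ with $\Tw F(X) \simeq Y$, whence the symmetrised object $X \oplus \iota_* X$ is $\iota$-invariant and $\Tw F(X \oplus \iota_* X) \simeq Y \oplus Y$. This latter splits via the idempotents $\tfrac{1}{2}\bigl(\begin{smallmatrix}e_Y & \pm e_Y \\ \pm e_Y & e_Y\end{smallmatrix}\bigr)$ analogous to $p^\pm_\gamma$ considered earlier in the section, so $Y$ appears as an invariant summand inside $\Pi\bigl((\Tw\cff_{\Gamma_2})^\iota\bigr)$.

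Combining these steps delivers a quasi-equivalence
$$\Pi\bigl((\Tw\cff_{\Gamma_1}(\tilde{S}, \tilde{M}, \eta))^\iota\bigr) \simeq \Pi\bigl((\Tw\cff_{\Gamma_2}(\tilde{S}, \tilde{M}, \eta))^\iota\bigr),$$
and the preceding corollary identifies each side with $\Pi(\Tw(\cff_{\Gamma_i}(\tilde{S}, \iota, \tilde{M}, \eta)))$, completing the proof. The principal obstacle is the essential-surjectivity step: one must verify that the $\super$-invariant lift obtained by symmetrising $X$ with $\iota_* X$ genuinely splits off $Y$ in the idempotent completion. This is where $\Pi$ is indispensable, and where the characteristic-zero hypothesis on $\field$ is used crucially to make the averaging projector and the diagonal/anti-diagonal idempotents well-defined.
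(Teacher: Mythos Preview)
Your proposal is correct and follows essentially the same route as the paper: lift the non-equivariant Morita equivalence to a $\super$-equivariant one via the preceding lemma, pass to $\iota$-fixed parts, and then invoke the corollary identifying $\Pi(\Tw(\cff_{\Gamma_i}(\tilde{S}, \iota, \tilde{M}, \eta)))$ with $\Pi((\Tw\cff_{\Gamma_i}(\tilde{S}, \tilde{M}, \eta))^\iota)$. The paper's proof is terser---it simply asserts that an equivariant quasi-equivalence induces an equivalence on fixed categories---whereas you unpack this step, supplying the averaging argument for full faithfulness and the symmetrisation-plus-idempotent argument for essential surjectivity after $\Pi$; your added detail is sound and is exactly what the paper's one-line assertion relies on.
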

\begin{proof}
	Since the equivalence between $\Tw(\cff_{\Gamma_1}(\tilde{S}, \tilde{M}, \eta))$ and $\Tw(\cff_{\Gamma_2}(\tilde{S}, \tilde{M}, \eta))$ is $\super$-equivariant, their fixed category are equivalent as well. Thus, we have $$\Pi(\Tw(\cff_{\Gamma_1}(\tilde{S}, \iota, \tilde{M}, \eta))) \simeq \Pi(\Tw(\cff_{\Gamma_1}(\tilde{S}, \tilde{M}, \eta))^\iota) \simeq \Pi(\Tw(\cff_{\Gamma_2}(\tilde{S}, \tilde{M}, \eta))^\iota) \simeq \Pi(\Tw(\cff_{\Gamma_2}(\tilde{S}, \iota, \tilde{M}, \eta))).$$
\end{proof}
As in Definition \ref{definition:TopologicalFukayaCategory}, we define the topological Fukaya category of an involutive surface as follows.
\begin{defn}
	The {\em topological Fukaya category} of an involutive graded boundary-marked surface $(\tilde{S}, \iota, \tilde{M}, \eta)$ is the idempotent completion of the triangulated enhancement of the $\Aoo$-category for an involutive graded arc system $\Gamma$:
	 $$\cff(\tilde{S}, \tilde{M}, \eta) \deq \Pi(\Tw(\cff_\Gamma(\tilde{S}, \iota, \tilde{M}, \eta)).$$	
\end{defn}

\subsection{Fukaya categories of involutive arc systems and tagged arcs}\label{subsection:InvolutiveVSOrbi}
Now, we show that the construction of Fukaya category of involutive arc system in this section is equivalent to that of tagged arc system defined in Definition \ref{defn:FukayaCategoryForMarkedSurface}. The correspondence simplifies a lot because the corresponding tagged arc system is also involutive. In particular, there are no interior morphisms between tagged arcs, because the tagged arcs of the involutive system do not meet at interior markings. (See the next section for the discussion on interior morphisms.)

Let $(\tilde{S}, \iota, \tilde{M}, \eta)$ be an involutive graded boundary-marked surface, and let $(S,M,O,\eta)$ be the graded marked surface obtained by the quotient.
\begin{defn}
For a graded arc system  $\Gamma$ of $\tilde{S}$, we define a tagged arc system $\pi_*\Gamma$ of $S$ as follows:
$\pi_*\Gamma$ consists of the following associated arcs for all representing (among each $\super$-orbit) ordinary and special arcs:
\begin{enumerate}
\item For an ordinary arc $\alpha$, $\pi \circ \alpha$ is a graded arc of $(S,M,O,\eta)$ and we denote it by $\pi_*(\alpha)$.
\item For a special arc $\beta$ (assume that $\beta(\frac{1}{2}) \in O$),  $\pi \circ \beta|_{[0, \frac{1}{2}]}$ is a graded arc of $(S,M,O,\eta)$ and 
for each tagging $\pm$, we have two tagged arcs $\pi_*(\beta)_+$ and $\pi_*(\beta)_-$.
\end{enumerate}
\end{defn}
\begin{lemma}\label{lemma:InvolutiveIsInvolutive}
	The collection $\pi_*(\Gamma)$ is an involutive nice tagged arc system of $(S, M,O, \eta)$. Moreover, any involutive nice tagged arc system of $(S, M,O, \eta)$ is obtained in this way.
\end{lemma}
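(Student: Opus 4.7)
The plan is to verify each direction of the biconditional by unpacking the definitions. For the forward direction, I check each clause of involutivity, thickness, goodness, fullness, and niceness for $\pi_*\Gamma$. For the reverse direction, I exhibit an inverse construction on the level of underlying arcs and observe it produces an involutive graded arc system on $\tilde{S}$ with the correct pushforward.

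For the forward direction, I first verify that the pushforward produces well-defined (tagged) arcs. An ordinary arc $\alpha \in \Gamma_\Ord$ avoids the fixed-point locus $\tilde{O}$, so $\pi_*\alpha$ is an embedded arc with $\nu(\pi_*\alpha) = 0$. A special arc $\beta \in \Gamma_\Sp$ meets $\tilde{O}$ at exactly one interior point $\beta(\tfrac{1}{2})$, so the half-arc $\pi \circ \beta|_{[0,1/2]}$ is embedded and $\nu(\pi_*\beta) = 1$; both taggings are declared as tagged arcs, which together form a thick pair. To check the first clause of Definition \ref{defn:InvolutiveCondition}, I use that $\phi : \Gamma_\Sp \to \tilde{O}$ constructed in the proof of the earlier count is injective, so at most one underlying arc of $\pi_*\Gamma$ hangs over each $p \in O$. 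The second clause of involutivity holds by construction since every underlying arc with $\nu = 1$ arises from a special arc and comes equipped with both taggings. Thickness and goodness follow immediately, because $|(\pi_*\Gamma)_p| \le 2$ and any two tagged arcs at $p$ constitute the full thick pair; niceness is vacuous as no interior morphisms exist between the tagged arcs of $\pi_*\Gamma$.

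The full condition requires tracing $\iota$-orbits of components of $\tilde{S}\setminus \bigcup_{\tilde{\gamma}\in \Gamma}\tilde{\gamma}$. A component $\tilde{D}$ is a topological disc, and the components of $S\setminus \bigcup_{\gamma \in \underline{\pi_*\Gamma}}\gamma$ are in bijection with $\iota$-orbits of such $\tilde{D}$. If $\iota(\tilde{D})\ne \tilde{D}$, then $\pi(\tilde{D})$ is topologically a copy of $\tilde{D}$ with the same boundary pattern, giving a disc/composition/unmarked-boundary case upstairs that corresponds to the same type downstairs. If $\iota(\tilde{D}) = \tilde{D}$, then $\iota$ acts as an orientation-preserving involution on the disc $\tilde{D}$ fixing an interior orbifold point or folding along special arcs in $\partial\tilde{D}$, and $\pi(\tilde{D})$ is a disc in which the special arcs on its boundary appear as folded thick arcs and the interior fixed point appears as an interior marking. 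In each case I match the resulting configuration to one of the bullets in Definition \ref{defn:FullCondition}, observing that any folded arc in the quotient is necessarily thick (it is the image of a special arc), and the counts of unmarked boundary components and interior markings descend correctly from the arc system axioms on $\tilde{S}$.

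For the reverse direction, given an involutive nice tagged arc system $\Gamma'$ on $(S, M, O, \eta)$, I construct $\Gamma$ by lifting: for each underlying arc $\alpha \in \underline{\Gamma'}$ with $\nu(\alpha) = 0$, include both components of $\pi^{-1}(\alpha)$ as ordinary arcs; for each thick underlying arc $\beta \in \underline{\Gamma'}$ with $\nu(\beta) = 1$ hanging at $p$, the preimage $\pi^{-1}(\beta)$ is a single arc through $\pi^{-1}(p)\in \tilde{O}$ which I include as a special arc. Gradings lift canonically via the branched cover and the involution-invariance of $\eta$. The collection $\Gamma$ is manifestly $\iota$-invariant. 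The graded arc system axioms on $\tilde{S}$ follow from the corresponding properties on $S$: disjointness downstairs (which holds since no two underlying arcs meet at $O$) lifts to disjointness upstairs, and disc components lift correctly. Finally, $\pi_*\Gamma = \Gamma'$ by construction.

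The main obstacle will be the bookkeeping in the full-condition step: identifying the precise dictionary between the four types of configurations allowed in Definition \ref{defn:FullCondition} and the disc-component structure of the $\iota$-quotient. Concretely, I must show that ``at most one outward interior marking, folded non-thick arc, or unmarked boundary component'' downstairs is equivalent, orbit by orbit, to ``at most one unmarked boundary component per $\tilde{D}$'' upstairs, and that folded thick arcs downstairs are exactly the images of special arcs glued to themselves in a $\iota$-invariant $\tilde{D}$. Once this correspondence is laid out, the remaining verifications are straightforward.
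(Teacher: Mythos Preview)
Your approach is correct and matches the paper's in structure: verify the axioms for $\pi_*\Gamma$ directly, then exhibit the lifting construction for the converse. However, you treat the full condition as the main obstacle and plan a case analysis of $\iota$-orbits of disc components, while the paper dispatches it in one line: since $\pi_*\Gamma$ has no interior morphisms (at most one underlying arc hangs at each $p\in O$) and every arc of interior number $1$ is thick, every disc component automatically defines a disc sequence with all folds thick, so fullness is immediate without any orbit bookkeeping.

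Your orbit analysis also contains a small confusion worth flagging. Folded arcs downstairs arise from special arcs on the boundary of a disc $\tilde{D}$ with $\iota(\tilde{D})\neq\tilde{D}$: since $\iota$ reverses the parametrization of a special arc and is orientation-preserving on $\tilde{S}$, it swaps the two sides of that arc, so a special arc on $\partial\tilde{D}$ forces $\iota(\tilde{D})$ to be the other side. Conversely, an $\iota$-invariant disc $\tilde{D}$ carries exactly one interior orbifold point (the unique fixed point of the rotation by $\pi$) and has no special arcs on its boundary. Your phrasing suggests the opposite pairing. This would cause trouble if you carried out the detailed dictionary you propose, but the paper's shortcut makes that dictionary unnecessary.
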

\begin{proof}
	Let us first show the collection of underlying arcs of $\pi_*(\Gamma)$ is an arc system. Since any two distinct arcs $\alpha$ and $\beta$ in $\Gamma$ are disjoint and non-isotopic, so are $\pi_*(\alpha)$ and $\pi_*(\beta)$. So it satisfies the first and third conditions in Definition \ref{defn:ArcSystemForOrbiSuefaces}. Also, the second condition follows the fact that the restriction of the projection map $$\pi : \tilde{S} \setminus \bigcup_{\alpha \in \Gamma}\alpha \rightarrow S \setminus \bigcup_{\beta \in \pi_*(\beta)}\beta$$ is a covering map. The fourth condition vacuously holds. Thus, $\pi_*(\Gamma)$ is a pre-tagged arc system.
	
	The thick condition and good condition follow the definition. Also, since $\pi_*\Gamma$ has no interior morphisms and any arcs of interior number $1$ are thick, it satisfies the full condition as well. Moreover, it is nice as it has no interior morphism. This proves the first part of the lemma.
	
	Now suppose that $\Gamma$ be an involutive tagged arc system. Let us define the lift of $\Gamma$ as follows. As we have explained in Section \ref{subsection:TaggedArcs}, any $\alpha \in \Gamma$ with $\nu(\alpha) = 0$ has two lifts $\pi^*(\alpha)$ and $\iota_*(\pi^*(\alpha))$, and any $\beta \in \Gamma$ with $\nu(\beta) = 1$ has branched double covering $\pi^*(\beta)$ which is involutive. Then, define $$\pi^*(\Gamma) \deq \{\pi^*(\alpha), \iota_*(\pi^*(\alpha)) : \alpha \in \Gamma, \nu(\alpha) = 0\} \cup \{\pi^*(\beta) : \beta \in \Gamma, \nu*\beta) = 1\}.$$ One can check this is an involutive graded arc system of $(\tilde{S}, \iota, \tilde{M}, \eta)$. Then, we have $\pi_*(\pi^*(\Gamma)) = \Gamma$.
\end{proof}

\begin{thm}\label{theorem:AooOrbifoldAndInvolutive}
	The following two $\Aoo$-categories are isomorphic to each other
	 $$\cff_\Gamma(\tilde{S}, \iota, \tilde{M}, \eta) \cong \cff_{\pi_*(\Gamma)}(S, M,O, \eta).$$
	 \end{thm}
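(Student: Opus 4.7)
The plan is to construct an explicit strict isomorphism of $\Aoo$-categories
$F \colon \cff_{\pi_*(\Gamma)}(S, M, O, \eta) \to \cff_\Gamma(\tilde{S}, \iota, \tilde{M}, \eta)$.
On objects, set $F(\pi_*(\alpha)) \deq \Delta(\alpha)$ for each ordinary arc $\alpha$ and $F(\pi_*(\beta)_\pm) \deq \Delta(\beta)_\pm$ for each special arc $\beta$; by Lemma \ref{lemma:InvolutiveIsInvolutive} this is a bijection. On morphisms, every basis boundary morphism $\pi_*(\theta)$ in $\pi_*(\Gamma)$ corresponds via $\pi$ to an $\iota$-orbit of preferred boundary morphisms in $\Gamma$, and I send it to the matrix $\Delta(\theta)^?_?$ of Definition \ref{defn:delarc}, with the decoration $0$ or $\pm/\pm'$ dictated by the taggings at the endpoints. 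Lemma \ref{lemma:SkewGentleHom} then guarantees that $F$ is a linear bijection on morphism spaces.

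The task that remains is to verify $F \circ \fm_n = \fm_n \circ F^{\otimes n}$ for all $n$, so that $F$ is a strict $\Aoo$-functor. Involutivity simplifies this enormously: the tagged arc system $\pi_*(\Gamma)$ has no interior morphisms, and hence no composition sequences and no thick triples, so only $\fm_2^\con$ and $\fm_n^\disc$ can contribute on the source side. On the target side, operations descend from the HKK category of $\tilde{S}$ by taking $\iota$-invariants and conjugating by the idempotents $p^\pm_\gamma$. The geometric bridge between the two is clean: a non-folded disc sequence on $S$ lifts to a pair of disjoint $\iota$-conjugate disc sequences on $\tilde{S}$, and a folded disc sequence on $S$ (whose folded sides are thick, by the full condition) lifts to a single $\iota$-invariant disc sequence on $\tilde{S}$ in which each folded thick arc is realized by a special arc passing through the corresponding $\iota$-fixed point.

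With this dictionary in hand, the proof reduces to a case-by-case matrix computation. The factors $(1/2)^{\langle\cdot,\cdot\rangle}$ appearing in $\fm^\con$ and $\fm^\disc$ on the tagged side arise geometrically from the coefficient $1/2$ in the idempotents $p^\pm$: every time a composition passes through an idempotent decoration (at the source, at the target, or along a folded thick arc in the middle) exactly one factor of $1/2$ survives the idempotent multiplication, while cross-terms with mismatched signs cancel. The signs $(-1)^{\sigma(\theta_1,\theta_2)}$ encoding the tagging at the interior endpoint of a thick folded arc match precisely the $\pm$ pattern occurring in Definition \ref{defn:delarc} when one reads off matrix entries along the folded lift $\Delta(\gamma_2)_\pm$.

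The main technical obstacle is the bookkeeping for the matrix composition $\Delta(\theta_1)^\pm_{\pm'} \cdot \Delta(\theta_2)^{\pm''}_{\pm'''}$ between boundary morphisms of two special arcs: one must check that the cross terms with $\pm' \neq \pm''$ vanish after the four-entry summation, while the matching terms sum to exactly the prescribed $\fm_2^\con$ value, with the right sign and the right power of $1/2$. A similar but longer bookkeeping is needed for folded disc sequences featuring several consecutive thick special arcs, where each additional folded special arc contributes one more factor of $1/2$ from the intermediate idempotent passage. Once these explicit identities are verified, $F$ is a strict $\Aoo$-isomorphism and the theorem follows.
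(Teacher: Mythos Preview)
Your proposal is correct and takes essentially the same approach as the paper: construct an explicit strict functor between the two categories and verify compatibility with the $\Aoo$-operations by direct matrix computation, using the key observation that involutivity of $\pi_*(\Gamma)$ eliminates interior morphisms (hence $\fm^\comp$ and $\fm^\thick$), so only $\fm_2^\con$ and $\fm_n^\disc$ need to be checked. The only cosmetic difference is that the paper defines the functor in the opposite direction, $\Psi \colon \cff_\Gamma(\tilde{S}, \iota, \tilde{M}, \eta) \to \cff_{\pi_*(\Gamma)}(S, M, O, \eta)$, and carries out the explicit matrix verifications for $\fm_2^\con$ in one representative case and for $\fm_3^\disc$ in all cases according to which of $\gamma_1,\gamma_2,\gamma_3$ are ordinary or special, leaving the general $n$ as an exercise; your outline of how the $(1/2)$ factors and signs arise from the idempotent entries is exactly what those computations confirm.
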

\begin{proof}
	Let us define a strict functor $\Psi : \cff_\Gamma(\tilde{S}, \iota, \tilde{M}, \eta) \rightarrow \cff_{\pi_*(\Gamma)}(S, M,O, \eta)$ as follows.
\begin{itemize}
	\item For $\alpha \in \Gamma_\Ord$, $\Psi$ sends the object $\Delta(\alpha)$ to $\pi_*(\alpha)$.
	\item For $\beta \in \Gamma_\Sp$, $\Psi$ sends the objects $\Delta(\beta)_\pm$ to tagged arcs  $\pi_*(\beta)_\pm$.
	\item For a boundary morphism $\theta$ from $\alpha$ to $\beta$ in $\tilde{S}$, we define
	 $$\Psi_1(\Delta(\theta)^{a}_{b}) \deq \pi_*(\theta) : \pi_*(\alpha)_{a} \rightarrow \pi_*(\beta)_{b},$$ where $a,b \in \{0, +, -\}$.
	 \item Higher functor maps $\Psi_k$ are defined to be zero (for $k \geq 2$).
\end{itemize}
By the construction, the functor $\Psi$ gives a bijection on the object level. Also, by Lemma \ref{lemma:SkewGentleHom}, it gives a bijection on the morphism level. Hence it is enough to check that $\Psi$ is a strict $\Aoo$-functor.
As we have explained in the beginning of this section, there are no interior morphisms as the tagged arc system $\pi_*(\Gamma)$ is involutive.
In particular, the $\Aoo$-operations $\fm^{\comp}, \fm^{\thick}$ vanish. Moreover, the sign $\Sigma(\vec{\phi})$ becomes trivial as well. However, discs on $S$ may be folded along tagged arcs (which lift to honest discs in $\tilde{S}$) and the weight factors $\frac{1}{2}$ for $\fm^{\con}$ and $\fm^{\disc}$ are still non-trivial.
	
Let us first check the case of concatenation $\fm^{\con}$.
Let $\phi : \alpha \rightarrow \beta$ and $\psi : \beta \rightarrow \gamma$ be boundary morphisms in $\Gamma$ on $(\tilde{S}, \tilde{M}, \eta)$.
Assume that  $\alpha, \gamma \in \Gamma_\Ord$ and $\beta \in \Gamma_\Sp$ (all other cases are similar and we leave them as  exercises).
 Then, $(\phi, \psi)$ and $(\iota_*(\phi), \iota_*(\psi))$ are concatenable while $(\phi, \iota(\psi))$ and $(\iota(\phi), \psi)$ are not. So we have 
 	\begin{align*}
		\Psi \left(\fm_2(\Delta(\phi)^0_\pm, \Delta(\psi)^\pm_0)\right) =& \Psi \left( \fm_2 \left(\begin{bmatrix} \phi/2 & \pm\iota_*(\phi)/2 \\ \pm\phi/2 & \iota_*(\phi)/2 \end{bmatrix}, \pm\begin{bmatrix} \psi/2 & \pm\psi/2 \\ \pm\iota_*(\psi)/2 & \iota_*(\psi)/2 \end{bmatrix}\right) \right) \\
		=&\pm(-1)^{\deg{\phi}} \Psi \left( \begin{bmatrix} \phi \bullet \psi /2 & 0 \\ 0 & \iota_*(\phi) \bullet \iota_*(\psi) / 2\end{bmatrix} \right) = \pm(-1)^{\deg{\phi}}\frac{1}{2} \Psi (\Delta(\phi \bullet \psi)^0_0) \\
		=& \pm(-1)^{\deg{\phi}}\frac{1}{2} \pi_*(\phi \bullet \psi) = \pm(-1)^{\deg{\phi}}\frac{1}{2} \pi_*(\phi) \bullet \pi_*(\psi) \\
		=& \fm^\con_2(\pi_*(\phi), \pi_*(\psi)) = \fm_2^{\con}(\Psi(\Delta(\phi)), \Psi(\Delta(\psi))).
	\end{align*}
	Now let us check $\Psi$ is compatible with higher $\Aoo$-structure, namely disc operations $\fm_k$ (on the left) and $\fm_k^{\disc}$ (on the right).  We will show the claim for disc sequences of length $3$.
	(As the proof generalizes in a straightforward way, we will leave the rest as an exercise.) 
	
	 	Let $(\theta_1, \theta_2, \theta_3)$ be a disc sequence in $\Gamma$ with $\theta_i : \gamma_i \rightarrow \gamma_{i+1}$, where $\gamma_4 = \gamma_1$. To show $\Psi$ and $\fm_3$ are compatible, we have to show $$\Psi(\fm_3(\Delta(\theta_1), \Delta(\theta_2), \Delta(\theta_3))) = \fm_3(\pi_*(\theta_1), \pi_*(\theta_2), \pi_*(\theta_3)) = \half{\Phi(\pi_*(\theta_1), \pi_*(\theta_2), \pi_*(\theta_3)) - \left<\pi_*(\theta_3), \pi_*(\theta_1)\right>}e_{\pi_*(\gamma_1)}.$$ However, the weight is the number of special arcs among $\gamma_i$'s. Thus, we need to show $$\fm_3(\Delta(\theta_1), \Delta(\theta_2), \Delta(\theta_3)) = \half{\text{$\#$ of special arcs in $\{\gamma_2, \gamma_3\}$}}e_{\Delta(\gamma_1)}.$$ Also, note that at least one of the arcs has to be ordinary. If $\gamma_i$ is special, then $\theta_{i-1}$ and $\theta_i$ are on different markings which are in the same $\super$-orbit of $\tilde{M}$. Thus, one and only one of them is a preferred morphism. So in this case, we have to act $\iota_*$ in order to compute $\Delta(\theta_{i-1}$) or $\Delta(\theta_i)$. Now, we have the following cases.
		\begin{enumerate}
	 		\item $\gamma_1$ is ordinary. There are four possibilities.
	 		\begin{enumerate}
	 			\item Both $\gamma_2, \gamma_3$ are ordinary. Then $\fm_3(\Delta(\theta_1)^0_0, \Delta(\theta_2)^0_0, \Delta(\theta_3)^0_0) = e_{\Delta(\gamma_1)}$.
	 			\item $\gamma_2$ is special and $\gamma_3$ is ordinary. Then,
	 			\begin{align*}
	 				&\fm_3(\Delta(\theta_1)^0_\pm, \Delta(\iota_*(\theta_2))^\pm_0, \Delta(\iota_*(\theta_3))^0_0) \\
	 				=& \fm_3\left(\begin{bmatrix} \theta_1/2 & \pm\iota_*(\theta_1)/2 \\ \pm\theta_1/2 & \iota_*(\theta_1)/2 \end{bmatrix}, \pm\begin{bmatrix}\iota_*(\theta_2)/2 & \pm\iota_*(\theta_2)/2 \\ \pm\theta_2/2 & \theta_2/2 \end{bmatrix}, \begin{bmatrix} 0 & \theta_3 \\ \iota_*(\theta_3) & 0 \end{bmatrix} \right) \\
	 				=&\pm\frac{1}{4}\begin{bmatrix} \pm2\fm_3(\theta_1, \theta_2, \theta_3) & 0 \\ 0 & \pm2\fm_3(\iota_*(\theta_1), \iota_*(\theta_2), \iota_*(\theta_3)) \end{bmatrix} = \frac{1}{2}e_{\Delta(\gamma_1)}.
	 			\end{align*}
	 			\item $\gamma_2$ is ordinary and $\gamma_3$ is special. This is the same with the previous case.
	 			\item Both $\gamma_2$ and $\gamma_3$ are special. Then, 
	 			\begin{align*}
	 				&\fm_3(\Delta(\theta_1)^0_\pm, \Delta(\iota_*(\theta_2))^\pm_{\pm'}, \Delta((\theta_3))^{\pm'}_0) \\
	 				=& \fm_3\left(\begin{bmatrix} \theta_1/2 & \pm\iota_*(\theta_1)/2 \\ \pm\theta_1/2 & \iota_*(\theta_1)/2 \end{bmatrix}, \pm\begin{bmatrix} (\iota_*(\theta_2) \pm\pm'\theta_2)/4 & (\pm\iota_*(\theta_2) \pm'\theta_2)/4 \\ (\pm'\iota_*(\theta_2) \pm\theta)/4 & (\pm\pm'\iota_*(\theta_2) + \theta)/4	\end{bmatrix}, \begin{bmatrix} \theta_3 & \pm'\theta_3 \\ \pm'\iota_*(\theta_3) & \iota_*(\theta_3) \end{bmatrix} \right) \\
	 				=&\pm\pm'\frac{1}{16}\begin{bmatrix} \pm\pm'4\fm_3(\theta_1, \theta_2, \theta_3) & 0 \\ 0 & \pm\pm'4\fm_3(\iota_*(\theta_1), \iota_*(\theta_2), \iota_*(\theta_3)) \end{bmatrix} = \frac{1}{4}e_{\Delta(\gamma_1)}.
	 			\end{align*}
	 		\end{enumerate}
 		\item $\gamma_1$ is special. Then there are three possibilities.
 		\begin{enumerate}
	 		\item Both $\gamma_2$ and $\gamma_3$ are ordinary. Then,
	 		\begin{align*}
	 			&\fm_3(\Delta(\theta_1)^\pm_0, \Delta(\theta_2)^0_0, \Delta(\iota_*(\theta_3))^0_\pm) \\
	 			=& \fm_3\left(\pm\begin{bmatrix} \theta_1/2 & \pm\theta_1/2 \\ \pm\iota_*(\theta_1)/2 & \iota_*(\theta_1)/2 \end{bmatrix}, \begin{bmatrix} 0 & \iota_*(\theta_2) \\ \theta_2 & 0 \end{bmatrix}, \begin{bmatrix} \iota_*(\theta_3)/2 & \pm\theta_3/2 \\ \pm\iota_*(\theta_3)/2 & \theta_3/2 \end{bmatrix} \right) \\
	 			=&\pm\frac{1}{4}\begin{bmatrix} \pm\fm_3(\theta_1, \theta_2, \theta_3) \pm \fm_3(\iota_*(\theta_1), \iota_*(\theta_2), \iota_*(\theta_3)) & \fm_3(\theta_1, \theta_2, \theta_3) + \fm_3(\iota_*(\theta_1), \iota_*(\theta_2), \iota_*(\theta_3)) \\ \fm_3(\iota_*(\theta_1), \iota_*(\theta_2), \iota_*(\theta_3)) + \fm_3(\theta_1, \theta_2, \theta_3) & \pm\fm_3(\iota_*(\theta_1), \iota_*(\theta_2), \iota_*(\theta_3)) \pm \fm_3(\theta_1, \theta_2, \theta_3) \end{bmatrix} \\
	 			=& e_{\Delta(\gamma_1)_\pm}.
	 		\end{align*}
	 		\item $\gamma_2$ is special and $\gamma_3$ is ordinary. Then,
	 		\begin{align*}
	 			&\fm_3(\Delta(\theta_1)^\pm_{\pm'}, \Delta(\iota_*(\theta_2))^{\pm'}_0, \Delta(\iota_*(\theta_3))^0_\pm) \\
	 			=&\fm_3\left(\pm\begin{bmatrix} (\theta_1 \pm\pm' \iota_*(\theta_1))/4 & (\pm\theta_1 \pm'\iota_*(\theta_1))/4 \\ (\pm\iota_*(\theta_1) \pm' \theta_1)/4 & (\iota_*(\theta_1) \pm\pm' \theta_1)/4 \end{bmatrix}, \pm'\begin{bmatrix} \iota_*(\theta_2)/2 & \pm'\iota_*(\theta_2)/2 \\ \pm'\theta_2/2 & \theta_2/2 \end{bmatrix}, \begin{bmatrix} \iota_*(\theta_3)/2 & \pm\theta_3/2 \\ \pm\iota_*(\theta_3)/2 & \theta_3/2 \end{bmatrix} \right) \\
	 			=& \pm\pm'\frac{1}{16}\begin{bmatrix} \pm\pm'2\fm_3(\theta_1, \theta_2, \theta_3) \pm\pm'2 \fm_3(\iota_*(\theta_1), \iota_*(\theta_2), \iota_*(\theta_3)) & \pm'2\fm_3(\theta_1, \theta_2, \theta_3) \pm'2 \fm_3(\iota_*(\theta_1), \iota_*(\theta_2), \iota_*(\theta_3)) \\ \pm'2\fm_3(\theta_1, \theta_2, \theta_3) \pm'2 \fm_3(\iota_*(\theta_1), \iota_*(\theta_2), \iota_*(\theta_3)) & \pm\pm'2\fm_3(\theta_1, \theta_2, \theta_3) \pm\pm'2 \fm_3(\iota_*(\theta_1), \iota_*(\theta_2), \iota_*(\theta_3)) \end{bmatrix} \\
	 			=& \frac{1}{2}e_{\Delta(\gamma_1)_{\pm'}}.
	 		\end{align*}
 			\item $\gamma_2$ is ordinary and $\gamma_3$ is special. This is the same with the previous case.
	 		\end{enumerate}
		\end{enumerate}
	Thus, $\Psi$ is compatible with $\Aoo$-structure. 
\end{proof}

\subsection{Interior morphisms between tagged arcs and idempotents}\label{sec:tagide}
\begin{figure}[h!]
	\centering
	\includegraphics[width=0.3\linewidth]{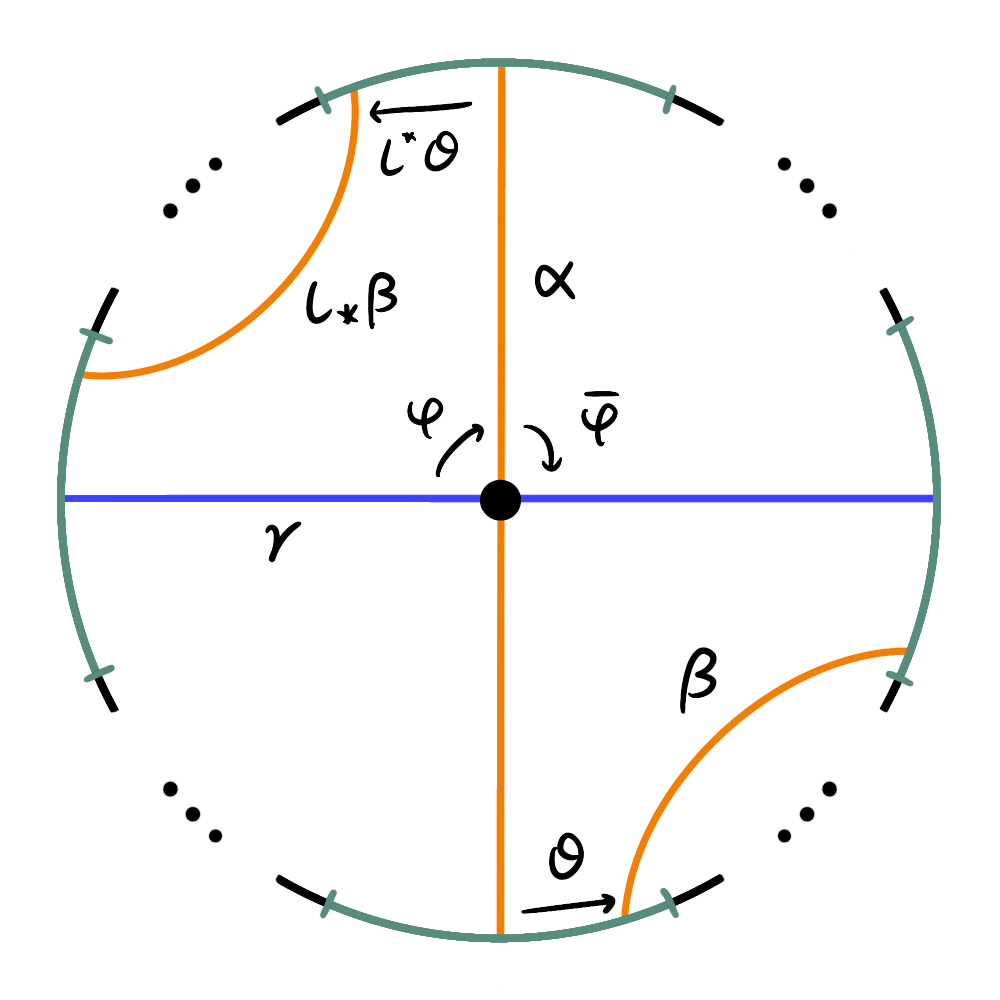}
	\caption{Interior morphism between tagged arcs and idempotents}
	\label{fig:example}
\end{figure}
Let us justify our definition of interior morphism between tagged arcs in Definition \ref{defn:intmor}.
An involutive tagged arc system does not allow two tagged arcs in the arc system to meet at an interior marked point. However, we can add such arcs into the arc system 
in the form of twisted complexes and investigate the morphisms between them. We will consider the following simple example to illustrate ``interior morphisms'' 
between idempotents of two arcs in the setting of Fukaya category.

Suppose we are given a special arc $\alpha$ and an ordinary arc $\beta$ with a boundary morphism $\theta : \alpha \rightarrow \beta$ as in Figure \ref{fig:example}. Then, there is another arc $\iota_*(\beta)$ with the boundary morphism $\iota_*(\theta)$. Let us assume that the boundary morphisms $\theta$ and $\iota_*(\theta)$ are of degree $1$. Then, the blue arc $\gamma$ can be understood as a twisted complex
$$\gamma \deq \left(\alpha[d] \oplus \beta[d] \oplus \iota_*(\beta)[d], \delta \deq \begin{bmatrix} 0 & 0 & 0 \\ \theta & 0 & 0 \\ \iota_*(\theta) & 0 & 0 \end{bmatrix}\right),$$ for some $d \in \bzz$. 
The $\iota$-image of it is the twisted complex 

$$\iota_*(\gamma) =\left(\alpha[d] \oplus  \iota_*(\beta)[d] \oplus \beta[d], \delta \deq \begin{bmatrix} 0 & 0 & 0 \\ \iota_*(\theta) & 0 & 0 \\ \theta & 0 & 0 \end{bmatrix}\right).$$

Then, the hom spaces (in a minimal model) $\ho(\gamma, \alpha)$, $\ho(\iota_*(\gamma), \alpha)$, $\ho(\alpha, \gamma)$, and $\ho(\alpha, \iota_*(\gamma))$ are all one-dimensional and generated by the following morphisms, \resp.
$$\phi \deq \begin{bmatrix} e_\alpha[-d] & 0 & 0 \end{bmatrix}, \quad \phi' \deq \begin{bmatrix} e_\alpha[-d] & 0 & 0 \end{bmatrix}, \quad \overline{\phi} \deq \begin{bmatrix} 0 \\ \theta[d] \\ -\iota_*(\theta)[d] \end{bmatrix}, \quad \overline{\phi}' \deq \begin{bmatrix} 0 \\ \iota_*(\theta)[d] \\ -\theta[d] \end{bmatrix}.$$ \resp. 
Two twisted complexes $\gamma$ and $\iota_*(\gamma)$ are  identified via the morphism $$\epsilon \deq \begin{bmatrix} e_\alpha & 0 & 0 \\ 0 & 0 & e_{\iota_*(\beta)} \\ 0 & e_\beta & 0 \end{bmatrix} : \gamma \rightarrow \iota_*(\gamma).$$
This identifies the generator $\phi$ (which has degree $d$) with $(-1)^d\phi'$ via 
$$\fm_2(\epsilon, \phi) = \fm_2\left(\begin{bmatrix} e_\alpha & 0 & 0 \\ 0 & 0 & e_{\iota_*(\beta)} \\ 0 & e_\beta & 0 \end{bmatrix}, \begin{bmatrix} e_\alpha[-d] & 0 & 0 \end{bmatrix}\right) = (-1)^d\begin{bmatrix} e_\alpha[-d] & 0 & 0 \end{bmatrix} = (-1)^d\phi'.$$
Similarly, we can identify $\overline{\phi}$ with $(-1)^{d+1}\overline{\phi}'$ via
$$\fm_2(\overline{\phi}, \epsilon) = \fm_2\left(\begin{bmatrix} 0 \\ \theta[d] \\ -\iota_*(\theta)[d] \end{bmatrix}, \begin{bmatrix} e_\alpha & 0 & 0 \\ 0 & 0 & e_{\iota_*(\beta)} \\ 0 & e_\beta & 0 \end{bmatrix}\right) = (-1)^d\begin{bmatrix} 0 \\ -\iota_*(\theta)[d] \\ \theta[d] \end{bmatrix} = (-1)^{d+1}\overline{\phi}'.$$
Here the sign comes from the degree shift of $\overline{\phi}$.

Hence, $\iota$ acts on the hom spaces $\ho(\gamma, \alpha)$ and $\ho(\iota_*(\gamma), \alpha)$ of two special arcs as $$\iota_*(\phi) = (-1)^d\phi, \quad \iota_*(\overline{\phi}) = (-1)^{d+1}\overline{\phi}.$$

From this action, we know 
\begin{align*}
	\ho(\Delta(\gamma), \Delta(\alpha))^\iota &\deq \left\{\begin{bmatrix} a\phi & b\phi \\ (-1)^db\phi & (-1)^da\phi \end{bmatrix} : a, b\in \field\right\}, \quad \deg{\begin{bmatrix} a\phi & b\phi \\ (-1)^db\phi & (-1)^da\phi \end{bmatrix}} = d,\\
	\ho(\Delta(\alpha), \Delta(\gamma))^\iota &\deq \left\{\begin{bmatrix} a\overline{\phi} & b\overline{\phi} \\ (-1)^{d+1}b\overline{\phi} & (-1)^{d+1}a\overline{\phi} \end{bmatrix} : a, b\in \field\right\}, \quad \deg{\begin{bmatrix} a\overline{\phi} & b\overline{\phi} \\ (-1)^{d+1}b\overline{\phi} & (-1)^{d+1}a\overline{\phi} \end{bmatrix}} = 1-d.
\end{align*}
Together with idempotents of $\alpha$ and $\gamma$, we get the following results.
\begin{itemize}
	\item When $d$ is even, 
	\begin{alignat*}{3}
		\ho(\Delta(\gamma)_+, \Delta(\alpha)_+) &= \field\left<\frac{1}{2}\begin{bmatrix} \phi & \phi \\ \phi & \phi \end{bmatrix}\right>, \quad 
		&&\ho(\Delta(\alpha)_+, \Delta(\gamma)_+) &&= 0, \\
		\ho(\Delta(\gamma)_+, \Delta(\alpha)_-) &= 0, \quad 
		&&\ho(\Delta(\alpha)_-, \Delta(\gamma)_+) &&= \field\left<\frac{1}{2}\begin{bmatrix} \overline{\phi} & -\overline{\phi} \\ -\overline{\phi} & \overline{\phi}\end{bmatrix}\right>, \\
		\ho(\Delta(\gamma)_-, \Delta(\alpha)_+) &= 0, \quad 
		&&\ho(\Delta(\alpha)_+, \Delta(\gamma)_-) &&= \field\left<\frac{1}{2}\begin{bmatrix} \overline{\phi} & \overline{\phi} \\ \overline{\phi} & \overline{\phi}\end{bmatrix}\right>, \\
		\ho(\Delta(\gamma)_-, \Delta(\alpha)_-) &= \field\left<\frac{1}{2}\begin{bmatrix} \phi & -\phi \\ -\phi & \phi \end{bmatrix}\right>, \quad 
		&&\ho(\Delta(\alpha)_-, \Delta(\gamma)_-) &&= 0.
	\end{alignat*}
	\item When $d$ is odd,
	\begin{alignat*}{3}
		\ho(\Delta(\gamma)_+, \Delta(\alpha)_+) &= 0, \quad 
		&&\ho(\Delta(\alpha)_+, \Delta(\gamma)_+) &&= \field\left<\frac{1}{2}\begin{bmatrix} \overline{\phi} & \overline{\phi} \\ \overline{\phi} & \overline{\phi}\end{bmatrix}\right>, \\
		\ho(\Delta(\gamma)_+, \Delta(\alpha)_-) &= \field\left<\frac{1}{2}\begin{bmatrix} \phi & \phi \\ \phi & \phi \end{bmatrix}\right>, \quad 
		&&\ho(\Delta(\alpha)_-, \Delta(\gamma)_+) &&= 0, \\
		\ho(\Delta(\gamma)_-, \Delta(\alpha)_+) &= \field\left<\frac{1}{2}\begin{bmatrix} \phi & -\phi \\ -\phi & \phi \end{bmatrix}\right>, \quad 
		&&\ho(\Delta(\alpha)_+, \Delta(\gamma)_-) &&= 0, \\
		\ho(\Delta(\gamma)_-, \Delta(\alpha)_-) &= 0, \quad 
		&&\ho(\Delta(\alpha)_-, \Delta(\gamma)_-) &&= \field\left<\frac{1}{2}\begin{bmatrix} \overline{\phi} & -\overline{\phi} \\ -\overline{\phi} & \overline{\phi}\end{bmatrix}\right>.
	\end{alignat*}
\end{itemize}
In summary, there is a morphism from $\Delta(\gamma)_{\sigma_1}$ to $\Delta(\alpha)_{\sigma_2}$ if and only if $d \equiv \sigma_1 - \sigma_2$ modulo under $2$. This explains the algebraic rule in Section \ref{subsection:InteriorMorphisms}.

\section{$\Aoo$-identities}\label{sec:Aooness}

In this section, we prove Theorem \ref{thm:ItIsAooCategory} that the data in Definition \ref{defn:FukayaCategoryForMarkedSurface} indeed form an $\Aoo$-category. In the first subsection, we introduce easy but useful lemmas. Then, we compute $\Aoo$-relations in the next section.

First, we define the following sets.
\begin{itemize}
	\item $\Con$ is the set of concatenable pairs of boundary/interior morphisms.
	\item $\Disc$ is the set of disc sequences.
	\item $\Comp$ is the set of composition sequences. We usually denote a composition sequence as $(\phi_1, \dots, \phi_m ; \psi)$.
	\item $\Thick$ is the set of thick triples. For a thick triple $(\phi_1 ; (\phi_2, \phi_3))$ and its value $\phi_4$, we set $\phi_1^\vee \deq \phi_4$ and $\phi_4^\wedge \deq \phi_1$. Similarly, for a thick triple $((\psi_1, \psi_2) ; \psi_3)$ and its value $\psi_4$, we usually set ${}^\vee\psi_3 \deq \psi_4$ and ${}^\wedge\psi_4 \deq \psi_3$.
\end{itemize}

We denote the domain and codomain of a morphism $\theta$ by $s(\theta)$ and $t(\theta)$, \resp. Let $\alpha$ and $\beta$ be tagged arcs meeting at an interior marking $p$. We say $\alpha$ {\em meets} $\beta$ {\em on the left} or {\em right} if $p$ defines an interior morphism $p^\alpha_\beta$ or $p^\beta_\alpha$, \resp. Let $(\phi, \psi)$ be a concatenable pair. Then, we say $t(\phi) = s(\psi)$ {\em divides} the morphism $\phi \bullet \psi$.

\subsection{Preliminary Lemmas}
There could be multiple $\Aoo$-operations for a given input : some pair can be a concatenable pair and form a composition sequence at the same time. Similarly, some triple can be a thick triple and disc sequence at the same time.  But we can prove the following lemma.
\begin{lemma}
	Let $\phi_1, \dots, \phi_n$ be a sequence of composable basic morphisms. Then, for $1 \leq i < j \leq n$ such that $(i, j) \neq (1, n)$, at most one of the following expression (among  $\circ, \square \in \{\con, \disc, \comp, \thick\}$) is nonzero.
	$$\fm_{n-j+i}^\circ(\phi_1, \dots, \phi_{i-1}, \fm_{j-i+1}^\square(\phi_i, \dots, \phi_j), \phi_{j+1}, \dots, \phi_n).$$
\end{lemma}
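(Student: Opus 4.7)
The approach is a systematic case analysis on the pair $(\circ,\square)$, exploiting the rigidity of each operation in Definition \ref{defn:FukayaCategoryForMarkedSurface}.

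My first step would be the following uniform observation: whenever the inner composition $\chi \deq \fm^\square_{j-i+1}(\phi_i,\dots,\phi_j)$ is nonzero, it is a scalar multiple of a unique basis element of the morphism space, with source $s(\phi_i)$ and target $t(\phi_j)$. Moreover, $\chi$ belongs to exactly one of three classes: a unit (necessarily with $\square=\disc$ and no endpoint decomposition), a boundary morphism, or an interior morphism. This is verified directly from the formulas in Definition \ref{defn:FukayaCategoryForMarkedSurface} in each case. This bookkeeping reduces the problem to the following task: for each type of $\chi$, show that at most one outer operation $\fm^\circ_{n-j+i}(\phi_1,\dots,\phi_{i-1},\chi,\phi_{j+1},\dots,\phi_n)$ is nonzero.

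Next I would record the combinatorial data required by each $\square$: $\con$ forces $j=i+1$ and concatenability of $(\phi_i,\phi_j)$; $\thick$ forces $j=i+2$ with a thick pair among the inputs; $\disc$ requires the inputs to form (possibly with endpoint decomposition) a disc sequence; $\comp$ requires a composition sequence. Several $\square$ can be simultaneously valid on the same subsequence (for instance a concatenable pair may also appear inside a length-two composition sequence), so uniqueness must come from matching with the outer operation. The heart of the proof then splits by the type of $\chi$: when $\chi$ is a unit, the unitality of the $\Aoo$-category forces $\circ$ to be determined by the position of $\chi$; when $\chi$ is a basic boundary or interior morphism, I verify that the surrounding configuration can fit at most one of the four geometric profiles $(\con,\disc,\comp,\thick)$. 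The key source of rigidity is the full, good, and thick conditions of Definitions \ref{defn:ThickCondition}--\ref{defn:FullCondition}, which guarantee that each disc component of $S\setminus\bigcup_{\gamma\in\underline{\Gamma}}\gamma$ realizes at most one type of sequence, and that the thick triples are signaled unambiguously by the presence of a thick pair among consecutive inputs.

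The main obstacle I expect is ruling out subtle coincidences, most notably when a literal concatenation $\phi_i\bullet\phi_j$ produced by $\fm^\con$ could be confused with the interior-morphism value $\psi$ of a composition sequence of the same length, or when a folded disc configuration could be misread as a thick-triple output. These are distinguished by inspecting the explicit source, target, and identity of the underlying basic morphism of $\chi$ in each subcase: $\fm^\con$ outputs the concrete concatenation whereas $\fm^\comp$ produces the value $\psi$, which is a genuinely distinct interior morphism; and $\fm^\thick$ outputs a boundary morphism whose codomain lies in a thick pair with $t(\phi_j)$, incompatible with any of $\fm^\disc, \fm^\comp$ whose outputs are attached to $t(\phi_j)$ itself. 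Combined with the uniqueness of the underlying basic morphism noted in the first step, this pins down at most one nonzero combination $(\circ,\square)$.
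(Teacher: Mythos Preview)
Your proposal is correct and follows essentially the same route as the paper: both argue that distinct inner operations produce outputs that are distinguishable by the marking at which they live (or by boundary/interior type), and that the outer operation --- whatever $\circ$ is --- can accept at most one such output because adjacency with $\phi_{i-1}$ and $\phi_{j+1}$ pins down that marking.

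The paper's argument is considerably more economical than your systematic case split. Rather than analysing all $(\circ,\square)$ pairs, it observes up front that the only genuine overlaps for the \emph{inner} operation are (i) a concatenable pair that is also a length-two composition sequence, and (ii) a thick triple that also admits a disc decomposition. For (i) it shows that if the concatenation $\theta\bullet\phi$ and the composition value $\psi$ were at the same marking, the middle arc would have interior number $2$ and both endpoints on a single disc component, violating the fourth condition of Definition~\ref{defn:ArcSystemForOrbiSuefaces}. For (ii) it notes that the thick output is a boundary morphism while the disc output is an interior morphism. That is the entire proof.

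One comment on your write-up: your stated ``reduction'' --- for each fixed $\chi$, show at most one outer $\circ$ works --- is \emph{not} by itself sufficient, since different $\square$'s produce different $\chi$'s and a priori each could match a different $\circ$. You do close this gap in your final paragraph (different $\chi$'s land at different markings, and the outer constraint on $\chi$'s marking is independent of $\circ$), so the argument is complete, but the intermediate framing is misleading and should be rephrased.
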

\begin{proof}
We show that  values of different $\Aoo$-operations are at different markings, hence one of the outer operation should vanish.

Let $(\theta, \phi)$ is concatenable pair such that $(\theta, \phi ; \psi)$ is also a composition sequence. Suppose that $\theta, \phi$, and $\psi$ are at the same marking. Since $\psi$ is the value of a composition sequence, the marking has to be an interior marking. Also, since $\theta$ and $\phi$ are concatenable, the composition sequence $(\theta, \phi ; \psi)$ is folded. In particular, $\nu(\beta) = 2$. Now let $D$ be a disc associated with the composition sequence. Then, $D$ bounds both endpoints of $\beta$, which violates the fourth condition of Definition \ref{defn:ArcSystemForOrbiSuefaces}. This proves $\psi$ cannot be at the same marking with $\theta$ and $\phi$.

Let $(\phi_1 ; (\phi_2, \phi_3))$ be a thick triple such that $\phi_3$ has a decomposition $(\phi_3^1, \phi_3^2)$ such that $(\phi_1, \phi_2, \phi_3^1)$ is a disc sequence. Then, $\fm_3^\thick(\phi_1, \phi_2, \phi_3)$ is a boundary morphism while $\fm_3^\disc(\phi_1, \phi_2, \phi_3)$ is an interior morphism. The case of a thick triple $((\phi_1, \phi_2) ; \phi_3)$ is the same. Thus we deduce the following result.
\end{proof}

From now on, we look at the splittings of disc and composition sequences and how their weights and sign factors are related.
In the case that output is a result of a disc sequence, we have seen that we have two possible cases as in  Figure \ref{fig:DiscSequenceDecomposition}(a) and (b).
This is when a disc sequence splits into two disc sequences, or one disc and one composition sequence as explained right after Theorem \ref{thm:ItIsAooCategory}.
We compute the relevant weights and sign factors here. Note that the weight and sign factor for the disc sequence is cyclic symmetric. So sometimes we will omit some cases when
they can be obtained by cyclic rotations. 

\begin{lemma}\label{Lemma:PhiSigmaDD} {\bf (Disc splits into two discs)} Let us consider the case of \eqref{eq:dd2} (the computation for \eqref{eq:dd} can be obtained from a cyclic rotation).
	Let $\overrightarrow{\phi} = (\phi_1, \dots, \phi_m)$ and $\overrightarrow{\psi} = (\psi_1, \dots, \psi_n)$ be disc sequences such that $(\phi_m, \psi_1), (\psi_n, \phi_1) \in \Con$ and $\ora{\phi}\bullet\overrightarrow{\psi} \deq (\psi_n \bullet \phi_1, \phi_2, \dots, \phi_{m-1}, \phi_m \bullet \psi_1, \psi_2, \dots, \psi_{n-1}) \in \Disc$. Then,
	\begin{align}
		\Phi(\ora{\phi}\bullet\ora{\psi}) &= \Phi(\ora{\phi}) + \Phi(\ora{\psi}) - \left<\psi_n, \phi_1\right> - \left<\phi_m, \psi_1\right>, \label{AppendixLemma:DiscDiscPhi}\\
		\Sigma(\ora{\phi}\bullet\ora{\psi}) &= \Sigma(\ora{\phi}) + \Sigma(\ora{\psi}) - \sigma(\psi_n, \phi_1) - \sigma(\phi_m, \psi_1). \label{AppendixLemma:DiscDiscSigma}
	\end{align}
\end{lemma}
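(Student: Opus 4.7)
The plan is to expand both invariants of $\chi \deq \ora\phi\bullet\ora\psi$ directly from the definitions and match them to the invariants of $\ora\phi$ and $\ora\psi$ term by term. Let $\alpha$ be the shared arc along which the two discs are glued, with endpoints $p$ (where $\phi_m$ and $\psi_1$ meet) and $q$ (where $\psi_n$ and $\phi_1$ meet). The entries of $\chi$ are $\chi_1 = \psi_n\bullet\phi_1$, $\chi_i = \phi_i$ for $2 \le i \le m-1$, $\chi_m = \phi_m\bullet\psi_1$, and $\chi_{m+j-1} = \psi_j$ for $2 \le j \le n-1$. The key geometric fact that drives the proof is that whether each endpoint of $\alpha$ is a boundary marking or an interior marking determines the type (boundary versus interior) of every morphism attached there.

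Since concatenation preserves type, $[\psi_n\bullet\phi_1]=[\phi_1]=[\psi_n]$ and $[\phi_m\bullet\psi_1]=[\phi_m]=[\psi_1]$. The bracket $\left<\cdot,\cdot\right>$ depends only on the middle arc, so $\left<\psi_n\bullet\phi_1,\phi_2\right>=\left<\phi_1,\phi_2\right>$, $\left<\phi_m\bullet\psi_1,\psi_2\right>=\left<\psi_1,\psi_2\right>$, and similarly at positions $m-1$ and $m+n-2$. Moreover $\phi_m$ and $\phi_1$ end at opposite endpoints of $\alpha$ and are therefore not concatenable in $\ora\phi$, giving $\left<\phi_m,\phi_1\right>=0$; symmetrically $\left<\psi_n,\psi_1\right>=0$. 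Summing the definition of $\Phi$ term by term, we obtain
\begin{equation*}
	\Phi(\chi) = \Phi(\ora\phi) + \Phi(\ora\psi) - [\psi_1] - [\psi_n].
\end{equation*}
A direct case analysis at $p$ and $q$ then shows that $\left<\phi_m,\psi_1\right> = 1$ iff $q$ is an interior marking, so $\left<\phi_m,\psi_1\right> = [\psi_n]$; symmetrically $\left<\psi_n,\phi_1\right> = [\psi_1]$, establishing \eqref{AppendixLemma:DiscDiscPhi}.

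The proof of \eqref{AppendixLemma:DiscDiscSigma} is parallel. Since $\sigma$ of a basic morphism depends only on its source tagging at the defining interior marking, $\sigma(\psi_n\bullet\phi_1)=\sigma(\psi_n)$ and $\sigma(\phi_m\bullet\psi_1)=\sigma(\phi_m)$. Niceness of the system (all interior morphisms have degree $0$) forces equal source and target taggings at each interior endpoint of $\alpha$, giving $\sigma(\phi_1)=\sigma(\psi_n)$ and $\sigma(\psi_1)=\sigma(\phi_m)$. Summing the $\Sigma$ contributions term by term, these coincidences make most correction terms cancel, leaving a residual of $\sigma(\phi_m)+\sigma(\psi_n)$. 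The same endpoint analysis on $\alpha$ then gives $\sigma(\phi_m,\psi_1) = \sigma(\psi_n)$ and $\sigma(\psi_n,\phi_1) = \sigma(\phi_m)$, which yields \eqref{AppendixLemma:DiscDiscSigma}. The main technical care throughout is the bookkeeping at the four ``joined'' positions $1, m-1, m, m+n-2$ of $\chi$ and the correct identification of middle arcs; once these are settled, both identities collapse to the same two-endpoint case analysis on $\alpha$.
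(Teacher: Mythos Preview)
Your term-by-term expansion is the same strategy as the paper's proof, and in the generic case your bookkeeping is correct. The gap is the assertion that ``$\phi_m$ and $\phi_1$ end at opposite endpoints of $\alpha$ and are therefore not concatenable, giving $\langle\phi_m,\phi_1\rangle=0$.'' Nothing in the hypotheses forces this: the disc sequence $\ora\phi$ may be \emph{folded} along $\gamma_1=\alpha$, in which case $\phi_m$ and $\phi_1$ are both at the non-fold endpoint of $\alpha$ and $\langle\phi_m,\phi_1\rangle=1$ (and then automatically $\langle\psi_n,\psi_1\rangle=1$, since $\psi_1,\psi_n$ are concatenable with $\phi_m,\phi_1$ respectively and hence also sit at that same marking). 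In that situation your intermediate identity $\Phi(\chi)=\Phi(\ora\phi)+\Phi(\ora\psi)-[\psi_1]-[\psi_n]$ is off by the two missing $(1-[\cdot])\langle\cdot,\cdot\rangle$ terms, and your ``$p$ versus $q$'' dichotomy collapses because $p=q$.

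The paper avoids this by not making any geometric claim about the endpoints: it keeps $\langle\phi_m,\phi_1\rangle$ and $\langle\psi_n,\psi_1\rangle$ as parameters and checks the two possibilities directly. When both equal $1$, all four of $\phi_m,\phi_1,\psi_1,\psi_n$ lie at the same marking with middle arc $\alpha$, so $\langle\phi_m,\psi_1\rangle=\langle\psi_n,\phi_1\rangle=1$ and both sides of \eqref{AppendixLemma:DiscDiscPhi} equal $\Phi(\ora\phi)+\Phi(\ora\psi)-2$; when both equal $0$, one recovers exactly your computation $\langle\phi_m,\psi_1\rangle=[\psi_n]$ and $\langle\psi_n,\phi_1\rangle=[\phi_m]$. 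The same remark applies verbatim to the $\Sigma$ identity. The fix is small: drop the unjustified claim that $p\ne q$ and split into the two cases $\langle\phi_m,\phi_1\rangle\in\{0,1\}$, as the paper does.
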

\begin{proof}
	From
	\begin{align*}
		\Phi(\ora{\phi}) =& \sum_{i=1}^m\left(\left[\phi_i\right] + \left<\phi_i, \phi_{i+1}\right> - \left[\phi_i\right]\left<\phi_i, \phi_{i+1}\right>\right), \\
		\Phi(\ora{\psi}) =& \sum_{j=1}^n\left(\left[\psi_j\right] + \left<\psi_j, \psi_{j+1}\right> - \left[\psi_j\right]\left<\psi_j, \psi_{j+1}\right>\right), \\
		\Phi(\ora{\phi}\bullet\ora{\psi}) =& \sum_{i=1}^{m-1}\left(\left[\phi_i\right] + \left<\phi_i, \phi_{i+1}\right> - \left[\phi_i\right]\left<\phi_i, \phi_{i+1}\right>\right) + \sum_{j=1}^{n-1}\left(\left[\psi_j\right] + \left<\psi_j, \psi_{j+1}\right> - \left[\psi_j\right]\left<\psi_j, \psi_{j+1}\right>\right),
	\end{align*}
	we get
	\begin{align*}
		\Phi(\ora{\phi}) + \Phi(\ora{\psi}) - \Phi(\ora{\phi}\bullet\ora{\psi}) &= \left[\phi_m\right] + \left<\phi_m, \phi_1\right> - \left[\phi_m\right]\left<\phi_m, \phi_1\right> + \left[\psi_n\right] + \left<\psi_n, \psi_1\right> - \left[\psi_n\right]\left<\psi_n, \psi_1\right>.
	\end{align*}
	Note that $\left<\phi_m, \phi_1\right> = 1$ if and only if $\left<\psi_n, \psi_1\right> = 1$. If they are $1$, then $\left<\phi_m, \psi_1\right>$ and $\left<\psi_n, \phi_1\right>$ are also $1$, so (\ref{AppendixLemma:DiscDiscPhi}) holds. If they are zero, then $\left<\phi_m, \psi_1\right> = \left[\psi_n\right]$ and $\left<\psi_n, \phi_1\right> = \left[\phi_m\right]$. Thus (\ref{AppendixLemma:DiscDiscPhi}) holds in this case as well.
	
	Similarly, we have
	\begin{align*}
		\Sigma(\ora{\phi}) + \Sigma(\ora{\psi}) - \Sigma(\ora{\phi}\bullet\ora{\psi}) =& \sum_{i=1}^m\left(\sigma(\phi_i) - \sigma(\phi_i)\left<\phi_i, \phi_{i+1}\right>\right) + \sum_{j=1}^n\left(\sigma(\psi_j) -  \sigma(\psi_j)\left<\psi_j, \psi_{j+1}\right>\right) \\ 
		&- \sum_{i=1}^{m-1}\left(\sigma(\phi_i) - \sigma(\phi_i)\left<\phi_i, \phi_{i+1}\right>\right) - \sum_{j=1}^{n-1}\left(\sigma(\psi_j) - \sigma(\psi_j)\left<\psi_j, \psi_{j+1}\right>\right) \\ 
		=& \sigma(\phi_m) - \sigma(\phi_m)\left<\phi_m, \phi_1\right> + \sigma(\psi_n) - \sigma(\psi_n)\left<\psi_n, \psi_1\right>.
	\end{align*}
	If $\left<\phi_m, \phi_1\right>$ and $\left<\psi_n, \psi_1\right>$ are $1$, then $\sigma(\psi_n, \phi_1) = \sigma(\phi_m, \psi_1)$. If they are zero, then $\sigma(\psi_n, \phi_1) = \sigma(\phi_m)$ and $\sigma(\phi_m, \psi_1) = \sigma(\psi_n)$. In both cases, (\ref{AppendixLemma:DiscDiscSigma}) holds.
\end{proof}

 \begin{lemma}\label{Lemma:PhiSigmaCD}{\bf (Disc splits into composition and disc sequences)}
	Let us consider the case of (\ref{eq:DCComp}). Let $\ora{\phi} = (\phi_1, \dots, \phi_m)$ be a composition sequence with value $\theta$ and $\ora{\psi} = (\psi_1, \dots, \psi_n, \theta)$ be a disc sequence such that $(\phi_m, \psi_1), (\psi_n, \phi_1) \in \Con$ and $\ora{\phi}\bullet\ora{\psi} \deq (\phi_1, \dots, \phi_{m-1}, \phi_m\bullet\psi_1, \psi_2, \dots, \psi_n)$,\\ $\ora{\psi}\bullet\ora{\phi} \deq (\psi_1, \dots, \psi_{n-1}, \psi_n\bullet\phi_1, \phi_2, \cdots, \phi_m) \in \Disc$. Then,
	\begin{align*}
		\Phi(\ora{\phi}\bullet\ora{\psi}) &= \Phi(\ora{\phi} ; \theta) + \Phi(\ora{\psi}), \\
		\Sigma(\ora{\phi}\bullet\ora{\psi}) &= \Sigma(\ora{\phi} ; \theta) + \Sigma(\ora{\psi}) - \sigma(\phi_m, \psi_1), \\
		\Phi(\ora{\psi}\bullet\ora{\phi}) &= \Phi(\ora{\phi} ; \theta) + \Phi(\ora{\psi}), \\
		\Sigma(\ora{\psi}\bullet\ora{\phi}) &= \Sigma(\ora{\phi} ; \theta) + \Sigma(\ora{\psi}) - \sigma(\psi_n, \phi_1).
	\end{align*}
\end{lemma}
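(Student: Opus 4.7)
The plan is to follow the same strategy as in the proof of Lemma~\ref{Lemma:PhiSigmaDD}: expand both sides using the definitions of $\Phi$ and $\Sigma$, identify the terms that appear on both sides, and analyze the residual interface terms case by case. The two identities about $\ora{\phi}\bullet\ora{\psi}$ and the two about $\ora{\psi}\bullet\ora{\phi}$ are symmetric, so I would prove one pair in detail and leave the other to an essentially identical argument.

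First I would write out $\Phi(\ora{\phi}\bullet\ora{\psi})$, remembering that this sequence has length $m+n-1$ and that the concatenation $\phi_m\bullet\psi_1$ appears in position $m$. Most summands literally match summands appearing in $\Phi(\ora{\phi};\theta)+\Phi(\ora{\psi})$. The discrepancies are concentrated at three interfaces: (i) the combined morphism $\phi_m\bullet\psi_1$ and the adjacent bracket $\left<\phi_m\bullet\psi_1,\psi_2\right>$, which should be compared with the boundary data $[\phi_m],\left<\phi_m,\phi_1\right>$ omitted by the composition sequence formula together with $[\psi_1],\left<\psi_1,\psi_2\right>$; (ii) the pair $(\psi_n,\phi_1)$, which closes the disc sequence $\ora{\phi}\bullet\ora{\psi}$ cyclically but does not appear in either of the two sequences on the right; and (iii) the terms of $\Phi(\ora{\psi})$ involving $\theta$, namely $[\theta]$ and the two brackets that touch $\theta$ cyclically. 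Then I would do a small case split on whether $(\phi_m,\psi_1)$ is folded and whether $\theta$ is boundary or interior, exactly as was done in Lemma~\ref{Lemma:PhiSigmaDD} via the subcases $\left<\phi_m,\phi_1\right>=0$ or $1$. In each case the totals match.

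The sign identities are handled in parallel. The correction $\sigma(\psi,\phi_1)+\sigma(\phi_n,\psi)-\sigma(\phi_1)$ baked into the definition of $\Sigma(\ora{\phi};\theta)$ is precisely what is needed to cancel the would-be $\theta$-contribution in $\Sigma(\ora{\psi})$ at the interface where $\theta$ sat. After this cancellation, the only remaining difference between $\Sigma(\ora{\phi}\bullet\ora{\psi})$ and $\Sigma(\ora{\phi};\theta)+\Sigma(\ora{\psi})$ is a single term at the new interface $(\phi_m,\psi_1)$, which one checks equals $\sigma(\phi_m,\psi_1)$ whether or not this pair is folded (using the convention that $\sigma(\phi,\psi)=\sigma(\phi)$ when the pair is not folded, matched against the way $\sigma(\phi_m)$ appears or disappears in each definition).

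The main obstacle I expect is the bookkeeping around $\theta$. The composition-sequence weight $\Phi(\cdot;\theta)$ deliberately omits $[\phi_1]$ and $[\phi_m]$ from the sum and replaces the cyclic brackets through $\phi_1$ and $\phi_m$ by the ``half-brackets'' $\left<\psi,\phi_1\right>$, $\left<\phi_m,\psi\right>$. One has to verify case by case that these omissions are compensated by the corresponding terms appearing in $\Phi(\ora{\psi})$ at the position of $\theta$, and that the remaining interface terms reassemble into the claimed right-hand side. The geometric picture sketched after Theorem~\ref{thm:ItIsAooCategory} — that replacing the interior marking at $\theta$ by the disc $\ora{\psi}$ shifts a factor $\tfrac{1}{2}$ from the $\fm^{\comp}$ to the $\fm^{\disc}$ operation — makes it transparent which cases arise and is the natural guide to organizing the case split.
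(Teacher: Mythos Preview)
Your plan matches the paper's own treatment: the paper does not write out a proof of this lemma but explicitly says the computation is done ``as in the disc cases'' (i.e.\ Lemma~\ref{Lemma:PhiSigmaDD}) and leaves the details as an exercise. Expanding the definitions, matching the bulk of the summands, and doing a small case split at the interfaces is exactly the intended argument.

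One correction to your bookkeeping sketch: it is \emph{not} true that $\sigma(\phi,\psi)=\sigma(\phi)$ when the pair is not folded; by definition $\sigma(\phi,\psi)=0$ whenever $\left<\phi,\psi\right>=0$. The way the $-\sigma(\phi_m,\psi_1)$ term actually arises is different from what you describe. In $\Sigma(\ora{\psi})$ the summand for $\theta$ is $\sigma(\theta)(1-\left<\theta,\psi_1\right>)$, and $\sigma(\theta)$ records the tagging at the interior marking where $\phi_1$ and $\phi_m$ meet; comparing this against the boundary corrections $\sigma(\psi,\phi_1)+\sigma(\phi_n,\psi)-\sigma(\phi_1)$ in $\Sigma(\ora{\phi};\theta)$ and the interface term at $\phi_m\bullet\psi_1$ is what produces the single residual $\sigma(\phi_m,\psi_1)$. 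If you run the case split with the correct convention $\sigma(\cdot,\cdot)=0$ in the unfolded case, the identities still come out, but the mechanism is not the one you stated.
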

 
From now on, let us consider the case when the disc for a composition sequence splits into two parts.
This was divided into several types in Lemma \ref{lem:comdecomp}. We can compute the related weights and sign factors as in the disc cases. 
We will leave the details as an exercise (or see \cite{Kim24}).

\begin{lemma}\label{Lemma:PhiSigmaDCM}{\bf (Figure \ref{fig:CompDecomposition1}(a))}
	Let $\ora{\phi} = (\phi_1, \dots, \phi_m)$ be a composition sequence with value $\theta$ and $\ora{\psi} = (\psi_1, \dots, \psi_n)$ be a disc sequence such that $(\phi_k, \psi_1), (\psi_n, \phi_{k+1}) \in \Con$ for some $k$ and \\$\ora{\phi}\bullet \ora{\psi} \deq (\phi_1, \dots, \phi_{k-1}, \phi_k\bullet\psi_1, \psi_2, \dots, \psi_{n-1}, \psi_n\bullet\phi_{k+1}, \phi_{k+2}, \dots, \phi_m ; \theta) \in \Comp$. Then,
	\begin{align*}
		\Phi(\ora{\phi}\bullet\ora{\psi} ; \theta) =& \Phi(\ora{\phi} ; \theta) + \Phi(\ora{\psi}) -\left<\phi_k, \psi_1\right> - \left<\psi_n, \phi_{k+1}\right>, \\
		\Sigma(\ora{\phi}\bullet \ora{\psi} ; \theta) =& \Sigma(\ora{\phi} ; \theta) + \Sigma(\ora{\psi}) - \sigma(\phi_k, \psi_1) - \sigma(\psi_n, \phi_{k+1}).
	\end{align*}	
\end{lemma}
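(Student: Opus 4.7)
The plan is to imitate the proof of Lemma~\ref{Lemma:PhiSigmaDD}: expand $\Phi$ and $\Sigma$ of the combined sequence using the combinatorial definitions, cancel all matching terms, and isolate the residue at the two ``gluing'' indices where the disc $\ora{\psi}$ is spliced into $\ora{\phi}$. Write the combined sequence as $\ora{\chi}=(\chi_1,\dots,\chi_{m+n-2};\theta)$ with $\chi_i=\phi_i$ for $1\le i\le k-1$, $\chi_k=\phi_k\bullet\psi_1$, $\chi_{k+j}=\psi_{j+1}$ for $1\le j\le n-2$, $\chi_{k+n-1}=\psi_n\bullet\phi_{k+1}$, and $\chi_{k+n-1+j}=\phi_{k+1+j}$ for $1\le j\le m-k-1$. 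Two observations will be used throughout: concatenation preserves the boundary/interior type, so $[\chi_k]=[\phi_k]=[\psi_1]$ and $[\chi_{k+n-1}]=[\psi_n]=[\phi_{k+1}]$, and by the nice condition $\sigma(\chi_k)=\sigma(\phi_k)=\sigma(\psi_1)$ and $\sigma(\chi_{k+n-1})=\sigma(\psi_n)=\sigma(\phi_{k+1})$; moreover $\left<\cdot,\cdot\right>$ and $\sigma(\cdot,\cdot)$ depend only on the shared middle arc, so each contribution at an index involving $\chi_k$ or $\chi_{k+n-1}$ coincides with the corresponding contribution of its constituent in $\ora{\phi}$ or $\ora{\psi}$.

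For the weight identity, working in the generic range $2\le k\le m-2$, the $[\cdot]$-part of the difference $\Phi(\ora{\phi};\theta)+\Phi(\ora{\psi})-\Phi(\ora{\chi};\theta)$ collapses to $[\psi_1]+[\psi_n]$, and the $\left<\cdot,\cdot\right>$-part collapses to $(1-[\phi_k])\left<\phi_k,\phi_{k+1}\right>+(1-[\psi_n])\left<\psi_n,\psi_1\right>$. Because the dissecting arc $\alpha=\gamma^\phi_{k+1}=\gamma^\psi_1$ is not a folded arc of either sub-polygon---this is precisely the content of Figure~\ref{fig:CompDecomposition1}(a)---the corners $\phi_k,\phi_{k+1}$ lie at the two distinct endpoints of $\alpha$ and similarly for $\psi_n,\psi_1$, so both of those $\left<\cdot,\cdot\right>$-terms vanish. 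It remains to check $[\psi_1]+[\psi_n]=\left<\phi_k,\psi_1\right>+\left<\psi_n,\phi_{k+1}\right>$, which follows by a case analysis on $\nu(\alpha)\in\{0,1,2\}$: at each endpoint of $\alpha$ the indicator $[\,\cdot\,]$ records whether the endpoint is an interior marking, while the corresponding $\left<\cdot,\cdot\right>$-term records whether the endpoint-type matches the interior number $\nu(\alpha)$, and one verifies directly that both indicators agree in every case.

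The sign identity is verified by the same accounting. In the generic range the endpoint terms $\sigma(\theta,\phi_1),\sigma(\phi_m,\theta),-\sigma(\phi_1)$ of $\Sigma(\ora{\phi};\theta)$ agree with those of $\Sigma(\ora{\chi};\theta)$, so only the summations $\sum\sigma(\cdot)(1-\left<\cdot,\cdot\right>)$ contribute to the difference. Bookkeeping shows the $\chi$-sum contains every term of the $\phi$-sum except the one at $i=k$ and every term of the $\psi$-sum except the one at $j=n$; combined with $\left<\phi_k,\phi_{k+1}\right>=\left<\psi_n,\psi_1\right>=0$, the leftover equals $\sigma(\phi_k)+\sigma(\psi_n)$. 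The claim $\sigma(\phi_k)+\sigma(\psi_n)=\sigma(\phi_k,\psi_1)+\sigma(\psi_n,\phi_{k+1})$ is then verified case-by-case on $\nu(\alpha)$, using that $\sigma(\cdot,\cdot)$ at a foldable pair is the tagging at the \emph{other} endpoint of the middle arc: when $\nu(\alpha)=2$ the two $\sigma(\cdot,\cdot)$ terms swap the taggings $\sigma(\phi_k)$ and $\sigma(\psi_n)$, while for $\nu(\alpha)\in\{0,1\}$ the vanishing of certain $\sigma$-values forces both sides to match directly.

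The main subtlety is the careful bookkeeping at the two gluing indices, together with the boundary cases $k=1$ and $k=m-1$, where the merged morphism sits at position $1$ or $m+n-2$ and so is excluded from the $[\cdot]$-sum $\sum_{i=2}^{m+n-3}$ while contributing instead to the endpoint terms $\sigma(\theta,\chi_1),\sigma(\chi_{m+n-2},\theta),-\sigma(\chi_1)$. In those cases the excluded term $[\phi_1]$ or $[\phi_m]$ of $\Phi(\ora{\phi};\theta)$, respectively the endpoint $\sigma$-terms of $\Sigma(\ora{\phi};\theta)$, already compensate exactly, so the same identities reassemble; this is routine but requires care to ensure nothing is double-counted or missed.
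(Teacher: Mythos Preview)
Your proposal is correct and follows precisely the approach the paper indicates: the paper omits the proof of this lemma, stating only that ``we can compute the related weights and sign factors as in the disc cases'' (i.e., as in Lemma~\ref{Lemma:PhiSigmaDD}) and leaving the details as an exercise. Your term-by-term expansion and cancellation, together with the case analysis on $\nu(\alpha)$ to identify the residual terms with $\left<\phi_k,\psi_1\right>+\left<\psi_n,\phi_{k+1}\right>$ and $\sigma(\phi_k,\psi_1)+\sigma(\psi_n,\phi_{k+1})$, is exactly that computation carried out in full.
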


\begin{lemma}\label{Lemma:PhiSigmaDCF}{\bf (Figure \ref{fig:CompDecomposition1}(b)-1)}
	Let $\ora{\phi} = (\phi_1, \dots, \phi_m)$ be a composition sequence with value $\theta$ and $\ora{\psi} = (\psi_1, \dots, \psi_n)$ be a disc sequence such that $\theta = \psi_1 \bullet \theta'$, $(\psi_n, \phi_1) \in \Con$ and \\$\ora{\psi}\bullet\ora{\phi} \deq (\psi_2, \dots, \psi_{n-1}, \psi_n \bullet \phi_1, \phi_2, \dots, \phi_m ; \theta') \in \Comp$. Then,
	\begin{align*}
		\Phi(\ora{\psi}\bullet\ora{\phi} ; \theta') &= \Phi(\ora{\psi}) + \Phi(\ora{\phi} ; \theta) - 1 - \left<\psi_1, \theta'\right>, \\
		\Sigma(\ora{\psi}\bullet\ora{\phi} ; \theta') &= \Sigma(\ora{\psi}) + \Sigma(\ora{\phi} ; \theta) - \sigma(\psi_n, \phi_1) - \sigma(\psi_1, \theta').
	\end{align*}
\end{lemma}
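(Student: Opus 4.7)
The proof proceeds by a direct term-by-term bookkeeping comparison, exactly in the spirit of Lemma \ref{Lemma:PhiSigmaDCM} and Lemma \ref{Lemma:PhiSigmaDD}. Write $\overrightarrow{\chi} = (\chi_1,\dots,\chi_{n+m-2})$ for $\ora{\psi}\bullet\ora{\phi}$, so $\chi_i = \psi_{i+1}$ for $i=1,\dots,n-2$, $\chi_{n-1} = \psi_n\bullet\phi_1$, and $\chi_{n-1+j} = \phi_{j+1}$ for $j=1,\dots,m-1$. For the $\Phi$ identity, I expand
\[
\Phi(\overrightarrow{\chi};\theta') = \sum_{i=2}^{n+m-3}[\chi_i] + \sum_{i=1}^{n+m-3}(1-[\chi_i])\left<\chi_i,\chi_{i+1}\right>,
\]
and compare it to the expansions of $\Phi(\ora{\psi})$ and $\Phi(\ora{\phi};\theta)$. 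The $[\chi_i]$-contributions from $i=2,\dots,n-2$ match $[\psi_{i+1}]$ from $\Phi(\ora{\psi})$ and the $[\chi_i]$-contributions from $i=n,\dots,n+m-3$ match $[\phi_{j+1}]$ from $\Phi(\ora{\phi};\theta)$; the hinge term $[\chi_{n-1}]=0$ matches nothing since both $[\psi_n]$ and $[\phi_1]$ sit at boundary positions of $\ora{\psi}$ and $\ora{\phi}$ respectively in terms that do not contribute to $[\,\cdot\,]$. The nontrivial accounting is at positions touching $\psi_1$, $\psi_n\bullet\phi_1$, and the new endpoints $\chi_1$, $\chi_{n+m-2}$.

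\textbf{Key identifications.} Near the hinge, $\chi_{n-1} = \psi_n\bullet\phi_1$ forces $\left<\chi_{n-2},\chi_{n-1}\right> = \left<\psi_{n-1},\psi_n\right>$ and $\left<\chi_{n-1},\chi_n\right> = \left<\phi_1,\phi_2\right>$, while $\left<\psi_n,\phi_1\right>$ does not appear in $\Phi(\overrightarrow{\chi};\theta')$ (consistent with its absence from the desired correction). At the $\psi_1$-end, the terms $[\psi_1] + (1-[\psi_1])\left<\psi_1,\psi_2\right> + (1-[\psi_n])\left<\psi_n,\psi_1\right>$ appear in $\Phi(\ora{\psi})$ but have no direct analogue in $\Phi(\overrightarrow{\chi};\theta')$; by the convention $\left<\psi_1,\theta'\right>\deq\left<\psi_1,\theta'\right>$ arising from $\theta = \psi_1\bullet\theta'$ and by using that $\psi_1$ is a boundary morphism if and only if $[\psi_1]=0$ (case split on this), these three terms collapse to $1+\left<\psi_1,\theta'\right>$, matching the claimed discrepancy. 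The contributions involving $(\theta,\phi_1)$ and $(\phi_m,\theta)$ in $\Phi(\ora{\phi};\theta)$ transfer to $(\theta',\chi_1)$ and $(\chi_{n+m-2},\theta')$ in $\Phi(\overrightarrow{\chi};\theta')$ without change, using the folded conventions from Definition \ref{defn:FukayaCategoryForMarkedSurface}.

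The $\Sigma$ identity is handled by the same strategy. I expand $\Sigma(\overrightarrow{\chi};\theta') = \sigma(\theta',\chi_1) + \sigma(\chi_{n+m-2},\theta') - \sigma(\chi_1) + \sum(\sigma(\chi_i)(1-\left<\chi_i,\chi_{i+1}\right>))$, and compare with $\Sigma(\ora{\psi}) + \Sigma(\ora{\phi};\theta)$. The $\sigma(\chi_i)$-contributions with $i\neq n-1$ match directly, the hinge term $\sigma(\chi_{n-1})(1-\left<\chi_{n-1},\chi_n\right>)$ corresponds to $\sigma(\psi_n\bullet\phi_1)$ which (using that $\sigma$ of a concatenation equals the common tagging shared by $\psi_n$ and $\phi_1$) accounts for the $\sigma(\phi_1)$ term in $\Sigma(\ora{\phi};\theta)$; the $\psi_1$-related terms $\sigma(\psi_1)(1-\left<\psi_1,\psi_2\right>) + \sigma(\psi_n)(1-\left<\psi_n,\psi_1\right>)$ in $\Sigma(\ora{\psi})$ combine with $\sigma(\theta',\chi_1)$ (defined through the folding convention $\sigma(\psi_1,\theta')$) to produce precisely the correction $-\sigma(\psi_n,\phi_1)-\sigma(\psi_1,\theta')$.

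\textbf{Main obstacle.} The bookkeeping at the boundary is delicate: the three positions $\psi_1$, the hinge $\psi_n\bullet\phi_1$, and the new endpoints $\chi_1, \chi_{n+m-2}$ all interact with the folded-pair conventions for composition sequences. The cleanest way to handle this is to split into cases by whether $(\psi_n,\phi_1)$ is a folded pair in $\ora{\psi}\bullet\ora{\phi}$ (which by concatenability translates into a condition on $\nu$ of the shared arc) and whether $\psi_1$ is interior or boundary, then verify each case separately; niceness of $\Gamma$ and the thick condition restrict these cases enough that each reduces to a direct check.
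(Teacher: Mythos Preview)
Your approach is correct and is precisely what the paper intends: the paper does not give a detailed proof of this lemma but explicitly says the weights and sign factors ``can be computed as in the disc cases'' (i.e., by the same term-by-term bookkeeping as in Lemma~\ref{Lemma:PhiSigmaDD}), leaving the details as an exercise. One small simplification you can make: since $\theta$ is the value of a composition sequence it is an interior morphism, and $\theta = \psi_1 \bullet \theta'$ forces $\psi_1$ to be interior as well, so the case split on $[\psi_1]$ is unnecessary and the term $[\psi_1] + (1-[\psi_1])\langle\psi_1,\psi_2\rangle$ collapses immediately to $1$.
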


\begin{lemma}\label{Lemma:PhiSigmaDCL}{\bf (Figure \ref{fig:CompDecomposition1}(b)-2)}
	Let $\ora{\phi} = (\phi_1, \dots, \phi_m)$ be a composition sequence with value $\theta$ and $\ora{\psi} = (\psi_1, \dots, \psi_n)$ be a disc sequence such that $\theta = \theta' \bullet \psi_n$, $(\phi_m, \psi_1) \in \Con$ and \\ $\ora{\phi}\bullet\ora{\psi} \deq (\phi_1, \dots, \phi_{m-1}, \phi_m \bullet \psi_1, \psi_2, \dots, \psi_{n-1} ; \theta') \in \Comp$. Then,
	\begin{align*}
		\Phi(\ora{\phi}\bullet\ora{\psi} ; \theta') &= \Phi(\ora{\phi} ; \theta) + \Phi(\ora{\psi}) - 1 - \left<\theta', \psi_n\right>, \\
		\Sigma(\ora{\phi}\bullet\ora{\psi} ; \theta') &= \Sigma(\ora{\phi} ; \theta) + \Sigma(\ora{\psi}) - \sigma(\phi_m, \psi_1) - \sigma(\theta', \psi_n).
	\end{align*}
\end{lemma}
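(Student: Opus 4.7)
The plan is a termwise expansion and comparison, following exactly the template of Lemma~\ref{Lemma:PhiSigmaDCF} under the left-right swap $\psi_1 \leftrightarrow \psi_n$.

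First I fix notation: write $\Lambda \deq \ora{\phi}\bullet\ora{\psi} = (\lambda_1, \dots, \lambda_N)$ with $N = m+n-2$, where $\lambda_i = \phi_i$ for $i<m$, $\lambda_m = \phi_m\bullet\psi_1$, and $\lambda_{m+j-1} = \psi_j$ for $2 \le j \le n-1$. The splicing is harmless at the local level because $[\,\cdot\,]$, $\langle\cdot,\cdot\rangle$, and $\sigma(\cdot), \sigma(\cdot,\cdot)$ depend only on the middle arc and the morphism types, both of which are preserved under $\bullet$; in particular $[\lambda_m] = [\phi_m] = [\psi_1]$, $\langle\phi_{m-1},\lambda_m\rangle = \langle\phi_{m-1},\phi_m\rangle$, $\langle\lambda_m,\psi_2\rangle = \langle\psi_1,\psi_2\rangle$, and analogously for $\sigma$. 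With these identifications, the ``bulk'' of $\Phi(\Lambda;\theta')$ and $\Sigma(\Lambda;\theta')$ matches the bulk of $\Phi(\ora{\phi};\theta) + \Phi(\ora{\psi})$ and $\Sigma(\ora{\phi};\theta) + \Sigma(\ora{\psi})$; the only residual contributions come from the cyclic tail of the $\ora{\psi}$-sum that does not appear in the non-cyclic $\Lambda$-sum.

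Carrying out the subtraction for $\Phi$ and using $[\phi_m] = [\psi_1]$, one obtains
\begin{align*}
&\Phi(\Lambda;\theta') - \Phi(\ora{\phi};\theta) - \Phi(\ora{\psi}) \\
&\qquad = -[\psi_{n-1}] - [\psi_n] - (1-[\psi_{n-1}])\langle\psi_{n-1},\psi_n\rangle - (1-[\psi_n])\langle\psi_n,\psi_1\rangle.
\end{align*}
The hypothesis $\theta = \theta'\bullet\psi_n$ forces $\psi_n$ to be an interior morphism at the marking $p$ carrying $\theta$, so $[\psi_n] = 1$ and $(1-[\psi_n])\langle\psi_n,\psi_1\rangle = 0$. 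A case split on the other endpoint of the middle arc $\delta_n = s(\psi_n)$ handles the rest: if it is an interior marking then $\nu(\delta_n) = 2$, $\psi_{n-1}$ is interior, and both $[\psi_{n-1}] + (1-[\psi_{n-1}])\langle\psi_{n-1},\psi_n\rangle$ and $\langle\theta',\psi_n\rangle$ equal $1$; if it is a boundary marking then $\nu(\delta_n) = 1$, $\psi_{n-1}$ is boundary, $\langle\psi_{n-1},\psi_n\rangle = 0$ by the mixed-type rule, and both quantities equal $0$. Either way the residual collapses to $-1 - \langle\theta',\psi_n\rangle$.

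The $\Sigma$ identity runs along the same lines, with an extra layer of bookkeeping for the folded-value conventions $\sigma(\psi,\phi_1)$ and $\sigma(\phi_n,\psi)$. After the bulk cancellation, the residual reads
\begin{align*}
&\Sigma(\Lambda;\theta') - \Sigma(\ora{\phi};\theta) - \Sigma(\ora{\psi}) \\
&\qquad = \sigma(\theta',\phi_1) - \sigma(\theta,\phi_1) + \sigma(\psi_{n-1},\theta') - \sigma(\phi_m,\theta) \\
&\qquad\quad - \sigma(\psi_{n-1})(1-\langle\psi_{n-1},\psi_n\rangle) - \sigma(\psi_n)(1-\langle\psi_n,\psi_1\rangle).
\end{align*}
The decomposition $\theta = \theta'\bullet\psi_n$ propagates the folded-value data: when $(\phi_m,\theta)$ is folded in $\ora{\phi}$ via some $\theta = \zeta\bullet\phi_m$, matching this against $\theta = \theta'\bullet\psi_n$ identifies $\sigma(\phi_m,\theta)$ with a summand which combines with $\sigma(\psi_{n-1},\theta')$ to produce $-\sigma(\theta',\psi_n)$, and the $\sigma(\theta',\phi_1) - \sigma(\theta,\phi_1)$ difference either vanishes or collapses by applying the left-folded convention to both values. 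The remaining tail $-\sigma(\psi_{n-1})(1-\langle\psi_{n-1},\psi_n\rangle) - \sigma(\psi_n)(1-\langle\psi_n,\psi_1\rangle)$ evaluates to $-\sigma(\phi_m,\psi_1)$ by the same $\psi_{n-1}$-type case split used for $\Phi$, combined with the nice-condition identification of the tagging at $p$ seen from $\psi_n$ with the tagging appearing in $\sigma(\phi_m,\psi_1)$.

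The main obstacle is purely bookkeeping the four folded-value contributions $\sigma(\theta,\phi_1)$, $\sigma(\phi_m,\theta)$, $\sigma(\theta',\phi_1)$, $\sigma(\psi_{n-1},\theta')$ case by case against the synchronized foldings allowed by $\theta = \theta'\bullet\psi_n$. Since Lemma~\ref{Lemma:PhiSigmaDCF} already handles the analogous bookkeeping on the opposite end (with $\theta = \psi_1\bullet\theta'$), no new idea is required: one runs the same argument with the roles of the first and last morphisms of $\ora{\psi}$ swapped.
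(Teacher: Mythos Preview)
Your approach is correct and is exactly what the paper intends: the paper proves only Lemma~\ref{Lemma:PhiSigmaDD} in detail by termwise expansion and then states that all the remaining weight/sign lemmas (including this one) ``can be computed as in the disc cases'' and are left as an exercise, so your direct expansion-and-comparison is the paper's method. Your $\Phi$ computation is carried out cleanly and the case split on $\nu(\delta_n)$ is the right way to collapse the residual; the $\Sigma$ part is sketchier but the residual you wrote down is correct, and the remaining bookkeeping with the folded-value conventions $\sigma(\theta,\phi_1)$, $\sigma(\phi_m,\theta)$ is indeed symmetric to the DCF case as you say.
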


\begin{lemma}\label{Lemma:PhiSigmaCCM}{\bf (Figure \ref{fig:CompDecomposition1}(c)-1)}
	Let $\ora{\phi} = (\phi_1, \dots, \phi_m)$ be a composition sequence with value $\theta$ and $\ora{\psi} = (\psi_1, \dots, \psi_n)$ be a composition sequence with value $\phi_k$ for some $2 \leq k \leq m-1$ such that $(\phi_{k-1}, \psi_1), (\psi_n, \phi_{k+1}) \in \Con$ and $\ora{\phi} \bullet_{k-1} \ora{\psi} \deq (\phi_1, \dots, \phi_{k-2}, \phi_{k-1} \bullet \psi_1, \psi_2, \dots, \psi_n, \phi_{k+1}, \dots, \phi_m)$ and \\ $\ora{\phi} \bullet_{k+1} \ora{\psi} \deq (\phi_1, \dots, \phi_{k-1}, \psi_1, \psi_2, \dots, \psi_{n-1}, \psi_n \bullet \phi_{k+1}, \phi_{k+2}, \dots, \phi_m)$ are composition sequences with value $\theta$. Then,
	\begin{align*}
		\Phi(\ora{\phi}\bullet_{k-1}\ora{\psi} ; \theta) =& \Phi(\ora{\phi} ; \theta) + \Phi(\ora{\psi} ; \phi_k) - 1, \\
		\Sigma(\ora{\phi}\bullet_{k-1}\ora{\psi} ; \theta) =& \Sigma(\ora{\phi} ; \theta) + \Sigma(\ora{\psi} ; \phi_k) -\sigma(\phi_{k-1}, \psi_1) - \sigma(\psi_n, \phi_{k+1}), \\
		\Phi(\ora{\phi}\bullet_{k+1}\ora{\psi} ; \theta) =& \Phi(\ora{\phi} ; \theta) + \Phi(\ora{\psi} ; \phi_k) - 1, \\
		\Sigma(\ora{\phi}\bullet_{k+1}\ora{\psi} ; \theta) =& \Sigma(\ora{\phi} ; \theta) + \Sigma(\ora{\psi} ; \phi_k) -\sigma(\phi_{k-1}, \psi_1) - \sigma(\psi_n, \phi_{k+1}).
	\end{align*}	
\end{lemma}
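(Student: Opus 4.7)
The proof of Lemma~\ref{Lemma:PhiSigmaCCM} will follow the same direct-computation strategy used in Lemma~\ref{Lemma:PhiSigmaDD} and the other auxiliary lemmas in this section. The plan is to expand the definitions of $\Phi$ and $\Sigma$ for the three composition sequences involved and systematically compare them term by term, using the hypothesis $(\phi_{k-1},\psi_1),(\psi_n,\phi_{k+1})\in\Con$ to control the new junction contributions.

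First, I would write the combined composition sequence explicitly: $\ora{\phi}\bullet_{k-1}\ora{\psi}=(\phi_1,\ldots,\phi_{k-2},\phi_{k-1}\bullet\psi_1,\psi_2,\ldots,\psi_n,\phi_{k+1},\ldots,\phi_m)$ of length $m+n-2$ with value $\theta$. Splitting the sum defining $\Phi(\ora{\phi}\bullet_{k-1}\ora{\psi};\theta)$ into contributions coming from the $\ora\phi$-part (those indices $\ne k$) and from the $\ora\psi$-part (with $\psi_1$ absorbed at the left junction), and then subtracting $\Phi(\ora{\phi};\theta)+\Phi(\ora{\psi};\phi_k)$, almost every term will cancel. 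The residual contributions concentrate near the junction data around $\phi_k$ in $\ora\phi$, the endpoint data of $\psi_1$ and $\psi_n$ in $\ora\psi$, and the new junctions $\phi_{k-2}\,|\,\phi_{k-1}\bullet\psi_1\,|\,\psi_2$ and $\psi_n\,|\,\phi_{k+1}$ in the combined sequence.

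Next, I would use the hypotheses to evaluate these residuals. Since $\phi_k$ is the value of a composition sequence it is an interior morphism, so $[\phi_k]=1$. The assumption $(\phi_{k-1},\psi_1)\in\Con$ forces $\langle\phi_{k-1},\psi_1\rangle=1$ (a fold along the arc at $\gamma_k$), and similarly $\langle\psi_n,\phi_{k+1}\rangle=1$. Because the merged morphism $\phi_{k-1}\bullet\psi_1$ is of the same type (boundary or interior) as $\phi_{k-1}$, the values $[\phi_{k-1}\bullet\psi_1]$, $\langle\phi_{k-2},\phi_{k-1}\bullet\psi_1\rangle$, and $\langle\phi_{k-1}\bullet\psi_1,\psi_2\rangle$ agree respectively with $[\phi_{k-1}]$, $\langle\phi_{k-2},\phi_{k-1}\rangle$, and $\langle\psi_1,\psi_2\rangle$, since these are local properties of the underlying arcs unchanged by concatenation. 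Collecting the surviving terms, the net change to $\Phi$ is exactly $-[\phi_k]=-1$, proving the first identity. An identical bookkeeping with $\sigma(\cdot)$ and $\sigma(\cdot,\cdot)$ in place of $[\cdot]$ and $\langle\cdot,\cdot\rangle$ yields the $\Sigma$ identity; the two correction terms $-\sigma(\phi_{k-1},\psi_1)-\sigma(\psi_n,\phi_{k+1})$ arise precisely because the folded pairs at the two junctions no longer contribute their $\sigma$-terms in the combined sum.

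The case $\ora{\phi}\bullet_{k+1}\ora{\psi}$ is symmetric to the above, obtained by swapping the roles of the two junctions, and admits the same proof up to relabeling of indices. The main obstacle is purely bookkeeping rather than conceptual: one must track precisely how $\langle\cdot,\cdot\rangle$ and $\sigma(\cdot,\cdot)$ behave across the merged morphism and ensure that each folded pair adjacent to a junction is counted exactly once. Running the computation on a small explicit example (say $m=3$, $n=2$) before writing the general argument is a useful sanity check to guarantee that all junction terms have been accounted for.
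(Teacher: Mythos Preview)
Your proposal is correct and matches the paper's intended approach: the paper states just before this lemma that ``we can compute the related weights and sign factors as in the disc cases'' and leaves the details as an exercise, pointing to the proof of Lemma~\ref{Lemma:PhiSigmaDD} as the template. Your key observation that $\langle\phi_{k-1},\psi_1\rangle=\langle\psi_n,\phi_{k+1}\rangle=1$ is the crucial step (it holds because $\gamma_k$ and $\gamma_{k+1}$ both hang at the interior marking where $\phi_k$ lives, so the concatenation of $\phi_{k-1},\psi_1$ at the \emph{other} endpoint of $\gamma_k$ is either two boundary morphisms with $\nu(\gamma_k)=1$ or two interior morphisms with $\nu(\gamma_k)=2$), and once this is established the term-by-term comparison goes through exactly as you outline.
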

\begin{lemma}\label{Lemma:PhiSigmaXDM}{\bf (Figure \ref{fig:CompDecomposition1}(c)-2)}
	Let $\ora{\theta} = (\theta_1, \dots, \theta_l)$, $\ora{\psi} = (\psi_1, \dots, \psi_n)$ be sequences of composable sequences, and $\ora{\phi} = (\phi_1, \dots, \phi_m, \eta)$ be a disc sequence such that $(\theta_l, \phi_1), (\phi_m, \psi_1) \in \Con$ and 
	\begin{align*}
		\ora{\theta} \bullet \ora{\phi} &\deq (\theta_1, \dots, \theta_{l-1}, \theta_l \bullet \phi_1, \phi_2, \dots, \phi_m, \psi_1, \dots, \psi_n), \\
		\ora{\phi} \bullet \ora{\psi} &\deq (\theta_1, \dots, \theta_l, \phi_1, \phi_2, \dots, \phi_{m-1}, \phi_m \bullet \psi_1, \psi_2, \dots, \psi_n)
	\end{align*}
	are composition sequences with value $\xi$. Then we have the following.
	\begin{align*}
		\Phi(\ora{\theta} \bullet \ora{\phi} ; \xi) &= \Phi(\ora{\phi} \bullet \ora{\psi} ; \xi) \\
		\Sigma(\ora{\theta} \bullet \ora{\phi} ; \xi) &= \Sigma(\ora{\phi} \bullet \ora{\psi} ; \xi).
	\end{align*}
\end{lemma}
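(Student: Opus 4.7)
The plan is to prove both identities by expanding $\Phi$ and $\Sigma$ directly from their definitions and matching the contributions term by term. Writing $A = \ora{\theta} \bullet \ora{\phi}$ and $B = \ora{\phi} \bullet \ora{\psi}$, both of length $N = l+m+n-1$, I will first observe that $A_i = B_i$ outside the range $l \le i \le l+m$, so all contributions in those positions cancel between the two sides. At the merging positions, I will apply the identities $[\theta_l \bullet \phi_1] = [\theta_l] = [\phi_1]$, $\sigma(\theta_l \bullet \phi_1) = \sigma(\theta_l) = \sigma(\phi_1)$, the symmetric statements for $\phi_m \bullet \psi_1$, and pairing identities such as $\langle \theta_l \bullet \phi_1, \phi_2 \rangle = \langle \phi_1, \phi_2 \rangle$ and $\langle \theta_{l-1}, \theta_l \bullet \phi_1 \rangle = \langle \theta_{l-1}, \theta_l \rangle$ (and their $B$-side analogues), all of which hold because concatenation preserves morphism type and shares the outer endpoints.

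After these cancellations and a short telescoping across the shared $\phi$-block, the difference in weights reduces to
\[
\Phi(A;\xi) - \Phi(B;\xi) = ([\phi_m] - [\phi_1]) - (1 - [\phi_1])\langle \theta_l, \phi_1 \rangle + (1 - [\phi_m])\langle \phi_m, \psi_1 \rangle,
\]
and the corresponding calculation for $\Sigma$ collapses to a boundary expression of the same shape. The crucial identification will be $\langle \theta_l, \phi_1 \rangle = \langle \phi_m, \psi_1 \rangle = [\eta]$: since $\gamma_1$ is the side of the disc sequence $\ora{\phi}$ flanked by the corners at $\eta$ and $\phi_1$, its two endpoints lie at the markings of $\eta$ and $\phi_1$, so generically $\nu(\gamma_1) = [\phi_1] + [\eta]$. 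A case split on the common type of $\theta_l$ and $\phi_1$ (both boundary, requiring $\nu(\gamma_1) = 1$; or both interior, requiring $\nu(\gamma_1) = 2$) then yields $\langle \theta_l, \phi_1 \rangle = [\eta]$, and the identical argument for $\gamma_{m+1}$ gives $\langle \phi_m, \psi_1 \rangle = [\eta]$.

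Substituting produces $\Phi(A;\xi) - \Phi(B;\xi) = ([\phi_m] - [\phi_1])(1 - [\eta])$ and $\Sigma(A;\xi) - \Sigma(B;\xi) = (\sigma(\phi_m) - \sigma(\phi_1))(1 - [\eta])$. To finish, I will invoke the geometric context in which the lemma arises, namely case (c)-2 of Lemma \ref{lem:comdecomp}: there the morphism $\eta$ is the interior morphism at the folded interior marking where the dissecting arc $\alpha$ terminates, so $[\eta] = 1$ and both differences vanish. The main obstacle will be the careful bookkeeping of weights and signs around the merging positions $l$ and $l+m$, together with the combinatorial verification of the identity $\langle \theta_l, \phi_1 \rangle = [\eta]$ from the endpoint structure of $\gamma_1$ inside $\ora{\phi}$; once these pieces are in place the cancellations are automatic.
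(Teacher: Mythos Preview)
Your approach is correct and is exactly the method the paper uses for the companion lemmas: the paper proves Lemma~\ref{Lemma:PhiSigmaDD} by direct expansion of $\Phi$ and $\Sigma$ and term-by-term cancellation, and then explicitly leaves Lemmas~\ref{Lemma:PhiSigmaCD}--\ref{Lemma:PhiSigmaT} (including this one) ``as an exercise (or see \cite{Kim24})''. So there is no alternate proof in the paper to compare against, and your expansion-and-telescope argument is precisely what is intended.

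Two small points worth tightening. First, your derivation of $\langle\theta_l,\phi_1\rangle=[\eta]$ via $\nu(\gamma_1)=[\phi_1]+[\eta]$ tacitly assumes the disc $\ora\phi$ is not itself folded at $\gamma_1$. If it is folded there, then $(\eta,\phi_1)\in\Con$, so $[\eta]=[\phi_1]$ and $\nu(\gamma_1)=[\phi_1]+1$ (the other endpoint being the branch point). In that case $\langle\theta_l,\phi_1\rangle=1$ directly, and since you will ultimately invoke $[\eta]=1$ from the (c)-2 context anyway, the identity $\langle\theta_l,\phi_1\rangle=[\eta]$ still holds. The same remark applies at $\gamma_{m+1}$. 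Second, in the boundary cases $l=1$ or $n=1$ the terms $\sigma(\xi,A_1)$, $\sigma(A_1)$ (resp.\ $\sigma(A_N,\xi)$) no longer automatically agree with their $B$-counterparts term by term; one has to observe that $s(A_1)=s(B_1)=s(\theta_1)$ (resp.\ $t(A_N)=t(B_N)$) and that the fold status of $(\xi,A_1)$ and $(\xi,B_1)$ is forced to coincide by the validity of both polygons, after which the extra contributions cancel. This is routine but should be mentioned, since your outline assumes $A_1=B_1$ and $A_N=B_N$.
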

 
\begin{lemma}\label{Lemma:PhiSigmaXDF}{\bf (Figure \ref{fig:CompDecomposition1}(d)-1)}
	Let $\ora{\phi} = (\phi_1, \dots, \phi_m, \zeta, \eta)$ be a disc sequence, $\ora{\psi} \deq (\psi_1, \dots, \psi_n)$ be a sequence of composable morphism, and $\xi$ be an interior morphism such that $(\phi_m, \psi_1), (\eta, \xi) \in \Con$ and 
	\begin{align*}
		\ora{\phi} \bullet \ora{\psi} &\deq (\eta, \phi_1, \dots, \phi_{m-1}, \phi_m \bullet \psi_1, \psi_2, \dots, \psi_n ; \eta \bullet \xi), \\
		\ora{\phi} \circ \ora{\psi} &\deq (\phi_1, \dots, \phi_m, \psi_1, \dots, \psi_n ; \xi)
	\end{align*}
	are compositions sequences. Then we have the following.
	\begin{align*}
		\Phi(\ora{\phi} \bullet \ora{\psi} ; \eta \bullet \xi) &= \Phi(\ora{\phi} \circ \ora{\psi} ; \xi) + \left<\eta, \xi\right> - 1, \\
		\Sigma(\ora{\phi} \bullet \ora{\psi} ; \eta \bullet \xi) &= \Sigma(\ora{\phi} \circ \ora{\psi} ; \xi) + \sigma(\eta, \xi) - \sigma(\phi_m, \psi_1).
	\end{align*}
\end{lemma}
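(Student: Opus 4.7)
The plan is to prove both identities by a direct term-by-term comparison of the definitions of $\Phi$ and $\Sigma$ applied to the two composition sequences $A \deq \ora{\phi}\bullet\ora{\psi}$ and $B \deq \ora{\phi}\circ\ora{\psi}$. Both have length $m+n$; indexing by position, they differ only at position $1$ (where $A$ has $\eta$) and position $m+1$ (where $A$ has $\phi_m\bullet\psi_1$ while $B$ has $\phi_m$ at position $m$ and $\psi_1$ at position $m+1$), with a one-index shift across the initial $\phi$-block.

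The first step is to record the elementary identities that follow from the fact that a concatenation of two concatenable basic morphisms is again a basic morphism of the same type, through the same middle arc:
\begin{align*}
    [\phi_m\bullet\psi_1] &= [\phi_m] = [\psi_1], \\
    \left<\phi_{m-1},\phi_m\bullet\psi_1\right> &= \left<\phi_{m-1},\phi_m\right>, \\
    \left<\phi_m\bullet\psi_1,\psi_2\right> &= \left<\psi_1,\psi_2\right>,
\end{align*}
together with the analogous identities for $\sigma(\phi_m\bullet\psi_1)$, $\sigma(\phi_{m-1},\phi_m\bullet\psi_1)$, and $\sigma(\phi_m\bullet\psi_1,\psi_2)$. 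These force the interior contributions of the $\phi$- and $\psi$-blocks to coincide in $A$ and $B$.

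For the $\Phi$-identity, expanding the defining sums and applying the cancellations above leaves
$$\Phi(A) - \Phi(B) \;=\; [\phi_1]-[\phi_m] + (1-[\eta])\left<\eta,\phi_1\right> - (1-[\phi_m])\left<\phi_m,\psi_1\right>.$$
Since $\xi$ is an interior morphism and $(\eta,\xi)\in\Con$, $\eta$ must also be interior, so $[\eta]=1$ eliminates the middle term. To identify the remainder with $\left<\eta,\xi\right>-1$, I would use the geometric input carried by the assumption that $A$ is a composition sequence with value $\eta\bullet\xi$: the pair $(\eta\bullet\xi,\eta)$ is folded with $\theta=\xi$, and $\left<\eta,\xi\right>$ records the interior number of the common arc of $\eta$ and $\xi$, which is also the arc $\gamma_1$ of $\ora{\phi}$. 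Together with the good, thick, and full conditions on the tagged arc system and the compatibility of $B$ as a composition sequence sharing the same $\phi$- and $\psi$-structure, a short case split on $[\phi_1]$, $[\phi_m]$, and $\left<\phi_m,\psi_1\right>$ then yields the combinatorial identity $[\phi_1]-[\phi_m]-(1-[\phi_m])\left<\phi_m,\psi_1\right>=\left<\eta,\xi\right>-1$.

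For the $\Sigma$-identity, the same cancellation pattern applies to the sign contributions. Here the folded-value convention $\sigma(\eta\bullet\xi,\eta)=\sigma(\eta,\xi)$ supplies the boundary term from $A$, while the intermediate terms match pair-by-pair under the identities above; the only uncancelled residues are $\sigma(\eta,\xi)$ from $A$ and $-\sigma(\phi_m,\psi_1)$ arising from the absorbed position in $A$ versus the two positions $m$, $m+1$ in $B$. The principal technical obstacle is the case analysis in the $\Phi$-step, which parallels the bookkeeping in the preceding Lemmas \ref{Lemma:PhiSigmaDCM}--\ref{Lemma:PhiSigmaXDM}; once this is in place, the $\Sigma$-identity follows by the same style of argument.
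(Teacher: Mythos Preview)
The paper does not actually prove this lemma; it states after Lemma~\ref{Lemma:PhiSigmaCD} that the remaining weight/sign identities ``can be computed as in the disc cases'' and leaves the details as an exercise (referring to \cite{Kim24}). So there is no proof in the paper to compare against, but your direct term-by-term expansion is exactly the method the paper uses for the one case it does prove (Lemma~\ref{Lemma:PhiSigmaDD}), and your reduction
\[
\Phi(A)-\Phi(B)=[\phi_1]-[\phi_m]-(1-[\phi_m])\langle\phi_m,\psi_1\rangle
\]
is correct.

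The gap is that the ``short case split'' you defer is where the actual content lives, and you have not supplied the two geometric facts that close it. First, $\langle\phi_m,\psi_1\rangle=1$ always: since $(\phi_m,\psi_1)\in\Con$ and $B$ is a composition sequence, the disc for $B$ is folded along $\gamma_{m+1}$, and the fold tip (the endpoint of $\gamma_{m+1}$ opposite the concatenation point) must lie in $O$ for the immersion $f$ to exist; this forces $\nu(\gamma_{m+1})\in\{1,2\}$ according as $[\phi_m]\in\{0,1\}$. Second, $[\phi_1]=\langle\eta,\xi\rangle$: the morphism $\phi_1$ sits at the endpoint of $\gamma_1$ opposite the interior marking $p$ where $\eta,\xi$ live (or at $p$ itself if $B$ folds along $\gamma_1$, in which case the other endpoint is again in $O$), so $[\phi_1]=1$ iff $\nu(\gamma_1)=2$ iff $\langle\eta,\xi\rangle=1$. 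Neither of these follows from the good/thick/full conditions you invoke; they come from the existence of the disc maps for $A$ and $B$.

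For the $\Sigma$-identity your sketch is too optimistic. A careful expansion leaves, beyond $\sigma(\eta,\xi)$, the residual terms $-\sigma(\xi,\phi_1)+\sigma(\phi_1)-\sigma(\eta)\langle\eta,\phi_1\rangle$, and these do \emph{not} obviously reduce to $-\sigma(\phi_m,\psi_1)$ without a further case analysis on whether $B$ (and $A$) is folded along $\gamma_1$ and on the tagging at the fold tip of $\gamma_{m+1}$. Your claim that ``the only uncancelled residues are $\sigma(\eta,\xi)$ and $-\sigma(\phi_m,\psi_1)$'' is therefore unjustified as written.
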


\begin{lemma}\label{Lemma:PhiSigmaCCL}{\bf (Figure \ref{fig:CompDecomposition1}(d)-2)}
	Let $\ora{\phi} = (\phi_1, \dots, \phi_m)$ be a composition sequence with value $\theta$ and $\ora{\psi} = (\psi_1, \dots, \psi_n)$ be a composition sequence with value $\phi_m$ such that $\theta = \theta' \bullet \psi_n$, $(\phi_{m-1}, \psi_1) \in \Con$ and $\ora{\phi} \bullet_n \ora{\psi} \deq (\phi_1, \dots, \phi_{m-1}, \psi_1, \dots, \psi_{n-1} ; \theta') \in \Comp$. Then, $\ora{\phi} \bullet_1 \ora{\psi} \deq (\phi_1, \dots, \phi_{m-2}, \phi_{m-1} \bullet \psi_1, \psi_2, \dots, \psi_n ; \theta) \in \Comp$ and we have the following.
	\begin{align*}
		\Phi(\ora{\phi}\bullet_n\ora{\psi} ; \theta') =& \Phi(\ora{\phi} ; \theta) + \Phi(\ora{\psi} ; \phi_m) - \left<\theta', \psi_n\right> + 1, \\
		\Sigma(\ora{\phi}\bullet_n\ora{\psi} ; \theta') =& \Sigma(\ora{\phi} ; \theta) + \Sigma(\ora{\psi} ; \phi_m) - \sigma(\theta', \psi_n), \\
		\Phi(\ora{\phi}\bullet_1\ora{\psi} ; \theta) =& \Phi(\ora{\phi} ; \theta) + \Phi(\ora{\psi} ; \phi_m), \\
		\Sigma(\ora{\phi}\bullet_1\ora{\psi} ; \theta) =& \Sigma(\ora{\phi} ; \theta) + \Sigma(\ora{\psi} ; \phi_m) - \sigma(\phi_{m-1}, \psi_1).
	\end{align*}	
\end{lemma}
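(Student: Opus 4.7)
The plan is to first establish geometrically that $\ora{\phi}\bullet_1\ora{\psi}$ is a composition sequence, and then verify the four identities by direct expansion of the definitions of $\Phi$ and $\Sigma$, matching contributions term by term.

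First, to check that $\ora{\phi}\bullet_1\ora{\psi} = (\phi_1,\ldots,\phi_{m-2},\phi_{m-1}\bullet\psi_1,\psi_2,\ldots,\psi_n;\theta)$ is a composition sequence, I would glue the immersion witnessing $(\psi_1,\ldots,\psi_n;\phi_m) \in \Comp$ to the immersion witnessing $(\phi_1,\ldots,\phi_m;\theta) \in \Comp$ along the common boundary edge labeled $\phi_m$. The hypothesis $(\phi_{m-1},\psi_1) \in \Con$ ensures that the two successive boundary edges at the seam merge into a single morphism $\phi_{m-1}\bullet\psi_1$, and the rest of the data descends to produce the required immersion $(D, M^\can_{m+n-1}) \to (S, M\cup O)$ with output $\theta$.

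Second, I would compute $\Phi(\ora{\phi}\bullet_n\ora{\psi};\theta')$, $\Phi(\ora{\phi}\bullet_1\ora{\psi};\theta)$, $\Phi(\ora{\phi};\theta)$, and $\Phi(\ora{\psi};\phi_m)$ by unfolding the definition. The bulk contributions, namely the interior-morphism indicators $[\phi_i]$ and $[\psi_j]$ for indices $i,j$ away from the seam and the output, and the folding brackets $\langle\cdot,\cdot\rangle$ at interior positions, appear identically on both sides of each desired identity and cancel. The residual discrepancy localizes at the seam $(\phi_{m-1},\psi_1)$ and at the output endpoint. For $\bullet_1$, the seam collapse produces an exact identity up to the missing bracket contribution, which accounts for the correction $-\sigma(\phi_{m-1},\psi_1)$ on the $\Sigma$ side and no correction on the $\Phi$ side. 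For $\bullet_n$, since $\phi_m$ is an interior morphism (so $[\phi_m]=1$) that disappears from the bulk under the gluing, and since the output $\theta = \theta'\bullet\psi_n$ is replaced by $\theta'$, the combined bookkeeping yields the correction $-\langle\theta',\psi_n\rangle + 1$.

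Third, the identity for $\Sigma$ is handled in parallel using
\[
\Sigma(\ora{\phi};\theta) = \sigma(\theta,\phi_1) + \sigma(\phi_m,\theta) - \sigma(\phi_1) + \sum_{i=1}^{m-1}\sigma(\phi_i)(1-\langle\phi_i,\phi_{i+1}\rangle).
\]
The bulk signs again cancel in pairs; at the seam the missing contribution $\sigma(\phi_{m-1},\psi_1)$ produces the $-\sigma(\phi_{m-1},\psi_1)$ correction in the $\bullet_1$ identity, and at the output endpoint the folding convention $\sigma(\phi_m,\theta) \deq \sigma(\theta',\psi_n)$ coming from $\theta = \theta'\bullet\psi_n$ yields the $-\sigma(\theta',\psi_n)$ correction for $\bullet_n$. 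The main obstacle will be the case analysis on the folded/unfolded status at each seam and endpoint: whether the arc shared by $\phi_{m-1}$ and $\psi_1$ has interior number one (so that $\langle\phi_{m-1},\psi_1\rangle = 1$ and the seam itself is folded in the glued disc), and whether the arc hosting $\psi_n$ coincides with the one supporting $\phi_m$ or $\theta$ (which determines whether the folding conventions for $\langle\phi_n,\psi\rangle$ and $\sigma(\phi_n,\psi)$ apply). Each sub-case has to be checked individually, but the arguments are strictly parallel to the bookkeeping already carried out in Lemmas \ref{Lemma:PhiSigmaDD}--\ref{Lemma:PhiSigmaXDF}, so the work is bookkeeping rather than new geometry.
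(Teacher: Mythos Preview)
Your proposal is correct and follows exactly the approach the paper indicates: the paper does not spell out a proof of this lemma but explicitly says ``We can compute the related weights and sign factors as in the disc cases. We will leave the details as an exercise,'' referring back to the direct term-by-term expansion carried out in Lemma~\ref{Lemma:PhiSigmaDD}. Your plan---unfold the definitions of $\Phi$ and $\Sigma$, match bulk contributions, and isolate the residual corrections at the seam $(\phi_{m-1},\psi_1)$ and at the output via the folding conventions---is precisely that method.
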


\begin{lemma}\label{Lemma:PhiSigmaCCF}{\bf (Figure \ref{fig:CompDecomposition1}(d)-3)}
	Let $\ora{\phi} = (\phi_1, \dots, \phi_m)$ be a composition sequence with value $\theta$ and $\ora{\psi} = (\psi_1, \dots, \psi_n)$ be a composition sequence with value $\phi_1$ such that $\theta = \psi_1 \bullet \theta'$, $(\psi_n, \phi_2) \in \Con$ and $\ora{\phi} \bullet_1 \ora{\psi} \deq (\psi_2, \dots, \psi_n, \phi_2, \dots, \phi_m ; \theta') \in \Comp$. Then, $\ora{\phi} \bullet_n \ora{\psi} \deq (\psi_1, \dots, \psi_{n-1}, \psi_n \bullet \phi_2, \phi_3, \dots, \phi_m ; \theta) \in \Comp$ and we have the following.
	\begin{align*}
		\Phi(\ora{\phi}\bullet_1\ora{\psi} ; \theta') =& \Phi(\ora{\phi} ; \theta) + \Phi(\ora{\psi} ; \phi_1) - \left<\psi_1, \theta'\right> + 1, \\
		\Sigma(\ora{\phi}\bullet_1\ora{\psi} ; \theta') =& \Sigma(\ora{\phi} ; \theta) + \Sigma(\ora{\psi} ; \phi_1) - \sigma(\psi_1, \theta'), \\
		\Phi(\ora{\phi}\bullet_n\ora{\psi} ; \theta) =& \Phi(\ora{\phi} ; \theta) + \Phi(\ora{\psi} ; \phi_1), \\
		\Sigma(\ora{\phi}\bullet_n\ora{\psi} ; \theta) =& \Sigma(\ora{\phi} ; \theta) + \Sigma(\ora{\psi} ; \phi_1) - \sigma(\psi_n, \phi_2)
	\end{align*}	
\end{lemma}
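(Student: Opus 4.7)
The plan is to prove the four identities by direct computation, in the same spirit as Lemmas \ref{Lemma:PhiSigmaDD} through \ref{Lemma:PhiSigmaCCL}: expand each weight $\Phi$ and sign $\Sigma$ as a sum indexed by the consecutive pairs of the composition sequence, match terms across the two sides, and identify the surviving boundary contributions. Since $\ora{\phi}$ has value $\theta = \psi_1 \bullet \theta'$, by convention $\langle \theta, \phi_1 \rangle = \langle \psi_1, \theta' \rangle$ and $\sigma(\theta, \phi_1) = \sigma(\psi_1, \theta')$, which records the folding at the output end. For $\ora{\psi}$ with value $\phi_1$, the only folding to check at the two ends is whether $(\psi_n, \phi_2)$ contributes, and by hypothesis this pair is concatenable so $\langle \psi_n, \phi_2 \rangle$ behaves in the standard way.

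First I would handle $\ora{\phi} \bullet_1 \ora{\psi} = (\psi_2,\dots,\psi_n,\phi_2,\dots,\phi_m;\theta')$. Writing out $\Phi(\ora{\phi}\bullet_1\ora{\psi};\theta')$ as $\sum_{i=2}^{\text{(length)}-1}[\cdot] + \sum_{i=1}^{\text{(length)}-1}(1-[\cdot])\langle\cdot,\cdot\rangle$, the interior sum decomposes into the $\ora{\psi}$-segment (indices $2,\dots,n-1$) and the $\ora{\phi}$-segment (indices $2,\dots,m-1$), plus the transition pair $(\psi_n,\phi_2)$. Comparing with $\Phi(\ora{\phi};\theta)+\Phi(\ora{\psi};\phi_1)$, the $\ora{\phi}$-segment contributions agree exactly, whereas on the $\ora{\psi}$ side the original expression contributes $[\psi_1]$ internally and $\langle \psi_1,\theta'\rangle = \langle\psi,\phi_1\rangle_{\text{folded}}$ as the boundary piece, both of which are absent after the splicing; balancing these with the newly-appearing $\langle\psi_n,\phi_2\rangle$ term yields exactly $-\langle\psi_1,\theta'\rangle + 1$. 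The sign identity follows analogously after invoking $\sigma(\psi_n,\phi_2)=\sigma(\phi_2)$ (since $(\psi_n,\phi_2)$ is concatenable and the relevant tagging data is determined by the output side of $\phi_2$) and the convention $\sigma(\theta,\phi_1)=\sigma(\psi_1,\theta')$.

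Next I would handle $\ora{\phi}\bullet_n\ora{\psi} = (\psi_1,\dots,\psi_{n-1},\psi_n\bullet\phi_2,\phi_3,\dots,\phi_m;\theta)$. Here the output is still $\theta$, which preserves its folding convention, so no $+1$ correction appears in the weight. The only delicate point is that the pair $(\psi_n,\phi_2)$, which produced the transition weight $\langle\psi_n,\phi_2\rangle$ in the previous case, is now fused into a single morphism $\psi_n\bullet\phi_2$ and contributes nothing to the interior sum, cancelling against the disappearance of the folded boundary term $\langle\phi_n,\psi\rangle$ from $\Phi(\ora{\psi};\phi_1)$ at the other end. I expect $\Sigma$ to match after tracking how the concatenation absorbs $\sigma(\psi_n,\phi_2)$ as a single boundary correction.

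The main obstacle is purely bookkeeping rather than conceptual: keeping the three kinds of boundary data (the fixed convention for folded output $(\theta,\phi_1)$ via $\theta = \psi_1\bullet\theta'$; the genuinely concatenable transition $(\psi_n,\phi_2)$; and the index offsets in $\sigma(\phi_1)$ that appear in the definition of $\Sigma$) straight in both versions of the spliced sequence. In particular I would need to verify that $[\psi_1]=1$ and $[\phi_n]=\langle\phi_n,\psi\rangle$-compatible (using the full condition ensuring the relevant folded arc structure in the disc), so that the telescoping produces exactly the stated $\pm 1$ and $\pm \sigma$ corrections with no further hidden terms. Once this accounting is in place, each of the four identities reduces to a short algebraic manipulation.
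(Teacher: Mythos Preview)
Your approach is correct and matches the paper's: the paper does not write out a proof for this lemma, instead stating that the computation proceeds ``as in the disc cases'' (i.e., exactly as in Lemma~\ref{Lemma:PhiSigmaDD}) and leaving the details as an exercise (or referring to \cite{Kim24}). Your plan of expanding $\Phi$ and $\Sigma$ from their definitions and matching terms across the splice is precisely this method; just be careful with a couple of the specific claims in your sketch (e.g., $\sigma(\psi_n,\phi_2)$ is the tagging at the \emph{other} interior endpoint of the middle arc, not $\sigma(\phi_2)$ in general), but the overall bookkeeping strategy is the intended one.
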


\begin{lemma}\label{Lemma:PhiSigmaXDL}{\bf (Figure \ref{fig:CompDecomposition1}(d)-4)}
	Let $\ora{\phi} = (\phi_1, \dots, \phi_m)$ be a sequence of composable morphisms, $\ora{\psi} \deq (\eta, \zeta, \psi_1, \dots, \psi_n)$ be a disc sequence, and $\xi$ be an interior morphism such that $(\phi_m, \psi_1), (\xi, \eta) \in \Con$ and 
	\begin{align*}
		\ora{\phi} \bullet \ora{\psi} &\deq (\phi_1, \dots, \phi_{m-1}, \phi_m \bullet \psi_1, \psi_2, \dots, \psi_n, \eta ; \eta \bullet \xi), \\
		\ora{\phi} \circ \ora{\psi} &\deq (\phi_1, \dots, \phi_m, \psi_1, \dots, \psi_n ; \xi)
	\end{align*}
	are compositions sequences. Then we have the following.
	\begin{align*}
		\Phi(\ora{\phi} \bullet \ora{\psi} ; \eta \bullet \xi) &= \Phi(\ora{\phi} \circ \ora{\psi} ; \xi) + \left<\xi, \eta\right> - 1, \\
		\Sigma(\ora{\phi} \bullet \ora{\psi} ; \eta \bullet \xi) &= \Sigma(\ora{\phi} \circ \ora{\psi} ; \xi) + \sigma(\xi, \eta) - \sigma(\phi_m, \psi_1).
	\end{align*}
\end{lemma}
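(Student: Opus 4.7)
The plan is to prove Lemma~\ref{Lemma:PhiSigmaXDL} by a careful term-by-term comparison of the weights $\Phi$ and signs $\Sigma$ of the two composition sequences, following the same template as the proofs of Lemmas~\ref{Lemma:PhiSigmaDD}--\ref{Lemma:PhiSigmaXDF}. The key observation is that this lemma is the ``reversed'' analogue of Lemma~\ref{Lemma:PhiSigmaXDF}: there the folded morphism $\eta$ is pulled to the \emph{front} of the composition sequence and the fold-convention supplies $\left<\eta,\xi\right>$, $\sigma(\eta,\xi)$; here $\eta$ is appended at the \emph{end} and the symmetric fold-convention for $\left<\phi_n,\psi\right>$, $\sigma(\phi_n,\psi)$ supplies $\left<\xi,\eta\right>$, $\sigma(\xi,\eta)$. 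Both composition sequences have length $m+n$, agree on the initial block $\phi_1,\dots,\phi_{m-1}$ and on the interior block $\psi_2,\dots,\psi_n$, and differ only through the merge $(\phi_m,\psi_1)\mapsto\phi_m\bullet\psi_1$ at position $m$ and through the appended $\eta$ at position $m+n$.

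For the weight I would isolate the contributions from these two local modifications. Near the merge, the change in the $[\cdot]$-sum is $[\phi_m\bullet\psi_1]-[\phi_m]-[\psi_1]$ and the change in the pairings contributes three differences at positions $m-1, m, m+1$; since $(\phi_m,\psi_1)\in\Con$ forces $\phi_m$ and $\psi_1$ to be of the same type (both boundary or both interior), one has $[\phi_m\bullet\psi_1]=[\phi_m]=[\psi_1]$, and moreover the invariance of $s(\phi_m)$ and $t(\psi_1)$ under the merge makes the two outer cross-term contributions collapse exactly as in Lemma~\ref{Lemma:PhiSigmaDCM}, leaving only the net contribution $-(1-[\phi_m])\left<\phi_m,\psi_1\right>-[\phi_m]$. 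Near the tail, appending $\eta$ adds $[\psi_n]$ and $(1-[\psi_n])\left<\psi_n,\eta\right>$, and the fold-convention inserts the new term $\left<\xi,\eta\right>$ via the identification $\left<\phi_N,\psi\right>\deq\left<\xi,\eta\right>$ for $N=m+n$. After the standard case analysis on the joint type of the relevant morphisms, the surviving terms combine into $\left<\xi,\eta\right>-1$.

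The sign $\Sigma$ is handled analogously. The three sources of difference are: (a) the merge, which loses the pair-sign $\sigma(\phi_m,\psi_1)$; (b) the appended $\eta$, whose new pair-sign $\sigma(\psi_n,\eta)$ cancels the newly appearing arc-sign $\sigma(\psi_n)$ coming through the $(1-\left<\cdot,\cdot\right>)$ factor; and (c) the boundary term $\sigma(\phi_n,\psi)$ in the definition of $\Sigma$, which for $\ora{\phi}\bullet\ora{\psi}$ reads $\sigma(\eta,\eta\bullet\xi)\deq\sigma(\xi,\eta)$ via the fold-convention while vanishing for the unfolded $\ora{\phi}\circ\ora{\psi}$. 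The remaining terms $\sigma(\psi,\phi_1)-\sigma(\phi_1)$ of the $\Sigma$-formula are unchanged across the two sequences because the first morphism and the source of the value are identical. Summing yields precisely $\sigma(\xi,\eta)-\sigma(\phi_m,\psi_1)$.

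The main obstacle is purely combinatorial bookkeeping: one must enumerate the sub-cases at the merge (the joint type of $\phi_m$ and $\psi_1$, and the folding status of the pairs $(\phi_{m-1},\phi_m)$ and $(\psi_1,\psi_2)$) and at the tail (the folding status of $(\psi_n,\eta)$ and the type of $\eta$), and verify that the surviving contributions add up correctly. No new geometric or algebraic ingredient beyond those already employed in Lemmas~\ref{Lemma:PhiSigmaDCM}--\ref{Lemma:PhiSigmaXDF} is required.
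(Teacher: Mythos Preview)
Your approach is correct and is exactly what the paper intends: the paper does not write out a proof of this lemma but explicitly states that ``We can compute the related weights and sign factors as in the disc cases. We will leave the details as an exercise,'' i.e.\ the same term-by-term comparison you outline, following the template of Lemma~\ref{Lemma:PhiSigmaDD}. Your identification of this lemma as the mirror of Lemma~\ref{Lemma:PhiSigmaXDF} (with the fold occurring at the tail rather than the head, so that the boundary term $\sigma(\phi_n,\psi)$ of the $\Sigma$-formula supplies $\sigma(\xi,\eta)$) is the right organizing principle.
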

We have a computation involving thick triples.
\begin{lemma}\label{Lemma:PhiSigmaT}
	Let $\ora{\phi} = (\phi_1, \dots, \phi_n)$ be a disc sequence. If $\psi$ is an interior morphism such that $((\psi, \phi_1) ; \phi_2)$ is a thick triple, then $\ora{\phi}^\vee = (\phi_2, \dots, \phi_n ; \phi_1^\vee)$ is a composition sequence and we have the following.
	\begin{align*}
		\Phi(\ora{\phi}) &= \Phi(\ora{\phi}^\vee ; \psi) + \left<\psi, \phi_1\right> + 1, \\
		\Sigma(\ora{\phi}) &= \Sigma(\ora{\phi}^\vee) + \sigma(\phi_1) + \sigma(\psi, \phi_1).
	\end{align*}
	Similarly, if $\psi$ is an interior morphism such that $(\phi_{n-1}, (\phi_n, \psi))$ is a thick triple, then $\ora{\phi}^\vee = (\phi_1, \dots, \phi_{n-1} ; \phi_n^\vee)$ is a composition sequence and we have the following.
	\begin{align*}
		\Phi(\ora{\phi}) &= \Phi(\ora{\phi}^\vee ; \psi) + \left<\phi_n, \psi\right> + 1, \\
		\Sigma(\ora{\phi}) &= \Sigma(\ora{\phi}^\vee ; \psi) + \sigma(\phi_n) + \sigma(\phi_n, \psi).
	\end{align*}
\end{lemma}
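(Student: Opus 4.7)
The plan is to establish both identities by directly expanding the definitions of $\Phi$ and $\Sigma$, exploiting the structural constraints imposed by the thick triple $((\psi,\phi_1);\phi_2)$: this forces $\psi$ and $\phi_1$ to be interior morphisms (so $[\phi_1]=1$) and $\phi_2$ to be a boundary morphism (so $[\phi_2]=0$), with the sources of $\psi$ and $\phi_2$ forming a thick pair. As a preliminary step, I would verify geometrically that $(\phi_2,\dots,\phi_n;\phi_1^\vee)$ is indeed a composition sequence: the disc realizing the original disc sequence can be reinterpreted by replacing the corner at $s(\phi_2)$ with its thick counterpart, producing a configuration whose boundary data matches a composition sequence with value $\phi_1^\vee$.

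For the $\Phi$ identity, I write
\[
\Phi(\ora{\phi}) = \sum_{i=1}^n \bigl([\phi_i] + (1-[\phi_i])\left<\phi_i,\phi_{i+1}\right>\bigr), \qquad \Phi(\ora{\phi}^\vee;\psi) = \sum_{i=3}^{n-1}[\phi_i] + \sum_{i=2}^{n-1}(1-[\phi_i])\left<\phi_i,\phi_{i+1}\right>,
\]
the first with the cyclic convention $\phi_{n+1}=\phi_1$. Subtracting and using $[\phi_1]=1$, $[\phi_2]=0$ reduces the difference to $1+[\phi_n]+(1-[\phi_n])\left<\phi_n,\phi_1\right>$. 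A case split on the type of $\phi_n$ then closes the argument: because $\bigcup\gamma$ is a forest, the two corners of the disc adjacent to $\gamma_1$ cannot coincide, so $\phi_n$ being interior forces $\gamma_1$ to have two distinct interior endpoints, i.e.\ $\nu(\gamma_1)=2$; if $\phi_n$ is boundary then $\nu(\gamma_1)=1$ and $\left<\phi_n,\phi_1\right>=0$ as these are morphisms of different types. Since $\left<\psi,\phi_1\right>=1$ iff $\nu(\gamma_1)=2$, both cases produce exactly $\left<\psi,\phi_1\right>+1$.

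For the $\Sigma$ identity the same expansion strategy applies: one writes out both $\Sigma(\ora{\phi})$ and $\Sigma(\ora{\phi}^\vee;\psi)$ from the definitions, uses $[\phi_2]=0$ and $\sigma(\phi_2)=0$ to kill most terms, and again case-splits on whether $\phi_n$ is interior or boundary, reconciling the endpoint terms $\sigma(\phi_1^\vee,\phi_2)+\sigma(\phi_n,\phi_1^\vee)$ in the composition-sequence formula with the cyclic $\sigma(\phi_n)(1-\left<\phi_n,\phi_1\right>)$ term in the disc-sequence formula, to obtain $\Sigma(\ora{\phi})-\Sigma(\ora{\phi}^\vee)=\sigma(\phi_1)+\sigma(\psi,\phi_1)$. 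The second part of the lemma, concerning the thick triple $(\phi_{n-1};(\phi_n,\psi))$ at the other end of the disc, follows from the symmetric calculation with the indices $1$ and $n$ interchanged. The main bookkeeping obstacle is unraveling the endpoint quantities $\left<\psi,\phi_1\right>$ and $\sigma(\psi,\phi_1)$, which in the composition-sequence context are defined through the possibly folded decomposition of the value; since both $\psi$ and $\phi_1$ are interior morphisms meeting at $\gamma_1=s(\phi_1)$, these are determined by the local geometry at that interior marking and match the corresponding disc-sequence contributions exactly as in the computations leading to Lemmas \ref{Lemma:PhiSigmaDD}--\ref{Lemma:PhiSigmaXDL}.
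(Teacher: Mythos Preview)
Your approach is correct and matches the paper's. The paper does not give an explicit proof of this lemma; it only proves Lemma~\ref{Lemma:PhiSigmaDD} in full and then states that the remaining weight/sign lemmas, including this one, are obtained by the same direct expansion of the definitions of $\Phi$ and $\Sigma$ (``We will leave the details as an exercise (or see \cite{Kim24})''). Your plan---expand both sides, use $[\phi_1]=1$ and $[\phi_2]=0$ from the thick-triple structure, and case-split on whether $\phi_n$ is interior or boundary---is exactly the template established in the proof of Lemma~\ref{Lemma:PhiSigmaDD}.

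One small point to tighten: your forest argument for the implication ``$\phi_n$ interior $\Rightarrow \nu(\gamma_1)=2$'' is slightly imprecise. What you actually need is that $\phi_n$ and $\phi_1$ sit at the two \emph{distinct} endpoints of $\gamma_1$ in the disc, so that if both are interior morphisms then both endpoints of $\gamma_1$ lie in $O$. This is immediate from the disc picture unless $\gamma_1$ itself is folded, i.e.\ $(\phi_n,\phi_1)$ is a folded pair; you should rule out or handle that case explicitly (using the good/thick conditions on $\Gamma_p$ at the marking where the thick pair $(s(\psi),\gamma_2)$ lives). This is routine bookkeeping of the same flavor as the cases $\langle\phi_m,\phi_1\rangle=1$ versus $0$ in the paper's proof of Lemma~\ref{Lemma:PhiSigmaDD}, but it is worth making precise since the quantity $\langle\psi,\phi_1\rangle$ is exactly what distinguishes the two situations.
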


\subsection{Proof of $\Aoo$-identities}

In this section, we prove that the data defined in Section \ref{defn:FukayaCategoryForMarkedSurface} indeed form an $\Aoo$-category. 
We need to prove that for an input $(\phi_1, \dots, \phi_n)$, $$\sum_{k=1}^{n}\sum_{i=0}^{n-k}(-1)^{\deg{\phi_1} + \dots \deg{\phi_i} + i}\fm_{n-k+1}(\phi_1, \dots, \phi_i, \fm_k(\phi_{i+1}, \dots, \phi_{i+k}), \phi_{i+k+1}, \dots, \phi_n) = 0.$$ We will prove this case by case.

Suppose that we are given two pairs $\ora{\phi} = (\phi_1, \dots, \phi_m), \ora{\psi} = (\psi_1, \dots, \psi_n)$ such that $\fm_m(\phi_1, \dots, \phi_m) \neq 0$ and $\fm_n(\psi_1, \dots, \psi_n) = c\phi_k$, for some nonzero $c \in \field$. Then, we will check the $\Aoo$-identity for the input  $(\phi_1, \dots, \phi_{k-1}, \psi_1, \dots, \psi_n, \phi_{k+1}, \dots, \phi_m)$. Let us call $\ora{\phi}$ and $\ora{\psi}$ the {\em outer-input} and {\em inner-input}, \resp. Also we call $\phi_k$ the {\em value} of $\ora{\psi}$. An outer-input is one of the following.
\begin{enumerate}
	\item Unit $(e, \phi)$, $(\phi, e)$.
	\item Concatenable pair $(\phi_1, \phi_2)$.
	\item Disc sequence $(\phi_1, \dots, \phi_n)$.
	\item Composition sequence $(\phi_1, \dots, \phi_n)$.
	\item Thick triple $(\phi_1 ; (\phi_2, \phi_3))$ and $((\phi_1, \phi_2) ; \phi_3)$.
\end{enumerate}

Since computations are similar, we will give the full detail only for the first case and only the sketches for the other cases. 
(rest of the details will be available at \cite{Kim24}).  
\subsubsection{Unit}\label{Appendix:Unit}
Here, we deal with the outer-input $\ora{\phi} = (e, \phi)$. 
If $\phi$ is the value of the inner input, then unital property can be used to show an $\Aoo$-identity. 
So let us assume that $e$ is the value of the inner-input $\ora{\psi}$. 
Thus $\ora{\psi} = (\psi_1, \dots, \psi_n)$ has to be a disc sequence. We divide them as follows.
\begin{enumerate}
	\item $(\psi_n, \psi_1) \notin \Con$. (Hence $\left<\psi_n, \psi_1\right>=0$.) This further has the following three subcases.
	\begin{enumerate}
		\item $(\psi_n, \phi) \in \Con$. 
		\begin{align*}
			&\fm_2(\fm_n(\psi_1, \dots, \psi_n), \phi) + (-1)^{\deg{\psi_1} + \dots + \deg{\psi_{n-1}} + n-1}\fm_n(\psi_1, \dots, \psi_{n-1}, \fm_2(\psi_n, \phi)) \\
			=&(-1)^{\Sigma(\ora{\psi})}\half{\Phi(\ora{\psi})}\fm_2(e, \phi) - (-1)^{\sigma(\psi_n, \phi)}\half{\left<\psi_n, \phi\right>}\fm_n(\psi_1, \dots, \psi_{n-1}, \psi_n \bullet \phi)\\
			=&(-1)^{\Sigma(\ora{\psi})}\half{\Phi(\ora{\psi})}\phi - (-1)^{\Sigma(\ora{\psi})}\half{\Phi(\ora{\psi})}\phi = 0.
		\end{align*}
		Let us explain the computation in more detail. From Definition \ref{defn:FukayaCategoryForMarkedSurface}, we first have $\fm_2(\psi_n, \phi) = (-1)^{\deg{\psi_n} + \sigma(\psi_n, \phi)}\half{\left<\psi_n, \phi\right>}\psi_n \bullet \phi$. So, the sign becomes $\deg{\psi_1} + \dots + \deg{\psi_n} + n-1 + \sigma(\psi_n, \phi)$. Since $(\psi_1, \dots, \psi_n)$ is a disc sequence, by Lemma \ref{Lemma:DegreeFormulae}, $\deg{\psi_1} + \dots + \deg{\psi_n} + n$ is even. So the sign is equivalent to $\sigma(\psi_n, \phi) + 1$ and we get the first equality. Then, from the definition of $\fm_m(\psi_1, \dots, \psi_{n-1}, \psi_n \bullet \phi)$, the sign $\sigma(\psi_n, \phi)$ and weight $\left<\psi_n, \phi\right>$ are canceled. Thus we get the second equality. In the following computations, we will use Lemma \ref{Lemma:DegreeFormulae} repeatedly.
		\item $(\psi_n, \phi) \notin \Con$. This further has the following three subcases.
		\begin{enumerate}
			\item $\phi = \psi_1\bullet \theta$ for some nonzero $\theta$.
			\begin{align*}
				&\fm_2(\fm_n(\psi_1, \dots, \psi_n), \psi_1\bullet\theta) + (-1)^{\deg{\psi_1} + 1}\fm_2(\psi_1, \fm_n(\psi_2, \dots, \psi_n, \psi_1 \bullet \theta)) \\
				=&(-1)^{\Sigma(\ora{\psi})}\half{\Phi(\ora{\psi})}\fm_2(e, \psi_1 \bullet \theta) + (-1)^{\deg{\psi_1} + 1 + \Sigma(\ora{\psi}) - \sigma(\psi_1, \theta)}\half{\Phi(\ora{\psi}) - \left<\psi_1, \theta\right>}\fm_2(\psi_1, \theta) \\
				=&(-1)^{\Sigma(\ora{\psi})}\half{\Phi(\ora{\psi})}\psi_1 \bullet \theta - (-1)^{\Sigma(\ora{\psi})}\half{\Phi(\ora{\psi})}\psi_1 \bullet \theta =0.
			\end{align*}
			\item $\phi = \psi_1$. This further has the following two subcases.
			\begin{enumerate}
				\item $(\psi_1, \psi_2) \notin \Con$.
				\begin{align*}
					&\fm_2(\fm_n(\psi_1, \dots, \psi_n), \psi_1) + (-1)^{\deg{\psi_1} + 1}\fm_2(\psi_1, \fm_n(\psi_2, \dots, \psi_n, \psi_1)) \\
					=&(-1)^{\Sigma(\ora{\psi})}\half{\Phi(\ora{\psi})}\fm_2(e, \psi_1) + (-1)^{\deg{\psi_1} + 1 + \Sigma(\ora{\psi})}\half{\Phi(\ora{\psi})}\fm_2(\psi_1, e) \\
					=&(-1)^{\Sigma(\ora{\psi})}\half{\Phi(\ora{\psi})}\psi_1 - (-1)^{\Sigma(\ora{\psi})}\half{\Phi(\ora{\psi})}\psi_1 =0.
				\end{align*}
				\item $(\psi_1, \psi_2) \in \Con$.
				\begin{align*}
					&\fm_2(\fm_n(\psi_1, \dots, \psi_n), \psi_1) + \fm_n(\fm_2(\psi_1, \psi_2), \psi_3, \dots, \psi_n, \psi_1) \\
					&+ (-1)^{\deg{\psi_1} + 1}\fm_2(\psi_1, \fm_n(\psi_2, \dots, \psi_n, \psi_1))  \\
					=&(-1)^{\Sigma(\ora{\psi})}\half{\Phi(\ora{\psi})}\fm_2(e, \psi_1) + (-1)^{\deg{\psi_1} + \sigma(\psi_1, \psi_2)}\frac{1}{2}\fm_n(\psi_1 \bullet \psi_2, \psi_3, \dots, \psi_n, \psi_1) \\
					&+ (-1)^{\deg{\psi_1} + 1 + \Sigma(\ora{\psi})}\half{\Phi(\ora{\psi}) - 1}\fm_2(\psi_1, e) \\
					=&(-1)^{\Sigma(\ora{\psi})}\half{\Phi(\ora{\psi})}\psi_1 + (-1)^{\Sigma(\ora{\psi})}\half{\Phi(\ora{\psi})}\psi_1 - (-1)^{\Sigma(\ora{\psi})}\half{\Phi(\ora{\psi})-1}\psi_1=0.
				\end{align*}
			\end{enumerate}
			\item $\psi_1 = \phi \bullet \theta$, for some nonzero $\theta$. This further has the following three subcases.
			\begin{enumerate}
				\item $t(\phi)$ divides $\psi_k$ into $\psi_k^1 \bullet \psi_k^2$ for some $2 < k < n$ so that $\ora{\psi}^1 \deq (\theta, \psi_2, \dots, \psi_{k-1}, \psi_k^1)$, $\ora{\psi}^2 \deq (\psi_k^2, \psi_{k+1}, \dots, \psi_n, \phi) \in \Disc$.
				\begin{align*}
					&\fm_2(\fm_n(\psi_1, \dots, \psi_n), \phi) + (-1)^{\deg{\psi_1} + \dots + \deg{\psi_{k-1}} + k-1}\fm_k(\psi_1, \dots, \psi_{k-1}, \fm_{n-k+2}(\psi_k, \dots, \psi_n, \phi)) \\
					=&(-1)^{\Sigma(\ora{\psi})}\half{\Phi(\ora{\psi})}\fm_2(e, \phi) -(-1)^{\deg{\phi} + \Sigma(\ora{\psi}^2) - \sigma(\psi_k^1, \psi_k^2)}\half{\Phi(\ora{\psi}^2) - \left<\psi_k^1, \psi_k^2\right>}\fm_k(\psi_1, \dots, \psi_{k-1}, \psi_k^1) \\
					=&(-1)^{\Sigma(\ora{\psi})}\half{\Phi(\ora{\psi})}\phi -(-1)^{\Sigma(\ora{\psi}^2) + \Sigma(\ora{\psi}^1) - \sigma(\psi_k^1, \psi_k^2) - \sigma(\phi, \theta)}\half{\Phi(\ora{\psi}^1) + \Phi(\ora{\psi}^2) - \left<\psi_k^1, \psi_k^2\right> - \left<\phi, \theta\right>}\phi \\
					=& 0 \quad \text{(by Lemma \ref{Lemma:PhiSigmaDD})}.
				\end{align*}
				\item $t(\phi)$ meets $t(\psi_k)$ on the left for some $1 < k < n$ with an interior morphism $\xi$ so that $(\theta, \psi_2, \dots, \psi_k ; \xi) \in \Comp$ and $(\xi, \psi_{k+1}, \dots, \psi_n, \phi) \in \Disc$. In this case, $(\psi_k, \psi_{k+1}) \in \Con$ and $\ora{\psi}' \deq (\phi, \theta, \psi_2, \dots, \psi_{k-1}, \psi_k \bullet \psi_{k+1}, \psi_{k+2}, \dots, \psi_n) \in \Disc$. Also, $\sigma(\phi, \theta) = \sigma(\psi_k, \psi_{k+1})$ and $\left<\phi, \theta\right> = \left<\psi_k, \psi_{k+1}\right>$.
				\begin{align*}
					&\fm_2(\fm_n(\psi_1, \dots, \psi_n), \phi) + (-1)^{\deg{\psi_1} + \dots + \deg{\psi_{k-1}} + k-1}\fm_n(\psi_1, \dots, \psi_{k-1}, \fm_2(\psi_k, \psi_{k+1}), \psi_{k+2}, \dots, \psi_n, \phi) \\
					=& (-1)^{\Sigma(\ora{\psi})}\half{\Phi(\ora{\psi})}\fm_2(e, \phi) - (-1)^{\deg{\phi} + \sigma(\psi_k, \psi_{k+1})}\frac{1}{2}\fm_n(\psi_1, \dots, \psi_{k-1}, \psi_k \bullet \psi_{k+1}, \psi_{k+2}, \dots, \psi_n, \phi) \\
					=&(-1)^{\Sigma(\ora{\psi})}\half{\Phi(\ora{\psi})}\phi - (-1)^{\Sigma(\ora{\psi}')}\half{\Phi(\ora{\psi}')}\phi \\
					=& 0 \quad \text{(by Lemma \ref{Lemma:PhiSigmaCD})}.
				\end{align*}
				\item $t(\phi)$ meets $t(\psi_k)$ on the right for some $1 < k < n$ with an interior morphism $\xi$ so that $\ora{\psi}^1 \deq (\psi_{k+1}, \dots \psi_n, \phi ; \xi) \in \Comp$ and $\ora{\psi}^2 \deq (\xi, \theta, \psi_2, \dots, \psi_k) \in \Disc$. In this case, $(\psi_k, \psi_{k+1}) \in \Con$.  Also, $\sigma(\phi, \theta) = \sigma(\psi_k, \psi_{k+1})$ and $\left<\phi, \theta\right> = \left<\psi_k, \psi_{k+1}\right>$.
				\begin{align*}
					&\fm_2(\fm_n(\psi_1, \dots, \psi_n), \phi) + (-1)^{\deg{\psi_1} + \dots + \deg{\psi_{k-1}} + k-1}\fm_n(\psi_1, \dots, \psi_{k-1}, \fm_2(\psi_k, \psi_{k+1}), \psi_{k+2}, \dots, \psi_n, \phi) \\
					&+(-1)^{\deg{\psi_1} + \dots + \deg{\psi_k} + k}\fm_{k+1}(\psi_1, \dots, \psi_k, \fm_{n-k+1}(\psi_{k+1}, \dots, \psi_n, \phi)) \\
					=& (-1)^{\Sigma(\ora{\psi})}\half{\Phi(\ora{\psi})}\fm_2(e, \phi) + (-1)^{\deg{\phi} + \sigma(\psi_k, \psi_{k+1})}\frac{1}{2}\fm_n(\psi_1, \dots, \psi_{k-1}, \psi_k \bullet \psi_{k+1}, \psi_{k+2}, \dots, \psi_n, \phi) \\
					&+ (-1)^{\Sigma(\ora{\psi}^1 ; \xi)}\half{\Phi(\ora{\psi}^1 ; \xi)}\fm_{k+1}(\psi_1, \dots, \psi_k, \xi) \\ 
					=& (-1)^{\Sigma(\ora{\psi})}\half{\Phi(\ora{\psi})}\phi + (-1)^{\Sigma(\ora{\psi}')}\half{\Phi(\ora{\psi}')}\phi  - (-1)^{\Sigma(\ora{\psi}^1 ; \xi) + \Sigma(\ora{\psi}^2) - \sigma(\phi, \theta)}\half{\Phi(\ora{\psi}^1 ; \xi) + \Phi(\ora{\psi}^2) - 1}\phi \\
					=& 0 \quad \text{(by Lemma \ref{Lemma:PhiSigmaCD})}.
				\end{align*}
			\end{enumerate}
		\end{enumerate}
	\item $(\psi_n, \phi)$ is a thick pair so that $(\psi_{n-1} ; (\psi_n, \phi)) \in \Thick$. Then, $\ora{\psi}^\vee \deq (\psi_1, \dots, \psi_{n-2}, \psi_{n-1}^\vee ; \phi)$ is a composition sequence.
	\begin{align*}
		&\fm_2(\fm_n(\psi_1, \dots, \psi_n), \phi) + (-1)^{\deg{\psi_1} + \dots + \deg{\psi_{n-2}} +  n-2}\fm_{n-1}(\psi_1, \dots, \psi_{n-2}, \fm_3(\psi_{n-1}, \psi_n, \phi)) \\
		=& (-1)^{\Sigma(\ora{\psi})}\half{\Phi(\ora{\psi})}\fm_2(e, \phi) - (-1)^{\sigma(\psi_n) +  \sigma(\psi_n, \phi)}\half{1 + \left<\psi_n, \phi\right>}\fm_{n-1}(\psi_1, \dots, \psi_{n-2}, \psi_{n-1}^\vee) \\
		=& (-1)^{\Sigma(\ora{\psi})}\half{\Phi(\ora{\psi})}\phi - (-1)^{\sigma(\psi_n, \phi) +  \sigma(\psi_n) + \Sigma(\ora{\psi}^\vee)}\half{1 + \left<\psi_n, \phi\right> + \Phi(\ora{\psi}^\vee)}\phi\\
    	=& 0 \quad \text{(by Lemma \ref{Lemma:PhiSigmaT})}.
	\end{align*}
	\end{enumerate}
	\item $(\psi_n, \psi_1) \in \Con$. This further has the following three subcases.
	\begin{enumerate}
		\item $(\psi_n, \phi) \in \Con$. This further has the following three subcases.
		\begin{enumerate}
			\item $\phi = \psi_1\bullet \theta$ for some nonzero $\theta$.
			\begin{align*}
				&\fm_2(\fm_n(\psi_1, \dots, \psi_n), \psi_1\bullet\theta) + (-1)^{\deg{\psi_1} + 1}\fm_2(\psi_1, \fm_n(\psi_2, \dots, \psi_n, \psi_1 \bullet \theta)) \\
				&+ (-1)^{\deg{\psi_1} + \dots, \deg{\psi_{n-1}} + n-1}\fm_n(\psi_1, \dots, \psi_{n-1}, \fm_2(\psi_n, \psi_1\bullet\theta))\\			
				=&(-1)^{\Sigma(\ora{\psi})}\half{\Phi(\ora{\psi})-1}\fm_2(e, \psi_1 \bullet \theta) + (-1)^{\deg{\psi_1} + 1 + \Sigma(\ora{\psi}) - \sigma(\psi_1, \theta)}\half{\Phi(\ora{\psi}) - \left<\psi_1, \theta\right>}\fm_2(\psi_1, \theta) \\
				&- (-1)^{\sigma(\psi_n, \psi_1)}\frac{1}{2}\fm_n(\psi_1, \dots, \psi_{n-1}, \psi_n \bullet \psi_1\bullet\theta)\\			
				=&(-1)^{\Sigma(\ora{\psi})}\half{\Phi(\ora{\psi})-1}\psi_1 \bullet \theta - (-1)^{\Sigma(\ora{\psi})}\half{\Phi(\ora{\psi})}\psi_1 \bullet \theta -(-1)^{\Sigma(\ora{\psi})}\half{\Phi(\ora{\psi})}\psi_1 \bullet \theta =0.
			\end{align*}
			\item $\phi = \psi_1$. This further has the following two subcases.
			\begin{enumerate}
				\item $(\psi_1, \psi_2) \notin \Con$.
				\begin{align*}
					&\fm_2(\fm_n(\psi_1, \dots, \psi_n), \psi_1) + (-1)^{\deg{\psi_1} + 1}\fm_2(\psi_1, \fm_n(\psi_2, \dots, \psi_n, \psi_1)) \\
					&+ (-1)^{\deg{\psi_1} + \dots + \deg{\psi_{n-1}} + n-1}\fm_n(\psi_1, \dots, \psi_{n-1}, \fm_2(\psi_n, \psi_1))\\
					=&(-1)^{\Sigma(\ora{\psi})}\half{\Phi(\ora{\psi})-1}\fm_2(e, \psi_1) + (-1)^{\deg{\psi_1} + 1 + \Sigma(\ora{\psi})}\half{\Phi(\ora{\psi})}\fm_2(\psi_1, e) \\
					&- (-1)^{\sigma(\psi_n, \psi_1)}\frac{1}{2}\fm_n(\psi_1, \dots, \psi_{n-1}, \psi_n \bullet \psi_1)\\					=&(-1)^{\Sigma(\ora{\psi})}\half{\Phi(\ora{\psi})-1}\psi_1 -(-1)^{\Sigma(\ora{\psi})}\half{\Phi(\ora{\psi})}\psi_1 -(-1)^{\Sigma(\ora{\psi})}\half{\Phi(\ora{\psi})}\psi_1 = 0.
				\end{align*}
				\item $(\psi_1, \psi_2) \in \Con$.
				\begin{align*}
					&\fm_2(\fm_n(\psi_1, \dots, \psi_n), \psi_1) + \fm_n(\fm_2(\psi_1, \psi_2), \psi_3, \dots, \psi_n, \psi_1) \\
					&+(-1)^{\deg{\psi_1} + 1}\fm_2(\psi_1, \fm_n(\psi_2, \dots, \psi_n, \psi_1)) + (-1)^{\deg{\psi_1} + \dots + \deg{\psi_{n-1}} + n-1}\fm_n(\psi_1, \dots, \psi_{n-1}, \fm_2(\psi_n, \psi_1))\\					=&(-1)^{\Sigma(\ora{\psi})}\half{\Phi(\ora{\psi})-1}\fm_2(e, \psi_1) + (-1)^{\deg{\psi_1} + \sigma(\psi_1, \psi_2)}\frac{1}{2}\fm_n(\psi_1 \bullet \psi_2, \psi_3, \dots, \psi_n, \psi_1) \\
					&+ (-1)^{\deg{\psi_1} + 1 + \Sigma(\ora{\psi})}\half{\Phi(\ora{\psi}) - 1}\fm_2(\psi_1, e) -(-1)^{\sigma(\psi_n, \psi_1)}\frac{1}{2}\fm_n(\psi_1, \dots, \psi_{n-1}, \psi_n \bullet \psi_1)\\					
					=&(-1)^{\Sigma(\ora{\psi})}\half{\Phi(\ora{\psi})-1}\psi_1 +(-1)^{\Sigma(\ora{\psi})}\half{\Phi(\ora{\psi})}\psi_1 \\
					&-(-1)^{\Sigma(\ora{\psi})}\half{\Phi(\ora{\psi})-1}\psi_1 - (-1)^{\Sigma(\ora{\psi})}\half{\Phi(\ora{\psi})}\psi_1=0.
				\end{align*}
			\end{enumerate}
			\item $\psi_1 = \phi \bullet \theta$ for some nonzero $\theta$. This further has the following three subcases.
			\begin{enumerate}
				\item $t(\phi)$ divides $\psi_k$ into $\psi_k^1 \bullet \psi_k^2$ for some $2 < k < n-1$ so that $\ora{\psi}^1 \deq (\theta, \psi_2, \dots, \psi_{k-1}, \psi_k^1)$, $\ora{\psi}^2 \deq (\psi_k^2, \psi_{k+1}, \dots, \psi_n, \phi) \in \Disc$.
				\begin{align*}
					&\fm_2(\fm_n(\psi_1, \dots, \psi_n), \phi) + (-1)^{\deg{\psi_1} + \dots + \deg{\psi_{n-1}} + n-1}\fm_n(\psi_1, \dots, \psi_{n-1}, \fm_2(\psi_n, \phi)) \\
					&+(-1)^{\deg{\psi_1} + \dots + \deg{\psi_{k-1}} + k-1}\fm_k(\psi_1, \dots, \psi_{k-1}, \fm_{n-k+2}(\psi_k, \dots, \psi_n, \phi)) \\
					=&(-1)^{\Sigma(\ora{\psi})}\half{\Phi(\ora{\psi}) - 1}\fm_2(e, \phi) -(-1)^{\sigma(\psi_n, \phi)}\half{\left<\psi_n, \phi\right>}\fm_n(\psi_1, \dots, \psi_{n-1}, \psi_n \bullet \phi) \\
					&-(-1)^{\deg{\phi} + \Sigma(\ora{\psi}^2) - \sigma(\psi_k^1, \psi_k^2)}\half{\Phi(\ora{\psi}^2) - \left<\psi_k^1, \psi_k^2\right>}\fm_k(\psi_1, \dots, \psi_{k-1}, \psi_k^1) \\
					=&(-1)^{\Sigma(\ora{\psi})}\half{\Phi(\ora{\psi}) - 1}\phi - (-1)^{\Sigma(\ora{\psi})}\half{\Phi(\ora{\psi})}\phi \\ 
					&-(-1)^{\Sigma(\ora{\psi}^2) + \Sigma(\ora{\psi}^1) - \sigma(\psi_k^1, \psi_k^2) - \sigma(\phi, \theta)}\half{\Phi(\ora{\psi}^1) + \Phi(\ora{\psi}^2) - \left<\psi_k^1, \psi_k^2\right> - \left<\phi, \theta\right>}\phi \\
					=& 0 \quad \text{(by Lemma \ref{Lemma:PhiSigmaDD})}.
				\end{align*}
				\item $t(\phi)$ meets $t(\psi_k)$ on the left for some $1 < k < n$ with an interior morphism $\xi$ so that $(\theta, \psi_2, \dots, \psi_k ; \xi) \in \Comp$ and $(\xi, \psi_{k+1}, \dots, \psi_n, \phi) \in \Disc$. Then, $(\psi_k, \psi_{k+1}) \in \Con$ and $\ora{\psi}' \deq (\phi, \theta, \psi_2, \dots, \psi_{k-1}, \psi_k \bullet \psi_{k+1}, \psi_{k+2}, \dots, \psi_n) \in \Disc$. Also, $\sigma(\phi, \theta) = \sigma(\psi_k, \psi_{k+1})$ and $\left<\phi, \theta\right> = \left<\psi_k, \psi_{k+1}\right>$.
				\begin{align*}
					&\fm_2(\fm_n(\psi_1, \dots, \psi_n), \phi) +  (-1)^{\deg{\psi_1} + \dots + \deg{\psi_{n-1}} + n-1}\fm_n(\psi_1, \dots, \psi_{n-1}, \fm_2(\psi_n, \phi)) \\
					&+ (-1)^{\deg{\psi_1} + \dots + \deg{\psi_{k-1}} + k-1}\fm_n(\psi_1, \dots, \psi_{k-1}, \fm_2(\psi_k, \psi_{k+1}), \psi_{k+2}, \dots, \psi_n, \phi) \\
					=& (-1)^{\Sigma(\ora{\psi})}\half{\Phi(\ora{\psi})-1}\fm_2(e, \phi) -(-1)^{\sigma(\psi_n, \phi)}\half{\left<\psi_n, \phi\right>}\fm_n(\psi_1, \dots, \psi_{n-1}, \psi_n \bullet \phi)\\
					&- (-1)^{\deg{\phi} + \sigma(\psi_k, \psi_{k+1})}\frac{1}{2}\fm_n(\psi_1, \dots, \psi_{k-1}, \psi_k \bullet \psi_{k+1}, \psi_{k+2}, \dots, \psi_n, \phi) \\
					=&(-1)^{\Sigma(\ora{\psi})}\half{\Phi(\ora{\psi}) - 1}\phi - (-1)^{\Sigma(\ora{\psi})}\half{\Phi(\ora{\psi}) }\phi - (-1)^{\Sigma(\ora{\psi}')}\half{\Phi(\ora{\psi}')}\phi \\
					=& 0 \quad \text{(by Lemma \ref{Lemma:PhiSigmaCD})}.
				\end{align*}
				\item $t(\phi)$ meets $t(\psi_k)$ on the right for some $1 < k < n$ with an interior morphism $\xi$ so that $\ora{\psi}^1 \deq (\psi_{k+1}, \dots \psi_n, \phi ; \xi) \in \Comp$ and $\ora{\psi}^2 \deq (\xi, \theta, \psi_2, \dots, \psi_k) \in \Disc$. Then, $(\psi_k, \psi_{k+1}) \in \Con$. Also, $\sigma(\phi, \theta) = \sigma(\psi_k, \psi_{k+1})$ and $\left<\phi, \theta\right> = \left<\psi_k, \psi_{k+1}\right>$.
				\begin{align*}
					&\fm_2(\fm_n(\psi_1, \dots, \psi_n), \phi) + (-1)^{\deg{\psi_1} + \dots + \deg{\psi_{n-1}} + n-1}\fm_n(\psi_1, \dots, \psi_{n-1}, \fm_2(\psi_n, \phi)) \\
					&+(-1)^{\deg{\psi_1} + \dots + \deg{\psi_{k-1}} + k-1}\fm_n(\psi_1, \dots, \psi_{k-1}, \fm_2(\psi_k, \psi_{k+1}), \psi_{k+2}, \dots, \psi_n, \phi) \\
					&+(-1)^{\deg{\psi_1} + \dots + \deg{\psi_k} + k}\fm_{k+1}(\psi_1, \dots, \psi_k, \fm_{n-k+1}(\psi_{k+1}, \dots, \psi_n, \phi)) \\
					=& (-1)^{\Sigma(\ora{\psi})}\half{\Phi(\ora{\psi}) - 1}\fm_2(e, \phi) -(-1)^{\sigma(\psi_n, \phi)}\half{\left<\psi_n, \phi\right>}\fm_n(\psi_1, \dots, \psi_{n-1}, \psi_n \bullet \phi) \\
					&+(-1)^{\deg{\phi} + \sigma(\psi_k, \psi_{k+1})}\frac{1}{2}\fm_n(\psi_1, \dots, \psi_{k-1}, \psi_k \bullet \psi_{k+1}, \psi_{k+2}, \dots, \psi_n, \phi) \\
					& - (-1)^{\deg{\phi} + \Sigma(\ora{\psi}^1 ; \xi)}\half{\Phi(\ora{\psi}^1 ; \xi)}\fm_{k+1}(\psi_1, \dots, \psi_k, \xi) \\ 
					=& (-1)^{\Sigma(\ora{\psi})}\half{\Phi(\ora{\psi}) - 1}\phi  - (-1)^{\Sigma(\ora{\psi})}\half{\Phi(\ora{\psi})}\phi \\
					&+ (-1)^{\Sigma(\ora{\psi}')}\half{\Phi(\ora{\psi}')}\phi - (-1)^{\Sigma(\ora{\psi}^1 ; \xi) + \Sigma(\ora{\psi}^2) - \sigma(\phi, \theta)}\half{\Phi(\ora{\psi}^1 ; \xi) + \Phi(\ora{\psi}^2) - 1}\phi \\
					=& 0 \quad \text{(by Lemma \ref{Lemma:PhiSigmaCD})}.
				\end{align*}
			\end{enumerate}
		\end{enumerate}
		\item $(\psi_n, \phi) \notin \Con$. Then, $t(\phi)$ divides $\psi_k$ into $\psi_k^1 \bullet \psi_k^2$ for some $1 < k < n$ so that $\ora{\psi}^1 \deq (\psi_1, \dots, \psi_{k-1}, \psi_k^1 ; \phi) \in \Comp$ and $\ora{\psi}^2 \deq (\phi, \psi_k^2, \psi_{k+1}, \dots, \psi_n) \in \Disc$.
		\begin{align*}
			&\fm_2(\fm_n(\psi_1, \dots, \psi_n), \phi) + (-1)^{\deg{\psi_1} + \dots + \deg{\psi_{n-1}} + n - 1}\fm_n(\psi_1, \dots, \psi_{n-1}, \fm_2(\psi_n, \phi)) \\
			&+ (-1)^{\deg{\psi_1} + \dots + \deg{\psi_{k-1}} + k -1}\fm_k(\psi_1, \dots, \psi_{k-1}, \fm_{n-k+2}(\psi_k, \dots, \psi_n, \phi)) \\
			=&(-1)^{\Sigma(\ora{\psi})}\half{\Phi(\ora{\psi}) - 1}\fm_2(e, \phi)  -(-1)^{\sigma(\psi_n, \phi)}\half{\left<\psi_n, \phi\right>}\fm_n(\psi_1, \dots, \psi_{n-1}, \psi_n \bullet \phi) \\
			&-(-1)^{\Sigma(\ora{\psi}^2) - \sigma(\psi_k^1, \psi_k^2)}\half{\Phi(\ora{\psi}^2) - \left<\psi_k^1, \psi_k^2\right>}\fm_k(\psi_1, \dots, \psi_{k-1}, \psi_k^1)\\
			=&(-1)^{\Sigma(\ora{\psi})}\half{\Phi(\ora{\psi}) - 1}\phi  -(-1)^{\Sigma(\ora{\psi})}\half{\Phi(\ora{\psi})}\phi \\
			&-(-1)^{\Sigma(\ora{\psi}^1 ; \phi) + \Sigma(\ora{\psi}^2) - \sigma(\psi_k^1, \psi_k^2)}\half{\Phi(\ora{\psi}^1 ; \phi) + \Phi(\ora{\psi}^2) - \left<\psi_k^1, \psi_k^2\right>}\phi \\
			=& 0 \quad \text{(by Lemma \ref{Lemma:PhiSigmaCD})}.
		\end{align*}
	\item $(\psi_n, \phi)$ is a thick pair and $(\psi_{n-1} ; (\psi_n, \phi)) \in \Thick$. Then, $\phi = \psi_1 \bullet \theta$ for some nonzero $\theta$ and$\ora{\psi}^\vee \deq (\psi_1, \dots, \psi_{n-2}, \psi_{n-1}^\vee ; \phi)$ is a composition sequence. Note that $\left<\psi_n, \phi\right> = 1$.
\begin{align*}
&\fm_2(\fm_n(\psi_1, \dots, \psi_n), \phi) + (-1)^{\deg{\psi_1} + 1}\fm_2(\psi_1, \fm_n(\psi_2, \dots, \psi_n, \phi)) \\
&+ (-1)^{\deg{\psi_1} + \dots + \deg{\psi_{n-2}} + n-2}\fm_{n-1}(\psi_1, \dots, \psi_{n-2}, \fm_3(\psi_{n-1}, \psi_n, \phi)) \\
=& (-1)^{\Sigma(\ora{\psi})}\half{\Phi(\ora{\psi}) - 1}\fm_2(e, \phi) + (-1)^{\deg{\psi_1} + 1 + \Sigma(\ora{\psi}) - \sigma(\psi_1, \theta)}\half{\Phi(\ora{\psi}) - \left<\psi_1, \theta\right>}\fm_2(\psi_1, \theta) \\
&- (-1)^{\sigma(\psi_n) + \sigma(\psi_n, \phi)}\half{1 + \left<\psi_n, \phi\right>}\fm_{n-1}(\psi_1, \dots, \psi_{n-2}, \psi_{n-1}^\vee) \\
=& (-1)^{\Sigma(\ora{\psi})}\half{\Phi(\ora{\psi}) - 1}\phi - (-1)^{\Sigma(\ora{\psi})}\half{\Phi(\ora{\psi})}\phi - (-1)^{\sigma(\psi_n) + \sigma(\psi_n, \phi) + \Sigma(\ora{\psi}^\vee ; \phi)}\half{1 + \left<\psi_n, \phi\right> + \Phi(\ora{\psi}^\vee ; \phi)}\phi \\
=& 0 \quad \text{(by Lemma \ref{Lemma:PhiSigmaT})}.
\end{align*}
	\end{enumerate}
\end{enumerate}

The case of $(e, \phi)$ is symmetric and omitted.

\subsubsection{Concatenable pair}\label{Appendix:ConcatenablePair}

Here, we deal with the case when the outer-input is a concatenable pair $\ora{\phi} = (\phi_1, \phi_2)$. 
From symmetry, we may assume that the value of  the inner input $\ora{\psi}$ is $\phi_1$.

\paragraph{Concatenable pair}
Suppose that $\ora{\psi} = (\psi_1, \psi_2)$ is a concatenable pair with $\phi_1 = \psi_1 \bullet \psi_2$.
\begin{align*}
	&\fm_2(\fm_2(\psi_1, \psi_2), \phi_2) + (-1)^{\deg{\psi_1} + 1}\fm_2(\psi_1, \fm_2(\psi_2, \phi_2)) = 0.
\end{align*}

\paragraph{Disc sequence}
Suppose that $\ora{\psi} = (\psi_1, \dots, \psi_n)$ is a disc sequence with $(\phi_1, \psi_1) \in \Con$. Then, there are following cases.
\begin{enumerate}
	\item $(\psi_n, \psi_1) \notin \Con$. Then, $(\psi_n, \phi_2) \notin \Con$. In particular, $(\psi_n, \phi_2)$ is not a thick pair. This further has the following three subcases.
	\begin{enumerate}
		\item $\phi_2 = \psi_1 \bullet \theta$, for some nonzero $\theta$.
		$$\fm_2(\fm_n(\phi_1\bullet\psi_1, \psi_2, \dots, \psi_n), \psi_1 \bullet \theta) + (-1)^{\deg{\phi_1} + \deg{\psi_1} + 1}\fm_2(\phi_1\bullet \psi_1, \fm_n(\psi_2, \dots, \psi_n, \psi_1 \bullet \theta)) = 0.$$
		\item $\phi_2 = \psi_1$. This further has the following two subcases.
		\begin{enumerate}
			\item $(\psi_1, \psi_2) \notin \Con$.
			$$\fm_2(\fm_n(\phi_1 \bullet \psi_1, \psi_2, \dots, \psi_n), \psi_1) + (-1)^{\deg{\phi_1} + \deg{\psi_1} + 1}\fm_2(\phi_1 \bullet \psi_1, \fm_n(\psi_2, \dots, \psi_n, \psi_1)) = 0.$$
			\item $(\psi_1, \psi_2) \in \Con$.
			\begin{align*}
				&\fm_2(\fm_n(\phi_1 \bullet \psi_1, \psi_2, \dots, \psi_n), \psi_1) + (-1)^{\deg{\phi_1} + \deg{\psi_1} + 1}\fm_2(\phi_1 \bullet \psi_1, \fm_2(\psi_2, \dots, \psi_n, \psi_1)) \\
				&+ \fm_n(\fm_2(\phi_1 \bullet \psi_1, \psi_2), \psi_3, \dots, \psi_n, \psi_1) = 0.
			\end{align*}
		\end{enumerate}
		\item $\psi_1 = \phi_2 \bullet \theta$, for some nonzero $\theta$. This further has the following three subcases.
		\begin{enumerate}
			\item $t(\phi_2)$ divides $\psi_k$ into $\psi_k^1 \bullet \psi_k^2$ for some $2 < k < n$ so that $(\theta, \psi_2, \dots, \psi_k^1)$, $(\psi_k^2, \dots, \psi_n, \phi_2) \in \Disc$.
			\begin{align*}
				&\fm_2(\fm_n(\phi_1 \bullet \psi_1, \psi_2, \dots, \psi_n), \phi_2) \\
				&+ (-1)^{\deg{\phi_1} + \deg{\psi_1} + \deg{\psi_2} + \dots + \deg{\psi_{k-1}} + k-1}\fm_k(\phi_1 \bullet \psi_1, \psi_2, \dots, \psi_{k-1}, \fm_{n-k+2}(\psi_k, \dots, \psi_n, \phi_2)) \\
				=& 0 \quad \text{(by Lemma \ref{Lemma:PhiSigmaDD})}.
			\end{align*}
			\item $t(\phi_2)$ meets $t(\psi_k)$ on the left for some $1 < k < n$ with an interior morphism $\xi$ so that $(\theta, \psi_2, \dots, \psi_k ; \xi) \in \Comp$ and $(\xi, \psi_{k+1}, \dots, \psi_k, \phi_2) \in \Disc$. Then, $(\psi_k, \psi_{k+1}) \in \Con$ and \\$(\theta, \psi_2, \dots, \psi_{k-1}, \psi_k\bullet \psi_{k+1}, \psi_{k+2}, \dots, \psi_n, \phi_2) \in \Disc$.
			\begin{align*}
				&\fm_2(\fm_n(\phi_1 \bullet \psi_1, \psi_2, \dots, \psi_n), \phi_2) \\
				&+ (-1)^{\deg{\phi_1} + \deg{\psi_1} + \deg{\psi_2} + \dots + \deg{\psi_{k-1}} + k-1}\fm_n(\phi_1 \bullet \psi_1, \psi_2, \dots, \psi_{k-1}, \fm_2(\psi_k, \psi_{k+1}), \psi_{k+2}, \dots, \psi_n, \phi_2) \\
				=& 0 \quad \text{(by Lemma \ref{Lemma:PhiSigmaCD})}.
			\end{align*}
			\item $t(\phi_2)$ meets $t(\psi_k)$ on the right for some $1 < k < n$ with an interior morphism $\xi$ so that $(\theta, \psi_2, \dots, \psi_k, \xi) \in \Disc$ and $(\psi_{k+1}, \dots, \psi_k, \phi_2 ; \xi) \in \Comp$. Then, $(\psi_k, \psi_{k+1}) \in \Con$ and \\$(\theta, \psi_2, \dots, \psi_{k-1}, \psi_k\bullet \psi_{k+1}, \psi_{k+2}, \dots, \psi_n, \phi_2) \in \Disc$.
			\begin{align*}
				&\fm_2(\fm_n(\phi_1 \bullet \psi_1, \psi_2, \dots, \psi_n), \phi_2) \\
				&+ (-1)^{\deg{\phi_1} + \deg{\psi_1} + \deg{\psi_2} + \dots + \deg{\psi_{k-1}} + k-1}\fm_n(\phi_1 \bullet \psi_1, \psi_2, \dots, \psi_{k-1}, \fm_2(\psi_k, \psi_{k+1}), \psi_{k+2}, \dots, \psi_n, \phi_2) \\
				&+ (-1)^{\deg{\phi_1} + \deg{\psi_1} + \deg{\psi_2} + \dots + \deg{\psi_{k-1}} + \deg{\psi_k} + k}\fm_{k+1}(\phi_1 \bullet \psi_1, \psi_2, \dots, \psi_k, \fm_{n - k + 1}(\psi_{k+1}, \psi_{k+2}, \dots, \psi_n, \phi_2)) \\
				=& 0 \quad \text{(by Lemma \ref{Lemma:PhiSigmaCD})}.
			\end{align*}
		\end{enumerate}
	\end{enumerate}
	\item $(\psi_n, \psi_1) \in \Con$. This case is the same with the previous case except the following one case.
	\begin{enumerate}
		\item $(\psi_n, \phi_2)$ is a thick pair so that $(\psi_{n-1} ; (\psi_n, \phi_2)) \in \Thick$. Then, $\phi_2 = \psi_1 \bullet \theta$ for some nonzero $\theta$.
		\begin{align*}
			&\fm_2(\fm_n(\phi_1 \bullet \psi_1, \psi_2, \dots, \psi_n), \phi_2) + (-1)^{\deg{\phi_1} + \deg{\psi_1} + 1}\fm_2(\phi_1 \bullet \psi_1, \fm_n(\psi_2, \dots, \psi_n, \phi_2)) = 0.
		\end{align*}
	\end{enumerate}
\end{enumerate}

Now suppose that $\ora{\psi} = (\psi_1, \dots, \psi_n)$ is a disc sequence with $(\psi_n, \phi_1) \in \Con$. Then, there is only one case.
\begin{align*}
	&\fm_2(\fm_n(\psi_1, \dots, \psi_n \bullet \phi_1), \phi_2) + (-1)^{\deg{\psi_1} + \dots + \deg{\psi_{n-1}} + n-1}\fm_n(\psi_1, \dots, \psi_{n-1}, \fm_2(\psi_n \bullet \phi_1, \phi_2)) = 0.
\end{align*}

\paragraph{Composition sequence}

Suppose that $\ora{\psi} =(\psi_1, \dots, \psi_n ; \phi_1)$ is a composition sequence. Then, there are the following cases.
\begin{enumerate}
	\item $t(\phi_2)$ divides $\psi_k$ into $\psi_k^1 \bullet \psi_k^2$ for some $1 < k < n$ so that $(\psi_1, \dots, \psi_{k-1}, \psi_k^1 ; \phi_1 \bullet \phi_2) \in \Comp$ and $(\psi_k^2, \psi_{k+1}, \dots, \psi_n, \phi_2) \in \Disc$.
	\begin{align*}
		&\fm_2(\fm_n(\psi_1, \dots, \psi_n), \phi_2) + (-1)^{\deg{\psi_1} + \dots + \deg{\psi_{k-1}} + k-1}\fm_k(\psi_1, \dots, \psi_{k-1}, \fm_{n-k+2}(\psi_k, \dots, \psi_n, \phi_2)) \\
		&= 0 \quad \text{(by Lemma \ref{Lemma:PhiSigmaDCL})}.
	\end{align*}
	\item $t(\phi_2)$ meets $t(\psi_k)$ on the left for some $1 \leq k <n$ with an interior morphism $\xi$ so that $(\psi_{k+1}, \dots, \psi_n, \phi_2, \xi) \in \Disc$. In this case, $(\psi_k, \psi_{k+1}) \in \Con$. Also, $(\psi_1, \dots, \psi_{k-1}, \psi_k \bullet \psi_{k+1}, \psi_{k+2}, \dots, \psi_n, \phi_2 ; \phi_1 \bullet \phi_2) \in \Comp$.
	\begin{align*}
		&\fm_2(\fm_n(\psi_1, \dots, \psi_n), \phi_2) + (-1)^{\deg{\psi_1} + \dots + \deg{\psi_{k-1}} + k - 1}\fm_n(\psi_1, \dots, \psi_{k-1}, \fm_2(\psi_k, \psi_{k+1}), \psi_{k+2}, \dots, \psi_n, \phi_2) \\
		&= 0 \quad \text{(by Lemma \ref{Lemma:PhiSigmaXDL})}.
	\end{align*}
	\item $t(\phi_2)$ meets $t(\psi_k)$ on the right for some $1 \leq k < n$ with an interior morphism $\xi$ so that $(\psi_1, \dots, \psi_k, \xi) \in \Disc$ and $(\psi_{k+1}, \dots, \psi_n, \phi_2 ; \xi) \in \Comp$. In this case, $(\psi_k, \psi_{k+1}) \in \Con$.\\ Also, $(\psi_1, \dots, \psi_{k-1}, \psi_k \bullet \psi_{k+1}, \psi_{k+2}, \dots, \phi_2 ; \phi_1 \bullet \phi_2) \in \Comp$.
	\begin{align*}
		&\fm_2(\fm_n(\psi_1, \dots, \psi_n), \phi_2) + (-1)^{\deg{\psi_1} + \dots + \deg{\psi_{k-1}} + k - 1}\fm_n(\psi_1, \dots, \psi_{k-1}, \fm_2(\psi_k, \psi_{k+1}), \psi_{k+2}, \dots, \psi_n, \phi_2) \\
		&+ (-1)^{\deg{\psi_1} + \dots + \deg{\psi_k} + k}\fm_{k+1}(\psi_1, \dots, \psi_k, \fm_{n-k+1}(\psi_{k+1}, \dots, \psi_n, \phi_2)) \\
		=& 0 \quad \text{(by Lemma \ref{Lemma:PhiSigmaCCL})}.
	\end{align*}
\end{enumerate}

\paragraph{Thick triple}
Suppose that $\ora{\psi} = (\psi_1, \psi_2)$ is a thick pair. Then, there are the following cases.
\begin{enumerate}
	\item $(\phi_1^\wedge ; (\psi_1, \psi_2))$ is a thick triple. Then, $((\psi_1, \psi_2) ; \phi_2)$ is also a thick triple.
	\begin{align*}
		&\fm_2(\fm_3(\phi_1^\wedge, \psi_1, \psi_2), \phi_2) + (-1)^{\deg{\phi_1^\wedge} + 1}\fm_2(\phi_1^\wedge, \fm_3(\psi_1, \psi_2, \phi_2)) = 0.
	\end{align*}
	\item $((\psi_1, \psi_2) ; {}^\wedge\phi_1)$ is a thick triple. Then, $((\psi_1, \psi_2) ; {}^\wedge\phi_1 \bullet \phi_2)$ is also a thick triple. Note that ${}^\wedge \phi_1 \bullet \phi_2 = {}^\wedge(\phi_1 \bullet \phi_2)$.
	\begin{align*}
		&\fm_2(\fm_3(\psi_1, \psi_2, {}^\vee\phi_1), \phi_2) + \fm_3(\psi_1, \psi_2, \fm_2({}^\vee\phi_1, \phi_2)) = 0.
	\end{align*}
\end{enumerate}

\subsubsection{Disc sequence}\label{Appendix:DiscSequence}

Here, we deal with the case when the outer-input is a disc sequence $\ora{\phi} = (\phi_1, \dots, \phi_m)$. The cases of \\$(\theta \bullet \phi_1, \phi_2, \dots, \phi_m)$ and $(\phi_1, \dots, \phi_{m-1}, \phi_m \bullet \theta)$ are similar, so we only list the cases for $\ora{\phi}$.

\paragraph{Concatenable pair}

Suppose that $\phi_i$ has a decomposition $\phi_i^1 \bullet \phi_i^2$. As we have already seen the cases of $$(\phi_1^1, \phi_1^2, \phi_2, \dots, \phi_m), \quad (\phi_1, \dots, \phi_{m-1}, \phi_m^1, \phi_m^2),$$ in Section \ref{Appendix:ConcatenablePair}, we may assume $1 < i < m$. Then, there are the following cases.
\begin{enumerate}
	\item $t(\phi_i^1)$ divides $\phi_j$ into $\phi_j^1 \bullet \phi_j^2$ for some $j$ with $\deg{i-j} > 1$ so that $(\phi_i^2, \phi_{i+1}, \dots, \phi_{j-1}, \phi_j^1)$,\\ $(\phi_j^2, \phi_{j+1}, \dots, \phi_{i-1}, \phi_i^1) \in \Disc$. We may assume $i+1 < j < m$.
	\begin{align*}
		&\fm_m(\phi_1, \dots, \phi_{i-1}, \fm_2(\phi_i^1, \phi_i^2), \phi_{i+1}, \dots, \phi_m) \\
		&+ (-1)^{\deg{\phi_i^1} + 1}\fm_{m-j+i+1}(\phi_1, \dots, \phi_{i-1}, \phi_i^1, \fm_{j-i+1}(\phi_i^2, \phi_{i+1}, \dots, \phi_j), \phi_{j+1}, \dots, \phi_m) \\
		=& 0 \quad \text{(by Lemma \ref{Lemma:PhiSigmaDD})}.
	\end{align*}
	\item $t(\phi_i^1)$ meets $t(\phi_j)$ on the left, for some $j$ such that $j < i-1$ or $j > i$, with an interior morphism $\xi$ so that $(\phi_{j+1}, \dots, \phi_{i-1}, \phi_i^1, \xi) \in \Disc$ and $(\phi_i^2, \phi_{i+1}, \dots, \phi_j ; \xi) \in \Comp$. In this case, $(\phi_j, \phi_{j+1}) \in \Con$. Also, $(\phi_j \bullet \phi_{j+1}, \phi_{j+2}, \dots, \phi_{i-1}, \phi_i^1, \phi_i^2, \phi_{i+1}, \dots, \phi_{j-1}) \in \Disc$. We may assume $1 < j < m$.
	\begin{enumerate}
		\item $1 < j < i - 1$.
		\begin{align*}
			&\fm_m(\phi_1, \dots, \phi_{j-1}, \fm_2(\phi_j, \phi_{j+1}), \phi_{j+2}, \dots, \phi_{i-1}, \phi_i^1, \phi_i^2, \phi_{i+1}, \dots, \phi_m) \\
			&+(-1)^{\deg{\phi_j} + \dots + \deg{\phi_{i-1}} + i - j}\fm_m(\phi_1, \dots, \phi_{j-1}, \phi_j, \phi_{j+1}, \phi_{j+2}, \dots, \phi_{i-1}, \fm_2(\phi_i^1, \phi_i^2), \phi_{i+1}, \dots, \phi_m) \\
			=& 0 \quad \text{(by Lemma \ref{Lemma:PhiSigmaCD})}.
		\end{align*}		
		\item $i < j < m$.
		\begin{align*}
			&\fm_n(\phi_1, \dots, \phi_{i-1}, \fm_2(\phi_i^1, \phi_i^2), \phi_{i+1}, \dots, \phi_m) \\
			&+ (-1)^{\deg{\phi_i^1} + 1}\fm_{n-j+i+1}(\phi_1, \dots, \phi_{i-1}, \phi_i^1, \fm_{j - i +1}(\phi_i^2, \phi_{i+1}, \dots, \phi_j), \phi_{j+1}, \dots, \phi_m) \\
			&+(-1)^{\deg{\phi_i^1} + \deg{\phi_i^2} + \deg{\phi_{i+1}} + \dots + \deg{\phi_{j-1}} + j - i + 1}\fm_m(\phi_1, \dots, \phi_{i-1}, \phi_i^1, \phi_i^2, \phi_{i+1}, \dots, \phi_{j-1}, \fm_2(\phi_j, \phi_{j+1}), \phi_{j+2}, \dots, \phi_m) \\
			=& 0 \quad \text{(by Lemma \ref{Lemma:PhiSigmaCD})}.
		\end{align*}
	\end{enumerate} 
	\item $t(\phi_i^1)$ meets $t(\phi_j)$ on the right, for some $j$ such that $j<i-1$ or $j>i$, with an interior morphism $\xi$ so that $(\phi_{j+1}, \dots, \phi_{i-1}, \phi_i^1 ; \xi) \in \Comp$ and $(\phi_i^2, \phi_{i+1}, \dots, \phi_j, \xi) \in \Disc$. In this case, $(\phi_j, \phi_{j+1}) \in \Con$. Also, $(\phi_j \bullet \phi_{j+1}, \phi_{j+2}, \dots, \phi_{i-1}, \phi_i^1, \phi_i^2, \phi_{i+1}, \dots, \phi_{j-1}) \in \Disc$. We may assume $1 < j < m$.
	\begin{enumerate}
		\item $1 < j < i - 1$.
		\begin{align*}
			&\fm_m(\phi_1, \dots, \phi_{j-1}, \fm_2(\phi_j, \phi_{j+1}), \phi_{j+2}, \dots, \phi_{i-1}, \phi_i^1, \phi_i^2, \phi_{i+1}, \dots, \phi_m) \\
			&+(-1)^{\deg{\phi_j} + \dots + \deg{\phi_{i-1}} + i - j}\fm_m(\phi_1, \dots, \phi_{j-1}, \phi_j, \phi_{j+1}, \phi_{j+2}, \dots, \phi_{i-1}, \fm_2(\phi_i^1, \phi_i^2), \phi_{i+1}, \dots, \phi_m) \\
			&+(-1)^{\deg{\phi_j} + 1}\fm_{n-i+j+1}(\phi_1, \dots, \phi_j, \fm_{i-j+1}(\phi_{j+1}, \dots, \phi_{i-1}, \phi_i^1), \phi_i^2, \phi_{i+1}, \dots, \phi_m) \\
			=& 0 \quad \text{(by Lemma \ref{Lemma:PhiSigmaCD})}.
		\end{align*}		
		\item $i < j < m$.
		\begin{align*}
			&\fm_n(\phi_1, \dots, \phi_{i-1}, \fm_2(\phi_i^1, \phi_i^2), \phi_{i+1}, \dots, \phi_m) \\
			&+(-1)^{\deg{\phi_i^1} + \deg{\phi_i^2} + \deg{\phi_{i+1}} + \dots + \deg{\phi_{j-1}} + j - i + 1}\fm_m(\phi_1, \dots, \phi_{i-1}, \phi_i^1, \phi_i^2, \phi_{i+1}, \dots, \phi_{j-1}, \fm_2(\phi_j, \phi_{j+1}), \phi_{j+2}, \dots, \phi_m) \\
			=& 0 \quad \text{(by Lemma \ref{Lemma:PhiSigmaCD})}.
		\end{align*}
	\end{enumerate}
\end{enumerate}

\paragraph{Disc sequence}
Suppose that $\ora{\psi} = (\psi_1, \dots, \psi_n)$ is a disc sequence with $(\psi_n, \phi_i) \in \Con$. The case of $(\phi_i, \psi_1) \in \Con$ is symmetric. There are the following cases.
\begin{enumerate}
	\item $(\phi_{i-1}, \psi_1) \in \Con$. In this case, $(\phi_1, \dots, \phi_{i-2}, \phi_{i-1} \bullet \psi_1, \psi_2, \dots, \psi_{n-1}, \psi_n \bullet \phi_i, \phi_{i+1}, \dots, \phi_m) \in \Disc$. This further has the following two subcases.
	\begin{enumerate}
		\item $i=1$.
		\begin{align*}
			&\fm_m(\fm_n(\psi_1, \dots, \psi_{n-1}, \psi_n \bullet \phi_1), \phi_2, \dots, \phi_m) \\
			&+ (-1)^{\deg{\psi_1} + \dots + \deg{\psi_{n-1}} + n-1}\fm_n(\psi_1, \dots, \psi_{n-1}, \fm_m(\psi_n \bullet \phi_1, \phi_2, \dots, \phi_m)) \\
			=& 0 \quad \text{(by Lemma \ref{Lemma:PhiSigmaDD})}.
		\end{align*}
		\item $i\neq 1$.
		\begin{align*}
			&\fm_{n+m-2}(\phi_1, \dots, \phi_{i-2}, \fm_2(\phi_{i-1}, \psi_1), \psi_2, \dots, \psi_{n-1}, \psi_n \bullet \phi_i, \phi_{i+1}, \dots \phi_n) \\
			&+ (-1)^{\deg{\phi_{i-1}} + 1}\fm_n(\phi_1, \dots, \phi_{i-1}, \fm_m(\psi_1, \dots, \psi_{m-1}, \psi_m \bullet \phi_i), \phi_{i+1}, \dots, \phi_n) \\
			=& 0 \quad \text{(by Lemma \ref{Lemma:PhiSigmaDD})}.
		\end{align*}
	\end{enumerate}
	\item $(\phi_{i-1}, \psi_1) \notin \Con$. This further has the following three subcases.
	\begin{enumerate}
		\item $\phi_{i-1}$ is an interior morphism from $t(\phi_{i-1})$ to $s(\phi_i)$. Then, $t(\phi_{i-2})$ divides $\psi_k$ into $\psi_k^1 \bullet \psi_k^2$ for some $1 < k < n$ so that $(\psi_1, \dots, \psi_{k-1}, \psi_k^1, \phi_{i-1}), (\psi_k^2, \psi_{k+1}, \dots, \psi_{n-1}, \psi_n \bullet \phi_i, \phi_{i+1}, \dots, \phi_{i-2}) \in \Disc$ and $(\psi_k^2, \psi_{k+1}, \dots, \psi_n ; \phi_{i-1}) \in \Comp$.
		\begin{enumerate}
			\item $i=1$.
			\begin{align*}
				&\fm_m(\fm_n(\psi_1, \dots, \psi_{n-1}, \psi_n \bullet \phi_1), \phi_2, \dots, \phi_m) \\
				&+(-1)^{\deg{\psi_1} + \dots  + \deg{\psi_{k-1}} + k-1}\fm_{k+1}(\psi_1, \dots, \psi_{k-1}, \fm_{n + m - k - 1}(\psi_k, \dots, \psi_{n-1}, \psi_n \bullet \phi_1, \phi_2, \dots, \phi_{m-1}), \phi_m) \\
				=& 0 \quad \text{(by Lemma \ref{Lemma:PhiSigmaCD})}.
			\end{align*}
			\item $i=2$.
			\begin{align*}
				&\fm_{n+m-k-1}(\fm_{k+1}(\phi_1, \psi_1, \dots, \psi_k), \psi_{k+1}, \dots, \psi_{n-1}, \psi_n \bullet \phi_2, \phi_3, \dots, \phi_m) \\
				&+(-1)^{\deg{\phi_1} + 1}\fm_m(\phi_1, \fm_n(\psi_1, \dots, \psi_{n-1}, \psi_n \bullet \phi_2), \phi_3, \dots, \phi_m) \\
				&+(-1)^{\deg{\phi_1} + \deg{\psi_1} + \dots  + \deg{\psi_{k-1}} + k}\fm_{k+1}(\phi_1, \psi_1, \dots, \psi_{k-1}, \fm_{n + m - k - 1}(\psi_k, \dots, \psi_{n-1}, \psi_n \bullet \phi_2, \phi_3, \dots, \phi_m)) \\
				=& 0 \quad \text{(by Lemma \ref{Lemma:PhiSigmaCD})}.
			\end{align*}
			\item $3 \leq i \leq m$.
			\begin{align*}
				&\fm_{n+m-k-1}(\phi_1, \dots, \phi_{i-2}, \fm_{k+1}(\phi_{i-1}, \psi_1, \dots, \psi_k), \psi_{k+1}, \dots, \psi_{n-1}, \psi_n \bullet \phi_i, \phi_{i+1}, \dots, \phi_m) \\
				&+(-1)^{\deg{\phi_{i-1}} + 1}\fm_m(\phi_1, \dots, \phi_{i-2}, \phi_{i-1}, \fm_n(\psi_1, \dots, \psi_k, \psi_{k+1}, \dots, \psi_{n-1}, \psi_n \bullet \phi_i), \phi_{i+1}, \dots, \phi_m) \\
				=& 0 \quad \text{(by Lemma \ref{Lemma:PhiSigmaCD})}.
			\end{align*}
		\end{enumerate}
		\item $\psi_1$ is an interior morphism from $t(\psi_n)$ to $s(\psi_2)$. Then, $s(\psi_2)$ divides $\phi_k$ into $\phi_k^1 \bullet \phi_k^2$ for $k$ such that $k < i-1$ or $k > i$ so that $(\phi_k^1, \psi_2, \dots, \psi_{n-1}, \psi_n \bullet \phi_i, \phi_{i+1}, \dots, \phi_{k-1}) \in \Disc$, $(\phi_i, \dots, \phi_{k-1}, \phi_k^1 ; \phi_{i-1}) \in \Comp$, and $(\phi_k^2, \phi_{k+1}, \dots, \phi_{i-1}, \psi_1) \in \Disc$.
	\begin{enumerate}
	\item $i < k$.
	\begin{align*}
		&\fm_m(\phi_1, \dots, \phi_{i-2}, \phi_{i-1}, \fm_n(\psi_1, \dots, \psi_{n-1}, \psi_n \bullet \phi_i), \phi_{i+1}, \dots, \phi_m) \\
		&+(-1)^{\deg{\psi_1} + 1}\fm_{m-k+i+1}(\phi_1, \dots, \phi_{i-1}, \psi_1, \fm_{n+k-i-1}(\psi_2, \dots, \psi_{n-1}, \psi_n \bullet \phi_i, \phi_{i+1}, \dots, \phi_k), \phi_{k+1}, \dots, \phi_m) \\
		=& 0 \quad \text{(by Lemma \ref{Lemma:PhiSigmaCD})}.
	\end{align*}
	\item $k < i - 1$. Here, $\dagger \deq \deg{\phi_k} + \dots + \deg{\phi_{i-1}} + i - k$.
	\begin{align*}
		&(-1)^{\dagger}\fm_m(\phi_1, \dots, \phi_{i-2}, \phi_{i-1}, \fm_n(\psi_1, \dots, \psi_{n-1}, \psi_n \bullet \phi_i), \phi_{i+1}, \dots, \phi_m) \\
		&+\fm_{n+m-i+k-1}(\phi_1, \dots, \phi_{k-1}, \fm_{i-k+1}(\phi_k, \dots, \phi_{i-1}, \psi_1), \psi_2, \dots, \psi_{n-1}, \psi_n \bullet \phi_i, \phi_{i+1}, \dots, \phi_m) \\
		&+(-1)^{\dagger + \deg{\psi_1} + 1}\fm_{m-k+i+1}(\phi_1, \dots, \phi_{i-1}, \psi_1, \fm_{n+k-i-1}(\psi_2, \dots, \psi_{n-1}, \psi_n \bullet \phi_i, \phi_{i+1}, \dots, \phi_k), \phi_{k+1}, \dots, \phi_m) \\
		=& 0 \quad \text{(by Lemma \ref{Lemma:PhiSigmaCD})}.
	\end{align*}
	\end{enumerate}
		\item $(\phi_{i-2} ; (\phi_{i-1}, \psi_1)) \in \Thick$. Then, $((\phi_{i-1}, \psi_1) ; \psi_2) \in \Thick$.
		\begin{enumerate}
			\item $i=1$. This is the same with $1.(a)$.
			\item $i=2$. Then, $(\psi_2^\vee, \psi_3, \dots, \psi_{n-1}, \psi_n \bullet \phi_2, \phi_3, \dots, \phi_m) \in \Disc$.
			\begin{align*}
				&(-1)^{\deg{\phi_1} + 1}\fm_m(\phi_1, \fm_n(\psi_1, \dots, \psi_{n-1}, \psi_n \bullet \phi_2), \phi_3, \dots, \phi_m) \\
				&+\fm_{n+m-3}(\fm_3(\phi_1, \psi_1, \psi_2), \psi_3, \dots, \psi_{n-1}, \psi_n \bullet \phi_2, \phi_3, \dots, \phi_m) \\
				=& 0 \quad \text{(by Lemma \ref{Lemma:PhiSigmaDD})}.
			\end{align*}
			\item $3 \leq i \leq m$. Then, $(\phi_1, \dots, \phi_{i-2}, \psi_2^\vee, \dots, \psi_{n-1}, \psi_n \bullet \phi_i, \phi_{i+1}, \dots, \phi_m)$,\\ $(\phi_1, \dots, \phi_{i-3}, \phi_{i-2}^\vee, \psi_2, \dots, \psi_{n-1}, \psi_n \bullet \phi_i, \phi_{i+1}, \dots, \phi_m) \in \Disc$.
			\begin{align*}
				&(-1)^{\deg{\phi_{i-2}} + \deg{\phi_{i-1}}}\fm_m(\phi_1, \dots, \phi_{i-1}, \fm_n(\psi_1, \dots, \psi_{n-1}, \psi_n \bullet \phi_i), \phi_{i+1}, \dots, \phi_m) \\
				&+\fm_{n+m-3}(\phi_1, \dots, \phi_{i-3}, \fm_3(\phi_{i-2}, \phi_{i-1}, \psi_1), \psi_2, \dots, \psi_{n-1}, \psi_n \bullet \phi_i, \phi_{i+1}, \dots, \phi_m) \\
				&+(-1)^{\deg{\phi_{i-2}} + 1}\fm_{n+m-3}(\phi_1, \dots, \phi_{i-2}, \fm_3(\phi_{i-1}, \psi_1, \psi_2), \psi_3, \dots, \psi_{n-1}, \psi_n \bullet \phi_i, \phi_{i+1}, \dots, \phi_m) \\
				=& 0 \quad \text{(by Lemma \ref{Lemma:PhiSigmaDD})}.
			\end{align*}
		\end{enumerate}
	\end{enumerate}
\end{enumerate}

\paragraph{Composition sequence}
Suppose that $\ora{\psi} = (\psi_1, \dots, \psi_n ; \phi_i)$ is a composition sequence. Then, there are the following cases.
\begin{enumerate}
	\item $1 < i < m$. Then, $(\phi_{i-1}, \psi_1), (\psi_n, \phi_{i+1}) \in \Con$ and $(\phi_1, \dots, \phi_{i-2}, \phi_{i-1} \bullet \psi_1, \psi_2, \dots, \psi_n, \phi_{i+1}, \dots, \phi_m)$, $(\phi_1, \dots, \phi_{i-1}, \psi_1, \dots, \psi_{n-1}, \psi_n \bullet \phi_{i+1}, \phi_{i+2}, \dots, \phi_m) \in \Disc$.
	\begin{align*}
		&(-1)^{\deg{\phi_{i-1}} + 1}\fm_m(\phi_1, \dots, \phi_{i-1}, \fm_n(\psi_1, \dots, \psi_n), \phi_{i+1}, \dots, \phi_m) \\
		&+\fm_{m+n-2}(\phi_1, \dots, \fm_2(\phi_{i-1}, \psi_1), \psi_2, \dots, \psi_n, \phi_{i+1}, \dots, \phi_m) \\
		&+(-1)^{\deg{\phi_{i-1}} + \deg{\psi_1} + \dots + \deg{\psi_{n-1}} + n}\fm_{m+n-2}(\phi_1, \dots, \phi_{i-1}, \psi_1, \dots, \psi_{n-1}, \fm_2(\psi_n, \phi_{i+1}), \phi_{i+2}, \dots, \phi_m) \\
		=& 0 \quad \text{(by Lemma \ref{Lemma:PhiSigmaCD})}.
	\end{align*}
	\item $i=1$. Then, $(\psi_1, \dots, \psi_{n-1}, \psi_n \bullet \phi_1, \phi_2, \dots, \phi_m) \in \Disc$.
	\begin{align*}
		&\fm_m(\fm_n(\psi_1, \dots, \psi_n), \phi_2, \dots, \phi_m) +(-1)^{\deg{\psi_1} + \dots + \deg{\psi_{n-1}} + n-1}\fm_{n+m-2}(\psi_1, \dots, \psi_{n-1}, \fm_2(\psi_n, \phi_2), \phi_3, \dots, \phi_m) \\
		=& 0 \quad \text{(by Lemma \ref{Lemma:PhiSigmaCD})}.
	\end{align*}
	\item $i=m$. Then, $(\phi_1, \dots, \phi_{m-1}, \phi_{m-1} \bullet \psi_1, \psi_2, \dots, \psi_n) \in \Disc$.
	\begin{align*}
		&(-1)^{\deg{\phi_{m-1}} + 1}\fm_m(\phi_1, \dots, \phi_{m-1}, \fm_n(\psi_1, \dots, \psi_n)) +\fm_{n+m-2}(\phi_1, \dots, \phi_{m-2}, \fm_2(\phi_{m-1}, \psi_1), \psi_2, \dots, \psi_n) \\
		=& 0 \quad \text{(by Lemma \ref{Lemma:PhiSigmaCD})}.
	\end{align*}
\end{enumerate}

\paragraph{Thick triple}
Suppose that $(\phi_i^\wedge ; (\psi_1, \psi_2))$ is a thick triple. Then, $(\phi_i, \phi_{i+1}) \in \Con$. If not, $\phi_{i+1}$, $\psi_1$, and $\psi_2$ are all defined by the same interior marking, say $p$. Since $(\psi_1, \psi_2)$ is a thick pair, $p$ cannot define an interior morphism from $t(\psi_2)$. This contradicts the fact that $s(\phi_{i+1}) = t(\phi_i) = t(\psi_2)$. This proves that $\phi_i$ and $\phi_{i+1}$ are concatenable. Also, we have $((\psi_1, \psi_2) ; \phi_{i+1}) \in \Thick$. Moreover, $t(\psi_1)$ divides $\phi_k$ into $\phi_k^1 \bullet \phi_k^2$ for some $k\neq i, i+1$ so that $(\psi_2, \phi_{i+1}, \dots, \phi_{k-1}, \phi_k^1)$, $(\psi_1, \phi_k^2, \phi_{k+1}, \dots, \phi_{i-1}, \phi_i^\wedge) \in \Disc$. Then, there are the following cases.
\begin{enumerate}
	\item $i + 1< k$
	\begin{align*}
		&\fm_m(\phi_1, \dots, \phi_{i-1}, \fm_3(\phi_i^\wedge, \psi_1, \psi_2), \phi_{i+1}, \dots, \phi_m) \\
		&+(-1)^{\deg{\phi_i^\wedge} + 1}\fm_m(\phi_1, \dots, \phi_{i-1}, \phi_i^\wedge, \fm_3(\psi_1, \psi_2, \phi_{i+1}), \phi_{i+2}, \dots, \phi_m) \\
		&+(-1)^{\deg{\phi_i^\wedge} + \deg{\psi_1}}\fm_{m-k+i+2}(\phi_1, \dots, \phi_{i-1}, \phi_i^\wedge, \psi_1, \fm_{k-i+1}(\psi_2, \phi_{i+1}, \dots, \phi_k), \phi_{k+1}, \dots, \phi_m) \\
		=& 0 \quad \text{(by Lemma \ref{Lemma:PhiSigmaDD})}.
	\end{align*}
	\item $k < i$. This further has the following two subcases.
	\begin{enumerate}
		\item $i < m$.
		\begin{align*}
			&(-1)^{\deg{\phi_k} + \dots + \deg{\phi_{i-1}} + k - i + 1}\fm_m(\phi_1, \dots, \phi_{i-1}, \fm_3(\phi_i^\wedge, \psi_1, \psi_2), \phi_{i+1}, \dots, \phi_m) \\
			&+(-1)^{\deg{\phi_k} + \dots + \deg{\phi_{i-1}} + \deg{\phi_i^\wedge} + k - i + 2}\fm_m(\phi_1, \dots, \phi_{i-1}, \phi_i^\wedge, \fm_3(\psi_1, \psi_2, \phi_{i+1}), \ \dots, \phi_m) \\
			&+\fm_{m+k-i+1}(\phi_1, \dots, \phi_{k-1}, \fm_{i-k+2}(\phi_k, \dots, \phi_{i-1}, \phi_i^\wedge, \psi_1), \psi_2, \phi_{i+1}, \dots, \phi_m) \\
			=& 0 \quad \text{(by Lemma \ref{Lemma:PhiSigmaDD})}.
		\end{align*}
		\item $i = m$.
		\begin{align*}
			&(-1)^{\deg{\phi_k} + \dots + \deg{\phi_{m-1}} + k - m + 1}\fm_m(\phi_1, \dots, \phi_{m-1}, \fm_3(\phi_m^\wedge, \psi_1, \psi_2))\\
			&+\fm_{k+1}(\phi_1, \dots, \phi_{k-1}, \fm_{m-k+2}(\phi_k, \dots, \phi_{m-1}, \phi_m^\wedge, \psi_1), \psi_2) \\
			=& 0 \quad \text{(by Lemma \ref{Lemma:PhiSigmaDD})}.
		\end{align*}
	\end{enumerate}
\end{enumerate}
The case of $((\psi_1, \psi_2) ; {}^\wedge\phi_i)$ is symmetric. 

\subsubsection{Composition sequence}\label{Appendix:CompositionSequence}

Here, we deal with the case when the outer-input is a composition sequence $\ora{\phi} = (\phi_1, \dots, \phi_m ; \theta)$.

\paragraph{Concatenable pair}

Suppose that $\phi_i$ has a decomposition $\phi_i^1 \bullet \phi_i^2$. Then, there are the following cases.
\begin{enumerate}
	\item $1 < i < m$. This further has the following six subcases.
	\begin{enumerate}
		\item $t(\phi_i^1)$ divides $\phi_j$ into $\phi_j^1 \bullet \phi_j^2$ for some $j$ with $\deg{i-j} > 1$.
		\begin{enumerate}
			\item $j < i-1$. Then, $(\phi_j^2, \phi_{i+1}, \dots, \phi_{j-1}, \phi_i^1) \in \Disc$ and $(\phi_1, \dots, \phi_{i-1}, \phi_j^1, \phi_i^2, \phi_{j+1}, \dots, \phi_m ; \theta) \in \Comp$.
			\begin{align*}
				&(-1)^{\deg{\phi_j} + \dots + \deg{\phi_{i-1}} + i - j}\fm_m(\phi_1, \dots, \phi_{i-1}, \fm_2(\phi_i^1, \phi_i^2), \phi_{i+1}, \dots, \phi_m) \\
				&+\fm_{m-i+j+1}(\phi_1, \dots, \phi_{j-1}, \fm_{i-j+1}(\phi_j, \dots, \phi_{i-1}, \phi_i^1), \phi_i^2, \phi_{i+1}, \dots, \phi_m) \\
				=& 0 \quad \text{(by Lemma \ref{Lemma:PhiSigmaDCM})}.
			\end{align*}
			\item $j > i+1$. This is symmetric to the previous case.
		\end{enumerate}
		\item $t(\phi_i^1)$ meets $t(\phi_j)$ with an interior morphism $\xi$ so that $(\phi_j, \phi_{j+1}) \in \Con$ and the following hold.
		\begin{enumerate}
			\item $j < i-1$. In this case, $(\phi_1, \dots, \phi_{j-1}, \phi_j \bullet \phi_{j+1}, \phi_{j+2}, \dots, \phi_{i-1}, \phi_i^1, \phi_i^2, \phi_{i+1}, \dots, \phi_m ; \theta) \in \Comp$.
			\begin{enumerate}
				\item $\xi$ is from $t(\phi_i^1)$ to $t(\phi_j)$. Then, $(\phi_{j+1}, \dots, \phi_{i-1}, \phi_i^1, \xi) \in \Disc$.
				\begin{align*}
					&\fm_m(\phi_1, \dots, \phi_{j-1}, \fm_2(\phi_j, \phi_{j+1}), \phi_{j+2}, \dots, \phi_m) \\
					&+ (-1)^{\deg{\phi_j} + \deg{\phi_{j+1}} + \dots + \deg{\phi_{i-1}} + i - j}\fm_m(\phi_1, \phi_2, \dots, \phi_{i-1}, \fm_2(\phi_i^1, \phi_i^2), \phi_{i+1} \dots, \phi_m) \\
					=& 0 \quad \text{(by Lemma \ref{Lemma:PhiSigmaXDM})}.
				\end{align*}
				\item $\xi$ is from $t(\phi_j)$ to $t(\phi_i^1)$. Then, $(\phi_{j+1}, \dots, \phi_{i-1}, \phi_i^1 ; \xi) \in \Comp$ and \\$(\phi_1, \dots, \phi_j, \xi, \phi_i^2, \phi_{i+1}, \dots, \phi_m ; \theta) \in \Comp$.
				\begin{align*}
					&\fm_m(\phi_1, \dots, \phi_{j-1}, \fm_2(\phi_j, \phi_{j+1}), \phi_{j+2}, \dots, \phi_m) \\
					&+(-1)^{\deg{\phi_j} + 1}\fm_{m-i+j+2}(\phi_1, \dots, \phi_j, \fm_{i-j}(\phi_{j+1}, \dots, \phi_{i-1}, \phi_i^1), \phi_i^2, \phi_{i+1}, \dots, \phi_m) \\
					&+ (-1)^{\deg{\phi_j} + \deg{\phi_{j+1}} + \dots + \deg{\phi_{i-1}} + i - j}\fm_m(\phi_1, \phi_2, \dots, \phi_{i-1}, \fm_2(\phi_i^1, \phi_i^2), \phi_{i+1} \dots, \phi_m) \\
					=& 0 \quad \text{(by Lemma \ref{Lemma:PhiSigmaCCM})}.		
				\end{align*}
			\end{enumerate}
			\item $j > i$. This case is symmetric to the previous case.
		\end{enumerate}
		\item $t(\phi^1_i)$ meets $s(\theta$) on the left with an interior morphism $\xi$ so that $(\xi, \theta) \in \Con$. Then, $(\xi, \phi_1, \dots, \phi_{i-1}, \phi_i^1) \in \Disc$ and $(\phi_i^2, \phi_{i+1}, \dots, \phi_m ; \xi \bullet \theta) \in \Comp$.
		\begin{align*}
			&\fm_m(\phi_1, \dots, \phi_{i-1}, \fm_2(\phi_i^1, \phi_i^2), \phi_{i+1}, \dots, \phi_m) \\ 
			&+(-1)^{\deg{\phi_i^1} + 1}\fm_{i+1}(\phi_1, \dots, \phi_{i-1}, \phi_i^1, \fm_{m-i+1}(\phi_i^2, \phi_{i+1}, \dots, \phi_m)) \\ 
			=& 0  \quad \text{(by Lemma \ref{Lemma:PhiSigmaDCF})}.
		\end{align*}
		\item $t(\phi^1_i)$ meets $t(\theta)$ on the right with an interior morphism $\xi$ so that $(\theta, \xi) \in \Con$. This is symmetric to the previous case.
		\item $\theta$ has a decomposition $\phi_1 \bullet \eta$ and $t(\phi_i^1)$ meets $s(\phi_1)$ with an interior morphism $\xi$. \\Then, $(\phi_2, \dots, \phi_{i-1}, \phi_i^1 \bullet \phi_i^2, \phi_{i+1}, \dots, \phi_m ; \eta) \in \Comp$. This further has the following two subcases.
		\begin{enumerate}
			\item $\xi$ is from $t(\phi_i^1)$ to $s(\phi_1)$.
			\begin{align*}
				&(-1)^{\deg{\phi_2} + \dots + \deg{\phi_{i-1}} + i-2}\fm_m(\phi_1, \dots, \phi_{i-1}, \fm_2(\phi_i^1, \phi_i^2), \phi_{i+1}, \dots, \phi_m) \\
				&+\fm_2(\phi_1, \fm_m(\phi_2, \dots, \phi_{i-1}, \phi_i^1, \phi_i^2, \phi_{i+1}, \dots, \phi_m)) \\
				=& 0 \quad \text{(by Lemma \ref{Lemma:PhiSigmaXDF})}.
			\end{align*}
			\item $\xi$ is from $s(\phi_1)$ to $t(\phi_i^1)$. Then, $(\phi_1, \dots, \phi_{j-1}, \phi_i^1 ; \xi), (\xi, \phi_i^2, \phi_{i+1}, \dots, \phi_m ; \theta) \in \Comp$.
			\begin{align*}
				&(-1)^{\deg{\phi_1} + \dots + \deg{\phi_{i-1}} +  i - 1}\fm_m(\phi_1, \dots, \phi_{i-1}, \fm_2(\phi_i^1, \phi_i^2), \phi_{i+1}, \dots, \phi_m) \\
				&+\fm_{m-i+2}(\fm_i(\phi_1, \dots, \phi_{i-1}, \phi_i^1), \phi_i^2, \phi_{i+1}, \dots, \phi_m)\\
				&+(-1)^{\deg{\phi_1} + 1}\fm_2(\phi_1, \fm_m(\phi_2, \dots, \phi_{i-1}, \phi_i^1, \phi_i^2, \phi_{i+1}, \dots, \phi_m)) \\
				=& 0 \quad \text{(by Lemma \ref{Lemma:PhiSigmaCCF})}.
			\end{align*}
		\end{enumerate}
		\item $\theta$ has a decomposition $\eta \bullet \phi_m$. This case is symmetric to the previous case.
	\end{enumerate}
	\item $i=1$. This has three subcases.
	\begin{enumerate}
		\item $t(\phi_1^1)$ divides $\phi_k$ into $\phi_k^1 \bullet \phi_k^2$ for some $k>2$ so that $(\phi_1^2, \phi_2, \dots, \phi_{k-1}, \phi_k^1) \in \Disc$ and \\ $(\phi_1^1, \phi_k^2, \phi_{k+1}, \dots, \phi_m ; \theta) \in \Comp$. This is the same with the case 1.(a).ii.
		\item $t(\phi_1^1)$ meets $t(\phi_k)$ for some $1 < k < m$. This is the same with the case 1.(b).(ii).
		\item $\theta$ has a decomposition $\eta \bullet \phi_m$ and $t(\phi_1^1)$ meets $t(\phi_m)$. This is the same with the case 1.(f).
	\end{enumerate}
	\item $i=m$. This is symmetric to the previous case.
\end{enumerate}

\paragraph{Disc sequence}
Suppose that $\ora{\psi} = (\psi_1, \dots, \psi_n)$ is a disc sequence such that $(\psi_n, \phi_i) \in \Con$. Then, we have the following cases. The case of $(\phi_i, \psi_1) \in \Con$ is symmetric.
\begin{enumerate}
	\item $i=1$.
	\begin{enumerate}
		\item $\psi_1$ and $\theta$ are defined by the same interior marking. This further has the following three subcases.
		\begin{enumerate}
			\item $\theta = \psi_1 \bullet \xi$ for some nonzero $\xi$. Then, $(\psi_2, \dots, \psi_{n-1}, \psi_n \bullet \phi_1, \phi_2, \dots, \phi_m ; \xi) \in \Comp$.
			\begin{align*}
				&\fm_m(\fm_n(\psi_1, \dots, \psi_{n-1}, \psi_n \bullet \phi_1), \phi_2, \dots, \phi_m) \\
				&+(-1)^{\deg{\psi_1} + 1}\fm_2(\psi_1, \fm_{m+n-2}(\psi_2, \dots, \psi_{n-1}, \psi_n \bullet \phi_1, \phi_2, \dots, \phi_m)) \\
				=& 0 \quad \text{(by Lemma \ref{Lemma:PhiSigmaDCF})}.
			\end{align*}
			\item $\theta = \psi_1$. Then, $(\phi_m, \psi_2) \in \Con$ and $(\psi_2, \dots, \psi_{n-1}, \psi_n \bullet \phi_1, \phi_2, \dots, \phi_m) \in \Disc$.
			\begin{align*}
				&\fm_m(\fm_n(\psi_1, \dots, \psi_{n-1}, \psi_n \bullet \phi_1), \phi_2, \dots, \phi_m) \\
				&+(-1)^{\deg{\psi_1} + 1}\fm_2(\psi_1, \fm_{m+n-2}(\psi_2, \dots, \psi_{n-1}, \psi_n \bullet \phi_1, \phi_2, \dots, \phi_m)) \\
				=& 0 \quad \text{(by Lemma \ref{Lemma:PhiSigmaCD})}.
			\end{align*}
			\item $\psi_1 = \theta \bullet \xi$ for some nonzero $\xi$. Then, $t(\xi)$ divides $\phi_k$ into $\phi_k^1 \bullet \phi_k^2$ for some $1 < k < m$ so that $(\psi_2, \dots, \psi_{n-1}, \psi_n \bullet \phi_1, \phi_2, \dots, \phi_{k-1}, \phi_k^1) \in \Disc$ and $(\xi, \phi_k^2, \phi_{k+1}, \dots, \phi_m) \in \Disc$.
			\begin{align*}
				&\fm_m(\fm_n(\psi_1, \dots, \psi_{n-1}, \psi_n \bullet \phi_1), \phi_2, \dots, \phi_m) \\
				&+(-1)^{\deg{\psi_1} + 1}\fm_{m-k+2}(\psi_1, \fm_{n+k-2}(\psi_2, \dots, \psi_{n-1}, \psi_n \bullet \phi_1, \phi_2, \dots, \phi_k), \phi_{k+1}, \dots, \phi_m) \\
				=& 0 \quad \text{(by Lemma \ref{Lemma:PhiSigmaDCL}, \ref{Lemma:PhiSigmaCD})}.
			\end{align*}
		\end{enumerate}
		\item $s(\psi_2)$ meets $t(\psi_n)$ on the right with the interior morphism $\psi_1$. Then, $s(\psi_2)$ divides $\phi_k$ into $\phi_k^1 \bullet \phi_k^2$ for some $1 < k < m$ so that $(\psi_2, \dots, \psi_{n-1}, \psi_n \bullet \phi_1, \phi_2, \dots, \phi_{k-1}, \phi_k^1) \in \Disc$ and $(\psi_1, \phi_k^2, \phi_{k+1}, \dots, \phi_m ; \theta) \in \Comp$.
		\begin{align*}
			&\fm_m(\fm_n(\psi_1, \dots, \psi_{n-1}, \psi_n \bullet \phi_1), \phi_2, \dots, \phi_m) \\
			&+(-1)^{\deg{\psi_1} + 1}\fm_{m-k+2}(\psi_1, \fm_{n+k-2}(\psi_2, \dots, \psi_{n-1}, \psi_n \bullet \phi_1, \phi_2, \dots, \phi_k), \phi_{k+1}, \dots, \phi_m) \\
			=& 0 \quad \text{(by Lemma \ref{Lemma:PhiSigmaCD}, \ref{Lemma:PhiSigmaCCF})}.
		\end{align*}
		\item $t(\phi_m)$ meets $t(\psi_n)$ on the right with the interior morphism $\theta$. Then, $t(\phi_m)$ divides $\psi_k$ into $\psi_k^1 \bullet \psi_k^2$ for some $1 < k < n$ so that $(\psi_k^2, \psi_{k+1}, \dots, \psi_{n-1}, \psi_n \bullet \phi_1, \phi_2, \dots, \phi_m) \in \Disc$ and $(\psi_1, \dots, \psi_{k-1}, \psi_k^1 ; \theta) \in \Comp$.
		\begin{align*}
			&\fm_m(\fm_n(\psi_1, \dots, \psi_{n-1}, \psi_n \bullet \phi_1), \phi_2, \dots, \phi_m) \\
			&+(-1)^{\deg{\psi_1} + \dots + \deg{\psi_{k-1}} + k-1}\fm_k(\psi_1, \dots, \psi_{k-1}, \fm_{m+n-k}(\psi_k, \dots, \psi_{n-1}, \psi_n \bullet \phi_1, \phi_2, \dots, \phi_m)) \\
			=& 0 \quad \text{(by Lemma \ref{Lemma:PhiSigmaCD})}.
		\end{align*}
	\end{enumerate}
	\item $i > 1$. This further has the following four subcases.
	\begin{enumerate}
		\item $(\phi_{i-1}, \psi_1) \in \Con$. Then, $(\phi_1, \dots, \phi_{i-2}, \phi_{i-1} \bullet \psi_1, \psi_2, \dots, \psi_{n-1}, \psi_n \bullet \phi_i, \phi_{i+1}, \dots, \phi_m ; \theta) \in \Comp$.
		\begin{align*}
			&(-1)^{\deg{\phi_{i-1}} + 1}\fm_m(\phi_1, \dots, \phi_{i-1}, \fm_n(\psi_1, \dots, \psi_{n-1}, \psi_n \bullet \phi_i), \phi_{i+1}, \dots, \phi_m) \\
			&+\fm_{m+n-2}(\phi_1, \dots, \phi_{i-2}, \fm_2(\phi_{i-1}, \psi_1), \psi_2, \dots, \psi_{n-1}, \psi_n \bullet \phi_i, \phi_{i+1}, \dots, \phi_m) \\
			=& 0 \quad \text{(by Lemma \ref{Lemma:PhiSigmaDCM})}.
		\end{align*}
		\item $s(\psi_2)$ meets $t(\psi_n)$ on the right with the interior morphism $\psi_1$. This further has the following three subcases.
		\begin{enumerate}
			\item $s(\psi_2)$ divides $\phi_k$ into $\phi_k^1 \bullet \phi_k^2$ for some $k$ such that $k < i-1$ or $k> i$.
			\begin{enumerate}
				\item $k < i-1$. Then, $(\phi_1, \dots, \phi_{k-1}, \phi_k^1, \psi_2, \dots, \psi_{n-1}, \psi_n \bullet \phi_i, \phi_{i+1}, \dots, \phi_m ; \theta) \in \Comp$ and 
				
				$(\phi_k^2, \phi_{k+2}, \dots, \phi_{i-1}, \psi_1) \in \Disc$.
				\begin{align*}
					&(-1)^{\deg{\phi_k} + \dots + \deg{\phi_{i-1}} + i-k}\fm_m(\phi_1, \dots, \phi_{i-1}, \fm_n(\psi_1, \dots, \psi_{n-1}, \psi_n \bullet \phi_i), \phi_{i+1}, \dots, \phi_m) \\
					&+\fm_{m+n-i+k+1}(\phi_1, \dots, \phi_{k-1}, \fm_{i-k+1}(\phi_k, \dots, \phi_{i-1}, \psi_1), \psi_2, \dots, \psi_{n-1}, \psi_n \bullet \phi_i, \phi_{i+1}, \dots, \phi_m) \\
					=& 0 \quad \text{(by Lemma \ref{Lemma:PhiSigmaDCM})}.
				\end{align*}
				\item $k > i$. Then, $(\psi_2, \dots, \psi_{n-1}, \psi_n \bullet \phi_i, \phi_{i+1}, \dots, \phi_{k-1}, \phi_k^1) \in \Disc$ and 
				
				$(\phi_1, \dots, \phi_{i-1}, \psi_1, \phi_k^2, \phi_{k+1}, \dots, \phi_m ; \theta) \in \Comp$
				\begin{align*}
					&\fm_m(\phi_1, \dots, \phi_{i-1}, \fm_n(\psi_1, \dots, \psi_{n-1}, \psi_n \bullet \phi_i), \phi_{i+1}, \dots, \phi_m) \\
					&+(-1)^{\deg{\psi_1} + 1}\fm_{m - k + i + 1}(\phi_1, \dots, \phi_{i-1}, \psi_1, \fm_{n + k - i - 1}(\psi_2, \dots, \psi_{n-1}, \psi_n \bullet \phi_i, \phi_{i+1}, \dots, \phi_k), \phi_{k+1}, \dots, \phi_m) \\
					=& 0 \quad \text{(by Lemma \ref{Lemma:PhiSigmaDD}, \ref{Lemma:PhiSigmaDCM})}.
				\end{align*}
			\end{enumerate}
			\item $t(\psi_1)$ meets $s(\theta)$ on the left with an interior morphism $\xi$ so that $(\xi, \theta) \in \Con$. \\Then, $(\psi_2, \dots, \psi_{n-1}, \psi_n \bullet \phi_i, \phi_{i+1}, \dots, \phi_m ; \xi \bullet \theta) \in \Comp$ and $(\xi, \phi_1, \dots, \phi_{i-1}, \psi_1) \in \Disc$.
			\begin{align*}
				&\fm_m(\phi_1, \dots, \phi_{i-1}, \fm_n(\psi_1, \dots, \psi_{n-1}, \psi_n \bullet \phi_i, \phi_{i+1}, \dots, \phi_m)) \\
				&+(-1)^{\deg{\psi_1} + 1}\fm_{i+1}(\phi_1, \dots, \phi_{i-1}, \psi_1, \fm_{m+n-i-1}(\psi_2, \dots, \psi_{n-1}, \psi_n \bullet \phi_i, \phi_{i+1}, \dots, \phi_m)) \\
				=&0 \quad \text{(by Lemma \ref{Lemma:PhiSigmaXDF})}.
			\end{align*}
			\item $t(\psi_1)$ meets $t(\theta)$ on the right with an interior morphism $\xi$ so that $(\theta, \xi) \in \Con$. \\Then, $(\phi_1, \dots, \phi_{i-1}, \psi_1 ; \theta \bullet \xi) \in \Comp$ and $(\xi, \psi_2, \dots, \psi_{n-1}, \psi_n \bullet \phi_i, \phi_{i+1}, \dots, \phi_m) \in \Disc$.
			\begin{align*}
				&(-1)^{\deg{\phi_1} + \dots + \deg{\phi_{i-1}} + i - 1}\fm_m(\phi_1, \dots, \phi_{i-1}, \fm_n(\psi_1, \dots, \psi_{n-1}, \psi_n \bullet \phi_i), \phi_{i+1}, \dots, \phi_m) \\
				+&\fm_{m+n-i}(\fm_i(\phi_1, \dots, \phi_{i-1}, \psi_1), \psi_2, \dots, \psi_{n-1}, \psi_n \bullet \phi_i, \phi_{i+1}, \dots, \phi_m) \\
				=& 0 \quad \text{(by Lemma \ref{Lemma:PhiSigmaCCL})}.				
			\end{align*}
		\end{enumerate}
		\item $i=2$ and $s(\theta)$ meets $t(\psi_n)$ on the left with the interior morphism $\phi_1$. Then, $s(\theta)$ divides $\psi_k$ into $\psi_k^1 \bullet \psi_k^2$ for some $2 < k < n$ so that $(\phi_1, \psi_1, \dots, \psi_{k-1}, \psi_k^1) \in \Disc$ and\\ $(\psi_k^2, \psi_{k+1}, \dots, \psi_{n-1}, \psi_n \bullet \phi_2, \phi_3, \dots, \phi_m ; \theta) \in \Comp$.
		\begin{align*}
			&(-1)^{\deg{\phi_1} + 1}\fm_m(\phi_1, \fm_n(\psi_1, \dots, \psi_{n-1}, \psi_n \bullet \phi_2), \phi_3, \dots, \phi_m) \\
			+&\fm_{n+m-k-1}(\fm_{k+1}(\phi_1, \psi_1, \dots, \psi_k), \psi_{k+1}, \dots, \psi_{n-1}, \psi_n \bullet \phi_2, \phi_3, \dots, \phi_m) \\
			=& 0 \quad \text{(by Lemma \ref{Lemma:PhiSigmaCD}, \ref{Lemma:PhiSigmaCCL})}.
		\end{align*}
		\item $i > 2$ and $s(\phi_{i-2})$ meets $s(\phi_i)$ on the left with the interior morphism $\phi_{i-1}$ Then, $s(\phi_{i-2})$ divides $\psi_k$ into $\psi_k^1 \bullet \psi_k^2$ for some $2 < k < n$ so that $(\phi_{i-1}, \psi_1, \dots, \psi_{k-1}, \psi_k^1) \in \Disc$ and\\ $(\phi_1, \dots, \phi_{i-2}, \psi_k^2, \psi_{k+1}, \dots, \psi_{n-1}, \psi_n \bullet \phi_i, \phi_{i+1}, \dots, \phi_m ; \theta) \in \Comp$.
		\begin{align*}
			&(-1)^{\deg{\phi_{i-1}} + 1}\fm_m(\phi_1, \dots, \phi_{i-1}, \fm_n(\psi_1, \dots, \psi_{n-1}, \psi_n \bullet \phi_i), \phi_{i+1}, \dots, \phi_m) \\
			&+\fm_{n+m-k-1}(\phi_1, \dots, \phi_{i-2}, \fm_{k+1}(\phi_{i-1}, \psi_1, \dots, \psi_k), \psi_{k+1}, \dots, \psi_{n-1}, \psi_n \bullet \phi_i, \phi_{i+1}, \dots, \phi_m) \\
			=& 0 \quad \text{(by Lemma \ref{Lemma:PhiSigmaCD}, \ref{Lemma:PhiSigmaCCM})}.
		\end{align*}
		\end{enumerate}
		\item $(\phi_{i-1}, \psi_1)$ is a thick pair so that $((\phi_{i-1}, \psi_1) ; \psi_2) \in \Thick$. Note that $i \neq 2$ by thick condition. Then, $(\phi_1, \dots, \phi_{i-2}, \phi_{i-1}^\vee, \psi_1, \dots, \psi_{n-1}, \psi_n \bullet \phi_i, \phi_{i+1}, \dots, \phi_m ; \theta)$,\\ $(\phi_1, \dots, \phi_{i-1}, \psi_1^\vee, \dots, \psi_{n-1}, \psi_n \bullet \phi_i, \phi_{i+1}, \dots, \phi_m ; \theta) \in \Comp$.
		\begin{align*}
			&(-1)^{\deg{\phi_{i-2}} + \deg{\phi_{i-1}} + 2}\fm_m(\phi_1, \dots, \phi_{i-1}, \fm_n(\psi_1, \dots, \psi_{n-1}, \psi_n \bullet \phi_i), \phi_{i+1}, \dots, \phi_m) \\
			&+\fm_{m+n-3}(\phi_1, \dots, \phi_{i-3}, \fm_3(\phi_{i-2}, \phi_{i-1}, \psi_1), \psi_2, \dots, \psi_{n-1}, \psi_n \bullet \phi_i, \phi_{i+1}, \dots, \phi_m) \\
			&+(-1)^{\deg{\phi_{i-2}} +1}\fm_{m+n-3}(\phi_1, \dots, \phi_{i-2}, \fm_3(\phi_{i-1}, \psi_1, \psi_2), \psi_3, \dots, \psi_{n-1}, \psi_n \bullet \phi_i, \phi_{i+1}, \dots, \phi_m) \\
			=& 0 \quad \text{(by Lemma \ref{Lemma:PhiSigmaCCM}, \ref{Lemma:PhiSigmaT})}.
		\end{align*}
\end{enumerate}

\paragraph{Composition sequence}

Suppose that $\ora{\psi} = (\psi_1, \dots, \psi_n ; \phi_i)$ is a composition sequence. Then, there are the following cases.
\begin{enumerate}
	\item $1 < i < m$. Then, $(\phi_{i-1}, \psi_1), (\psi_n, \phi_{i+1}) \in \Con$ and $(\phi_1, \dots, \phi_{i-2}, \phi_{i-1} \bullet \psi_1, \psi_2, \dots, \psi_n, \phi_{i+1}, \dots, \phi_m ; \theta)$, $(\phi_1, \dots, \phi_{i-1}, \psi_1, \dots, \psi_{n-1}, \psi_n \bullet \phi_{i+1}, \phi_{i+2}, \dots, \phi_m ; \theta) \in \Comp$.
	\begin{align*}
		&(-1)^{\deg{\phi_{i-1}} + 1}\fm_m(\phi_1, \dots, \phi_{i-1}, \fm_n(\psi_1, \dots, \psi_n), \phi_{i+1}, \dots, \phi_m) \\
		&+\fm_{m+n-2}(\phi_1, \dots, \fm_2(\phi_{i-1}, \psi_1), \psi_2, \dots, \psi_n, \phi_{i+1}, \dots, \phi_m) \\
		&+(-1)^{\deg{\phi_{i-1}} + \deg{\psi_1} + \dots + \deg{\psi_{n-1}} + n}\fm_{m+n-2}(\phi_1, \dots, \phi_{i-1}, \psi_1, \dots, \psi_{n-1}, \fm_2(\psi_n, \phi_{i+1}), \phi_{i+2}, \dots, \phi_m) \\
		=& 0 \quad \text{(by Lemma \ref{Lemma:PhiSigmaCCM})}.
	\end{align*}
	\item $i=1$. Then, $\theta = \psi_1 \bullet \theta'$ for some nonzero $\theta'$. Then, $(\psi_1, \dots, \psi_{n-1}, \psi_n \bullet \phi_2, \phi_3, \dots, \phi_m ; \theta)$,\\ $(\psi_2, \dots, \psi_n, \phi_2, \dots, \phi_m ; \theta') \in \Comp$.
	\begin{align*}
		&\fm_m(\fm_n(\psi_1, \dots, \psi_n), \phi_2, \dots, \phi_m) \\
		&+(-1)^{\deg{\psi_1} + \dots + \deg{\psi_{n-1}} + n-1}\fm_{m+n-2}(\psi_1, \dots, \psi_{n-1}, \fm_2(\psi_n, \phi_2), \phi_3, \dots, \phi_m) \\
		&+(-1)^{\deg{\psi_1} + 1}\fm_2(\psi_1, \fm_{n+m-2}(\psi_2, \dots, \psi_n, \phi_2, \dots, \phi_m)) \\
		=& 0 \quad \text{(by Lemma \ref{Lemma:PhiSigmaCCF})}.
	\end{align*}
	\item $i=m$. This is symmetric to the previous case.
\end{enumerate}

\paragraph{Thick triple}

Suppose that $((\psi_1, \psi_2) ; {}^\wedge\phi_i)$ is a thick triple. Then, as we have shown in the disc sequence case, $(\phi_{i-1}, \phi_i) \in \Con$. Then, there are the following cases.
\begin{enumerate}
	\item $i=1$. This further has the following three subcases.
	\begin{enumerate}
		\item $\psi_1 = \theta \bullet \theta'$. Then, $t(\psi_1)$ divides $\phi_k$ into $\phi_k^1 \bullet \phi_k^2$ for some $1 < k < m$ so that $(\psi_2, {}^\wedge\phi_1, \phi_2, \dots, \phi_{k-1}, \phi_k^1)$, $(\theta', \phi_k^2, \phi_{k+1}, \dots, \phi_m) \in \Disc$.
		\begin{align*}
			&\fm_m(\fm_3(\psi_1, \psi_2, {}^\wedge\phi_1), \phi_2, \dots, \phi_m) \\
			&+(-1)^{\deg{\psi_1} + 1}\fm_{m-k+2}(\psi_1, \fm_{k+1}(\psi_2, {}^\wedge\phi_1, \phi_2, \dots, \phi_{k-1}, \phi_k), \phi_{k+1}, \dots, \phi_m) \\
			=& 0 \quad \text{(by Lemma \ref{Lemma:PhiSigmaDCF}, \ref{Lemma:PhiSigmaT})}.
		\end{align*}
		\item $\psi_1 = \theta$. Then, $(\psi_2, {}^\wedge\phi_1, \phi_2, \dots, \phi_m) \in \Disc$.
		\begin{align*}
			&\fm_m(\fm_3(\psi_1, \psi_2, {}^\wedge\phi_1), \phi_2, \dots, \phi_m) \\
			&+(-1)^{\deg{\psi_1} + 1}\fm_2(\psi_1, \fm_{m+1}(\psi_2, {}^\wedge\phi_1, \phi_2, \dots, \phi_m)) \\
			=& 0 \quad \text{(by Lemma \ref{Lemma:PhiSigmaT})}.
		\end{align*}
		\item $\theta = \psi_1 \bullet \theta'$. Then, $\psi_2 = \theta' \bullet \xi$ and $(\xi, {}^\wedge\phi_1, \phi_2, \dots, \phi_m) \in \Disc$.
		\begin{align*}
			&\fm_m(\fm_3(\psi_1, \psi_2, {}^\wedge\phi_1), \phi_2, \dots, \phi_m) \\
			&+(-1)^{\deg{\psi_1} + 1}\fm_2(\psi_1, \fm_{m+1}(\psi_2, {}^\wedge\phi_1, \phi_2, \dots, \phi_m)) \\
			=& 0 \quad \text{(by Lemma \ref{Lemma:PhiSigmaT})}.
		\end{align*}
	\end{enumerate}
	\item $i > 1$. Then, $(\phi_{i-1}, \phi_i) \in \Con$, $(\phi_{i-1} ; (\psi_1, \psi_2)) \in \Thick$, and $(\phi_1, \dots, \phi_{i-2}, \phi_{i-1}^\vee, {}^\wedge\phi_i, \phi_{i+1}, \dots, \phi_m ; \theta) \in \Comp$. This further has the following three subcases.
	\begin{enumerate}
		\item $t(\psi_1)$ meets $s(\theta)$ on the left with an interior morphism $\xi$ so that $(\xi, \theta) \in \Con$, $(\xi, \phi_1, \dots, \phi_{i-1}, \psi_1) \in \Disc$, and $(\psi_2, \phi_i^\wedge, \phi_{i+1}, \dots, \phi_m ; \xi \bullet \theta) \in \Comp$.
		\begin{align*}
			&(-1)^{\deg{\phi_1} + \dots + \deg{\phi_{i-2}} + i-2}\fm_m(\phi_1, \dots, \phi_{i-2}, \fm_3(\phi_{i-1}, \psi_1, \psi_2), \phi_i^\wedge, \phi_{i+1}, \dots, \phi_m) \\
			&+(-1)^{\deg{\phi_1} + \dots + \deg{\phi_{i-1}} + i-1}\fm_m(\phi_1, \dots, \phi_{i-1}, \fm_3(\psi_1, \psi_2, \phi_i^\wedge), \phi_{i+1}, \dots, \phi_m) \\
			&+\fm_{m-i+3}(\fm_i(\phi_1, \dots, \phi_{i-1}, \psi_1), \psi_2, \phi_i^\wedge, \phi_{i+1}, \dots, \phi_m) \\
			=& 0 \quad \text{(by Lemma \ref{Lemma:PhiSigmaDCF}, \ref{Lemma:PhiSigmaT})}.
		\end{align*}
		\item $t(\psi_1)$ meets $t(\theta)$ on the right with an interior morphism $\xi$ so that $(\theta, \xi) \in \Con$, $(\phi_1, \dots, \phi_{i-1}, \psi_1 ; \theta \bullet \xi)$, and $(\xi, \psi_2, \phi_i^\wedge, \phi_{i+1}, \dots, \phi_m) \in \Disc$. This case is symmetric to the previous case.
		\item $t(\psi_1)$ divides $\phi_k$ into $\phi_k^1 \bullet \phi_k^2$ with $k < i-1$ or $k > i$. 
		\begin{enumerate}
			\item $k < i-1$. Then, $(\phi_1, \dots, \phi_{k-1}, \phi_k^1, \psi_2, \phi_i^\wedge, \phi_{i+1}, \dots, \phi_m ; \theta) \in \Comp$, and $(\phi_k^2, \phi_{k+1}, \dots, \phi_{i-1}, \psi_1) \in \Disc$.
			\begin{align*}
				&(-1)^{\deg{\phi_k} + \dots + \deg{\phi_{i-2}} + i - k-1}\fm_m(\phi_1, \dots, \phi_{i-2}, \fm_3(\phi_{i-1}, \psi_1, \psi_2), \phi_i^\wedge, \phi_{i+1}, \dots, \phi_m) \\
				&+(-1)^{\deg{\phi_k} + \dots + \deg{\phi_{i-1}} + i-k}\fm_m(\phi_1, \dots, \phi_{i-1}, \fm_3(\psi_1, \psi_2, \phi_i^\wedge), \phi_{i+1}, \dots, \phi_m) \\
				&+\fm_{m-i+k+2}(\phi_1, \dots, \phi_{k-1}, \fm_{i-k+1}(\phi_k, \dots, \phi_{i-1}, \psi_1), \psi_2, \phi_i^\wedge, \phi_{i+1}, \dots, \phi_m) \\
				=& 0 \quad \text{(by Lemma \ref{Lemma:PhiSigmaDCM}, \ref{Lemma:PhiSigmaT})}.
			\end{align*}
			\item $k > i$. This case is symmetric to the previous case.
		\end{enumerate}
	\end{enumerate}
\end{enumerate}

\subsubsection{Thick triple}\label{Appendix:ThickTriple}

Here, we deal with the case when the outer-input is a thick triple $\ora{\phi} = (\phi_1 ; (\phi_2, \phi_3))$. The case for a thick triple $((\phi_1, \phi_2) ; \phi_3)$ is symmetric.

\paragraph{Concatenable pair}

Suppose that $\phi_i$ has a decomposition $\phi_i^1 \bullet \phi_i^2$. Then, there are the following cases.
\begin{enumerate}
	\item $i=1$. Then, $(\phi_1^2 ; (\phi_2, \phi_3)) \in \Thick$ and $\phi_1^1 \bullet (\phi_1^2)^\vee = \phi_1^\vee$.
	\begin{align*}
		&\fm_3(\fm_2(\phi_1^1, \phi_1^2), \phi_2, \phi_3) +(-1)^{\deg{\phi_1^1} + 1}\fm_2(\phi_1^1, \fm_3(\phi_1^2, \phi_2, \phi_3)) = 0.
	\end{align*}
	\item $i=2$. Then, $(\phi_2^2, \phi_3) \in \Con$ and $(\phi_1 ; (\phi_2^1, \phi_2^2 \bullet \phi_3)) \in \Thick$.
	\begin{align*}
		&\fm_3(\phi_1, \fm_2(\phi_2^1, \phi_2^2), \phi_3) + (-1)^{\deg{\phi_2^1} + 1}\fm_3(\phi_1, \phi_2^1, \fm_2(\phi_2^2, \phi_3)) = 0.
	\end{align*}
	\item $i=3.$ The same with the previous case.
\end{enumerate}

\paragraph{Disc sequence}
Suppose that $\ora{\psi} = (\psi_1, \dots, \psi_n)$ is a disc sequence with $(\phi_i, \psi_1) \in \Con$ or $(\psi_n, \phi_i) \in \Con$. There are the following cases.
\begin{enumerate}
	\item $(\phi_1, \psi_1) \in \Con$. Then, $((\phi_2, \phi_3) ; {}^\wedge\psi_1) \in \Thick$. Also, $(\psi_n, \psi_1) \in \Con$ and $t(\phi_2)$ divides $\psi_k$ into $\psi_k^1 \bullet \psi_k^2$ for some $1 < k < n$ so that $({}^\wedge\psi_1, \psi_2, \dots, \psi_{k-1}, \psi_k^1, \phi_3)$, $(\psi_k^2, \psi_{k+1}, \dots, \psi_n, \phi_2) \in \Disc$.
	\begin{align*}
		&\fm_3(\fm_n(\phi_1 \bullet \psi_1, \psi_2, \dots, \psi_n), \phi_2, \phi_3) \\
		&+(-1)^{\deg{\phi_1} + \deg{\psi_1} + \dots + \deg{\psi_{k-1}} + k-1}\fm_{k+1}(\phi_1 \bullet \psi_1, \psi_2, \dots, \psi_{k-1}, \fm_{n-k+2}(\psi_k, \dots, \psi_n, \phi_2), \phi_3) \\
		=& 0 \quad \text{(by Lemma \ref{Lemma:PhiSigmaCD}, \ref{Lemma:PhiSigmaT})}.
	\end{align*}
	\item $(\psi_n, \phi_1) \in \Con$. Then, $(\psi_n \bullet \phi_1 ; (\phi_2, \phi_3)) \in \Thick$.
	\begin{align*}
		&\fm_3(\fm_n(\psi_1, \dots, \psi_{n-1}, \psi_n \bullet \phi_1), \phi_2, \phi_3) \\
		&+(-1)^{\deg{\psi_1} + \dots + \deg{\psi_{n-1}} + n-1}\fm_n(\psi_1, \dots, \psi_{n-1}, \fm_3(\psi_n \bullet \phi_1, \phi_2, \phi_3)) = 0.
	\end{align*}
	\item $(\phi_2, \psi_1) \in \Con$. Then, $\phi_3 = \psi_1 \bullet \theta$ for some nonzero $\theta$ and $(\phi_1 ; (\phi_2 \bullet \psi_1, \theta)) \in \Thick$.
	\begin{align*}
		&\fm_3(\phi_1, \fm_n(\phi_2 \bullet \psi_1, \psi_2, \dots, \psi_n), \phi_3) + (-1)^{\deg{\phi_2} + \deg{\psi_1} + 1}\fm_3(\phi_1, \phi_2 \bullet \psi_1, \fm_n(\psi_2, \dots, \psi_n, \phi_3)) = 0.
	\end{align*}
	\item $(\psi_n, \phi_2) \in \Con$. As $(\phi_2, \phi_3)$ is a thick pair, this case is not valid.
	\item $(\phi_3, \psi_1) \in \Con$. As $(\phi_2, \phi_3)$ is a thick pair, this case is not valid.
	\item $(\psi_n, \phi_3) \in \Con$. Then, $\phi_2 = \theta \bullet \psi_n$ for some nonzero $\theta$ and $(\phi_1 ; (\theta, \psi_n \bullet \phi_3)) \in \Thick$.
	\begin{align*}
		&(-1)^{\deg{\phi_2} + 1}\fm_3(\phi_1, \phi_2, \fm_n(\psi_1, \dots, \psi_{n-1}, \psi_n \bullet \phi_3)) + \fm_3(\phi_1, \fm_n(\phi_2, \psi_1, \dots, \psi_{n-1}), \psi_n \bullet \phi_3) = 0.
	\end{align*}
\end{enumerate}

\paragraph{Composition sequence}

Suppose that $\ora{\psi} = (\psi_1, \dots, \psi_n)$ is a composition sequence with value $\phi_i$. Since the value of a composition sequence is an interior morphism, $i=2$ or $i=3$.
\begin{enumerate}
	\item $i=2$. Then, $(\phi_1, \psi_1) \in \Con$ and $(\psi_1^\wedge, \psi_2, \dots, \psi_n, \phi_3) \in \Disc$. Note that $\phi_1 \bullet \psi_1 = \phi_1^\vee \bullet \psi_1^\wedge$.
	\begin{align*}
		&(-1)^{\deg{\phi_1} + 1}\fm_3(\phi_1, \fm_n(\psi_1, \dots, \psi_n), \phi_3) + \fm_{n+1}(\fm_2(\phi_1, \psi_1), \psi_2, \dots, \psi_n, \phi_3) = 0.
	\end{align*}
	\item $i=3$. Then, $t(\psi_n) = t(\phi_1^\vee)$. This has three subcases.
	\begin{enumerate}
		\item $\psi_n = \theta \bullet \phi_1^\vee$ for some nonzero $\theta$. This further has the following three subcases.
		\begin{enumerate}
			\item $t(\theta)$ divides $\psi_k$ into $\psi_k^1 \bullet \psi_k^2$ for some $1 \leq k < n-1$ so that $(\phi_1, \phi_2, \psi_1, \dots, \psi_{k-1}, \psi_k^1)$ and
			 $(\psi_k^2, \psi_{k+1}, \dots, \psi_{n-1}, \theta) \in \Disc$.
			\begin{align*}
				&(-1)^{\deg{\phi_1} + \deg{\phi_2} + 2}\fm_3(\phi_1, \phi_2, \fm_n(\psi_1, \dots, \psi_n)) \\
				&+\fm_{n-k+1}(\fm_{k+2}(\phi_1, \phi_2, \psi_1, \dots, \psi_k), \psi_{k+1}, \dots, \psi_n) \\
				=& 0 \quad \text{(by Lemma \ref{Lemma:PhiSigmaDD}, \ref{Lemma:PhiSigmaT})}.
			\end{align*}
			\item $t(\theta)$ meets $t(\psi_k)$ on the left for some $1 \leq k < n-1$ with an interior morphism $\xi$ so that $(\phi_1, \phi_2, \psi_1, \dots, \psi_k ; \xi) \in \Comp$ and $(\xi, \psi_{k+1}, \dots, \psi_{n-1}, \theta) \in \Disc$. In this case, $(\psi_k, \psi_{k+1}) \in \Con$ and $(\phi_1, \phi_2, \psi_1, \dots, \psi_{k-1}, \psi_k \bullet \psi_{k+1}, \psi_{k+2}, \dots, \psi_{n-1}, \theta) \in \Disc$.
			\begin{align*}
				&(-1)^{\deg{\phi_1} + \deg{\phi_2} + 2}\fm_3(\phi_1, \phi_2, \fm_n(\psi_1, \dots, \psi_n)) \\
				&+\fm_{n-k+1}(\fm_{k+2}(\phi_1, \phi_2, \psi_1, \dots, \psi_k), \psi_{k+1}, \dots, \psi_n) \\
				&+(-1)^{\deg{\phi_1} + \deg{\phi_2} + \deg{\psi_1} + \dots + \deg{\psi_{k-1}} + k + 1}\fm_{n+1}(\phi_1, \phi_2, \psi_1, \dots, \psi_{k-1}, \fm_2(\psi_k, \psi_{k+1}), \psi_{k+2}, \dots, \psi_n) \\
				=& 0 \quad \text{(by Lemma \ref{Lemma:PhiSigmaCD}, \ref{Lemma:PhiSigmaT})}.
			\end{align*}
			\item $t(\theta)$ meets $t(\psi_k)$ on the right for some $1 \leq k < n-1$ with an interior morphism $\xi$ so that $(\phi_1, \phi_2, \psi_1, \dots, \psi_k, \xi) \in \Disc$ and $(\psi_{k+1}, \dots, \psi_{n-1}, \theta ; \xi) \in \Comp$. In this case, $(\psi_k, \psi_{k+1}) \in \Con$ and $(\phi_1, \phi_2, \psi_1, \dots, \psi_{k-1}, \psi_k \bullet \psi_{k+1}, \psi_{k+2}, \dots, \psi_{n-1}, \theta) \in \Disc$.
			\begin{align*}
				&\fm_3(\phi_1, \phi_2, \fm_n(\psi_1, \dots, \psi_n)) \\
				&+(-1)^{\deg{\psi_1} + \dots + \deg{\psi_{k-1}} + k - 1}\fm_{n+1}(\phi_1, \phi_2, \psi_1, \dots, \psi_{k-1}, \fm_2(\psi_k, \psi_{k+1}), \psi_{k+2}, \dots, \psi_n) \\
				=& 0 \quad \text{(by Lemma \ref{Lemma:PhiSigmaCD}, \ref{Lemma:PhiSigmaT})}.				
			\end{align*}
		\end{enumerate}
		\item $\psi_n = \phi_1^\vee$. Then, $(\phi_1, \phi_2, \psi_1, \dots, \psi_{n-1}) \in \Disc$.
		\begin{align*}
			&(-1)^{\deg{\phi_1} + \deg{\phi_2} + 2}\fm_3(\phi_1, \phi_2, \fm_n(\psi_1, \dots, \psi_n)) + \fm_2(\fm_{n+1}(\phi_1, \phi_2, \psi_1, \dots, \psi_{n-1}), \psi_n) \\
			=& 0 \quad \text{(by Lemma \ref{Lemma:PhiSigmaT})}.
		\end{align*}
		\item $\phi_1^\vee = \theta \bullet \psi_n$. Then, $\phi_1 = \theta \bullet \psi_n^\wedge$ and $(\phi_2, \psi_1, \dots, \psi_n^\wedge) \in \Disc$.
		\begin{align*}
			&(-1)^{\deg{\phi_1} + \deg{\phi_2} + 2}\fm_3(\phi_1, \phi_2, \fm_n(\psi_1, \dots, \psi_n)) + \fm_2(\fm_{n+1}(\phi_1, \phi_2, \psi_1, \dots, \psi_{n-1}), \psi_n) \\
			=& 0 \quad \text{(by Lemma \ref{Lemma:PhiSigmaT})}.
		\end{align*}
	\end{enumerate}
\end{enumerate}

\paragraph{Thick triple}
The last case is that $\ora{\psi}$ is a thick triple whose value is $\phi_i$. The only possible case is $\ora{\psi} = ((\psi_1, \psi_2) ; \psi_3)$ with ${}^\vee\psi_3 = \phi_1$. Then, $(\psi_3 ; (\phi_2, \phi_3)) \in \Thick$. 
\begin{align*}
	&\fm_3(\psi_1, \psi_2, \fm_3(\psi_3, \phi_2, \phi_3)) + \fm_3(\fm_3(\psi_1, \psi_2, \psi_3), \phi_2, \phi_3) = 0.
\end{align*}

\bibliographystyle{amsplain}
\bibliography{OnTopologicalOrbifoldFukayaCategories}

\end{document}